\documentclass[a4paper, 12pt]{amsart}
\usepackage[utf8]{inputenc}

\usepackage{url}
\usepackage[margin=1in]{geometry}  
\usepackage{epsfig}
\usepackage{color}
\usepackage{graphicx}              
\usepackage{amsmath}
\usepackage{amscd}               
\usepackage{amsfonts}
\usepackage{amssymb}             
\usepackage{amsthm}                
\usepackage{epsfig}
\usepackage{mathrsfs}
\usepackage{hyperref}
\usepackage{enumerate}
\usepackage{epstopdf}

\numberwithin{equation}{section}

\newtheorem{theorem}{Theorem}

\newtheorem*{theorem*}{Theorem}
\newtheorem{lemma}[theorem]{Lemma}
\newtheorem*{lemma*}{Lemma}
\newtheorem{proposition}[theorem]{Proposition}
\newtheorem*{proposition*}{Proposition}
\theoremstyle{definition}

\theoremstyle{remark}

\newtheorem*{remark}{Remark}
\newtheorem*{rem}{Remark}

\newcommand{\RE}{\operatorname{Re}}
\newcommand{\IM}{\operatorname{Im}}
\newcommand{\meas}{\operatorname{meas}}

\newcommand{\GCD}{\operatorname{GCD}}
\newcommand{\good}{\operatorname{good}}
\newcommand{\bad}{\operatorname{bad}}

\newcommand{\sm}{\operatorname{sm}}

\newcommand{\mix}{\operatorname{mix}}
\newcommand{\rel}{\operatorname{rel}}
\newcommand{\TV}{\operatorname{TV}}
\newcommand{\gap}{\operatorname{gap}}
\newcommand{\SL}{\operatorname{SL}}
\newcommand{\vol}{\operatorname{vol}}

\newcommand{\GL}{\operatorname{GL}}
\newcommand{\spec}{\operatorname{spec}}

\newcommand{\sav}{\operatorname{sav}}

\newcommand{\old}{\operatorname{old}}
\newcommand{\ini}{\operatorname{init}}
\newcommand{\fin}{\operatorname{fin}}
\newcommand{\diam}{\operatorname{diam}}
\newcommand{\geom}{\operatorname{geom}}

\newcommand{\rad}{\operatorname{rad}}
\newcommand{\new}{\operatorname{new}}

\newcommand{\E}{\mathbf{E}}     
\newcommand{\Prob}{\mathbf{P}} 
\newcommand{\one}{\mathbf{1}}
\newcommand{\Var}{\mathbf{Var}}

\newcommand{\bR}{\mathbb{R}}

\newcommand{\bU}{\mathbb{U}}

\newcommand{\bF}{\mathbb{F}}
\newcommand{\bS}{\mathbb{S}}
\newcommand{\zed}{\mathbb{Z}}

\newcommand{\sA}{{\mathscr{A}}}
\newcommand{\sB}{{\mathscr{B}}}
\newcommand{\sC}{{\mathscr{C}}}
\newcommand{\sD}{{\mathscr{D}}}

\newcommand{\sF}{{\mathscr{F}}}

\newcommand{\sI}{{\mathscr{I}}}
\newcommand{\sJ}{{\mathscr{J}}}
\newcommand{\sK}{{\mathscr{K}}}
\newcommand{\sL}{{\mathscr{L}}}
\newcommand{\sM}{{\mathscr{M}}}
\newcommand{\sN}{{\mathscr{N}}}

\newcommand{\sP}{{\mathscr{P}}}

\newcommand{\sS}{{\mathscr{S}}}
\newcommand{\sT}{{\mathscr{T}}}

\newcommand{\fX}{\mathfrak{X}}
\newcommand{\fS}{\mathfrak{S}}

\newcommand{\vo}{{\underline{\theta}}}

\newcommand{\ualpha}{\underline{\alpha}}
\newcommand{\ua}{\underline{a}}
\newcommand{\uA}{\underline{A}}

\newcommand{\un}{\underline{n}}

\newcommand{\uv}{{\underline{v}}}
\newcommand{\uw}{\underline{w}}
\newcommand{\ux}{\underline{x}}
\newcommand{\uy}{\underline{y}}

\title[Cycle walks]{Mixing and cut-off in cycle walks}
\author{Robert Hough}
\address[Robert Hough]{School of Mathematics, Institute of Advanced Study, 1
Einstein Drive, Princeton, NJ, 08540, USA}
\email{hough@math.ias.edu}

\thanks{This material is based upon work supported by the National
Science Foundation under agreements No. DMS-1128155 and DMS-1712682.  Any opinions, findings and
conclusions or recommendations expressed in this material are those of the
author and do not necessarily reflect the views of the National Science
Foundation.}
\thanks{The author thanks Persi Diaconis, Yuval Peres, David Wilson,
Lionel Levine, Elon Lindenstrauss and Daniel Jerison for useful conversations.}

\subjclass[2010]{Primary 60B10, 60B15, 60G50, 60J60, 11H06, 11A63}
\keywords{Random walk on a group, random lattice, cut-off phenomenon, embedded
hypercube}
\begin{document}

\begin{abstract}
Given a sequence $(\fX_i, \sK_i)_{i=1}^\infty$ of Markov chains, the cut-off
phenomenon describes a period of transition to stationarity which is
asymptotically lower order than the mixing time. We study mixing times and the
cut-off
phenomenon in the total variation metric in the case of random walk on the
groups $\zed/p\zed$, $p$ prime, with driving measure uniform on a symmetric
generating set $A \subset \zed/p\zed$.

\end{abstract}

\maketitle

\section{Introduction}
The mixing analysis of random walk on a finite abelian group is a classical
problem of probability
theory, with widespread applications; the
Ehrnfest urn and sandpile models of statistical mechanics are motivating
examples \cite{DGM90, JLP15, S15}.    Among the early results in this area is
a theorem of
Greenhalgh \cite{G89}, which shows that for generating set of
size $k$ contained in $\zed/n\zed$, the mixing time of the corresponding
random walk satisfies $t^{\mix} \gg_k n^{\frac{2}{k-1}}$.  A
set of size $k$ with mixing time bounded by $\ll_k n^{\frac{2}{k-1}}\log n$ is
also exhibited. Dou, Hildebrand and Wilson \cite{DH96}, \cite{H94}, \cite{W97}
consider the mixing of measures driven by typical generating sets on cyclic and
more general groups. Among the results of \cite{H94} is that typical generating
sets of size $k = (\log n)^a$, $a > 1$ produce a random walk satisfying the
cut-off phenomenon. We confine our attention to cyclic groups and symmetric
generating sets
which are smaller than logarithmic size in the order of the group, and prove
a number of refined results on the mixing behavior. Our results are in a
similar spirit to those of Diaconis and Saloff-Coste \cite{DS94} proven in the
more general context of random walk on groups of polynomial growth, but in
narrowing our focus we emphasize strong uniformity in the number of generators
of the random walk. Note that in the context of random walk on nilpotent groups, the mixing of the walk projected to the abelianization often controls the mixing in the group as a whole, see \cite{GT12}, \cite{DH15}.

To briefly summarize the results,
  Theorem
\ref{spectral_mixing_time_theorem} gives spectral upper and lower bounds for the
mixing time in a sharper form than previous results which have appeared in the
literature.
A natural
conjecture regarding random walk on a connected graph is that the total
variation mixing time is bounded by the maximum degree times the diameter
squared.
A highlight of our work is
Theorem \ref{mixing_time_theorem}, which  verifies the conjecture for the
mixing time
of random walk on the Cayley graph of $\zed/p\zed$
with a small symmetric generating set. Theorem \ref{bounded_gen_set_theorem}
gives a lower bound for the period of transition to uniformity relative to the
mixing time -- a lower bound on the cut-off window. Theorem \ref{random_theorem}
determines the generic and worst case mixing behavior for a sequence of typical
symmetric random walks. We conclude by analyzing the mixing time of a
walk which may be considered an approximate embedding of the hypercube
$(\zed/2\zed)^d$
into the cycle, demonstrating a cut-off phenomenon.

\subsection{Precise statement of results}
Let $\sP$ be the set of primes. Given $p \in \sP$ let $A \subset
\zed/p\zed$ be symmetric ($x \in A$ if and only if $-x \in A$), lazy ($0
\in A$) and generating $(|A| > 1)$.
Write $\sA(p)$ be the collection of symmetric, lazy, generating subsets of
$\zed/p\zed$, and for $k \in
\zed_{>0}$ write $\sA(p,k) \subset \sA(p)$ be those sets of size
$2k+1$. Given $A \in \sA(p)$
let $\mu_{A}$ denote the uniform measure on $A$, \[\mu_{A} =
\frac{1}{|A|} \sum_{x \in A} \delta_x.\] The distribution at step $n \geq 1$ of
random walk driven by $\mu_{A}$  is given by the convolution power
\[
\mu_{A}^{*1} = \mu_A, \qquad \mu_A^{*n} = \mu_A^{*(n-1)} * \mu_A, \; n > 1.
\]
As $n \to  \infty$, $\mu_A^{*n}$ converges to the uniform measure
$\bU_{\zed/p\zed}$ on $\zed/p\zed$ and we consider asymptotic behavior of this
convergence
for large $p$.  In particular,  the behavior of these walks
as $k = k(p)$ varies as a function of $p$, and as $A$ varies in the set
$\sA(p,k)$ is studied.

Given measure space $(\fX, \sB)$, a norm $\|\cdot\|$ on
the space $\sM(\fX)$ of probability measures on $\fX$,  a Markov chain
$P^n(\cdot)$ with stationary measure $\nu \in \sM(\fX)$, and $0 < \epsilon < 1$,
define the $\epsilon$-mixing
time
\[
 t^{\mix}(\epsilon)= \inf\left\{n: \sup_{\mu \in \sM(\fX)} \| P^n(\mu) - \nu\|
\leq
\epsilon\right\}
\]
and the standard mixing time $t^{\mix} = t^{\mix}\left(\frac{1}{e}\right)$.
In the cases considered $\fX$ is a (finite, compact, locally compact)
abelian group, and, due to the symmetry of the walk, it is sufficient to
take for $\mu$ the point mass at 0.
Of primary interest is the total variation norm, which for
$\mu, \nu \in \sM(\fX)$ is given by
\[
 \|\mu - \nu\|_{\TV(\fX)} = \sup_{S \in \sB} |\mu(S) - \nu(S)| .
\]
The mixing time with respect to this norm is indicated
$t^{\mix}_1$. Two further important parameters in
considering reversible Markov chains are the spectral gap of the transition
kernel
\[
 \gap = 1 - \sup\left\{|\lambda|: \lambda \in \spec(P) \setminus \{\pm 1\}
\right\}
\]
and the relaxation time
\[
t^{\rel} = \frac{1}{-\log (1-\gap)} \approx \frac{1}{\gap}.
\]
 In
stating our results we let $\tau_0$ 
denote the ratio $\frac{t^{\mix}_1}{t^{\rel}}$ of the
one dimensional Gaussian diffusion
\begin{equation}\label{def_1_d_diffusion}
\theta(x,t) =
\sum_{j \in \zed} \exp(-2\pi^2 tj^2)e^{2\pi i jx}
\end{equation}
on $(\bR/\zed, dx)$; $2\pi^2 t =  \tau_0$ solves the equation
\[
 \int_0^1 |\theta(x,t)-1|dt = \frac{2}{e}
\]
and has numerical value\footnote{We use parentheses to indicate the last significant digit of numerical constants.}
\begin{equation}\label{def_tau_0}
 \tau_0 = 0.56161265(1).
\end{equation}

In the context
of
random walk on $\zed/p\zed$ with small symmetric generating sets, the
relaxation and total variation mixing times are related as follows.\footnote{
We write $A(x)
\lesssim_x B(x)$ meaning that there is a non-increasing function $f :\bR^+
\to
\bR^+$ with $\lim_{x \to \infty}f(x) = 1$ such that $A(x) \leq f(x)B(x)$, thus
indicating the parameter which must grow for the asymptotic to hold.}
\begin{theorem}\label{spectral_mixing_time_theorem}
 Let $p$ be prime, let $1 \leq k \leq \frac{\log p}{\log \log p}$ and let $A
\in \sA(p,k)$.  Denote $t^{\rel}, t^{\mix}_1$ the relaxation time
and total variation mixing time of $\mu_A$ on $\zed/p\zed$.   We
have
\[
\frac{\tau_0 e }{4\pi } p^{\frac{2}{k}}\lesssim_k \tau_0
t^{\rel} \lesssim_p
t^{\mix}_1 \lesssim_k 0.163 k t^{\rel}.
\]
Also, uniformly in $k$,
\[
 \frac{2k+1}{16 \pi \Gamma\left(\frac{k}{2}+1 \right)^{\frac{2}{k}}}p^{\frac{2}{k}} \lesssim_p 
t^{\rel}.
\]

\end{theorem}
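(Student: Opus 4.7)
The argument is entirely Fourier-analytic. By symmetry of $\mu_A$, its Fourier coefficients (and the eigenvalues of the walk kernel) are real:
$$\hat{\mu}_A(\xi) = \frac{1}{2k+1}\Bigl(1 + 2\sum_{j=1}^k \cos(2\pi\xi a_j/p)\Bigr) = 1 - \frac{4}{2k+1}\sum_{j=1}^k\sin^2(\pi\xi a_j/p).$$
Throughout I write $Q(\xi) := \sum_j \|\xi a_j/p\|^2$, use the sandwich $4Q(\xi) \leq \sum_j \sin^2(\pi\xi a_j/p) \leq \pi^2 Q(\xi)$, and pass between the spectral gap and relaxation time via $t^{\rel} = 1/(-\log(1-\gap)) = (1+o(1))/\gap$ as $\gap\to 0$.

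\textbf{Lower bounds on $t^{\rel}$.} I will produce a nonzero $\xi^*$ at which $Q(\xi^*)$ is small. The homomorphism $\phi : \zed/p\zed \to \bT^k$, $\xi \mapsto (\xi a_j/p)_j$, is injective because $A$ generates and $p$ is prime, so its image is $p$ distinct points of $\bT^k$. A volume packing argument (open Euclidean balls of radius $\delta/2$ at these points are disjoint and total volume at most one) places two image points within Euclidean distance $\delta := 2\bigl(\Gamma(k/2+1)/\pi^{k/2}\bigr)^{1/k}p^{-1/k}$; their difference gives $\xi^* \neq 0$ with $Q(\xi^*) \leq \delta^2$. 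The upper sandwich then yields
$$\gap \leq \frac{4\pi^2\delta^2}{2k+1} = \frac{16\pi\,\Gamma(k/2+1)^{2/k}}{(2k+1)\,p^{2/k}},$$
which inverts to the uniform-in-$k$ lower bound on $t^{\rel}$ displayed last in the theorem. The first displayed inequality $\tfrac{e}{4\pi}p^{2/k} \lesssim_k t^{\rel}$ then follows from Stirling's asymptotic $\Gamma(k/2+1)^{2/k} = (k/2e)(1+o_k(1))$.

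\textbf{Lower bound $\tau_0 t^{\rel} \lesssim_p t^{\mix}_1$.} This is the heart of the argument and needs a genuine one-dimensional Gaussian comparison along the extremal frequency. With $\xi^*$ as above, $\|\xi^* a_j/p\| = O(p^{-1/k})$ for every $j$, so Taylor-expanding $\sin$ gives for each bounded integer $m$
$1-\hat{\mu}_A(m\xi^*) = m^2\gap\,(1+o_p(1)),$
and hence $\hat{\mu}_A(m\xi^*)^n \to e^{-m^2\tau}$ along $n = \tau t^{\rel}$, matching the Fourier coefficients of $\theta(\cdot,\tau/(2\pi^2))$ from (\ref{def_1_d_diffusion}). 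I bound $\|\mu_A^{*n}-\bU\|_{\TV}$ below by testing $\mu_A^{*n}-\bU$ against $f\circ T$, where $T(x) = \xi^* x/p$ is the bijection of $\zed/p\zed$ onto $\{0,1/p,\ldots,(p-1)/p\}$ and $f$ is a bandlimited function on $\bR/\zed$ with $\|f\|_\infty \leq 1$ chosen to nearly saturate $\int|\theta(x,t)-1|dx/2$. Fourier matching over the finite bandwidth gives $|\int f \circ T\,(d\mu_A^{*n}-d\bU)| \to |\int f\cdot(\theta(\cdot,t)-1)dx|$ as $p \to \infty$; sending the bandlimit to infinity and $\tau \nearrow \tau_0$ forces $\|\mu_A^{*n}-\bU\|_{\TV} > 1/e - o_p(1)$ at $n$ just below $\tau_0 t^{\rel}$, yielding $t^{\mix}_1 \geq \tau_0 t^{\rel}(1-o_p(1))$.

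\textbf{Upper bound $t^{\mix}_1 \lesssim_k 0.163\,k\,t^{\rel}$.} Plancherel gives $4\|\mu_A^{*n}-\bU\|_{\TV}^2 \leq \sum_{\xi\neq 0}|\hat{\mu}_A(\xi)|^{2n} \leq \sum_{\xi \neq 0}\exp(-\tfrac{32n}{2k+1}Q(\xi))$ via the lower sandwich. I estimate the theta-style right side using the lattice $M := \tfrac{1}{p}\zed\ua + \zed^k \subset \bR^k$ of covolume $1/p$: because $\exp(-\beta Q(\xi)) \leq \sum_{\be \in \zed^k}\exp(-\beta\|\xi\ua/p-\be\|^2)$, Poisson summation on $M$ expresses the double sum as $p(\pi/\beta)^{k/2}$ times the theta function of the dual lattice $M^* = \{w \in \zed^k : w\cdot\ua \equiv 0 \pmod p\}$ at parameter $\pi^2/\beta$, where $\beta = 32n/(2k+1)$. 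The $w=0$ main term $p(\pi/\beta)^{k/2}$ drops below $e^{-2}$ once $n \gtrsim_k k\,p^{2/k}$, and in the hypothesis range $k \leq \log p/\log\log p$ the $w \neq 0$ contributions remain exponentially smaller; optimizing the theta integral against the relaxation lower bound produces the explicit constant $0.163$. The \emph{main obstacle} is the middle inequality: a single-character test only recovers $t^{\mix}_1 \gtrsim \log(e/2)\,t^{\rel}$, so extracting the sharp $\tau_0$ demands quantitative control of the Taylor expansion $1-\hat{\mu}_A(m\xi^*) = m^2\gap(1+o(1))$ uniformly for $m$ growing with $p$, balanced against the bandlimit chosen in the test function.
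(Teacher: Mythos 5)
Your lower bound on $t^{\rel}$ (via pigeonhole/packing in $\bT^k$, which is Minkowski's theorem in disguise) and your middle inequality (via bandlimited test functions pulled back along $T(x) = \xi^* x / p$) are both essentially sound; the latter is the frequency-domain dual of the paper's time-domain argument, which instead defines $\Theta_0$ as the projection of the theta function onto the span of $\lambda^*$ and shows $\|\Theta_0 - \bU\|_{\TV} \leq \|\Theta - \bU\|_{\TV}$ by smoothing in the orthogonal directions. Your route is arguably cleaner because it avoids the $T\to\infty$ smoothing limit, but it does need the quantitative care you flag (uniformity of $1-\hat{\mu}_A(m\xi^*) = m^2\gap(1+o(1))$ over $|m| \leq M(p)$ against the bandlimit, which works since $p^{1/k} \geq \log p$ in this range of $k$).

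The real gap is the upper bound $t^{\mix}_1 \lesssim_k 0.163\, k\, t^{\rel}$, where your Poisson-summation plan fails for two concrete reasons. \textbf{First, the constant is far off.} Your $w=0$ term is $p(\pi/\beta)^{k/2}$ with $\beta = 32n/(2k+1)$; requiring this to be below $e^{-2}$ forces $\beta \gtrsim \pi p^{2/k}$, i.e.\ $n \gtrsim \tfrac{\pi k}{16}p^{2/k}$. Against the relaxation lower bound $t^{\rel} \gtrsim \tfrac{e}{4\pi}p^{2/k}$, this gives $n/t^{\rel} \gtrsim \tfrac{\pi^2}{4e}k \approx 0.907\,k$, not $0.163\,k$. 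Part of this is the lossy sandwich $\sin^2(\pi\theta) \geq 4\theta^2$ (the paper first passes to the continuous Gaussian model via Lemma~\ref{continuous_approx_lemma}, where the eigenvalues are exactly $\exp(-\tfrac{4\pi^2 t}{2k+1}\|\lambda\|^2)$, so the Taylor factor $\pi^2$ is exact); but even after fixing that, a covolume-only ($w=0$) bound cannot reach $0.163\,k$. \textbf{Second, and more fundamentally, the claim that the $w\neq 0$ contributions are exponentially smaller is false.} The Poisson dual $M^* = \Lambda$ contains $p\zed^k$ and can contain vectors of length as small as $\sqrt{3}$ (the defining conditions on $A$ only forbid squared lengths $1$ and $2$); since $\beta \gg 1$ in the regime you need, $\exp(-\pi^2\|w\|^2/\beta)$ is close to $1$ for these short $w$, and there can be enough of them to overwhelm the $w=0$ term. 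The paper avoids both problems by estimating $\sum_{\lambda\in\Lambda^\vee\setminus 0}\exp(-\tfrac{8\pi^2 t}{2k+1}\|\lambda\|^2)$ directly via the Kabatjanskij--Levenshtein sphere-packing bound (Lemma~\ref{lattice_point_bound_lemma}): a ball of radius $s\|\lambda^*\|$ in $\Lambda^\vee$ contains at most $\exp((1+o(1))kF(s))$ points, and the explicit constant $0.163 = 0.325/2$ comes from $\max_{s\geq 1} F(s)/s^2 < 0.325$. This packing constraint on $\Lambda^\vee$ at the scale of its own shortest vector is exactly what Poisson summation over $\Lambda$ cannot see, and it is the missing ingredient in your upper bound.
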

\begin{rem}
 The relationship $\frac{t_1^{\mix}}{t^{\rel}} \gtrsim \tau_0$ exhibits Gaussian diffusion on $\bR/\zed$ as asymptotically extremal for the ratio between the mixing and relaxation times.
\end{rem}

\begin{rem}
 The lower bound gives an explicit dependence on $k$ in Greenhalgh's theorem.
An upper bound of this type may be extracted from \cite{DS94}, Theorem 1.2, but
the $k$ dependence there is, in worst case, exponential.
\end{rem}

Theorem \ref{spectral_mixing_time_theorem} relates the mixing time to spectral
data, but in some cases it is more desirable to understand the mixing time
geometrically.  Given symmetric generating set $A \subset \zed/p\zed$ denote
$\sC(A, p)$ the Cayley graph with vertices $V=\zed/p\zed$ and edge set $E =
\{(n_1, n_2) \in (\zed/p\zed)^2: n_1 - n_2 \in A \}$.  Write $\diam(\sC(A,p))$
for the graph-theoretic diameter of $\sC(A,p)$.  Since $\zed/p\zed$ is abelian
there is a more geometric notion of diameter
\[
\diam_{\geom}(\sC(A,p)) = \max_{x \in \zed/p\zed} \min \left(\|\un\|_2:
\un \in \zed^k, \;\exists
\ua \in A^k, \; \un \cdot \ua \equiv x \bmod p \right).
\]
One has (the second inequality is given in Lemma \ref{diameter_lemma})\footnote{The notation $A \gg B$ means $B = O(A)$.}
\[
\diam(\sC(A,p)) \geq \diam_{\geom}(\sC(A,p)) \gg \sqrt{\frac{t^{\rel}}{k}}.
\] 
Random walk driven by
$\mu_A$ on $\zed/p\zed$ may be interpreted as  random walk on
$\sC(A, p)$ in which at each step the walker chooses a uniform edge leaving its
current position.

\begin{theorem}\label{mixing_time_theorem}
 Let $p$ be an odd prime and let $A \in \sA(p)$ with $|A| = 2k+1$, $1 \leq k
\leq \frac{\log p}{\log\log p}.$ The mixing time $t^{\mix}_1$ of random walk
driven by $\mu_A$ satisfies, as $p \to \infty$,
\[
 t^{\mix}_1 \ll k \cdot \diam_{\geom}(\sC(A,p))^2.
\]
\end{theorem}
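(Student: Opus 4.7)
The plan is to bypass Theorem~\ref{spectral_mixing_time_theorem}. Naively combining the spectral-to-TV inequality $t^{\mix}_1 \lesssim_k 0.163\, k\, t^{\rel}$ from that theorem with the spectral-to-diameter inequality $\diam_{\geom}(\sC(A,p)) \gg \sqrt{t^{\rel}/k}$ loses an extra factor of $k$, yielding only $t^{\mix}_1 \lesssim k^2 \diam_{\geom}^2$. The missing savings must be realized by a direct $\ell^2$ Fourier analysis. The key geometric input is that $\diam_{\geom}(\sC(A,p))$ coincides, up to an additive $O(\sqrt{k})$, with the covering radius $\rho(L)$ of the integer lattice $L := \{\un \in \zed^k : \un \cdot \ua \equiv 0 \bmod p\}$, a sublattice of $\zed^k$ of index (equivalently covolume) $p$. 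I write $D := \diam_{\geom}(\sC(A,p))$ throughout.

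Starting from $4\|\mu_A^{*n} - \bU_{\zed/p\zed}\|^2_{\TV} \leq \sum_{j=1}^{p-1}\hat\mu_A(j)^{2n}$, where $\hat\mu_A(j) = \tfrac{1}{2k+1}\bigl(1 + 2\sum_{i=1}^k \cos(2\pi j a_i/p)\bigr)$, the inequality $\cos(2\pi x) \leq 1 - 8\|x\|^2_{\bR/\zed}$ yields $\hat\mu_A(j) \leq 1 - \tfrac{16}{2k+1}\min_{\un \in \zed^k}\|j\ua/p - \un\|_2^2$, while laziness of $\mu_A$ forces $\hat\mu_A(j) \geq -\tfrac{2k-1}{2k+1}$. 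Splitting by the sign of $\hat\mu_A(j)$, the negative-eigenvalue contribution is uniformly bounded by $\bigl(1-\tfrac{2}{2k+1}\bigr)^{2n}$, whose sum over $p-1$ indices is at most $p\exp(-4n/(2k+1))$. This falls below any fixed $\epsilon > 0$ once $n \gtrsim k\log p$, which is within the $kD^2$ budget because the Minkowski lower bound gives $D^2 \gtrsim p^{2/k} \gg \log p$ throughout the allowed range $k \leq \log p/\log\log p$.

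For the positive-eigenvalue part, one identifies the orbit $\{j\ua/p \bmod \zed^k\}_{j=0}^{p-1}$ with the cyclic subgroup $L^*/\zed^k$ of the torus $(\bR/\zed)^k$, where $L^* := \zed^k + \tfrac{1}{p}\zed\ua$ is the lattice dual to $L$, of covolume $1/p$. With $s := \tfrac{32n}{2k+1}$, the sum is bounded by the theta defect
\[
\Theta_{L^*}(s) - 1 = \sum_{v \in L^*\setminus\{0\}} e^{-s\|v\|^2},
\]
which Poisson summation rewrites as $p(\pi/s)^{k/2}\Theta_L(\pi^2/s) - 1$. The goal is to show this is $O(1)$ once $s \gtrsim D^2$, equivalently $n \gtrsim k D^2$.

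The main obstacle is this theta estimate. The transference inequality $\lambda_1(L^*)\rho(L) \geq \tfrac{1}{2}$ only supplies $\lambda_1(L^*) \geq 1/(2D)$, so that $s\lambda_1(L^*)^2$ is merely of absolute-constant order at the critical scale $s \asymp D^2$, and a raw union bound over $L^*$ would lose polynomial factors in $k$. The resolution is to factorize $\Theta_{L^*}$ along a Minkowski-reduced basis of $L^*$, using the full chain of transference bounds on the successive minima (which satisfy $\prod_i \lambda_i(L^*) \asymp 1/p$ and each $\lambda_i(L^*) \gtrsim 1/D$) together with Banaszczyk's Gaussian mass lemma. This produces a near-product bound on $\Theta_{L^*}(s)$ which, combined with the integrality of $L \subset \zed^k$ (so $\lambda_1(L) \geq 1$, ensuring the dual Poisson sum $\Theta_L(\pi^2/s)$ is close to $p^{-1}(s/\pi)^{k/2}$), closes to $\Theta_{L^*}(s) - 1 = O(1)$ for $s \geq CD^2$ with $C$ sufficiently large, completing the TV estimate and yielding $t^{\mix}_1 \ll k D^2$.
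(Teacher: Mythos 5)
Your proposal takes a genuinely different route from the paper. The paper stays entirely in the time domain: after replacing $\mu_A^{*n}$ by the diffusion $\Theta(\cdot, \tfrac{2t}{2k+1};\Lambda)$ on $\bR^k/\Lambda$, it writes $\tfrac{1}{p} = \E_{\uy\in\sF}\bigl[\Theta(\ux+\uy,\cdot)\bigr]$ over the Voronoi cell $\sF$, applies the elementary inequality $|1-e^x|\le e^{|x|}-1$ to the ratio of Gaussian densities, folds the $\lambda$-sum into the $\ux$-integral, and integrates away all directions orthogonal to $\uy$ to reduce to a one-dimensional bound $\ll D\sqrt{k/t}$. No Fourier analysis or theta-defect bounds enter. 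You work instead in frequency space via Cauchy--Schwarz, which requires the lattice estimate $\Theta_{L^*}(s)-1=O(1)$ for $s\asymp D^2$.

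The theta estimate is where the gap lies, and it is a real one. You correctly identify that the single transference bound $\lambda_1(L^*)\rho(L)\ge\tfrac12$ only gives $s\lambda_1(L^*)^2\asymp 1$ at the critical scale, but the proposed resolution does not close it. The ``full chain'' of lower bounds $\lambda_i(L^*)\ge\lambda_1(L^*)\gtrsim 1/D$ is just the monotonicity of successive minima and supplies nothing new: plugging $\lambda_i\gtrsim 1/D$ into the near-product bound $\Theta_{L^*}(s)-1\lesssim\sum_i e^{-s\lambda_i^2}$ gives only $\le ke^{-C/4}$, which is $O(1)$ only for $C\gtrsim\log k$, i.e.\ $t\ll kD^2\log k$ --- a factor $\log k$ short of the theorem. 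What one would actually need to win the $\log k$ back is a quantitative statement that the successive minima of $L^*$ cannot all cluster near $1/(2D)$, for instance an inequality of the form $\rho(L)^2\ge c\sum_i\lambda_i(L^*)^{-2}$. No such inequality holds with a universal constant: it already fails for the hexagonal lattice in the plane, where $\rho(L)^2=\lambda_1(L)^2/3$ while $\tfrac14\sum_i\lambda_i(L^*)^{-2}=\tfrac{3}{8}\lambda_1(L)^2$. Banaszczyk's lemma governs the tail of the Gaussian mass relative to the total mass, not the defect $\Theta_{L^*}(s)-1$ in terms of the covering radius of the primal lattice, and does not substitute for the missing inequality. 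Finally, the remark that integrality of $L$ ``ensures the dual Poisson sum $\Theta_L(\pi^2/s)$ is close to $p^{-1}(s/\pi)^{k/2}$'' is circular: by Poisson summation this is exactly the assertion $\Theta_{L^*}(s)\approx 1$, which is what is to be proved, and $\lambda_1(L)\ge 1$ gives only the useless upper bound $\Theta_L\le\Theta_{\zed^k}$. It is not even clear that the $L^2$ mixing time is $\ll kD^2$ worst case --- it may genuinely carry a $\log k$ penalty --- which would make an $\|\cdot\|_{L^2}$-based argument structurally unable to yield the theorem. The paper's $L^1$ argument circumvents all of this, which is precisely why it is phrased in the time domain.
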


\begin{rem}
In the context of random walk on a cycle, Theorem \ref{mixing_time_theorem}
refines in two ways the much more general Theorem 1.2 of \cite{DS94}, which
applies in the context of groups of moderate growth. The dependence on the
number of generators $k$ there is, in worst case, exponential.  Also, we
replace the diameter
there with the smaller geometric diameter here. See also \cite{V98}.
\end{rem}

Given a sequence of triples $(\fX_i, P_i, \nu_i)_{i=1}^\infty$ where $\fX_i$ is
a measure space and $P_i$ is a Markov kernel on $\fX_i$ which has
$\nu_i \in \sM(\fX_i)$ as its stationary distribution, the sequence
exhibits the cut-off phenomenon in total variation if for all $0 < \epsilon <
\frac{1}{2}$,
\[
 \lim_{i \to \infty}
\frac{t^{\mix}_{1,i}(\epsilon)}{t^{\mix}_{1,i}(1-\epsilon)} = 1.
\]
The cut-off phenomenon is frequently observed in natural families of Markov
chains including the hypercube walk of \cite{DGM90} and
riffle
shuffling viewed as a random walk on the symmetric group \cite{AD86}.
Especially in total variation, the cut-off phenomenon is still imperfectly
understood, so that there is significant interest in deciding its occurrence
in specific examples, see for instance \cite{DS06}, \cite{DLP10},
\cite{DDMP13}, \cite{BC14}, \cite{LP15}.

One necessary condition for cut-off in total variation to occur is
\[
 \lim_{i \to \infty} \frac{t^{\mix}_{1,i}}{t^{\rel}_i} = \infty,
\]
see Chapter 18.3 of \cite{LPW09}.
In particular, by Theorem \ref{spectral_mixing_time_theorem} any sequence of
walks generated by $\{A_p \bmod p \subset \zed/p\zed\}_{p \in \sP}$ for which
$|A_p|$ remains bounded does not have cut-off, a result first obtained in
\cite{DS94}.
We give a different proof of this result found independently by the
author, which gives further information on the period of transition to
uniformity.

\begin{theorem}\label{bounded_gen_set_theorem}
Let $p \geq 3$ be prime, let $1 \leq
k
\leq \frac{\log p}{\log \log p}$ and let $A \in \sA(p,k)$.
For any $0 < \epsilon < \frac{1}{e}$ the
total variation mixing times of $\mu_A$ on $\zed/p\zed$ satisfy
\[
 t^{\mix}_1(\epsilon) - t^{\mix}_1(1-\epsilon) \gg_\epsilon
\frac{t^{\mix}_1}{k}.
\]

\end{theorem}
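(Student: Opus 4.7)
The plan is to combine a character-based lower bound on $t^{\mix}_1(\epsilon)$ with the mixing-time upper bound from Theorem \ref{spectral_mixing_time_theorem}. First, I would pick a non-zero frequency $\xi_* \in \zed/p\zed$ realizing $|\hat{\mu}_A(\xi_*)| = 1 - \gap =: \lambda_*$; since $A$ is symmetric, $\hat{\mu}_A(\xi_*)$ is real, and the real character $\phi(x) = \cos(2\pi \xi_* x/p)$ serves as a test function with $\|\phi\|_\infty \leq 1$, $\langle \bU, \phi\rangle = 0$, and $\langle \mu_A^{*n}, \phi\rangle = \hat{\mu}_A(\xi_*)^n$. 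The dual characterization of total variation then yields
\[
\|\mu_A^{*n} - \bU\|_{\TV} \geq \tfrac{1}{2} |\hat{\mu}_A(\xi_*)|^n = \tfrac{1}{2} \lambda_*^n
\]
(passing to even iterates if $\hat{\mu}_A(\xi_*) < 0$). Setting $n = t^{\mix}_1(\epsilon)$ and inverting gives $t^{\mix}_1(\epsilon) \geq t^{\rel}\log(1/(2\epsilon))(1+o(1))$.

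For the complementary upper bound, Theorem \ref{spectral_mixing_time_theorem} provides $t^{\mix}_1 \leq 0.163\, k\, t^{\rel}(1+o(1))$; since $\epsilon < 1/e$ implies $1-\epsilon > 1/e$, monotonicity of $t^{\mix}_1(\cdot)$ gives $t^{\mix}_1(1-\epsilon) \leq t^{\mix}_1(1/e) = t^{\mix}_1$. Subtracting,
\[
t^{\mix}_1(\epsilon) - t^{\mix}_1(1-\epsilon) \geq \bigl(\log(1/(2\epsilon)) - 0.163\, k\bigr) t^{\rel}(1+o(1)),
\]
and using $t^{\rel} \geq t^{\mix}_1/(0.163\, k)$ from the same theorem, the right-hand side is already $\gg_\epsilon t^{\mix}_1/k$ whenever $k$ is bounded in terms of $\epsilon$.

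The main obstacle is the regime $k = k(p) \to \infty$ (up to $\log p/\log\log p$), in which the naive subtraction becomes vacuous. To address this I would sharpen the upper bound on $t^{\mix}_1(1-\epsilon)$ to $\ll_\epsilon t^{\rel}$ independently of $k$. The route I would pursue is a Parseval analysis: show that for $n \asymp_\epsilon t^{\rel}$, the sum $\sum_{\xi \neq 0} |\hat{\mu}_A(\xi)|^{2n}$ is dominated by only $O_\epsilon(1)$ near-top Fourier frequencies, so that the $L^1 \leq \sqrt{p}\, L^2$ inequality $\|\mu_A^{*n} - \bU\|_{\TV} \leq \tfrac{1}{2}\bigl(\sum_{\xi \neq 0} |\hat{\mu}_A(\xi)|^{2n}\bigr)^{1/2}$ brings the total variation below $1-\epsilon$. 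Counting near-top eigenvalues using the Minkowski/lattice structure underlying Theorem \ref{spectral_mixing_time_theorem} is the natural tool, and I expect this $k$-uniform upper bound on $t^{\mix}_1(1-\epsilon)$ to be the technical heart of the proof. Combined with the character lower bound on $t^{\mix}_1(\epsilon)$, the difference $t^{\mix}_1(\epsilon) - t^{\mix}_1(1-\epsilon) \gg_\epsilon t^{\rel} \geq t^{\mix}_1/(0.163\, k)$ would follow, completing the argument.
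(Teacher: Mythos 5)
There is a genuine gap, and it lies exactly where you locate the ``technical heart'': the proposed $k$-uniform bound $t^{\mix}_1(1-\epsilon) \ll_\epsilon t^{\rel}$ is false. Theorem \ref{random_theorem}(3) (equivalently Theorem \ref{random_sharp_threshold_theorem}) shows that for typical $A \in \sA(p,k)$ with $k \to \infty$ the walk has cut-off at $t^{\mix}_1 \sim \frac{k}{2\pi e}p^{2/k}$, so in particular $t^{\mix}_1(1-\epsilon) \sim \frac{k}{2\pi e}p^{2/k}$, while $t^{\rel} \asymp p^{2/k}$ for such $A$ (the shortest dual vector of a random index-$p$ lattice has length $\asymp \sqrt{k}\,p^{-1/k}$, and $t^{\rel} \sim \frac{2k+1}{4\pi^2\ell(A)^2}$ by Lemma \ref{mixing_time_lower_bound_lemma}). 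Hence $t^{\mix}_1(1-\epsilon) \asymp k\, t^{\rel}$ generically: at time $C_\epsilon t^{\rel}$ the typical walk is still at total variation distance $1-o(1)$, and no Parseval/eigenvalue-counting argument can rescue a bound that is simply not true. More structurally, your strategy of lower-bounding $t^{\mix}_1(\epsilon)$ by $t^{\rel}\log\frac{1}{2\epsilon}$ and upper-bounding $t^{\mix}_1(1-\epsilon)$ separately cannot work when both quantities are $\asymp k\,t^{\rel}$ and you must resolve a difference of order $t^{\mix}_1/k \asymp t^{\rel}$ between them; the lower bound $t^{\rel}\log\frac{1}{2\epsilon}$ is then smaller than $t^{\mix}_1(1-\epsilon)$ itself, so the subtraction is vacuous no matter how the upper bound is sharpened. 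Your argument does go through when $k = O_\epsilon(1)$, but the theorem is asserted uniformly up to $k \leq \frac{\log p}{\log\log p}$.

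What is needed instead is an argument that directly controls how fast the total variation distance can \emph{decrease} per unit of (multiplicative) time. The paper does this in the time domain (Theorem \ref{derivative_theorem}): after replacing $\mu_A^{*n}$ by the theta density $\Theta(\cdot,\frac{2n}{2k+1};\Lambda)$ via Lemma \ref{continuous_replacement_lemma}, one bounds the logarithmic time derivative pointwise, $\frac{d}{d\sigma}\Theta \geq -\frac{k}{2\sigma}\Theta$, and integrates over the set where the density exceeds $1/p$ to conclude that the TV distance drops by at most $\frac{k}{2}\log\frac{\sigma}{\sigma_0}$ between times $\sigma_0 t^{\mix}_1$ and $\sigma t^{\mix}_1$. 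Taking $\sigma/\sigma_0 = \exp(2\epsilon_2/k)$ shows the distance can fall by at most $\epsilon_2$ over a window of relative length $\asymp \epsilon_2/k$, which is exactly the claimed $\gg_\epsilon t^{\mix}_1/k$. If you want to salvage a spectral route, you would need an analogue of this derivative estimate in frequency space, not a termwise upper bound on $t^{\mix}_1(1-\epsilon)$.
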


In contrast to Theorem \ref{bounded_gen_set_theorem}, our next theorem shows
that the generic behavior when $|A_p|$ grows slowly is for
there to be a sharp transition to uniformity with infrequent exceptions.

\begin{theorem}\label{random_theorem}
 Let $k:\sP \to \zed_{>0}$ tend to $\infty$ with $p$ in such a way that
$k(p)
\leq \frac{\log p}{\log\log p}$. Let sets $\{A_p \bmod p\}_{p \in
\sP}$
be
chosen independently with $A_p$ chosen uniformly from $\sA(p,k(p))$.
The following hold with
probability 1.
\begin{enumerate}
 \item Let $\rho:\sP \to \bR^+$ satisfy $\sum_p \frac{1}{\rho(p)^{k}} =
\infty$.  There is an infinite subsequence $\sP_0 \subset \sP$ such that for
$p$ increasing through $\sP_0$,
\[
 t^{\rel}(p) \gtrsim \frac{e}{\pi}\rho(p)^2 p^{\frac{2}{k(p)}}
\qquad \text{and} \qquad t^{\mix}_1(p) \sim \tau_0t^{\rel}(p).
\]
 In particular, the cut-off phenomenon does not occur for
$(\zed/p\zed, \mu_{A_p},
\bU_{\zed/p\zed})_{p \in \sP}$.
\item Let $\rho:\sP \to \bR^+$ satisfy $\sum_p \frac{1}{\rho(p)^{k}} < \infty$.
Then
 \[
  t^{\mix}_1(p) \lesssim \frac{\tau_0 e}{\pi}\rho(p)^2
p^{\frac{2}{k(p)}}.
 \]

 \item For any sequence $\{\epsilon(p)\}_{p \in \sP} \subset \bR_{>0}$
satisfying $\epsilon(p)\sqrt{k(p)} \to \infty$ there is a density 1 subset
$\sP_0 \subset
\sP$ such that in the family $(\zed/p\zed, \mu_{A_p}, \bU_{\zed/p\zed})_{p \in
\sP_0}$ we have
 \[
  t^{\mix}_1(p) \sim \frac{k(p)}{2\pi e} p^{\frac{2}{k(p)}},
 \]
 and as $p$ increases through $\sP_0$
 \[
  \lim \left\| \mu_{A_p}^{(1-\epsilon)t^{\mix}_1(p)} -
\bU_{\zed/p\zed}\right\|_{\TV(\zed/p\zed)} = 1, \qquad
  \lim \left\| \mu_{A_p}^{(1+\epsilon)t^{\mix}_1(p)} -
\bU_{\zed/p\zed}\right\|_{\TV(\zed/p\zed)} = 0.
 \]
 In particular, the cut-off phenomenon occurs.
\end{enumerate}
\end{theorem}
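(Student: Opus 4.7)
The plan is to combine the Fourier-analytic mixing bounds that underlie Theorem \ref{spectral_mixing_time_theorem} with probabilistic arguments tailored to the random choice of generators. Writing $A_p = \{0, \pm a_1, \ldots, \pm a_k\}$, the fundamental identity
\[
1 - \hat{\mu}_{A_p}(\xi) = \frac{2}{2k+1}\sum_{j=1}^k \Bigl(1 - \cos\bigl(2\pi a_j\xi/p\bigr)\Bigr)
\]
together with the Plancherel bound $4\|\mu_{A_p}^{*n} - \bU_{\zed/p\zed}\|_{\TV}^2 \leq \sum_{\xi\neq 0}|\hat{\mu}_{A_p}(\xi)|^{2n}$ reduces everything to understanding the joint distribution of $(a_1\xi/p, \ldots, a_k\xi/p) \in (\bR/\zed)^k$ as $\xi$ ranges over nonzero residues. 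Because $(a_1,\ldots,a_k) \mapsto (a_1\xi,\ldots,a_k\xi) \bmod p$ is a bijection for each nonzero $\xi$, fixing $\xi$ and varying the random generators produces a uniform vector in $(\zed/p\zed)^k$, making ball-volume counting the primary probabilistic input.

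For parts (1) and (2) the argument is a Borel--Cantelli dichotomy. Define the slow-mixing event
\[
E_p(\rho) = \Bigl\{\exists\, \xi \neq 0: \sum_{j=1}^k \|a_j\xi/p\|_{\bR/\zed}^2 \leq C_k/(\rho^2 p^{2/k})\Bigr\},
\]
calibrated so that on $E_p(\rho)$ one has $t^{\rel}(p) \gtrsim \frac{e}{\pi}\rho^2 p^{2/k}$. By equidistribution of $(a_j\xi)_{j=1}^k$ in $(\zed/p\zed)^k$ and a Euclidean ball-volume estimate, the probability contributed by a single $\xi$ is of order $\rho^{-k}/p$; summing over the $p-1$ choices of $\xi$ gives $\Prob(E_p(\rho)) \asymp \rho^{-k}$. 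Since the $A_p$ are independent across $p$, part (2) follows from Borel--Cantelli I (eventually no $E_p$ holds, so $t^{\mix}_1 \lesssim \tau_0 t^{\rel}(p) \lesssim \frac{\tau_0 e}{\pi}\rho^2 p^{2/k}$), and part (1) follows from Borel--Cantelli II, where along the subsequence of $p$ for which $E_p$ occurs, the Fourier expansion is asymptotically dominated by the single resonant $\xi$, the walk reduces in the limit to the one-dimensional diffusion \eqref{def_1_d_diffusion}, and the ratio $t^{\mix}_1/t^{\rel}$ converges to $\tau_0$.

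For part (3), the universal lower bound from Theorem \ref{spectral_mixing_time_theorem} combined with Stirling yields $t^{\rel}(p) \gtrsim \frac{k}{2\pi e}p^{2/k}(1+o(1))$. To obtain a matching sharp upper bound on $t^{\mix}_1$ for a density-$1$ set, I would estimate the expectation (over the random $A_p$) of $\sum_{\xi\neq 0}|\hat{\mu}_{A_p}(\xi)|^{2n}$ at $n = (1+\epsilon)\frac{k}{2\pi e}p^{2/k}$, exploiting the independence of $a_j\xi \bmod p$ across $j$ to express the Fourier coefficient raised to $2n$ as a $k$-fold product of independent expectations; the first-moment bound is $o(1)$ under $\epsilon \sqrt{k}\to\infty$, and a Markov/Chebyshev argument along a density-$1$ subsequence gives $\|\mu^{*n}-\bU\|_{\TV}\to 0$. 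For the lower bound at $n=(1-\epsilon)t^{\mix}_1$, a test-character argument applied to the ``resonant shell'' of $\xi$ with $\sum_j\|a_j\xi/p\|^2$ near the critical scale gives $\|\mu^{*n}-\bU\|_{\TV}\to 1$; the threshold $\epsilon\sqrt{k}\to\infty$ reflects that the typical fluctuation of $\sum_{j=1}^k(1-\cos(2\pi a_j\xi/p))$ across the random $a_j$ is of Gaussian order $\sqrt{k}$, so the cut-off window is of relative size $1/\sqrt{k}$.

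The main obstacle is controlling these Fourier sums with the correct constants $\tau_0$ and $\frac{1}{2\pi e}$ and, in part (3), verifying that the fluctuations across different $\xi$ are sufficiently weakly correlated that the concentration survives the second moment analysis: individual characters are not independent across $\xi$, only across the $a_j$ for fixed $\xi$, so a careful decomposition isolating the contribution of low-lying dual-lattice vectors from the bulk Gaussian shell is required. The sharpness of the cut-off window $\epsilon\sqrt{k}\to\infty$ is the most delicate step, as it is exactly the regime in which the Gaussian fluctuations of the Fourier coefficients transition from dominating to being dominated by the mean behavior of the walk.
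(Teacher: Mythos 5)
Your frequency-space Borel--Cantelli setup for parts (1) and (2) is the right skeleton and matches the paper's strategy: the first-moment count over resonant frequencies does give $\Prob(E_p(\rho))\asymp\rho^{-k}$ (Proposition \ref{short_vector_prop}), and Borel--Cantelli in both directions handles the dichotomy. But the assertion that on $E_p$ ``the Fourier expansion is asymptotically dominated by the single resonant $\xi$'' is the actual content of part (1), not a consequence of the construction: conditioned on the rare event that one short dual vector $\lambda^*$ exists, you must rule out a second short vector off the line $\zed\lambda^*$, and since the dual lattice modulo $\zed^k$ is the cyclic line $\{a\uv/p\}$, you must also control the multiples $a\lambda^*$ with $a$ a ratio of bounded integers. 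This is the paper's Proposition \ref{pair_short_vector_prop} (a pair-correlation estimate requiring the Farey-fraction analysis modulo $p$), and without it the claim $t^{\mix}_1\sim\tau_0 t^{\rel}$ on the slow subsequence is unproved.

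Part (3) has two concrete failures. First, the spectral lower bound of Theorem \ref{spectral_mixing_time_theorem} gives only $t^{\rel}\gtrsim\frac{e}{4\pi}p^{2/k}$ (the factor $\Gamma(\frac{k}{2}+1)^{2/k}\sim\frac{k}{2e}$ cancels the $2k+1$ in the numerator), which is smaller than $\frac{k}{2\pi e}p^{2/k}$ by a factor of $k$; the sharp lower bound at $(1-\epsilon)\frac{k}{2\pi e}p^{2/k}$ cannot come from the spectral gap and in the paper comes from a volume argument: the Gaussian diffusion is concentrated on a shell of radius $\sqrt{t}<(1-\delta)R_kp^{1/k}$, which occupies a $(1-\delta)^k=o(1)$ fraction of the fundamental domain. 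Second, and fatally for the upper bound, the first moment of the Plancherel sum at $n=(1+\epsilon)\frac{k}{2\pi e}p^{2/k}$ is \emph{not} $o(1)$: approximating $\sum_{\lambda\in\Lambda^\vee\setminus\{0\}}\exp(-8\pi^2 n\|\lambda\|^2/(2k+1))$ by $p\left(\frac{2k+1}{8\pi n}\right)^{k/2}$ gives $\left(\frac{e}{2(1+\epsilon)}\right)^{k/2}\to\infty$. The $L^2$ mixing time of a typical lattice is $\sim\frac{k}{4\pi}p^{2/k}$, larger than the total variation mixing time by the constant factor $\frac{e}{2}$, so no $L^2$/Plancherel method can produce the constant $\frac{1}{2\pi e}$ --- this is exactly why the paper abandons frequency space for part (3) and instead works in time domain, splitting $\Theta=\Theta_M+\Theta_E$ with $\Theta_M$ the contribution of lattice points in a thin spherical shell, bounding $\E[\Theta_E]$ in $L^1$ and $\Var_{L(p,k)}[\Theta_M]$ via discrete Siegel--Rogers-type pair averages over the random lattice. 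As written, your argument would prove an upper bound of $(1+\epsilon)\frac{k}{4\pi}p^{2/k}$ and hence neither the stated asymptotic nor cut-off at the stated location.
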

\begin{remark}
 Since $\sum_p \frac{1}{p} = \infty$, items (1) and (3) of Theorem \ref{random_theorem} demonstrate that almost surely among a sequence of walks, infinitely often there are slowly mixing walks which are slower than the typical behavior by a factor of  $\gg \frac{p^{\frac{2}{k(p)}}}{k(p)}$.
\end{remark}

\begin{remark}
  (3) of Theorem \ref{random_theorem} gives a cut-off
sequence with, for $0 < \epsilon < \frac{1}{2}$,
period of transition between $t_1^{\mix}(1-\epsilon)$ and
$t_1^{\mix}(\epsilon)$ of length
$O_\epsilon\left(\frac{t^{\mix}_1}{\sqrt{k}}\right)$.
While this is
 longer than the lower bound $\frac{t^{\mix}_1}{k}$ given in
Theorem \ref{bounded_gen_set_theorem}, it is much shorter than the true
transition period for many known examples giving cut-off.  For instance, the
transition period of random walk on the hypercube is faster than the mixing
time by a factor which is logarithmic in the number of generators.
\end{remark}

Our proofs of Theorems \ref{spectral_mixing_time_theorem}--\ref{random_theorem}
approximate the distribution of random walk on the cycle $\zed/p\zed$ with that
of a Gaussian diffusion on $\bR^k/\Lambda$ where $\Lambda$ is a co-volume $p$
lattice.
In making the transition between these models we use the following quantitative
normal approximation lemma for which we don't know an easy reference in the
literature.  A proof is included in Appendix \ref{local_limit_appendix}.
\begin{lemma}\label{normal_approximation_lemma} Let $n, k(n) \geq 1$ with $k^2 =
o\left(n\right)$ for
large $n$.
Let $\nu_k$ be the measure on $\bR^k$ which is uniform on $\{0, \pm e_i,\; 1
\leq i \leq k\}$, where $e_i$ denotes the $i$th standard basis vector.
For $\sigma > 0$ set \[\eta_k\left( \sigma, \ux\right) = \left(\frac{1}{2\pi
\sigma^2}\right)^{\frac{k}{2}}\exp\left(-\frac{\|\ux\|_2^2}{2\sigma^2} \right)\]
the standard Gaussian density.
As $n \to \infty$ we have
 \[
\left\| \nu_k^{*n} \ast \one_{\left[-\frac{1}{2}, \frac{1}{2}\right)^k} -
\eta_k\left(\sqrt{\frac{2n}{2k+1}}, \cdot\right)\right\|_{\TV(\bR^k)} = o(1).
 \]
\end{lemma}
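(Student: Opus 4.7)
The approach is a quantitative local central limit theorem via Fourier analysis. Setting $\sigma_n = \sqrt{2n/(2k+1)}$, $f_n = \nu_k^{*n} \ast \one_{[-1/2,1/2)^k}$ and $g_n = \eta_k(\sigma_n, \cdot)$, one computes
\[
\widehat{f_n}(\xi) = \varphi(\xi)^n \prod_{j=1}^k \frac{2 \sin(\xi_j/2)}{\xi_j}, \quad \widehat{g_n}(\xi) = e^{-\sigma_n^2 |\xi|^2/2}, \quad \varphi(\xi) = \frac{1 + 2\sum_{j=1}^k \cos \xi_j}{2k+1},
\]
and the goal is to bound $\|f_n - g_n\|_{L^1(\bR^k)}$, which controls the total variation distance up to a factor of two.

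I would first localize to a ball: for $R \asymp \sqrt{n}$, a Hoeffding bound on $\nu_k^{*n}$ (whose coordinates are sums of $n$ bounded iid mean-zero increments) together with the standard Gaussian tail gives $\int_{\|x\| > R}(f_n + g_n)\,dx = o(1)$. To handle the main ball without incurring a dimensional volume blow-up, I would convert Fourier inversion into an $L^1$ bound through moment weights: a pointwise bound $(1 + |x|^m)|f_n(x) - g_n(x)| \leq \epsilon_n$, obtained by differentiating the Fourier inversion identity $m$ times in $\xi$, implies $\|f_n - g_n\|_{L^1} = O(\epsilon_n)$ for any fixed $m > k$. The central task is then comparing $\widehat{f_n}$ and $\widehat{g_n}$ on three regions of $\xi$-space. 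In the central region $\|\xi\|_\infty \leq \delta$ for small $\delta$, Taylor expand $\varphi(\xi) = 1 - |\xi|^2/(2k+1) + O(|\xi|^4/k)$ to obtain $\varphi(\xi)^n = e^{-\sigma_n^2|\xi|^2/2}(1 + O(n|\xi|^4/k))$, while the sinc product contributes an additional $1 + O(|\xi|^2)$ factor; at the Gaussian scale $|\xi| \sim \sigma_n^{-1}$ both errors are of order $k/n$. In the middle region $\delta \leq \|\xi\|_\infty \leq \pi$, $\varphi(\xi) \leq 1 - c(\delta)/k$, so $|\varphi(\xi)^n| \leq e^{-cn/k}$ decays rapidly, and $\widehat{g_n}$ is similarly negligible. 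In the exterior $\|\xi\|_\infty > \pi$, $\varphi$ is periodic with period $2\pi$ in each coordinate and can return to magnitude one near nonzero points of $2\pi\bZ^k$, but at every such lattice point at least one sinc factor vanishes and the $1/|\xi_j|$ decay of the remaining sincs renders the tail integrable.

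The main obstacle is the uniform bookkeeping of the $k$-dependence. The hypothesis $k^2 = o(n)$ enters when combining the Taylor remainder, the sinc correction, and the $m$-fold Fourier differentiation (with $m > k$) that converts the $L^\infty$ estimate into an $L^1$ estimate in dimension $k$; each of these contributes $k$-dependent factors, and $k^2 = o(n)$ is the threshold that keeps the combined error $o(1)$. The cube smoothing $\one_{[-1/2,1/2)^k}$ in the statement is not a technicality but essential: the sinc factors it produces are precisely what tame the periodic reappearances of $|\varphi| = 1$ in the exterior Fourier region, enabling the comparison of a discrete measure on $\bZ^k$ with a density on $\bR^k$.
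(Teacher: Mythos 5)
Your approach is genuinely different from the paper's. The paper proves a sharp multiplicative local limit theorem, $\nu_k^{*n}(\ualpha) = (1+o(1))\,\eta_k(\sqrt{2n/(2k+1)},\ualpha)$, uniformly over $\ualpha$ with $\|\ualpha\|_2^2 \lesssim n$ and $\|\ualpha\|_4^4 \lesssim n^2/k$, via the saddle point method: the contour in Cauchy's formula is shifted to radii $R_j$ solving the stationary phase equations, so that the integrand has no oscillatory factor at the evaluation point. The $L^1$ estimate then follows because the exceptional $\ualpha$ carry $o(1)$ Gaussian mass. You instead work with the characteristic function on the unshifted torus and propose to pass from an $L^\infty$ Fourier comparison to an $L^1$ physical-space bound by a moment weight $(1+|x|^m)^{-1}$ with $m > k$. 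That is a real methodological difference, not a rephrasing.

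However, the moment-weight step is a genuine gap, not a bookkeeping nuisance. Since $k$ grows, $m > k$ must grow, so you need to control $\int |\partial_\xi^\beta(\hat f_n - \hat g_n)|\,d\xi$ for multi-indices of length $|\beta| \geq k+1$. Differentiating $\varphi(\xi)^n$ that many times produces sums over partitions with $\binom{m+k-1}{k-1}$-type combinatorial factors and terms like $n^j\varphi^{n-j}(\partial\varphi)^j$; you have not shown these cancel against the shrinking prefactor $\int (1+|x|^m)^{-1}dx \asymp \pi^{k/2}/\Gamma(k/2+1)$ to give $o(1)$, and it is not at all clear they do under only $k^2 = o(n)$. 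The saddle point shift avoids this entirely: it produces a relative error pointwise, so the $L^1$ conversion is simply integration against the Gaussian, with no moment weight and no high-order differentiation. Separately, your estimate that the central-region errors are ``of order $k/n$'' undercounts by a factor of $k$. In the Gaussian-typical region one has $\|\xi\|_2^2 \asymp k\sigma_n^{-2} \asymp k^2/n$ and $\|\xi\|_4^4 \asymp k\sigma_n^{-4} \asymp k^3/n^2$, so the sinc correction $\prod_j(1-\xi_j^2/24+\cdots)$ and the Taylor remainder $n\|\xi\|_4^4/k$ are both $\asymp k^2/n$, not $k/n$. The conclusion is the same (this is $o(1)$ precisely under the hypothesis), but the undercount suggests the $k$-uniform accounting, which is the entire difficulty, has not been carried through. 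Until the $m$-fold differentiation combinatorics are worked out and shown to close under $k^2=o(n)$, the proof is incomplete.
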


After transition to the diffusion model, the measure on lattices induced from
the random choice in Theorem
\ref{random_theorem} is close to the uniform measure on the (rescaled)
$p$-Hecke points, which are the index $p$ lattices of $\zed^k$.  It is known
that, after rescaling to volume 1, as $p \to \infty$ these lattices are
equidistributed with respect to the induced Haar measure in the space
$\SL_k(\zed)\backslash \SL_k(\bR)$ of all volume 1 lattices in
$\bR^k$. Statistics
regarding correlations of vectors in a random lattice are well-known, see for
instance \cite{S11} for a modern treatment.  Although we estimate somewhat
different quantities, the results considered there may be useful in
understanding our argument.

We conclude by giving an example of random walk on the cycle which
has cut-off.  This may be considered an approximate embedding of the classical
hypercube walk into the cycle.
\begin{theorem}\label{power_2_theorem}
 For $p \in \sP$ let $\ell_2(p) = \lceil \log_2 p \rceil$ (logarithm base 2)
and let the
power-of-2 set be $A_{2,p} = \{0, \pm 1, \pm 2, ..., \pm 2^{\ell_2(p)-1}\}
\subset
\zed/p\zed$.
Set
\[
 c_0 = \sum_{j=1}^\infty \left(1 - \cos \frac{2\pi}{2^j}\right) =
3.394649802(1).
\]
The power-of-2 walk $(\zed/p\zed, \mu_{A_{2,p}}, \bU_{\zed/p\zed})_{p \in
\sP}$ has
cut-off in total variation at mixing time
\[
 t_1^{\mix}(p) \sim \frac{\ell_2(p)\log \ell_2(p)}{2c_0}.
\]
\end{theorem}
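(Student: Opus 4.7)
The plan is to compute the Fourier coefficients of $\mu_{A_{2,p}}$ directly and identify the $O(\ell_2(p))$ slow frequencies that govern both the mixing time and the cut-off window. Set $\ell = \ell_2(p)$ and $f(y) = 1-\cos(2\pi y)$. Direct computation gives
\[
\hat\mu_{A_{2,p}}(\xi) = 1 - \frac{2}{2\ell+1}\,S(\xi), \qquad S(\xi) := \sum_{j=0}^{\ell-1} f\!\left(\frac{2^j\xi}{p}\right).
\]
Since $p\sim 2^\ell$, for each $a\in\{0,1,\dots,\ell-1\}$ the telescoping $2^{j+a}/p\approx 2^{j+a-\ell}\pmod 1$ yields $S(\pm 2^a) \to \sum_{m\geq 1} f(2^{-m}) = c_0$ as $p\to\infty$. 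Hence the set of \emph{slow frequencies} $T_p := \{\pm 2^a \bmod p : 0\le a < \ell\}$ has $|T_p|=O(\ell)$, and each element contributes $\hat\mu_{A_{2,p}}(\xi) \sim 1 - 2c_0/(2\ell+1)$.

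For the upper bound at $n = (1+\epsilon)\,\ell \log\ell/(2c_0)$ I would apply Plancherel's inequality $4\|\mu_{A_{2,p}}^{*n}-\bU_{\zed/p\zed}\|_{\TV}^2 \le \sum_{\xi\neq 0}|\hat\mu_{A_{2,p}}(\xi)|^{2n}$ together with the bound $|\hat\mu_{A_{2,p}}(\xi)|^{2n} \le \exp(-4n\,\widetilde S(\xi)/(2\ell+1)) = \ell^{-(1+o(1))(1+\epsilon)\widetilde S(\xi)/c_0}$, where $\widetilde S(\xi) = \min(S(\xi),\,2\ell+1-S(\xi))$ handles the case $\hat\mu_{A_{2,p}}(\xi) < 0$. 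The $T_p$-contribution is $O(\ell)\cdot\ell^{-(1+\epsilon)} = O(\ell^{-\epsilon})$. For $\xi\notin T_p$, the orbit $\{2^j\xi\bmod p\}_j$ under the doubling map on $\bR/\zed$ cannot remain close to $\{0, 1/2\}$ throughout all $\ell$ steps; stratifying by $\widetilde S(\xi)/c_0 \in [K, K+1]$ and using a counting estimate $\#\{\xi: \widetilde S(\xi)\le Kc_0\}\ll \ell^{O(K)}$ (based on the length of the "small prefix" in the doubling orbit) bounds the tail by $\sum_{K\ge 1}\ell^{O(K)}\ell^{-(1+\epsilon)K} = o(1)$.

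For the lower bound at $n = (1-\epsilon)\,\ell \log\ell/(2c_0)$ I would use a distinguishing statistic built from the slow characters, such as $W(x) := \sum_{a=0}^{\ell-1}\cos(2\pi\,2^a x/p)$. One has $\E_{\bU} W = 0$, $\Var_{\bU} W = \ell/2$, and $M := \E_{\mu_{A_{2,p}}^{*n}} W = \sum_a\hat\mu_{A_{2,p}}(2^a)^n \sim \ell^{(1+\epsilon)/2}$. Chebyshev at threshold $M/2$ gives $\Prob_\bU(W \ge M/2) = O(\ell^{-\epsilon})$. The matching statement $\Prob_{\mu^{*n}_{A_{2,p}}}(W\ge M/2) = 1 - o(1)$ is equivalent to $\Var_{\mu^{*n}_{A_{2,p}}} W = o(M^2)$, which reduces via the expansion of $W^2$ to controlling $\hat\mu_{A_{2,p}}(2^a\pm 2^b)^n$. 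Applying the addition identity $f(x+y) = f(x)+f(y)-f(x)f(y)+\sin(2\pi x)\sin(2\pi y)$ summed over the doubling orbit yields $S(2^a\pm 2^b) = 2c_0 + O(1)$ uniformly for $|a-b|\ge 2$, which, combined with the exact relation $2^a-2^b = 2^{\min(a,b)}$ when $|a-b|=1$, produces the required cancellation in the covariance sum.

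The main technical obstacle is the counting estimate $\#\{\xi: S(\xi)\le Kc_0\}\ll \ell^{O(K)}$ used in the upper bound, which requires translating smallness of $S(\xi)$ into structural constraints on the binary expansion of $\xi/p$ and controlling the orbits of $x\mapsto 2x\bmod p$ that spend a long initial segment near $0$. A secondary challenge is the uniform $O(1)$ estimate on $S(2^a\pm 2^b)-2c_0$ for $|a-b|\geq 2$, which relies on showing that the "large-$f$" contributions to $S(2^a)$ and $S(2^b)$ come from scales of $j$ separated by $|a-b|$, so that the cross-product and cross-sine sums in the addition identity remain bounded independently of $a$ and $b$.
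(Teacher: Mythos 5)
Your plan follows the paper's broad strategy (Fourier coefficients, a second-moment/Chebyshev lower bound, an $L^2$/Plancherel upper bound with a combinatorial count of slow frequencies), but it contains a critical error in identifying the slow frequencies, and this error invalidates the lower-bound argument.

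You take the slow set to be $T_p = \{\pm 2^a : 0 \le a < \ell\}$ and assert that $S(\pm 2^a) \to c_0$, based on ``$p \sim 2^\ell$, so $2^{j+a}/p \approx 2^{j+a-\ell}\pmod 1$.'' But with $\ell = \lceil\log_2 p\rceil$ the ratio $\theta := 2^\ell/p$ ranges over $(1,2)$ and does not tend to $1$. For $j+a < \ell$ one has $\{2^{j+a}/p\} = 2^{j+a-\ell}\theta$, and the truncated sum converges to $g(\theta) := \sum_{m\ge 1}\bigl(1-\cos(2\pi\theta 2^{-m})\bigr)$, which equals $c_0$ only for $\theta\in\{1,2\}$; for instance $g(3/2)\approx 3.56 \ne c_0 \approx 3.39$. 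For $j+a\ge\ell$ the fractional parts $\{2^{j+a}/p\}$ are determined by the binary digits of $1/p$ beyond position $\ell$ and are not small; their contribution to $S(2^a)$ grows roughly linearly in $a$ for generic $p$. Thus $S(\pm 2^a)$ neither equals $c_0 + o(1)$ nor stays bounded in $a$. The actual slow frequencies are those integers $\xi$ for which $\xi/p$ is within $2^{-\ell-J}$ of a sparse dyadic rational such as $\pm 2^{-j}$ or $\pm(2^{-j_1}-2^{-j_2})$; these depend on the binary expansion of $p$ and are not of the form $\pm 2^a$. The paper's set $B_p$ uses differences $2^{-j_1}-2^{-j_2}$ (savings $\approx 2c_0$) precisely because, by pigeonhole on the digits of $p/2^j$, there are $\gg\ell^2/2^J$ such frequencies — enough for the Chebyshev argument to close — whereas the approximate $\pm 2^{-j}$ frequencies alone are far fewer. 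Because your $T_p$ is not slow, $M := \sum_a\hat\mu_{A_{2,p}}^n(2^a)$ will not grow like $\ell^{(1+\epsilon)/2}$, and the whole distinguishing-statistic argument based on $W(x)=\sum_a\cos(2\pi\,2^a x/p)$ falls apart.

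Two further points, independent of the above. (i) In the upper bound, the counting estimate $\#\{\xi: \widetilde S(\xi)\le Kc_0\}\ll\ell^{O(K)}$ is too coarse: for $\sum_K\ell^{O(K)}\ell^{-(1+\epsilon)K}$ to be $o(1)$ the implied constant must be $\le 1$, and in fact one needs the $1/K!$ factor coming from choosing $K$ well-separated ``clump'' positions out of $\ell$. The paper's Lemmas~\ref{counting_lemma} and~\ref{coordinate_savings} deliver exactly $\ell^K/K!$ with controlled constants; your sketch stops at ``the main technical obstacle.'' (ii) Even granting $S(2^a)=c_0+o(1)$, your estimate $S(2^a\pm2^b)=2c_0+O(1)$ for $|a-b|\ge 2$ would be insufficient. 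A persistent $O(1)$ discrepancy makes $\hat\mu^n(2^a\pm2^b)$ differ from $\hat\mu^n(2^a)\hat\mu^n(2^b)$ by a constant multiple of $\ell^{-(1-\epsilon)}$, and summed over $\asymp\ell^2$ pairs this produces a variance $\asymp M^2$ rather than $o(M^2)$. The paper handles this by proving $S(\xi_1-\xi_2) = 4c_0 + O(2^{-J})$ for all but $O(J\ell^3)$ pairs $\xi_1,\xi_2\in B_p$ (Lemma~\ref{FC_upper_bound}), with cruder bounds and smaller exceptional sets handling the rest — a stratification you would need to replicate.
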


\subsection{Discussion of method}
Our arguments view random walk on the cycle $\zed/p\zed$ with symmetric generating set $A$, $|A| = 2k+1$ as random walk on an index $p$ quotient of $\zed^k$, in which a standard basis vector is assigned to each non-zero symmetric pair $\{x,-x\}$ of generators. The index $p$ lattice is the set $\Lambda =\{\un \in \zed^k: \sum_x n_x x \equiv 0 \bmod p\}$.  In the case of Theorem \ref{power_2_theorem}, the corresponding lattice is approximately cubic, and the argument is a perturbation of the Fourier analytic analysis of the hypercube walk in \cite{DS87}.  In particular, the mixing time and cut-off are the same in total variation and in $L^2$.

For $k \leq \frac{\log p}{\log \log p}$, a random index $p$ lattice gives a mixing time in total variation which is less than the $L^2$ mixing time by a constant, and thus the $L^2$ methods of proving cut-off are not immediately suitable.  Thus in our first four Theorems the arguments are made initially in time domain by first applying Lemma \ref{normal_approximation_lemma} to replace the discrete random walk with a diffusion on $\bR^k/\Lambda$.  This initial step is the reason for the restriction on the size of $k$ since the corresponding approximation fails for $k > (1+\epsilon) \frac{\log p}{\log \log p}$.  For larger $k$ there is a standard method of correcting the approximation using the saddle point method, but we have not made an attempt to do so.  

After having made the Gaussian approximation, Theorem \ref{spectral_mixing_time_theorem} combines standard spectral estimates with bounds for the shortest vector in a lattice (the lower bound) and for sphere packing (the upper bound).  Theorem \ref{mixing_time_theorem} goes through in time domain, using convexity.  Theorem \ref{bounded_gen_set_theorem} goes through in time domain, and uses an estimate for the derivative of the density in time. 

Parts (1) and (2) of Theorem 4 study rare events in which the random lattice is essentially one dimensional due to the presense of many short vectors.  We study these cases in frequency space. The dual lattice of an index $p$ lattice of $\zed^k$ is $\Lambda^\vee = \zed^k + \ell$ where \[\ell = \ell_v= \{a v: 0 < a < p\},\qquad v\in \frac{1}{p}\zed^k \setminus \zed^k\] is a line. We are able to show that with high probability the large Fourier coefficients arise from  frequencies which are small multiples of a single vector.  The analysis restricts attention to primitive vectors, and their multiples by \emph{Farey fractions modulo $p$}, which are residues $bq^{-1}\bmod p$ in which $b$ and $q$ are bounded. 

Part (3) of Theorem 4 is proven in time domain again.  After removing a small $L^1$ error, the modified density may be estimated using a variance bound.  In particular, our argument requires  averages concerning pairs of short vectors in a random lattice which are discrete analogues of the averages performed by Siegel and Rogers  \cite{S45}, \cite{R55} regarding the
distribution of vectors in a random lattice.

\subsection{Possible extensions}
From the point of view of mixing of Markov chains, an attractive open problem  is to decide the Peres conjecture \[\text{cut-off} \; \Leftrightarrow \;t^{\mix}/t^{\rel} \to \infty\] for random walk on a cycle. 

Abelian groups are prevalent in arithmetic, and there would be interest in extending the results to random walks on more general abelian groups. The class group of an imaginary quadratic field grows like the discriminant to the power $\frac{1}{2} + o(1)$, so a reweighting of Theorem \ref{random_theorem} with roughly $d$ groups of order $d$ would be of interest.  The techniques presented should translate without any great difficulty to studying random walk on cycles of composite order.  The general  case has not been considered, but see \cite{W97} for a study of random random walk on the hypercube. 

To model abelian sandpiles, asymmetric generating sets should be considered.
\section*{Notation and conventions}
Given groups $G, H$, $H< G$ indicates that $H$ is a subgroup of $G$ and $[G:H]$
denotes the index. $\fS_k$ is the symmetric group on $k$ letters and we write
$ (\zed/2\zed)^k\rtimes\fS_k  = O_k(\zed)$ for the $k
\times k$
 orthogonal group over $\zed$.  For ring $R = \zed, \zed/p\zed$,
$\GL_n(R)$ and $\SL_n(R)$ are the usual linear groups with entries in $R$.
We denote $e(x)= e^{2\pi i x}$ the
standard additive character on $\bR/\zed$.

Given measure space $(\fX, \sB)$, $\sM(\fX)$ indicates the Borel probability
measures on $\fX$.   When $\fX$ is a finite set, $\bU_{\fX}$ denotes the uniform
probability measure on $\fX$ and when $\fX$ is a compact abelian group,
$\bU_{\fX}$ denotes the probability Haar measure.  In either case expectation
and variance with respect to $\bU_{\fX}$ are indicated $\E_\fX$ and
$\Var_{\fX}$. $\| \cdot \|_{\TV(\fX)}$ indicates the total variation norm on
$\sM(\fX)$.

Unless otherwise stated, $\| \cdot \|$ indicates the $\ell^2$-norm on $\bR^k$,
$k
\geq 1$, $\| \cdot \|_p$ denotes the $\ell^p$ norm, $p \geq 1$, and $\| \cdot
\|_{(\bR/\zed)^k}$ denotes the $\ell^2$ distance to the nearest integer lattice
point.  $\bS^{k-1}$ is the unit sphere in $\bR^k$, $\bS^{k-1} = \{\ux \in
\bR^k: \|\ux\|_2 = 1\}$.  Given $\ux \in \bR^k$, $R \in \bR_{>0}$, and $p \geq
1$, $B_p(\ux, R)$
denotes the
$\ell^p$ ball \[B_p(\ux, R) = \left\{\uy \in \bR^k: \|\uy - \ux\|_p \leq
R\right\},\] the ambient dimension being clear from the context. If $p$ is not
stated $\ell^2$ is assumed.  Given further
parameter $0 < \tau < 1$, $S(\ux, R, \tau)$ indicates the spherical shell
\[S(\ux, R, \tau) = \left\{\uy \in \bR^k: \|\ux - \uy\|_2 \in [(1-\tau) R, (1 +
\tau) R]\right\}.\] For $k \geq 1$,
\[
 R_k = \left(\frac{\Gamma\left(\frac{k}{2}+1 \right)}{\pi^{\frac{k}{2}}}
\right)^{\frac{1}{k}}= \left(1 + \frac{\log
(k+1)}{2k} + O\left(\frac{1}{k} \right) \right)\sqrt{\frac{k}{2\pi e}}
\]
is the radius of an $\ell^2$ ball of unit volume in $\bR^k$. One may check that $R_k > \sqrt{\frac{k}{2\pi e}}$ for all $k \geq 1$.

For $k \geq 1$, given $\ux \in \bR^k$ and $\sigma \in \bR_{>0}$,
$\eta_k(\sigma, \ux)$ denotes the density at  $\ux$ of a symmetric centered
Gaussian distribution scaled by $\sigma$,
\[
 \eta_k(\sigma, \ux) = \left(\frac{1}{2\pi
\sigma^2}\right)^{\frac{k}{2}}\exp\left(-\frac{\|\ux\|_2^2}{2 \sigma^2}\right).
\]

By default, quantities considered depend upon a large prime parameter $p$
varying
over a set of primes $\sP_0$.    We use the Vinogradov notation $A \ll B$ with
the same meaning as $A(p) = O(B(p))$. $A\asymp B$ means $A\ll B$ and $B\ll A$.
For positive parameters $A, B$, $A \sim
B$
means $\lim_{p \to \infty} \frac{A(p)}{B(p)} = 1$ and $A \lesssim B$, resp. $A
\gtrsim B$ means $\limsup \frac{A(p)}{B(p)} \leq 1$, resp. $\liminf
\frac{A(p)}{B(p)} \geq 1$. We also use the non-standard notation already
introduced in the introduction $A \lesssim_x B$, with the meaning that there is
a non-increasing function $f : \bR^+ \to \bR^+$ with $\lim_{x \to \infty}f(x)=1$
such that $A(x) \leq f(x) B(x)$.
\section{Background}\label{background_section}
This section collects together several statements regarding classical
probability
theory and lattice theory on $\bR^k$, $k \geq 1$.

\subsection{Classical probability}
See \cite{D88} for background regarding random walk on a group and \cite{LPW09} for a thorough treatment of Markov chains.  We have provided proofs of the statements which we use for the reader's convenience.

We have already introduced the total variation distance between two probability
measures $\mu, \nu$ on a measure space $(\fX, \sB)$, by
\[
 \left\|\mu - \nu\right\|_{\TV(\fX)} = \sup_{S \in
\sB}\left|\mu(S)-\nu(S)\right|.
\]
In the case when $\mu$ has a density with respect to $\nu$, equivalent
characterizations are
\[
\left\|\mu - \nu\right\|_{\TV(\fX)} =
\frac{1}{2}\int_{\fX}\left|\frac{d\mu}{d\nu}-1 \right| d\nu =
\int_{\fX}\left(\frac{d\mu}{d\nu}-1 \right)\one\left(\frac{d\mu}{d\nu}>1
\right)d\nu.
\]

When $\mu$ is the distribution of a Markov chain with stationary measure $\nu$
define the $L^2(d\nu)$ distance to stationarity by
\[
 \|\mu- \nu\|_{L^2(d\nu)} = \frac{1}{2}\left(\int \left(\frac{d\mu}{d\nu}
-1\right)^2
d\nu
\right)^{\frac{1}{2}}
\]
with the convention that the norm is infinite if $\frac{d\mu}{d\nu}$ is not in
$L^2(d\nu)$. The factor of $\frac{1}{2}$ is for consistency with the
interpretation of total variation distance as half the $L^1(d\nu)$ norm.
For $\epsilon > 0$  denote $t^{\mix}_2(\epsilon)$ the $\epsilon$-mixing time
of the $L^2(d\nu)$ norm.

\begin{lemma}\label{t_mix_lower}
 Convolution with a probability measure is a contraction in the total variation
norm. Also, given symmetric probability measure $\mu$ on
finite or compact
abelian group $G$, for any $0 < \epsilon < 1$ the total variation mixing time of
random walk driven by
$\mu$ satisfies
$t^{\rel} \log \frac{1}{2\epsilon} \leq t^{\mix}_1(\epsilon) \leq
t^{\mix}_2(\epsilon)$ and
$\frac{2\pi^2}{27}\epsilon^3 t^{\rel} \lesssim t^{\mix}_1(1-\epsilon)$ as
$\epsilon \downarrow 0$.

\end{lemma}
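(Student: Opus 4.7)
The plan is to handle the three claims separately, each by a short Fourier-theoretic argument. The convolution contraction is standard: for any measurable $S$, the identity
\[
(\mu*\rho - \nu*\rho)(S) = \int \bigl[\mu(S-x)-\nu(S-x)\bigr]\,d\rho(x)
\]
together with the pointwise bound $|\mu(S-x)-\nu(S-x)| \leq \|\mu-\nu\|_{\TV}$ gives the result after taking the supremum over $S$. For the chain $t^{\rel}\log\tfrac{1}{2\epsilon} \leq t^{\mix}_1(\epsilon) \leq t^{\mix}_2(\epsilon)$, I would pick a nontrivial character $\chi$ with $|\hat\mu(\chi)| = 1-\gap$ (which is real by symmetry of $\mu$) and apply the dual description $\|\rho-\sigma\|_{\TV} \geq \tfrac{1}{2}|\int \chi\,d(\rho-\sigma)|$ to obtain $\|\mu^{*n}-\bU_G\|_{\TV} \geq \tfrac{1}{2}(1-\gap)^n$, which forces $n \geq t^{\rel}\log\tfrac{1}{2\epsilon}$ when the left hand side is at most $\epsilon$. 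The upper bound $\|\cdot\|_{\TV} \leq \|\cdot\|_{L^2}$ is Cauchy--Schwarz applied to the density $d\mu^{*n}/d\bU_G - 1$ on the probability space $(G,\bU_G)$.

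The third claim is the substantive one. After possibly passing to $\mu^{*2}$ to ensure that the top eigenvalue of the transition kernel is nonnegative---an operation which halves both $t^{\rel}$ and $t^{\mix}_1(1-\epsilon)$ and so preserves the desired estimate---let $\chi$ be a top character with $\hat\mu(\chi) = 1-\gap$. Set $f = \RE\chi$, so that $|f|\leq 1$, $\E_{\mu^{*n}} f = (1-\gap)^n$, and $\E_{\bU_G} f = 0$. For $c$ close to $1$ and $A_c := \{f > c\}$, the inequality $\E f \leq \mu^{*n}(A_c) + c(1-\mu^{*n}(A_c))$ gives
\[
\mu^{*n}(A_c) \;\geq\; \frac{(1-\gap)^n - c}{1-c},
\]
while $\bU_G(A_c) = \arccos(c)/\pi + O(1/|\chi(G)|) \sim \sqrt{2(1-c)}/\pi$ as $c \uparrow 1$. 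Writing $\delta = 1-(1-\gap)^n$ and $u = 1-c$, these combine to
\[
\|\mu^{*n} - \bU_G\|_{\TV} \;\geq\; 1 - \frac{\delta}{u} - \frac{\sqrt{2u}}{\pi} + o(1),
\]
whose maximum over $u \geq \delta$ is attained at $u^* = (2\pi^2\delta^2)^{1/3}$ and equals $1 - 3(\delta/(2\pi^2))^{1/3} + o(1)$. Requiring the right hand side to be $\leq 1 - \epsilon$ forces $\delta \geq (2\pi^2/27)\epsilon^3(1+o_\epsilon(1))$, and the bound $n(-\log(1-\gap)) \geq -\log(1-\delta)$ combined with $-\log(1-\gap) \sim 1/t^{\rel}$ as $\gap \to 0$ converts this to $t^{\mix}_1(1-\epsilon) \gtrsim \tfrac{2\pi^2}{27}\epsilon^3 t^{\rel}$.

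The main obstacle is securing the precise constant $2\pi^2/27$: it is produced jointly by the sharp asymptotic $\arccos(1-u) \sim \sqrt{2u}$ as $u \downarrow 0$ and the one-parameter optimization in $u$, whose critical scaling $u^* \propto \delta^{2/3}$ is exactly what yields the exponent $3$ in $\epsilon^3$. The sign of the top eigenvalue is a subsidiary technicality, resolved either by passing to the squared walk as above or by replacing $\{f > c\}$ by $\{f < -c\}$ and invoking symmetry of $\RE\chi$ under $\bU_G$.
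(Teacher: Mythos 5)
Your proof is correct and follows essentially the same route as the paper's: the same triangle-inequality contraction and Cauchy--Schwarz arguments, the same character bound $\|\mu^{*n}-\bU_G\|_{\TV}\geq \frac{1}{2}|(\E_\mu[\chi_1])^n|$ for the first lower bound, and for the third claim the same threshold set $\{\RE\chi > c\}$ with the $\cos^{-1}(1-u)\sim\sqrt{2u}$ asymptotic, your one-variable optimization in $u$ being just a cleaner packaging of the paper's ansatz $\epsilon_0 = A\epsilon^2$, $\epsilon_1=B\epsilon^3$ with $A\sim\frac{2\pi^2}{9}$, $B\sim\frac{2\pi^2}{27}$. Your explicit treatment of a possibly negative extremal eigenvalue by passing to $\mu^{*2}$ is a small technicality the paper leaves implicit.
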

\begin{proof}
 The contraction property follows from the triangle inequality.

To prove $t^{\mix}_1(\epsilon) \leq t^{\mix}_2(\epsilon)$, use the $L^1$
characterization of the total
variation metric and Cauchy-Schwarz
\[
\|\mu^{*n} - \bU_G\|_{\TV(G)} = \frac{1}{2}\int_{G}\left|\frac{d
\mu^{*n}}{d\bU_G} - 1\right| d\bU_G \leq  \|\mu^{*n}-
\bU_G\|_{L^2(d \bU_G)}.
\]

 To prove the lower bounds regarding $t^{\rel}$,
observe that the eigenvalues of the
transition kernel for the
random walk are given by
\[
\spec(\mu) = \left\{\E_{\mu}[\chi]: \chi \in \widehat{G}\right\},
\]
where $\widehat{G}$ denotes the set of characters of $G$.
Let $\chi_1$ generate the spectral gap.
Since $\|\chi_1\|_\infty \leq 1$, we have, for any $n \geq 1$,
\[
 \|\mu^{*n} - \bU_G\|_{\TV(G)} \geq
\frac{1}{2}\left|\E_{\mu^{*n}}[\chi_1]\right| =
\frac{1}{2}\left|\left(\E_{\mu}[\chi_1]
\right)^n\right|,
\]
so that the first mixing time bound follows by taking logarithms.

To obtain the
bound for $t^{\mix}_1(1-\epsilon)$, let $\epsilon_0 > \epsilon_1$ be small
parameters, satisfying, for some $A, B > 0$,  $\epsilon_0 = A
\epsilon^2$, $\epsilon_1 = B\epsilon^{3}$. Let $n$
be maximal such that $\E_{\mu^{*n}}[\chi_1] \geq 1-\epsilon_1$.
Set $S = \{g \in G: \RE(\chi_1(g))\geq 1-\epsilon_0 \}$ and $\alpha =
\mu^{*n}(S)$.  Bounding $\RE\left(\chi|_S\right) \leq 1$ and
$\RE\left(\chi|_{S^c}\right) \leq 1-\epsilon_0$,
\[
 (1-\epsilon_1) \leq \E_{\mu^{*n}}[\chi_1] \leq \alpha +
(1-\epsilon_0)(1-\alpha)
\]
whence $\alpha \geq 1 - \frac{\epsilon_1}{\epsilon_0}$. According to uniform
measure, $\RE(\chi)$ has the same distribution as $\cos(2\pi x)$ on $(\bR/\zed,
dx)$, so that
\[
 \bU_G(S) = \frac{\cos^{-1}(1-\epsilon_0)}{\pi} =
\frac{\sqrt{2\epsilon_0}}{\pi}(1 + O(\epsilon_0)).
\]
It follows that
\begin{align*}
\left\|\mu^{*n}-\bU_G\right\|_{\TV(G)} &\geq \mu^{*n}(S) - \bU_G(S) \\&\geq 1 -
\frac{\epsilon_1}{\epsilon_0} - \frac{\cos^{-1}(1-\epsilon_0)}{\pi} = 1 -
\left(\frac{B}{A}+ \frac{\sqrt{2 A}}{\pi} + O(\epsilon^2) \right)\epsilon.
\end{align*}
Imposing the constraint  $\mu^{*n}(S) - \bU_G(S) \geq 1 -\epsilon$ gives
$t^{\mix}_1(1-\epsilon) \geq n+1$.
As $\epsilon \downarrow 0$, one obtains the constraint $\left(\frac{B}{A}+
\frac{\sqrt{2 A}}{\pi} + O(\epsilon^2) \right) < 1$, which gives the asymptotic
claimed with $A \sim \frac{2\pi^2}{9}$, $B \sim \frac{2\pi^2}{27}$.

\end{proof}

Define the standard symmetric centered normal distribution on $\bR^k$ scaled
by $\sigma \in \bR_{>0}$ to be
\[\eta_k(\sigma, \ux) =
\frac{1}{(2\pi \sigma^2)^{\frac{k}{2}}}
\exp\left(\frac{-\|\ux\|_2^2}{2\sigma^2}\right).\]  For $t \in \bR_{>0}$,
$\eta(\sqrt{t}\sigma, \ux)$ is its $t$-fold
convolution.
We use several results regarding concentration of the Gaussian measure.
\begin{lemma}\label{gaussian_length_concentration}
 Let $k \geq 1$ and $\sigma > 0$.  There are positive constants $C,
\{C_p\}_{2\leq
p < \infty}$ such
that,
for any $t>C$,
 \[
\int_{\ux \in \bR^k} \eta_k(\sigma, \ux)\one\left( \left| \|\ux\|_2 -
\sigma\sqrt{k}\right| > \sigma t \right)d\ux \leq \exp\left(-\frac{(t-C)^2}{2}
\right),
 \]
 and, for all $t > 0$, for all $2 \leq p < \infty$,
 \begin{align*}
  \int_{\ux \in \bR^k} \eta_k(\sigma, \ux)\one\left( \|\ux\|_p > C_p
\sigma k^{\frac{1}{p}} + t\sigma\right) d\ux &\leq  \exp\left(-
\frac{t^2}{2}\right).
 \end{align*}

\end{lemma}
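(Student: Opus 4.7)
By rescaling $\ux \mapsto \sigma \ux$ it suffices to treat the case $\sigma = 1$, so $X \sim \eta_k(1,\cdot)$ is a standard Gaussian on $\bR^k$. The plan is to deduce both bounds from the Gaussian concentration (Borell--Sudakov--Tsirelson) inequality: if $f : \bR^k \to \bR$ is $1$-Lipschitz with respect to the Euclidean norm, then for all $t > 0$
\[
\Prob(f(X) - \E f(X) \geq t) \leq e^{-t^2/2}, \qquad \Prob(f(X) - \E f(X) \leq -t) \leq e^{-t^2/2}.
\]
The only inputs beyond this will be the trivial norm comparison $\|\cdot\|_p \leq \|\cdot\|_2$ for $p \geq 2$ and standard computations of the means $\E\|X\|_p$, $\E\|X\|_2$.

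For the second inequality, fix $p \in [2,\infty)$ and take $f(\ux) = \|\ux\|_p$. The reverse triangle inequality and the norm comparison above give $|f(\ux) - f(\uy)| \leq \|\ux-\uy\|_p \leq \|\ux-\uy\|_2$, so $f$ is $1$-Lipschitz. Jensen's inequality yields
\[
\E \|X\|_p \leq \bigl(\E\|X\|_p^p\bigr)^{1/p} = k^{1/p}\bigl(\E|X_1|^p\bigr)^{1/p},
\]
so setting $C_p := (\E|X_1|^p)^{1/p}$, the one-sided Borell inequality gives $\Prob(\|X\|_p > C_p k^{1/p} + t) \leq \Prob(\|X\|_p - \E\|X\|_p > t) \leq e^{-t^2/2}$, as required.

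For the first inequality, apply the same strategy to $f(\ux) = \|\ux\|_2$, which is plainly $1$-Lipschitz. By Jensen, $\mu_k := \E \|X\|_2 \leq \sqrt{\E \|X\|_2^2} = \sqrt{k}$, and on the other hand the exact value $\mu_k = \sqrt{2}\,\Gamma((k+1)/2)/\Gamma(k/2)$ together with Stirling yields $\sqrt{k} - \mu_k = O(1/\sqrt{k})$ uniformly in $k \geq 1$, so there is an absolute constant $C_0$ with $0 \leq \sqrt{k} - \mu_k \leq C_0$ for all $k$. Splitting the event in two and applying Borell to each side,
\[
\Prob(\|X\|_2 - \sqrt{k} > t) \leq \Prob(\|X\|_2 - \mu_k > t) \leq e^{-t^2/2},
\]
\[
\Prob(\sqrt{k} - \|X\|_2 > t) \leq \Prob(\mu_k - \|X\|_2 > t - C_0) \leq e^{-(t-C_0)^2/2} \quad (t > C_0).
\]
Adding the two and absorbing the factor $2$ by enlarging $C_0$ to $C := C_0 + \sqrt{2 \log 2}$ gives the stated bound $e^{-(t-C)^2/2}$ for all $t > C$.

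The potential obstacle is purely expository: one needs Borell's inequality (or equivalently log-Sobolev/Gaussian isoperimetry), which is standard but not derived in the paper; other than invoking this black box, the argument is elementary and involves no real estimation beyond an asymptotic for $\Gamma((k+1)/2)/\Gamma(k/2)$.
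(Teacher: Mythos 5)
Your proof is correct and takes essentially the same approach as the paper: apply Gaussian concentration to the $1$-Lipschitz functions $\|\cdot\|_p$ on $(\bR^k,\|\cdot\|_2)$, then compare the center of concentration to $\sigma\sqrt{k}$ (resp.\ $\sigma k^{1/p}$). The only difference is that the paper concentrates around the median via the Gaussian isoperimetric inequality cited from Ledoux--Talagrand, which delivers the two-sided bound directly without the factor-of-two absorption step, whereas you concentrate around the mean via Borell--TIS and then patch up the constant.
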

\begin{proof}
 All quantities scale with $\sigma$ so we may assume $\sigma = 1$.  Let
$\gamma_k$ denote the measure on $\bR^k$ with density $\gamma_k(\ux) =
\frac{1}{(2\pi)^{\frac{k}{2}}}\exp\left(-\frac{\|\ux\|_2^2}{2}\right).$  Let
$M_p$, $2 \leq p < \infty$ denote the median with respect to $\gamma_k$ of
$\|\cdot\|_p$, that is, $\gamma_k\left(\ux: \|\ux\|_p \leq M_p\right)
=\frac{1}{2}$. Since $\|\cdot\|_p$ is 1-Lipschitz on $(\bR^k
,\|\cdot\|_2)$ for $p \geq 2$, Talagrand's
inequality (\cite{LT11}, p.21) gives, for any $t > 0$,
\[
\gamma_k\left(\ux: |\|\ux\|_{p} - M_p| > t \right) \leq
\exp\left(-\frac{t^2}{2}\right).
\] The first statement follows,
since the mean, root mean square, and median of $\|\cdot\|_2$ differ by
constants, as is evident from the concentration around the median. The
second statement follows since $M_p \ll
k^{\frac{1}{p}}$.
\end{proof}

\subsection{Lattices}
Siegel's \emph{Lectures on the Geometry of Numbers} \cite{S13} are a recommended reference.

A lattice $\Lambda < \bR^k$ is a discrete finite co-volume subgroup of $\bR^k$.
Write
 \[
  \vol(\Lambda) = \int_{\bR^k/\Lambda} d \ux
 \]
for its co-volume.
Fixing the usual inner product $\langle \cdot, \cdot \rangle$ on $\bR^k$,
the dual lattice of lattice $\Lambda$ is
\[
 \Lambda^{\vee} = \left\{\lambda' \in \bR^k: \forall\, \lambda \in \Lambda,
\langle \lambda', \lambda \rangle \in \zed \right\}.
\]
This satisfies $\vol(\Lambda)\cdot \vol(\Lambda^\vee) = 1$. For instance, the dual lattice to $\Lambda = 2\zed$ is $\frac{1}{2} \zed$. More generally, if $\Lambda = Q \zed^k$ for some $Q \in \GL_k(\bR)$, then $\Lambda^{\vee} = (Q^{-1})^t \zed^k$.  We reserve
$\lambda^*$
for the shortest non-zero vector of $\Lambda^{\vee}$.

Given lattice $\Lambda < \bR^k$,  its norm-minimal fundamental domain (Voronoi cell) is
\[
 \sF(\Lambda) = \{ \ux \in \bR^k: \forall \;\lambda \in \Lambda \setminus \{0\},
\|\ux\| < \|\ux - \lambda\|\}.
\]
One may choose a set $\sF^0(\Lambda)$
\[
 \sF(\Lambda) \subset \sF^0(\Lambda) \subset \overline{\sF(\Lambda)}
\]
such that every $\ux \in \bR^k/\Lambda$ has a unique representative in
$\sF^0(\Lambda)$.

Minkowski's geometry of numbers gives an upper bound for the shortest non-zero
vector in a lattice.

\begin{theorem}[Minkowski's Theorem] Let $\Lambda \subset \bR^k$ be a
  lattice and let $C$ be a convex symmetric body, i.e. $\ux \in C
  \Leftrightarrow - \ux \in C$.  If \[\mathrm{vol}(C) > 2^k
  \mathrm{vol}(\Lambda)\] then $C$ contains a non-zero vector in
  $\Lambda$.  In particular
  \[
   \min_{\lambda \in \Lambda \setminus \{0\}} \|\lambda\|_2 \leq
\frac{2}{\sqrt{\pi}} \left(\Gamma\left(\frac{k}{2}+1 \right) \vol(\Lambda)
\right)^{\frac{1}{k}} \sim \sqrt{\frac{2k}{\pi e}} \vol(\Lambda)^{\frac{1}{k}}.
  \]
  with the asymptotic holding as $k \to \infty$.
\end{theorem}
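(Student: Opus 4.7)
The plan is to prove Minkowski's theorem by the classical Blichfeldt pigeonhole argument applied to the rescaled body $\tfrac{1}{2}C$, and then deduce the ``in particular'' bound by specializing $C$ to a Euclidean ball.

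First I would establish Blichfeldt's principle: if $S \subset \bR^k$ is a measurable set with $\vol(S) > \vol(\Lambda)$, then there exist distinct $x, y \in S$ with $x - y \in \Lambda$. The argument is to decompose
\[
S = \bigsqcup_{\lambda \in \Lambda} \bigl(S \cap (\sF^0(\Lambda) + \lambda)\bigr),
\]
translate each piece back into $\sF^0(\Lambda)$ by $-\lambda$, and observe that since $\vol(\sF^0(\Lambda)) = \vol(\Lambda) < \vol(S)$, the translated pieces cannot be pairwise disjoint. Any common point of two translated pieces lifts to a pair $x \ne y$ in $S$ with $x - y \in \Lambda \setminus \{0\}$.

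Next I apply this to $S = \tfrac{1}{2}C$, which has volume $2^{-k}\vol(C) > \vol(\Lambda)$ by the hypothesis. This produces distinct $x, y \in \tfrac{1}{2}C$ with $\lambda := x - y \in \Lambda \setminus \{0\}$. Writing $\lambda = \tfrac{1}{2}(2x) + \tfrac{1}{2}(-2y)$ exhibits $\lambda$ as the midpoint of $2x \in C$ and $-2y \in C$ (using the symmetry $C = -C$), so convexity of $C$ gives $\lambda \in C$. This is the main assertion.

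For the ``in particular'' bound, I take $C = B_2(0, r)$, which is convex, symmetric, and has volume $\frac{\pi^{k/2}}{\Gamma(k/2+1)} r^k$. Choosing $r$ so that this exceeds $2^k \vol(\Lambda)$ forces
\[
r > \frac{2}{\sqrt{\pi}}\bigl(\Gamma(k/2+1)\vol(\Lambda)\bigr)^{1/k},
\]
and any such $r$ guarantees a non-zero lattice point of norm $< r$. Letting $r$ decrease to the right-hand side, discreteness of $\Lambda$ (only finitely many lattice points in any bounded region) yields a non-zero lattice point of norm at most the claimed bound. The final asymptotic $\sqrt{2k/(\pi e)}\,\vol(\Lambda)^{1/k}$ follows from Stirling's formula $\Gamma(k/2+1)^{1/k} \sim \sqrt{k/(2e)}$. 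The only subtle point is keeping track of strict versus non-strict inequality in Blichfeldt's principle, which the limiting argument on $r$ handles cleanly; no other step presents real difficulty.
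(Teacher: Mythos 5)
Your proof is correct: Blichfeldt's pigeonhole principle applied to $\tfrac{1}{2}C$, followed by the midpoint/symmetry argument and the specialization to a Euclidean ball with the limiting argument in $r$, is the standard and complete derivation, and the Stirling asymptotic $\Gamma(\tfrac{k}{2}+1)^{1/k} \sim \sqrt{k/(2e)}$ gives the stated constant $\sqrt{2k/(\pi e)}$. Note that the paper itself states Minkowski's theorem without proof, as classical background (citing Siegel's \emph{Lectures on the Geometry of Numbers}), so there is no in-paper argument to compare against; your write-up supplies exactly the proof a reader would expect.
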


For lattice $\Lambda$, the diameter of the norm-minimal fundamental domain and
the shortest non-zero vector in the dual lattice are related as follows.
\begin{lemma}\label{diameter_lemma}
 Let $\Lambda$ be a lattice with norm-minimal fundamental domain $\sF$ and dual
lattice $\Lambda^{\vee}$. Let $\lambda^*$ be the shortest non-zero vector in
$\Lambda^{\vee}$.  We have
\[
 \|\lambda^*\|_2 \cdot \diam(\sF) \geq 1.
\]
\end{lemma}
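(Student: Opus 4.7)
The plan is to use the shortest dual vector $\lambda^*$ to build a continuous surjective homomorphism $\bar\varphi \colon \bR^k/\Lambda \to \bR/\zed$ and then argue that any fundamental region, in particular $\overline{\sF}$, must be broad enough in the direction of $\lambda^*$ for this homomorphism to cover the target. Cauchy--Schwarz then converts that breadth into a lower bound on $\diam(\sF)$.

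First, I would check the primitivity statement $\langle \lambda^*, \Lambda\rangle = \zed$. The image is a subgroup $d\zed$ with $d\geq 1$, and if $d > 1$ then $\lambda^*/d \in \Lambda^\vee$ (since its pairing with every $\lambda \in \Lambda$ is integral) has strictly smaller norm than $\lambda^*$, contradicting minimality. Consequently $d=1$, and the linear functional $\varphi(\ux) = \langle \lambda^*, \ux\rangle$ descends modulo $1$ to a continuous, surjective group homomorphism $\bar\varphi \colon \bR^k/\Lambda \to \bR/\zed$.

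Next I would examine $\varphi$ on the Voronoi cell. Writing
\[
 \sF = \{\ux \in \bR^k : \langle \ux, \lambda\rangle < \tfrac12 \|\lambda\|_2^2 \text{ for all } \lambda \in \Lambda\setminus\{0\}\},
\]
we see that $\sF$ is an intersection of open half-spaces, hence $\sF$ and $\overline{\sF}$ are convex and symmetric about the origin; also $\overline{\sF}$ is compact and meets every coset of $\Lambda$ (a nearest lattice point to any $\ux$ places $\ux$ into a translate of $\overline{\sF}$). Being linear on a compact convex symmetric set, $\varphi(\overline{\sF})$ is a closed symmetric interval $[-M,M] \subset \bR$. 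Because $\overline{\sF}$ hits every coset of $\Lambda$ and $\bar\varphi$ is surjective onto $\bR/\zed$, the reduction $[-M,M] \bmod 1$ must equal all of $\bR/\zed$, forcing $2M \geq 1$.

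To conclude, pick $\ux_0 \in \overline{\sF}$ with $\varphi(\ux_0) = M$. By symmetry $-\ux_0 \in \overline{\sF}$, so $\diam(\overline{\sF}) \geq \|\ux_0 - (-\ux_0)\|_2 = 2\|\ux_0\|_2$, and Cauchy--Schwarz yields
\[
 1 \leq 2M = |\langle \lambda^*, 2\ux_0\rangle| \leq \|\lambda^*\|_2 \cdot 2\|\ux_0\|_2 \leq \|\lambda^*\|_2 \cdot \diam(\overline{\sF}).
\]
Since $\diam(\sF) = \diam(\overline{\sF})$, the claimed bound follows. The only mildly delicate point is the primitivity of $\lambda^*$; after that the argument is a short exercise in convex geometry and Cauchy--Schwarz.
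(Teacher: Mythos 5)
Your proof is correct, but it takes a genuinely different route from the paper's. You first establish that $\lambda^*$ is primitive (its pairing with $\Lambda$ generates $\zed$, else $\lambda^*/d$ would be a shorter dual vector), so that $\ux \mapsto \langle \lambda^*, \ux\rangle$ descends to a surjection $\bR^k/\Lambda \to \bR/\zed$; since $\overline{\sF}$ is a compact, convex, symmetric set meeting every coset, its image $[-M,M]$ must cover $\bR/\zed$ modulo $1$, giving $2M \geq 1$, and Cauchy--Schwarz converts this width in the $\lambda^*$ direction into the diameter bound. The paper instead works locally at a single boundary point: it takes $\ux = x_0 \uv$ on $\partial \sF$ along the ray $\uv = \lambda^*/\|\lambda^*\|_2$, uses the tight facet inequality $|\langle \ux, \uy\rangle| = \tfrac12\|\uy\|_2^2$ for some $\uy \in \Lambda\setminus\{0\}$, and combines the integrality $|\langle \uy, \lambda^*\rangle| = \|\lambda^*\|_2 |y_0| \geq 1$ of the component $y_0$ of $\uy$ along $\uv$ with $|x_0 y_0| \geq \tfrac12 y_0^2$ to get $|x_0| \geq \tfrac{1}{2\|\lambda^*\|_2}$. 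Your argument is the more portable one: it never uses the Voronoi facet structure, only convexity, symmetry, and the covering property, so it applies verbatim to any convex symmetric fundamental domain; the primitivity observation is the one genuinely new ingredient you need, and you justify it correctly. The paper's argument is a shorter direct computation but is tied to the norm-minimal cell. Both yield exactly the same constant.
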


\begin{proof}
 Let $\uv = \frac{\lambda^*}{\|\lambda^*\|_2}$ and choose $\ux$ the point on
the boundary of $\sF$ on the ray determined by $\uv$.  Write $\ux = x_0
\uv$.  Since $\ux \in \partial(\sF)$ we may find $\uy \in
\Lambda\setminus\{0\}$
with $\left|\langle \ux, \uy\rangle\right| = \frac{1}{2}\|\uy\|_2^2.$  Set $\uy
= y_0 \uv +  \uv'$ where $\langle \uv, \uv'\rangle = 0$.  In particular,
$y_0 \neq 0$ so $\left|\langle \uy, \lambda^*\rangle\right| = \|\lambda^*\|_2
|y_0| \geq 1$.  Since $|x_0 y_0| \geq \frac{1}{2} y_0^2$ it follows that
$\|\ux\|_2\cdot\|\lambda^*\|_2 \geq \frac{1}{2}.$ The diameter is at
least as large as $2 \|\ux\|_2$.
\end{proof}

Given $\ux \in \bR^k$ and $R>0$, let $B_2(\ux, R)$ denote the ball
\[
B_2(\ux, R) = \{\uy \in \bR^k: \|\ux - \uy\|_2 \leq R\}.
\]
The following is an easy estimate for the number of lattice points
contained in a ball.
\begin{lemma}\label{point_count_lemma}
 Let $k \in \zed_{>0}$, let $\ux \in \bR^k$ and let $R > k^{\frac{3}{2}}$. Then
\[
\left| \zed^k \cap B_2(\ux, R)\right| = \left(1 +
O\left(\frac{k^{\frac{3}{2}}}{R}\right) \right) \vol(B_2(\ux, R)).
\]

\end{lemma}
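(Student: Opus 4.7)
The plan is a standard inflation/deflation comparison between the lattice count and the volume, using unit cubes around lattice points. To each point $\un \in \zed^k$ associate the closed cube $Q(\un) = \un + [-\frac12, \frac12]^k$, which has volume $1$ and diameter $\sqrt{k}$ in the $\ell^2$ norm; the cubes $\{Q(\un)\}_{\un \in \zed^k}$ tile $\bR^k$. If $\un \in B_2(\ux, R - \tfrac{\sqrt{k}}{2})$ then the full cube $Q(\un)$ lies inside $B_2(\ux, R)$, whereas if $Q(\un)$ meets $B_2(\ux, R)$ then $\un$ itself lies in $B_2(\ux, R + \tfrac{\sqrt{k}}{2})$. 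Summing cube volumes on both sides yields the sandwich
\[
\vol\!\left(B_2\!\left(\ux, R - \tfrac{\sqrt{k}}{2}\right)\right) \leq \left|\zed^k \cap B_2(\ux, R)\right| \leq \vol\!\left(B_2\!\left(\ux, R + \tfrac{\sqrt{k}}{2}\right)\right).
\]

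Next, I would convert this into the claimed multiplicative error by using $\vol(B_2(\ux, R)) = R_k^{-k} R^k$ (in the notation of the paper), so that the ratio of the upper bound to $\vol(B_2(\ux,R))$ equals $(1 + \tfrac{\sqrt{k}}{2R})^k$, and similarly the lower bound equals $(1 - \tfrac{\sqrt{k}}{2R})^k$. Under the hypothesis $R > k^{3/2}$ we have $\tfrac{\sqrt{k}}{2R} < \tfrac{1}{2k} < \tfrac12$, so the Taylor expansion of $\log(1 \pm u)$ gives
\[
\left(1 \pm \tfrac{\sqrt{k}}{2R}\right)^k = \exp\!\left(\pm \tfrac{k^{3/2}}{2R} + O\!\left(\tfrac{k^{3/2}}{R} \cdot \tfrac{1}{k^{1/2}}\right)\right) = 1 + O\!\left(\tfrac{k^{3/2}}{R}\right),
\]
which gives the claimed estimate on dividing through by $\vol(B_2(\ux, R))$.

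There is no real obstacle; the only point requiring a bit of care is making sure the constant in the $O(\cdot)$ is truly dimension-free, which is exactly where the hypothesis $R > k^{3/2}$ is used to keep the argument of the exponential bounded. I would simply remark that the same cube-tiling argument applies with any translate of $\zed^k$ in place of $\zed^k$ itself, should it be needed later.
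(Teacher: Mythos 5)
Your proof is correct and is essentially the paper's own argument: the paper convolves the counting measure of $\zed^k \cap B_2(\ux,R)$ with $\one_{[-\frac{1}{2},\frac{1}{2})^k}$ and sandwiches the result between $\one_{B_2(\ux,R-\sqrt{k})}$ and $\one_{B_2(\ux,R+\sqrt{k})}$, which is exactly your cube-tiling comparison (the paper uses the full diameter $\sqrt{k}$ where you use the circumradius $\sqrt{k}/2$, an immaterial difference). The conversion of $(1+O(\sqrt{k}/R))^k$ into $1+O(k^{3/2}/R)$ via the hypothesis $R>k^{3/2}$ is also the same in both.
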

\begin{proof}
 Let $\mu_{\ux,
R} = \sum_{\un \in \zed^k \cap B_2(\ux, R)}\delta_{\un}$.
Since the hypercube $\left[-\frac{1}{2},
\frac{1}{2}\right)^k$  has diameter $\sqrt{k}$,
\[
\one_{B_2(\ux, R-\sqrt{k})} \leq \mu_{\ux, R} \ast \one_{\left[-\frac{1}{2},
\frac{1}{2}\right)^k} \leq \one_{B_2(\ux,
R+\sqrt{k})}
\]
and thus
\begin{align*}
\left| \zed^k \cap B_2(\ux, R)\right| &= \int_{\bR^k}\mu_{\ux, R}\ast
\one_{\left[-\frac{1}{2},
\frac{1}{2}\right)^k}
\\&= \left(1 + O\left(\frac{\sqrt{k}}{R}\right)\right)^k \vol(B_2(\ux, R))\\& =
\left(1 + O\left(\frac{k^{\frac{3}{2}}}{R}\right)\right) \vol(B_2(\ux, R)).
\end{align*}
\end{proof}

We also use the following estimate counting lattice points of a more general
lattice.
\begin{lemma}\label{lattice_point_bound_lemma}
 Let $\Lambda < \bR^k$ be a lattice with shortest non-zero vector $\lambda^*$.
For any $t \geq 1$,
\[
\log \left|\Lambda \cap B_{2}(0, t\|\lambda^*\|) \right| \lesssim_k k
\left[\frac{1 + \sin \theta}{2\sin \theta}
\log \frac{1 + \sin \theta}{2 \sin \theta} - \frac{1 - \sin \theta}{2 \sin
\theta} \log \frac{1 - \sin \theta}{2 \sin \theta} \right],
\]
where $\theta = 2
\sin^{-1}\left( \frac{1}{2t}\right)$.
\end{lemma}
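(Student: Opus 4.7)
The plan is to reduce the lemma to the asymptotic Kabatyanskii--Levenshtein (KL) upper bound for spherical codes, which asserts that, for $\theta \in (0,\pi/3]$, the maximum size $A(k,\theta)$ of a collection of unit vectors in $\bR^k$ with pairwise angular separation at least $\theta$ satisfies
\[
\log A(k,\theta) \;\le\; k f(\theta) + o(k)
\]
as $k\to\infty$, where $f(\theta)=\frac{1+\sin\theta}{2\sin\theta}\log\frac{1+\sin\theta}{2\sin\theta}-\frac{1-\sin\theta}{2\sin\theta}\log\frac{1-\sin\theta}{2\sin\theta}$ is exactly the expression appearing in the statement. This is substantially sharper than any elementary sphere-packing argument: the best one obtains from packing disjoint radius-$\|\lambda^*\|/2$ balls is $\log|\Lambda\cap B_2(0,t\|\lambda^*\|)|\le k\log(2t+1)$, which exceeds $kf(\theta)$ by $\Theta(k)$ for $t\ge 1$.

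After scaling so $\|\lambda^*\|=1$, let $N=|\Lambda\cap B_2(0,t)|$ and set $\theta=2\arcsin(1/(2t))$. I partition the annulus $\{\ux:1\le\|\ux\|\le t\}$ into thin spherical shells $\Sigma_j=\{\ux: r_j<\|\ux\|\le r_j+\delta\}$, $r_j=1+j\delta$, with $\delta=k^{-1/2}$, giving $O(t\sqrt{k})$ non-empty shells. For two distinct lattice points $\lambda,\lambda'\in\Sigma_j$ with radii $r,r'$ subtending angle $\alpha$, the identity
\[
1 \;\le\; \|\lambda-\lambda'\|^2 \;=\; (r-r')^2 + 4rr'\sin^2(\alpha/2),
\]
combined with $|r-r'|\le\delta$ and $rr'\le t^2$, yields $\sin(\alpha/2)\ge\sqrt{1-\delta^2}/(2t)$. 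Hence the radial projections of the lattice points in $\Sigma_j$ onto the unit sphere form a spherical code of pairwise angle at least $\theta_\delta:=2\arcsin(\sqrt{1-\delta^2}/(2t))$, and smoothness of $\arcsin$ at $1/(2t)$ for $t\ge 1$ gives $\theta_\delta=\theta-O(\delta^2/t)$.

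Applying the KL bound in each shell and summing,
\[
N \;\le\; 1 + O(t\sqrt{k})\exp\!\bigl(kf(\theta_\delta)+o(k)\bigr).
\]
Continuity of $f$ yields $kf(\theta_\delta)=kf(\theta)+O(k\delta^2)=kf(\theta)+O(1)$, while $\log(t\sqrt k)=o(k)$ since $f(\theta)\ge f(\pi/3)>0$. This gives $\log N \le kf(\theta)(1+o(1))$ as $k\to\infty$, i.e.\ $\log N\lesssim_k kf(\theta)$.

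The essential input is the Kabatyanskii--Levenshtein linear-programming bound on $S^{k-1}$; the rest is bookkeeping. The main technical point is that the elementary Blichfeldt/sphere-packing estimate is too weak by a factor exponential in $k$, so KL (or an equivalent spherical-code bound) is unavoidable. The balanced choice $\delta=k^{-1/2}$ simultaneously keeps the angular perturbation $\theta-\theta_\delta=O(1/k)$ small enough that $k(f(\theta_\delta)-f(\theta))=O(1)$ and keeps the shell count polynomial in $k$, so both contribute only $o(k)$ to $\log N$.
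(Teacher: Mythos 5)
Your proof is correct and, like the paper's, rests entirely on the Kabatyanskii--Levenshtein spherical-code bound; the difference is in the geometric reduction. The paper views $\bR^k$ as an equatorial hyperplane inside $\bR^{k+1}$ and lifts each lattice point $\ux\in B_{2,k}(0,t)$ to the point $(\ux,\sqrt{t^2-\|\ux\|^2})$ on the sphere of radius $t$ in $\bR^{k+1}$: lifting can only increase pairwise distances, so the $1$-separated lattice points become a $1$-separated set on a radius-$t$ sphere and hence a $\theta$-separated spherical code, giving $|\Lambda\cap B|\le A(k+1,\theta)$ in one line with no loss. You instead decompose the annulus $1\le\|\ux\|\le t$ into $O(t\sqrt k)$ shells of width $k^{-1/2}$, apply KL on $\bS^{k-1}$ within each shell, and pay an additive $O(k\delta^2)=O(1)$ in the exponent from the angular perturbation $\theta\mapsto\theta_\delta$ plus a multiplicative $O(t\sqrt k)$ shell-count factor. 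Both yield the lemma, but the dimension-lift trick is cleaner: it handles radial spread for free, avoids any balancing of shell width against angular loss, and never introduces the shell-count term that you then need to argue away. Two small points in your write-up deserve tightening: the shells as written are half-open $(r_j,r_j+\delta]$ starting at $r_0=1$, so vectors of norm exactly $\|\lambda^*\|$ (there can be exponentially many) fall through the cracks unless the first shell is closed at $1$; and your assertion ``$\log(t\sqrt k)=o(k)$ since $f(\theta)\ge f(\pi/3)>0$'' is not literally correct when $t$ is allowed to grow exponentially in $k$ --- what you actually need, and what does hold, is $\log(t\sqrt k)=o\left(kf(\theta)\right)$ uniformly in $t\ge 1$, which follows from $f(\theta)\gg 1+\log t$ (since $\sin\theta\asymp 1/t$ and $f(\theta)\sim -\log\sin\theta$ as $\theta\downarrow 0$). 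Neither affects the validity of the approach.
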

\begin{proof}
This follows from \cite{KL78}, see
\cite{CZ14} for a nice exposition and related results.  We sketch the argument.

Write $B_{2,j}$ for a $\ell^2$ ball in $\bR^j$.
 By rescaling we may assume $\|\lambda^*\|_2 = 1$. View $\bR^k$ as a hyperplane
through zero in $\bR^{k+1}$, and consider the ball $\tilde{B} = B_{2, k+1}(0,
t)$ in $\bR^{k+1}$.  Project $\Lambda \cap B_{2, k}(0,t)$ orthogonally onto
$\tilde{B}$.  The points remain 1-spaced and thus satisfy an angular spacing of
at least $\theta = 2 \sin^{-1}(\frac{1}{2t}).$  Let, as in \cite{KL78}, $A(n,
\theta)$ denote the largest set $S \subset \bS^{n-1}$ which is separated by
angle $\theta$ as above.  Thus
\[
 \Lambda \cap B_{2, k}(0,t) \leq A(k+1, \theta).
\]
The claimed estimate for $A(k+1, \theta)$ is the main result of \cite{KL78}.

\end{proof}

Given a probability measure $\mu \in \sM(G)$, $G = \zed^k$ or $G = \bR^k$, and
a lattice $\Lambda < G$ the quotient measure $\mu_{\Lambda}$ is defined for $f
\in C(G/\Lambda)$ by
\[
 \langle f, \mu_\Lambda\rangle_{G/\Lambda} = \langle f, \mu\rangle_G.
\]
Quotienting commutes with convolution and contracts the total variation norm.
For lattice $\Lambda < \bR^k$, $t \in \bR_{>0}$ and $\ux \in
\bR^k$, the quotient measure of Gaussian $\eta_k\left(\sqrt{t}, \cdot\right)$
is the  theta function
\[
 \Theta(\ux, t; \Lambda) = \sum_{\lambda \in \Lambda} \eta_k\left(\sqrt{t}, \ux
+ \lambda\right).
\]
This has a representation in frequency space as
\[
\Theta(\ux, t;\Lambda) = \frac{1}{\vol(\Lambda)}\sum_{\lambda \in
\Lambda^{\vee}}\exp\left(-2\pi^2 t\|\lambda\|_2^2 \right)e(\lambda \cdot
\ux).
\]
To check the expansion, Fourier expand $\Theta$ in the orthonormal basis $\left\{\frac{e(\lambda \cdot \ux)}{\sqrt{\vol(\Lambda)}}\right\}_{\lambda \in \Lambda^{\vee}}$ for $L^2(\bR^k/\Lambda)$ (this is the usual proof of the Poisson summation formula).
In the case of a cubic lattice, where for some $\alpha \in \bR_{>0}$, $\Lambda =
\alpha \zed^k$, the theta function is particularly pleasant.
\begin{lemma}\label{theta_lemma}
 Let $k \in \zed_{>0}$, $\alpha, t \in \bR_{>0}$ and $\ux \in \bR^k$.  We
have
\[
 \Theta\left(\ux, t; \alpha \zed^k\right) = \prod_{i=1}^k \Theta\left(x_i, t;
\alpha \zed\right).
\]
The one dimensional theta function $\Theta(x, t; \alpha\zed)$ satisfies
\begin{align*}
 \Theta(x, t; \alpha\zed) &= \frac{\exp\left(- \frac{\alpha^2
\left\|\frac{x}{\alpha} \right\|_{\bR/\zed}^2}{2t} \right)}{\sqrt{2\pi t}} +
O\left(\frac{\exp\left(-\frac{\alpha^2}{8t} \right)}{\sqrt{2\pi
t}\left(1-\exp\left(-\frac{\alpha^2}{8t} \right) \right)} \right) \\
& = \frac{1}{\alpha} + O\left(\frac{\exp\left(-\frac{2\pi^2 t}{\alpha^2}
\right)}{\alpha\left(1- \exp\left(-\frac{2\pi^2 t}{\alpha^2}
\right)\right)} \right).
\end{align*}

\end{lemma}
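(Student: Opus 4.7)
The factorization $\Theta(\ux,t;\alpha\zed^k)=\prod_{i=1}^k\Theta(x_i,t;\alpha\zed)$ is purely formal: the Gaussian density factors as $\eta_k(\sqrt{t},\ux)=\prod_i\eta_1(\sqrt{t},x_i)$, and writing $\blambda=(\alpha n_1,\dots,\alpha n_k)$ allows the $k$-fold sum defining $\Theta$ to be separated across coordinates by Fubini. So the content of the lemma lies in the two one-dimensional estimates. My plan is to treat them via the two representations already placed side by side in the excerpt, one physical and one spectral.

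For the first (physical-space) form, I would isolate the lattice translate closest to $x$: let $m^*\in\zed$ be such that $|x+\alpha m^*|=\alpha\|x/\alpha\|_{\bR/\zed}\leq\alpha/2$. That summand contributes exactly the leading $\exp(-\alpha^2\|x/\alpha\|_{\bR/\zed}^2/(2t))/\sqrt{2\pi t}$ of the formula. Writing the remaining indices as $n=m^*+j$ with $j\neq 0$, the triangle inequality gives $|x+\alpha m^*+\alpha j|\geq\alpha(|j|-\tfrac12)$, so the tail is bounded by
\[
\frac{1}{\sqrt{2\pi t}}\sum_{j\neq 0}\exp\!\left(-\frac{\alpha^2(|j|-\tfrac12)^2}{2t}\right).
\]
Using the identity $(|j|-\tfrac12)^2=\tfrac14+|j|(|j|-1)$, the $j=\pm1$ pair produces the factor $\exp(-\alpha^2/(8t))$ and the remaining terms are dominated by a geometric series whose ratio is at most $\exp(-\alpha^2/(8t))$, yielding the stated error form $O\!\bigl(\exp(-\alpha^2/(8t))/[\sqrt{2\pi t}(1-\exp(-\alpha^2/(8t)))]\bigr)$.

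For the second (frequency-space) form, I would simply invoke the Poisson-summation expansion for $\Theta(x,t;\alpha\zed)$ already recorded in the paragraph preceding the lemma, specialized to the self-dual pair $\alpha\zed$, $\alpha^{-1}\zed$:
\[
\Theta(x,t;\alpha\zed)=\frac{1}{\alpha}\sum_{m\in\zed}\exp\!\left(-\frac{2\pi^2 tm^2}{\alpha^2}\right)e\!\left(\frac{mx}{\alpha}\right).
\]
The $m=0$ term gives $1/\alpha$. For $m\neq 0$, bound $|e(mx/\alpha)|\leq 1$ and use $m^2\geq|m|$ to dominate the rest by the geometric series $\tfrac{2}{\alpha}\sum_{m\geq 1}\exp(-2\pi^2 tm/\alpha^2)=\tfrac{2}{\alpha}\cdot\exp(-2\pi^2 t/\alpha^2)/(1-\exp(-2\pi^2 t/\alpha^2))$, which is the claimed error. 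There is no genuine obstacle here; the only care point is picking the right crude inequality in each tail summation so that the error matches the precise form displayed in the statement rather than a slightly sharper variant.
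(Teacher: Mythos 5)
Your proof is correct and follows the same route as the paper: the factorization is immediate from the product structure of the Gaussian, the first estimate comes from pulling out the nearest lattice translate and dominating the tail by a geometric progression, and the second comes from Poisson summation followed by another geometric progression. Your write-up merely makes explicit the ratio estimates that the paper leaves to the reader.
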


\begin{proof}
 The factorization is immediate from the definition of $\Theta$.  The first
estimate for $\Theta(x,t; \alpha\zed)$ is the result of pulling out the largest
term and bounding the remaining terms by a geometric progression.  For the
second, apply the Poisson summation formula,
\begin{align*}
 \sum_{n \in \zed} \eta_1\left(\sqrt{t}, x+\alpha n \right) =
\frac{1}{\alpha}\sum_{n
\in \zed} \exp\left(-\frac{2\pi^2 tn^2}{\alpha^2}
\right)e\left(\frac{xn}{\alpha} \right)
\end{align*}
and bound the $n \neq 0$ terms by a geometric progression.
\end{proof}

\subsection{Identification between generating sets and
lattices}\label{gen_sets_lattices_section}
Our proofs of Theorems \ref{spectral_mixing_time_theorem}-- \ref{random_theorem}
approximate random walk on $\zed/p\zed$ with
symmetric generating set $A$, $|A| = 2k + 1$ with a Gaussian diffusion on
$\bR^{k}/\Lambda$ where $\Lambda$ is a co-volume $p$ lattice.  The reduction
is as follows.

Let $O_k(\zed) \cong (\zed/2\zed)^k \rtimes \fS_k$ be the orthogonal group over
$\zed$ consisting of signed $k
\times k$ permutation matrices, which acts naturally on $\bR^k$. Let
\begin{align*}
L &= L(p,k) = \{ \Lambda < \zed^k: [\zed^k:\Lambda] = p\}\\
\sL &= \sL(p, k) =  O_k(\zed)\backslash L(p,k)
\end{align*}
be the set of index-$p$ lattices of $\zed^k$, resp.\ those lattices up to
$O_k(\zed)$-equivalence. The action is matrix multiplication on the left applied to lattice vectors.
Define subsets
\begin{align*}
 L^0(p,k) &= \left\{ \Lambda \in L(p, k): \lambda \in \Lambda \setminus\{0\}
\; \Rightarrow \; \|\lambda\|_2^2 > 2\right\}\\
 \sL^0(p,k) &=  O_k(\zed)\backslash L^0(p,k).
 \end{align*}

 Let
 \[
A(p,k) = \left\{\ua \in (\bF_p^\times)^k: \forall 1 \leq i < j \leq k, \, a_i
\neq \pm a_j \right\}.
 \]
 $\sA(p,k)$ may be identified with $O_k(\zed)\backslash A(p,k)$ by
interpreting the
factors of $\left(\zed/2\zed\right)^k$ as flipping signs, and the factor of
$\fS_k$ as rearranging the order of the coordinates in the vector. Evidently
the action is free, so that uniform measure on $A(p,k)$ descends to uniform
measure on $\sA(p,k)$.

 $\bF_p^\times$ acts freely on $A(p,k)$ dilating all coordinates
simultaneously. $\bF_p^\times \backslash A(p,k)$ and $L^0(p,k)$ are
in bijection via the map
\[
 A(p,k) \ni \ua  \overset{\phi}{\mapsto} \Lambda(\ua) = \left\{\un \in \zed^k:
\sum_{i=1}^k n_i a_i \equiv 0 \bmod p\right\} \in L^0(p,k).
\]
The map in the reverse direction is
\[\Lambda  \overset{\phi}{\mapsto}
\ua(\Lambda) = \{1, a_2, \cdots, a_k: \forall i,\, e_1 - a_i e_i \equiv 0 \bmod
p\}.
\]
It follows that uniform measure on $A(p,k)$ pushes forward to uniform
measure on $L^0(p,k)$.  $O_k(\zed)$ acts on $L^0(p,k)$, and we obtain a
map $\bF_p^\times \backslash \sA(p,k) \overset{\overline{\phi}}{\mapsto}
\sL^0(p,k)$ which we write as $\Lambda(A)$. Note that the joint action of
$\bF_p^\times \times O_k$ on $A(p,k)$ need not be free, but this will not
concern us. We write $\bU_{L}, \bU_{L^0}$ for uniform measure on $L$ and $L^0$.

Let $\nu = \nu_{k} \in \sM(\zed^k)$ be the uniform measure on
$ S_{k} = \{0, \pm e_1, ..., \pm e_k\},$ $e_i$ the $i$th standard basis vector.
Let $A \in \sA(p,k)$.    For
any $n \geq 1$ the
law of $\mu_A^{*n}$ on $\zed/p\zed$ and $(\nu_k^{*n})_{\Lambda(A)}$ on
$\zed^k/\Lambda(\ua)$ are equal.  The above observations imply that we
may sample the laws of $\mu_A^{*n}$ with $A$ chosen according to
$\bU_{\sA(p,k)}$ by instead sampling the laws of $(\nu_k^{*n})_{\Lambda}$ with
$\Lambda$ drawn according to $\bU_{L^0(p,k)}$.

Combining this discussion with Minkowski's theorem has the following
consequence.
\begin{lemma}\label{mixing_time_lower_bound_lemma}
Let $p $ be a large prime, let $1 \leq k < \frac{2\log p}{\log \log
p}$ and let $A \in \sA(p,k)$. Let
$\Lambda <
\zed^k$ be any lattice in the class of $\Lambda(A) \in \sL$, and let
\[
 \ell(A) = \min\{\|\lambda\|_2: 0 \neq \lambda \in \Lambda^\vee\}.
\]
The  relaxation time of random walk driven by
$\mu_A$ on $\zed/p\zed$ satisfies
\[
t^{\rel} \sim \frac{2k+1}{4\pi^2 \ell(A)^2}.
\]

\end{lemma}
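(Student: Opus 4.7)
The plan is to diagonalize convolution by $\mu_A$ on $L^2(\zed/p\zed)$ by characters and identify the extremal eigenvalue with the shortest vector $\uv^* \in \Lambda^\vee$. Writing $A = \{0, \pm a_1, \ldots, \pm a_k\}$ and letting $\Psi(\uv) = \frac{1}{2k+1}(1 + 2\sum_{i=1}^k \cos(2\pi v_i))$, the eigenvalue at the character $x \mapsto e(jx/p)$ equals $\hat\mu_A(j) = \Psi(j(a_1,\ldots,a_k)/p)$. Since $\Psi$ is $\zed^k$-periodic, under the identification of Section \ref{gen_sets_lattices_section} the non-trivial eigenvalues are exactly $\{\Psi(\uv): \uv \in (\Lambda^\vee/\zed^k)\setminus\{0\}\}$, where $\Lambda^\vee = \zed^k + \frac{1}{p}(a_1,\ldots,a_k)\zed$.

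I would first show $\ell(A) \to 0$ uniformly over $p$. Minkowski's theorem applied to $\Lambda^\vee$ (co-volume $1/p$) gives $\ell(A) \ll \sqrt{k/(2\pi e)}\, p^{-1/k}$, while the constraint $k < 2\log p/\log\log p$ yields $p^{1/k} \ge \sqrt{\log p}$, and hence $\ell(A) \ll 1/\sqrt{\log\log p} \to 0$. For large $p$ this forces $\ell(A) < 1/2$, so $|v_i^*| \le \ell(A) < 1/2$ and $\uv^*$ itself is the minimum-norm representative of its class modulo $\zed^k$. Every other non-trivial class contains a representative $\tilde\uv \in (-1/2, 1/2]^k$ which lies in $\Lambda^\vee \setminus \{0\}$, so by minimality of $\uv^*$ we have $\|\tilde\uv\|_2 \ge \ell(A)$.

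The main step is to show that the spectral gap arises from $\uv^*$. From the alternating-series bound $2\pi^2 x^2 - \frac{2\pi^4}{3}x^4 \le 1 - \cos(2\pi x) \le 2\pi^2 x^2$ valid on $|x| \le 1/2$, Taylor expansion at $\uv^*$ yields
\[
1 - \Psi(\uv^*) = \frac{2}{2k+1}\sum_i (1 - \cos(2\pi v_i^*)) = \frac{4\pi^2 \ell(A)^2}{2k+1}\bigl(1 + O(\ell(A)^2)\bigr).
\]
I then split the remaining classes by the size of $\|\tilde\uv\|_2$. When $\ell(A) \le \|\tilde\uv\|_2 \le (\pi/2)\ell(A)$, the same Taylor expansion gives $1 - \Psi(\tilde\uv) \ge (4\pi^2\ell(A)^2/(2k+1))(1 + o(1))$. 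When $\|\tilde\uv\|_2 > (\pi/2)\ell(A)$, the elementary bound $1 - \cos(2\pi x) \ge 8x^2$ on $|x| \le 1/2$ (from $|\sin(\pi x)| \ge 2|x|$) yields $1 - \Psi(\tilde\uv) \ge 16\|\tilde\uv\|_2^2/(2k+1) > 4\pi^2\ell(A)^2/(2k+1)$, thanks to the identity $16(\pi/2)^2 = 4\pi^2$. Finally, negative eigenvalues cannot be extremal because $1 + \Psi(\uv) \ge 2/(2k+1)$, which dominates $4\pi^2\ell(A)^2/(2k+1)$ once $\ell(A) < 1/(\pi\sqrt{2})$, as is guaranteed by the uniform bound $\ell(A) \to 0$.

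Combining these estimates gives $\gap = 1 - \Psi(\uv^*) \sim 4\pi^2\ell(A)^2/(2k+1)$, and then $t^{\rel} = 1/|\log(1-\gap)| \sim 1/\gap$ produces the desired asymptotic. The main obstacle is the case split in the previous paragraph: to avoid losing a constant factor the Taylor regime and the crude-bound regime must patch exactly at the threshold $\|\tilde\uv\|_2 = (\pi/2)\ell(A)$, which is the role of the numerical coincidence $16(\pi/2)^2 = 4\pi^2$.
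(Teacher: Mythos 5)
Your proposal is correct and follows essentially the same route as the paper: identify the eigenvalues with $\Psi$ evaluated on the classes of $\Lambda^\vee/\zed^k$, Taylor expand at the shortest dual vector, and use Minkowski's theorem together with the bound on $k$ to make $\ell(A)=o(1)$. The paper simply asserts that the spectral gap is attained at $\lambda^*$; your case split (with the exact patching at $\|\tilde\uv\|_2=(\pi/2)\ell(A)$) and the treatment of negative eigenvalues supply the verification the paper leaves implicit.
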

\begin{proof}
The characters of $\zed^k/\Lambda$ are given by the dual group,
$\Lambda^{\vee}/\zed^k$.
 Let $\lambda^* = (\lambda_1, ..., \lambda_k)$ be a vector of minimal length in
$\Lambda^{\vee}\setminus
\{0\}$.  The claim  follows on noting that the spectral gap is given by
 \[
 1- \hat{\nu}_{\Lambda}(\lambda^*) =  \frac{1}{2k+1}\sum_{j=1}^k \left(2 -
2\cos\left(2\pi \lambda_j \right) \right) = \frac{4\pi^2}{2k+1}\sum_{j=1}^k
\left(\lambda_j^2 + O(\lambda_j^4) \right).
 \]
The error is of lower order since $\|\lambda^*\|_\infty \ll
\sqrt{k}p^{-\frac{1}{k}}= o(1)$ by Minkowski's
Theorem.
\end{proof}

Lemma \ref{normal_approximation_lemma} from the introduction has the following
consequence.
\begin{lemma}\label{continuous_approx_lemma}
 Let $p \geq 3$ be a prime, let $1 \leq k \leq \frac{\log p}{\log \log
p}$ and let $A\in \sA(p,k)$ with
$\Lambda < \zed^k$ any representative of $\Lambda(A) \in \sL^0(p,k)$.
There is a function $\epsilon : \bR_{>0} \to \bR_{>0}$ with $\lim_{x \to
\infty} \epsilon(x)=0$, such that, for $n \geq 1$
\[
 \left\|\mu_{A}^{*n} - \bU_{\zed/p\zed} \right\|_{\TV(\zed/p\zed)} =
\left\|\Theta\left(\cdot, \frac{2n}{2k + 1};
\Lambda\right) - \bU_{\bR^k/\Lambda}\right\|_{\TV(\bR^k/\Lambda)} +
O\left(\epsilon\left(\frac{n}{k^2 }\right)
\right).
\]
\end{lemma}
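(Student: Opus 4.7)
The plan is to identify $\mu_A^{*n}$ with the quotient $(\nu_k^{*n})_\Lambda$ on $\zed^k/\Lambda$ via Section \ref{gen_sets_lattices_section}, lift this discrete measure to a density on $\bR^k/\Lambda$ by convolving with the unit cube, and then apply Lemma \ref{normal_approximation_lemma} to approximate that density by a Gaussian whose $\Lambda$-periodization is, by definition, the theta function $\Theta(\cdot, 2n/(2k+1); \Lambda)$.

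By the discussion in Section \ref{gen_sets_lattices_section}, the laws of $\mu_A^{*n}$ on $\zed/p\zed$ and of $(\nu_k^{*n})_\Lambda$ on $\zed^k/\Lambda$ coincide under the canonical isomorphism $\zed^k/\Lambda \cong \zed/p\zed$. Since $[\zed^k:\Lambda] = p$, the $p$ translates $\bar{n} + [-1/2, 1/2)^k$, indexed by $\bar{n} \in \zed^k/\Lambda$, tile the compact group $\bR^k/\Lambda$ (total volume $p$). Consequently, convolution with $\one_{[-1/2, 1/2)^k}$ is an isometric embedding, in total variation, of probability measures on $\zed^k/\Lambda$ into absolutely continuous probability measures on $\bR^k/\Lambda$, and it sends $\bU_{\zed^k/\Lambda}$ to the Haar measure $\bU_{\bR^k/\Lambda}$. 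Writing $\tilde{\mu}$ for the image of $(\nu_k^{*n})_\Lambda$, one obtains
\[
\left\|\mu_A^{*n} - \bU_{\zed/p\zed}\right\|_{\TV(\zed/p\zed)} = \left\|\tilde{\mu} - \bU_{\bR^k/\Lambda}\right\|_{\TV(\bR^k/\Lambda)}.
\]

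Now apply Lemma \ref{normal_approximation_lemma} with $\sigma = \sqrt{2n/(2k+1)}$:
\[
\left\|\nu_k^{*n} * \one_{[-1/2, 1/2)^k} - \eta_k\bigl(\sqrt{2n/(2k+1)}, \cdot\bigr)\right\|_{\TV(\bR^k)} = \epsilon\left(n/k^2\right),
\]
for some function $\epsilon(x) \to 0$ as $x \to \infty$. The quotient map $\bR^k \to \bR^k/\Lambda$ is a total variation contraction; it pushes $\nu_k^{*n} * \one_{[-1/2,1/2)^k}$ down to $\tilde{\mu}$, and pushes the Gaussian density $\eta_k(\sqrt{2n/(2k+1)}, \cdot)$ down to the theta function $\Theta(\cdot, 2n/(2k+1); \Lambda)$. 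The triangle inequality combines the preceding two displays to yield the asserted identity.

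The only substantive ingredient is Lemma \ref{normal_approximation_lemma} itself, deferred to Appendix \ref{local_limit_appendix}; the remaining steps, namely the tiling of $\bR^k/\Lambda$ by unit-cube translates of $\zed^k/\Lambda$-coset representatives and the TV-contraction property of the quotient modulo $\Lambda$, are routine bookkeeping.
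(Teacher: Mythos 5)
Your proposal is correct and follows essentially the same route as the paper: identify $\mu_A^{*n}$ with the quotient of $\nu_k^{*n}$, observe that cube-convolution is a TV isometry onto $\bR^k/\Lambda$, and then use the quotient's TV-contraction property together with the triangle inequality to transfer the normal approximation of Lemma \ref{normal_approximation_lemma} to the theta function. The only cosmetic difference is that the paper phrases the last step as two applications of the triangle inequality rather than isolating the contraction property, but the content is identical.
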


\begin{proof}
Write $\Lambda = \Lambda(A)$ and $\one_{\left[-\frac{1}{2},
\frac{1}{2}\right)^k}$ for the
indicator function of the cube $\left[-\frac{1}{2}, \frac{1}{2}\right)^k \subset
\bR^k$.
We have
\[
 \left\|\mu_{A}^{*n} - \bU_{\zed/p\zed} \right\|_{\TV(\zed/p\zed)} =
\left\|\nu_{\Lambda}^{*n} - \bU_{\zed^k/\Lambda}\right\|_{\TV(\zed^k/\Lambda)} =
\left\|\nu_{\Lambda}^{*n}\ast \one_{\left[-\frac{1}{2}, \frac{1}{2}\right)^k} -
\bU_{\bR^k/\Lambda}\right\|_{\TV(\bR^k/\Lambda)}
\]
and
\begin{align*}
\Biggl|\left\|\nu_{\Lambda}^{*n}\ast \one_{\left[-\frac{1}{2},
\frac{1}{2}\right)^k} -
\bU_{\bR^k/\Lambda}\right\|_{\TV(\bR^k/\Lambda)}& - \left\|\Theta\left(\ux,
\frac{2n}{2k + 1};
\Lambda\right) - \bU_{\bR^k/\Lambda}\right\|_{\TV(\bR^k/\Lambda)} \Biggr| \\&
\leq
 \left\| \nu_{ \Lambda}^{*n} \ast \one_{\left[-\frac{1}{2},
\frac{1}{2}\right)^k} -
\Theta\left(\ux, \frac{2n}{2k + 1}; \Lambda\right)\right\|_{\TV(\bR^k/\Lambda)}
\\& \leq \left\| \nu^{*n} \ast \one_{\left[-\frac{1}{2}, \frac{1}{2}\right)^k} -
\eta_k\left(\sqrt{\frac{2n}{2k+1}}, \cdot\right)\right\|_{\TV(\bR^k)}
\end{align*}
by two applications of the triangle inequality.  The bound now follows from
Lemma \ref{normal_approximation_lemma}.

\end{proof}

Combining the pieces above we prove the following lemma which is the main
reduction in this section.
\begin{lemma}\label{continuous_replacement_lemma}
 Let $0 < \epsilon <1$, and let $k = k(p)$ satisfy $1 \leq k \leq
\frac{\log p}{\log\log p}$. For
any set $A \in \sA(p,k)$ with uniform measure $\mu_A$ of total variation
mixing time $t^{\mix}_1(\epsilon)$, we have, as $p \to \infty$, for all $n \geq
t_1^{\mix}(\epsilon)$
\[
\left\|\mu_{A}^{*n}(x) - \bU_{\zed/p\zed}  \right\|_{\TV(\zed/p\zed)} =
\left\|\Theta\left(\ux, \frac{2n}{2k + 1};
\Lambda(A)\right) - \bU_{\bR^k/\Lambda(A)}
\right\|_{\TV(\bR^k/\Lambda(A))} + o_\epsilon(1).
\]
\end{lemma}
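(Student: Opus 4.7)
The plan is to apply Lemma \ref{continuous_approx_lemma} directly: that lemma already gives the claimed identity, but with an additive error $O(\epsilon(n/k^2))$ for some (a priori arbitrary) function $\epsilon:\bR_{>0}\to\bR_{>0}$ with $\lim_{x\to\infty}\epsilon(x)=0$. Replacing $\epsilon$ by $\tilde\epsilon(x)=\sup_{y\geq x}\epsilon(y)$ if necessary, we may assume $\epsilon$ is non-increasing. The entire content of the present lemma is therefore the assertion that $n/k^2\to\infty$ as $p\to\infty$, uniformly over $n\geq t_1^{\mix}(\epsilon)$, so that $\epsilon(n/k^2)=o_\epsilon(1)$.

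To establish this, I would combine the spectral lower bounds on $t_1^{\mix}(\epsilon)$ coming from Lemma \ref{t_mix_lower} with the uniform-in-$k$ lower bound on $t^{\rel}$ from Theorem \ref{spectral_mixing_time_theorem}. For $\epsilon\leq 1/e$, Lemma \ref{t_mix_lower} yields $t_1^{\mix}(\epsilon)\geq t^{\rel}\log\frac{1}{2\epsilon}$, while as $\epsilon\uparrow 1$ the complementary estimate $t_1^{\mix}(\epsilon)\gtrsim\frac{2\pi^2}{27}(1-\epsilon)^3 t^{\rel}$ applies; either way, there is a positive constant $c(\epsilon)>0$ with $t_1^{\mix}(\epsilon)\gtrsim c(\epsilon)t^{\rel}$ for $p$ large. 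Theorem \ref{spectral_mixing_time_theorem} gives
\[
t^{\rel}\gtrsim \frac{2k+1}{16\pi\,\Gamma(k/2+1)^{2/k}}\,p^{2/k},
\]
and Stirling's formula gives $\Gamma(k/2+1)^{2/k}\asymp k$, so that $t^{\rel}\gg p^{2/k}$ uniformly in $k\geq 1$.

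Combining these bounds, for any $n\geq t_1^{\mix}(\epsilon)$ one has
\[
\frac{n}{k^2}\gg_\epsilon \frac{p^{2/k}}{k^2}.
\]
The hypothesis $k\leq \log p/\log\log p$ now gives $p^{2/k}\geq (\log p)^{2}$ while $k^2\leq (\log p)^2/(\log\log p)^2$, so $p^{2/k}/k^{2}\geq (\log\log p)^{2}\to\infty$. Hence $\epsilon(n/k^2)=o_\epsilon(1)$ and the lemma follows. The only substantive point is that this estimate is essentially the \emph{tightest} use the argument can make of the constraint on $k$: were $k$ allowed to grow slightly faster than $\log p/\log\log p$, the ratio $p^{2/k}/k^2$ would fail to diverge and the Gaussian approximation underpinning Lemma \ref{continuous_approx_lemma} (and ultimately Lemma \ref{normal_approximation_lemma}) would no longer provide sufficient room for the error term to be absorbed.
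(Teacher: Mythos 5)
Your proof is correct and follows essentially the same route as the paper: a lower bound $t^{\rel}\gg p^{2/k}$ from Minkowski's theorem, the comparison $t_1^{\mix}(\epsilon)\gg_\epsilon t^{\rel}$ from Lemma \ref{t_mix_lower}, and the observation that $p^{2/k}/k^{2}\geq(\log\log p)^{2}\to\infty$ under $k\leq\log p/\log\log p$, so that Lemma \ref{continuous_approx_lemma} applies. The one thing to fix is the citation: you should derive $t^{\rel}\gg p^{2/k}$ directly from Lemma \ref{mixing_time_lower_bound_lemma} together with Minkowski's theorem (as the paper does) rather than from Theorem \ref{spectral_mixing_time_theorem}, whose proof in turn invokes the present lemma.
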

\begin{proof}
 By Minkowski's geometry of numbers, the shortest non-zero vector in the dual
lattice $\Lambda(A)^\vee$ has length
\[
 \ell(A) \ll \sqrt{k}p^{\frac{-1}{k}}
\]
so that Lemmas \ref{t_mix_lower} and \ref{mixing_time_lower_bound_lemma} give
for the discrete walk $t^{\mix}_1(\epsilon) \gg t^{\rel} \gg
p^{\frac{2}{k}}$.  The claim now follows from
Lemma \ref{continuous_approx_lemma}, since $k = o\left(p^{\frac{1}{k}}\right)$.
\end{proof}

\section{Mixing time estimates}\label{mixing_section}
Let $p$ be prime, $A \in \sA(p)$ with $|A| = 2k+1$ and $1
\leq k \leq \frac{\log p}{\log\log p}$.  Let $\Lambda = \Lambda(A)$ be any
lattice associated to $A$ in $\zed^k$, as above.

\begin{proof}[Proof of Theorem \ref{spectral_mixing_time_theorem}]
Theorem \ref{spectral_mixing_time_theorem} is contained in the set of estimates
 \[
\tau_0\frac{2k+1}{16 \pi \Gamma\left(\frac{k}{2}+1 \right)^{\frac{2}{k}}}p^{\frac{2}{k}} \lesssim_p \tau_0
t^{\rel} \lesssim_p
t^{\mix}_1 \lesssim_k 0.163 k t^{\rel}.
\]
since \[\frac{2k+1}{16 \pi \Gamma\left(\frac{k}{2}+1 \right)^{\frac{2}{k}}} \to \frac{e}{4\pi}, \qquad k \to \infty.\] 

Combining Lemma \ref{mixing_time_lower_bound_lemma} and Minkowski's theorem gives
\[
 t^{\rel} \sim \frac{2k+1}{4\pi^2 \ell(A)^2} \geq \frac{2k+1}{16 \pi \Gamma\left(\frac{k}{2}+1 \right)^{\frac{2}{k}}}p^{\frac{2}{k}}. 
\]

The estimate $t^{\rel}(1-\log 2)\leq t^{\mix}_1$ is given in Lemma
\ref{t_mix_lower}. To replace $(1-\log 2)$ with the larger constant $\tau_0$, consider the theta function $\Theta\left(\ux, \frac{2t}{2k+1}; \Lambda\right)$, which has asymptotically the same relaxation time as $\mu_A$ by Lemma \ref{mixing_time_lower_bound_lemma}. Let $\lambda^*$ be a shortest non-zero vector in the dual space, and consider
\[
 \Theta_0\left(\ux, \frac{2t}{2k+1}; \Lambda\right) = \frac{1}{p} \sum_{j\in \zed} \exp\left(\frac{-4\pi^2 t}{2k+1} \|\lambda^*\|_2^2 j^2 \right)e(j \lambda^* \cdot \ux),
\]
which is found by projecting $\Theta$ in frequency space onto the line determined by $\lambda^*$. 
Equivalently, identify $\bR^{k-1}$ with $\bR^k \cap (\lambda^*)^\perp$ and let $\eta_{k-1}(T,\cdot)$ denote a Gaussian of covariance matrix $T^2 I$ on this space.  Write $\lambda \in \Lambda^\vee$ as $\lambda = \lambda_1 + \lambda_2$ where $\lambda_1$ is the projection to the span of $\lambda^*$ and $\lambda_2$ is orthogonal to $\lambda^*$.  One has, for $T>0$,
\begin{align*}
&\int_{\bR^k \cap (\lambda^*)^\perp}\eta_{k-1}(T,\uy) \Theta\left(\ux + \uy, \frac{2t}{2k+1}; \Lambda\right)d\uy \\&= \sum_{\lambda \in \Lambda^\vee}\exp\left(-\frac{4\pi^2t}{2k+1}\|\lambda\|_2^2 - 2\pi^2 T^2 \|\lambda_2\|_2^2 \right)e(\lambda_1 \cdot x)
\end{align*}
and thus
\[
 \Theta_0\left(\ux, \frac{2t}{2k+1}; \Lambda\right) = \lim_{T \to \infty}\int_{\bR^k \cap (\lambda^*)^\perp}\eta_{k-1}(T,\uy) \Theta\left(\ux + \uy, \frac{2t}{2k+1}; \Lambda\right)d\uy.
\]
The convergence is uniform in $\ux$ as the error at $T$ is dominated by the case in which $\ux$ is orthogonal to $\lambda^*$ so that all the terms are positive.    This justifies exchanging the limit and integral in the following calculation. Let $\sF$ be a fundamental domain for $\bR^k/\Lambda$.
\begin{align*}&
\left\|\Theta_0\left(\ux, \frac{2t}{2k+1}; \Lambda\right) - \bU_{\bR^k/\Lambda}\right\|_{\TV(\bR^k/\Lambda)}= \frac{1}{2}\int_{\sF} \left| \Theta_0\left(\ux, \frac{2t}{2k+1}; \Lambda\right)-\frac{1}{p}\right|d\ux\\
&=\lim_{T\to \infty}\frac{1}{2}\int_{\sF}\left|\int_{\bR^k \cap (\lambda^*)^\perp}\eta_{k-1}(T, \uy)\left(\Theta\left(\ux+\uy, \frac{2t}{2k+1};\Lambda \right) -\frac{1}{p}\right)d\uy\right|d\ux
\end{align*}
Applying the triangle inequality,
\begin{align*}
\left\|\Theta_0 - \bU_{\bR^k/\Lambda}\right\|_{\TV(\bR^k/\Lambda)}&\leq \lim_{T \to \infty}\frac{1}{2}\int_{\sF}\int_{\bR^k \cap (\lambda^*)^\perp}\eta_{k-1}(T, \uy)\left|\Theta\left(\ux+\uy, \frac{2t}{2k+1};\Lambda \right) - \frac{1}{p}\right|d\uy d\ux\\
&= \lim_{T \to \infty}\int_{\bR^k \cap (\lambda^*)^\perp}\eta_{k-1}(T, \uy) \left\|\Theta - \bU_{\bR^k/\Lambda}\right\|_{\TV(\bR^k/\Lambda)} d\uy\\& =  \left\|\Theta - \bU_{\bR^k/\Lambda}\right\|_{\TV(\bR^k/\Lambda)}.
\end{align*}

Let \[\theta(x,t) = \sum_{j \in \zed} \exp(-2\pi^2 tj^2) e(jx)\] denote the time $t$ Gaussian diffusion on $\bR/\zed$.  For $t > 0$,
\[
 \left\|\Theta_0\left(\cdot, \frac{t}{\|\lambda^*\|_2^2}; \Lambda\right) - \bU_{\bR^k/\Lambda}\right\|_{\TV(\bR^k/\Lambda)} = \|\theta(\cdot, t) - \bU_{\bR/\zed}\|_{\TV(\bR/\zed)}.
\]
Since the latter distance is monotonically decreasing and smooth, and since for $n \geq t_1^{\mix}$, \[\left\|\mu_A^{*n} - \bU_{\zed^k/\Lambda}\right\|_{\TV(\zed^k/\Lambda)} = \left\|\Theta\left(\cdot, \frac{2n}{2k+1}; \Lambda\right) - \bU_{\bR^k/\Lambda}\right\|_{\TV(\bR^k/\Lambda)} + o(1)\] by Lemma \ref{continuous_replacement_lemma}, it follows that $t_1^{\mix} \gtrsim \tau_0 t^{\rel}$.

To give the spectral upper bound for $t^{\mix}_1$, again consider instead the
distance from uniformity of $\Theta\left(\cdot, \frac{2n}{2k+1};\Lambda\right)$
on $\bR^k/\Lambda$.  
For $t > 0$,
\begin{align}\notag
 &\left\|\Theta\left(\cdot, \frac{2t}{2k+1};\Lambda\right) -
\bU_{\bR^k/\Lambda}\right\|_{\TV(\bR^k/\Lambda)}^2 \leq
\frac{1}{4}\sum_{\lambda \in \Lambda^{\vee}\setminus\{0\}}
\exp\left(-\frac{8\pi^2 t \|\lambda\|_2^2}{2k+1} \right)
\end{align}
Writing the sum as a Stieltjes integral, then integrating by parts, the right hand side becomes
\begin{align}\label{L_2_estimate}&
\frac{1}{4}\int_{s=1^-}^{\infty} \exp\left(-\frac{8\pi^2 t\|\lambda^*\|_2^2
s^2
}{2k+1} \right)d\left(\left|\Lambda^\vee \cap B_2(0,
s\|\lambda^*\|) \right| \right)\\\notag
&= \frac{4\pi^2 t \|\lambda^*\|_2^2}{2k+1}\int_{1-}^\infty s
\exp\left(-\frac{8\pi^2 t\|\lambda^*\|_2^2 s^2
}{2k+1}
\right) \left|\Lambda^\vee \cap B_2(0,
s\|\lambda^*\|) \right|ds.
\end{align}
Set $t = \tau \frac{2k+1}{4\pi^2 \|\lambda^*\|_2^2}$ so that $\tau \sim
\frac{t}{t^{\rel}}.$  Thus (\ref{L_2_estimate}) simplifies to
\begin{align*}
 (\ref{L_2_estimate}) &= \tau \int_{1^-}^{\infty}s
\exp\left(-2\tau s^2 \right) \left|\Lambda^\vee \cap B_2(0,
s\|\lambda^*\|) \right|ds\\
& \leq \tau\int_{1^-}^{\infty}s
\exp\left(-2\tau s^2 + (1 + \varepsilon(k))kF(s)\right)ds
\end{align*}
where $\varepsilon(k) \to 0$ as $k \to \infty$, and
\[
 F(s) = \left[\frac{1 + \sin \theta}{2\sin
\theta}
\log \frac{1 + \sin \theta}{2 \sin \theta} - \frac{1 - \sin \theta}{2 \sin
\theta} \log \frac{1 - \sin \theta}{2 \sin \theta}  \right], \qquad \theta(s) =
2
\sin^{-1} \left(\frac{1}{2s} \right)
\] see Lemma
\ref{lattice_point_bound_lemma}.  The maximum of $\frac{F(s)}{s^2}$ in $s \geq
1$ occurs at $s = 1.260816271(1)$ with maximum $< 0.324908241$ and
$\frac{F(s)}{s^2} \to
0$ as $s \to \infty$.  Thus, choosing $2\tau = (0.325 +
\tilde{\varepsilon}(k))k$
for an appropriate function $\tilde{\varepsilon}(k)$ tending to 0 as $k \to
\infty$ the $L^2$ distance is negligible so that $\tau t^{\rel}$ is an upper
bound for $t^{\mix}_2 \geq t^{\mix}_1$.
\end{proof}

\subsection{Geometric mixing time bound, proof of Theorem
\ref{mixing_time_theorem}}\label{geometric_mixing_section} Let $p$, $A$ and
$\Lambda$ as above, and let $\sF$ be the Voronoi cell for $\bR^k/\Lambda$.  Note that $\zed^k \cap \overline{\sF}$
contains a system of representatives for $\zed^k/\Lambda$, and that the Cayley
graph $\sC(A,p)$ is isomorphic to $\sC(\{0, \pm e_i: 1 \leq i \leq k\},
\zed^k/\Lambda)$.  Thus
\[
  \rad(\sF) := \sup\left\{\|\ux\|_2: \ux \in \sF\right\}=
\diam_{\geom}(\sC(A,p)).
\]

\begin{proof}[Proof of Theorem \ref{mixing_time_theorem}]
 Write $D = \diam_{\geom}(\sC(A,p))$ and assume, as we may, that $t > kD^2$.
In view of Lemma \ref{diameter_lemma}, which proves $D \geq \frac{1}{\ell(A)}$, we have $t \gg t^{\rel}$, and thus as in Lemma \ref{continuous_replacement_lemma} \[\left\|\mu_A^{*t} -
\bU_{\zed/p\zed}\right\|_{\TV(\zed/p\zed)} +o(1) = \left\|\Theta\left(\cdot,
\frac{2t}{2k+1};\Lambda\right) -
\bU_{\bR^k/\Lambda}\right\|_{\TV(\bR^k/\Lambda)}, \]
so we will estimate the right hand side.

Since, for any $\ux$, $t$, $\E_{\uy \in \sF}\left[\Theta\left(\ux + \uy, \frac{2t}{2k+1}; \Lambda\right)  \right] = \frac{1}{p}$, we may estimate using the triangle inequality 
\begin{align*}
 &\left\|\Theta -
\bU_{\bR^k/\Lambda} \right\|_{\TV(\bR^k/\Lambda)}
= \frac{1}{2}\int_{\ux \in \sF}\left|\Theta\left(\ux, \frac{2t}{2k+1}; \Lambda\right) -\E_{\uy \in \sF}\left[\Theta\left(\ux+\uy, \frac{2t}{2k+1}; \Lambda\right) \right]\right| d\ux
\\& \leq \frac{1}{2} \int_{\ux \in \sF} \sum_{\lambda \in \Lambda} \left|\eta_k\left(\sqrt{\frac{2t}{2k+1}}, \ux-\lambda \right) - \E_{\uy \in \sF}\left[\eta_k\left(\sqrt{\frac{2t}{2k+1}}, \ux+\uy-\lambda \right) \right] \right| d\ux.
\end{align*}
Now use the inequality $|1 -e^x| \leq e^{|x|}-1$
to obtain
\begin{align*}
&\left\|\Theta -
\bU_{\bR^k/\Lambda} \right\|_{\TV(\bR^k/\Lambda)}
\\
& \leq \frac{1}{2} \int_{\ux \in \sF}\sum_{\lambda \in \Lambda} \eta_k\left(\sqrt{\frac{2t}{2k+1}}, \ux-\lambda \right) \E_{\uy \in \sF}\left[\exp\left(\frac{2k+1}{4t}\left(\|\uy\|_2^2 + 2|\langle \ux - \lambda, \uy\rangle | \right) \right)-1 \right] d\ux.
\end{align*}
Fold together the sum over $\lambda$ and the integral over $\ux$, then integrate away all directions in $\ux$ orthogonal to $\uy$ to obtain
\begin{align*}
  &\left\|\Theta\left(\cdot,
\frac{2t}{2k+1};\Lambda\right) -
\bU_{\bR^k/\Lambda} \right\|_{\TV(\bR^k/\Lambda)}\\
& \leq \frac{1}{2} \int_{x \in \bR} \eta_1 \left(\sqrt{\frac{2t}{2k+1}}, x \right) \E_{\uy \in \sF} \left[\exp\left(\frac{2k+1}{4t}\left(\|\uy\|_2^2 + \|\uy\|_2 |x|\right)\right)  - 1\right] dx\\
& \ll D \sqrt{\frac{k}{t}}.
\end{align*}
The last estimate follows on using $\frac{1}{\sqrt{2\pi}} \int_{x \in \bR} e^{-\frac{x^2}{2} + \delta |x|} dx = 1 + O(\delta)$ as $\delta \downarrow 0$.
\end{proof}

\section{Transition window bound, proof of Theorem
\ref{bounded_gen_set_theorem}}
We prove the following somewhat more general theorem.
\begin{theorem}\label{derivative_theorem}
 Let $p$ be a large prime and let $k \leq  \frac{\log
    p}{\log \log p}$.  Let $A \subset \zed/p\zed$ be a lazy symmetric
generating set of size $|A| = 2k+1$.  For any $1>\epsilon_1> \epsilon_2>0$,
for all $n <
\exp\left(\frac{2\epsilon_2}{ k}\right)\cdot t^{\mix}_1(\epsilon_1)$ we have
\[\left\|\mu_A^{*n} - \bU_{\zed/p\zed}\right\|_{\TV(\zed/p\zed)} \geq
  \epsilon_1 - \epsilon_2+ o_{\epsilon_1,\epsilon_2}(1).\]
\end{theorem}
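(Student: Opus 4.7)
The plan is to work in the Gaussian diffusion model afforded by Lemma \ref{continuous_replacement_lemma} and to bound from below the rate at which $\|\Theta(\cdot, t; \Lambda) - \bU_{\bR^k/\Lambda}\|_{\TV}$ decays in $t$ using the slowest Fourier mode of the walk. Write $n_1 = t_1^{\mix}(\epsilon_1)$. If $n < n_1$ the conclusion is immediate from the definition of the mixing time, so we may assume $n_1 \leq n < e^{2\epsilon_2/k} n_1$. Set $t_j = 2 n_j/(2k+1)$, $\delta t = t_2 - t_1$, $\Lambda = \Lambda(A)$, and $\phi_t = \Theta(\cdot, t; \Lambda) - \bU_{\bR^k/\Lambda}$. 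Lemma \ref{continuous_replacement_lemma} gives $\|\phi_{t_1}\|_{\TV} = \epsilon_1 + o(1)$, and reduces the theorem to $\|\phi_{t_2}\|_{\TV} \geq \epsilon_1 - \epsilon_2 + o(1)$.

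The theta function satisfies the heat equation $\partial_t \Theta = \tfrac{1}{2}\Delta \Theta$ on $\bR^k/\Lambda$, so $\phi_{t_2} = P_{\delta t}\phi_{t_1}$ where $P_s$ is the heat semigroup. By the triangle inequality
\[
 \|\phi_{t_2}\|_{\TV} \geq \|\phi_{t_1}\|_{\TV} - \|(I - P_{\delta t})\phi_{t_1}\|_{\TV},
\]
so it is enough to prove $\|(I - P_{\delta t})\phi_{t_1}\|_{\TV} \leq \epsilon_2 + o(1)$. This is the quantitative estimate on the time derivative of the density referred to in the discussion of method: it captures how much the distribution can evolve over the extra $\delta t$ units of time past the mixing threshold.

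Expanding $(I - P_{\delta t})\phi_{t_1} = \vol(\Lambda)^{-1}\sum_{\lambda \in \Lambda^{\vee} \setminus \{0\}} e^{-2\pi^2 t_1 \|\lambda\|^2}(1 - e^{-2\pi^2 \delta t \|\lambda\|^2}) e(\lambda \cdot \ux)$, combining Cauchy--Schwarz $\|f\|_{L^1} \leq \sqrt{\vol(\Lambda)}\,\|f\|_{L^2}$ with Parseval, and applying $1 - e^{-x}\leq x$:
\[
 \|(I - P_{\delta t})\phi_{t_1}\|_{L^1}^2 \leq (2\pi^2 \delta t)^2 \sum_{\lambda \neq 0} \|\lambda\|^4\, e^{-4\pi^2 t_1 \|\lambda\|^2}.
\]
By Lemma \ref{t_mix_lower}, $2\pi^2 t_1 \|\lambda^*\|^2 \geq \log(1/(2\epsilon_1)) + o(1)$, so (for $\epsilon_1$ sufficiently small) $x^4 e^{-4\pi^2 t_1 x^2}$ is decreasing for $x \geq \|\lambda^*\|$ and the sum is controlled by the near-minimal vectors of $\Lambda^\vee$. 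Using the point-count bound of Lemma \ref{lattice_point_bound_lemma} together with $e^{-4\pi^2 t_1 \|\lambda^*\|^2} \leq 4\epsilon_1^2$, this yields $\|(I - P_{\delta t})\phi_{t_1}\|_{\TV} \leq C\,\epsilon_1\,\delta t\,\|\lambda^*\|^2(1+o(1))$ for an absolute constant $C$.

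To finish, apply Lemma \ref{mixing_time_lower_bound_lemma} to write $4\pi^2 \|\lambda^*\|^2 \sim (2k+1)/t^{\rel}$, and the upper bound $n_1 \lesssim_k 0.163\, k\, t^{\rel}$ from Theorem \ref{spectral_mixing_time_theorem}, so
\[
 \delta t\,\|\lambda^*\|^2 \leq \frac{(e^{2\epsilon_2/k}-1)\,n_1}{2\pi^2 t^{\rel}}(1+o(1)) \leq \frac{0.163\,k\,(e^{2\epsilon_2/k}-1)}{2\pi^2}(1+o(1)) = O(\epsilon_2),
\]
uniformly in $k \geq 1$. Since $\epsilon_1 < 1$ the resulting bound $C\,\epsilon_1\,\delta t\,\|\lambda^*\|^2$ is $\leq \epsilon_2 + o(1)$, and the triangle inequality gives $\|\phi_{t_2}\|_{\TV} \geq \epsilon_1 - \epsilon_2 + o_{\epsilon_1,\epsilon_2}(1)$, as required. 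The main technical obstacle is the implied constant $C$: when $\Lambda^\vee$ has many near-minimal vectors (large kissing number) the crude Cauchy--Schwarz step loses a factor depending on this multiplicity, so one must either exploit the restriction $k \leq \log p/\log\log p$ to absorb this loss into the $o(1)$ error or refine the step to account for cancellation between modes.
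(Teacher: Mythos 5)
Your overall framework (pass to the diffusion $\Theta$ via Lemma \ref{continuous_replacement_lemma}, then show the TV distance can drop by at most $\epsilon_2$ over the extra multiplicative time $e^{2\epsilon_2/k}$) matches the paper's, but your mechanism for the second step is frequency-side and it has a genuine gap at the Cauchy--Schwarz/Parseval step. You bound $\|(I-P_{\delta t})\phi_{t_1}\|_{L^1}^2$ by $(2\pi^2\delta t)^2\sum_{\lambda\neq 0}\|\lambda\|^4 e^{-4\pi^2 t_1\|\lambda\|^2}$, and you need this to be $O(\epsilon_2^2)$ with an absolute constant. But at the total variation mixing time the $L^2$ distance is typically \emph{not} small: as the paper's discussion of method notes (and as the computation closing the proof of Theorem \ref{spectral_mixing_time_theorem} makes quantitative), for a generic lattice $t_1^{\mix}\approx \frac{k}{2e^2}t^{\rel}$ while $L^2$ mixing requires $\approx 0.163\,k\,t^{\rel}$; plugging $2\tau = 2n_1/t^{\rel}\approx 0.135 k$ into the integrand $e^{-2\tau s^2+kF(s)}$ with $F(s)/s^2$ up to $0.325$ shows the sum is $e^{\Theta(k)}$, not $O(1)$, even after the $\|\lambda\|^4$ weights are accounted for. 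So your ``constant $C$'' is $e^{\Theta(k)}$ in the typical case, and the restriction $k\le \log p/\log\log p$ does not rescue you: the quantity must be $\le\epsilon_2$, an absolute constant, not merely $p^{o(1)}$. Even for bounded $k$ the constant depends on the kissing number of $\Lambda^\vee$ and cannot be beaten down by $\epsilon_1<1$ alone. You correctly flag this obstacle in your last sentence, but flagging it is not resolving it; as stated the proof does not close.

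The paper avoids frequency space entirely. It uses the pointwise, purely time-domain inequality
\[
\frac{d}{d\sigma}\,\Theta\!\left(\ux,\tfrac{2\sigma t_1^{\mix}(\epsilon_1)}{2k+1};\Lambda\right)\;\ge\;-\frac{k}{2\sigma}\,\Theta\!\left(\ux,\tfrac{2\sigma t_1^{\mix}(\epsilon_1)}{2k+1};\Lambda\right),
\]
which holds term by term in the lattice sum because differentiating $\eta_k(\sqrt{2\sigma t/(2k+1)},\ux+\lambda)$ in $\sigma$ the only negative contribution comes from the prefactor $(2\pi\sigma\,\cdot)^{-k/2}$, yielding exactly $-k/(2\sigma)$ times the summand, while the exponential factor contributes positively. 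Integrating this over the superlevel set $P(\sigma_0)=\{\Theta>1/p\}$ and using that $\int_{P(\sigma_0)}\Theta\le 1$ gives that the positive-part ($=$ TV) distance drops by at most $\frac{k}{2}\log\frac{\sigma}{\sigma_0}$, which equals $\epsilon_2$ for $\sigma=e^{2\epsilon_2/k}$, $\sigma_0=1$. This sidesteps both the $L^2$ blow-up and any lattice point counting, and it also supplies (via $\sigma_0=1-1/t_1^{\mix}$, $\sigma=1$) the continuity statement $\|\phi_{t_1}\|_{\TV}=\epsilon_1+o(1)$ that you assert without proof. If you want to salvage your route you would need to restrict the frequency sum to the superlevel set in physical space rather than apply Parseval globally, which essentially reproduces the paper's argument.
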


\begin{proof}
Let $ \Lambda< \zed^k$ be any lattice representing the class of $\Lambda(A) \in
\sL$. By Lemma \ref{continuous_replacement_lemma} we may replace
$\left\|\mu_A^{*n} - \bU_{\zed/p\zed}\right\|_{\TV(\zed/p\zed)} $ with
$\left\|\Theta\left(\ux, \frac{2n}{2k + 1};
\Lambda\right) - \bU_{\bR^k/\Lambda}
\right\|_{\TV(\bR^k/\Lambda)} $ making error $o(1)$.

Write $n = \sigma t^{\mix}_1(\epsilon_1)$.  Differentiating under the sum in the $\theta$ function, 
\begin{align}\label{time_derivative}\frac{d}{d\sigma}\Theta\left(\ux, \frac{2 \sigma
t^{\mix}_1(\epsilon_1)}{2k+1}; \Lambda \right)\Bigg|_{\sigma = \sigma'} &\geq -
\frac{k}{2\sigma'} \Theta\left(\ux, \frac{2 \sigma'
t^{\mix}_1(\epsilon_1)}{2k+1}; \Lambda \right) .
 \end{align}
Also, $\left\|\Theta\left(\ux, \frac{2 \sigma
t^{\mix}_1(\epsilon_1)}{2k+1};\Lambda \right) - \bU_{\bR^k/\Lambda}
\right\|_{\TV(\bR^k/\Lambda)}$ is a
decreasing function of $\sigma>0$. Define
\[
P(\sigma) =  \left\{\ux \in \bR^k/\Lambda:
 \Theta\left(\ux,
\frac{2 \sigma
t^{\mix}_1(\epsilon_1)}{2k+1};\Lambda \right) > \frac{1}{p}\right\}.
\]

Now for any $\sigma, \sigma_0 >0$,
\begin{align*}\left\|\Theta\left(\cdot, \frac{2 \sigma
t^{\mix}_1(\epsilon_1)}{2k+1};\Lambda \right) -
\bU_{\bR^k/\Lambda}\right\|_{\TV(\bR^k/\Lambda)} &= \int_{P(\sigma)}
\Theta\left(\ux,
\frac{2 \sigma
t^{\mix}_1(\epsilon_1)}{2k+1};\Lambda \right) - \frac{1}{p} d\ux \\&\geq
\int_{P(\sigma_0)}
\Theta\left(\ux,
\frac{2 \sigma
t^{\mix}_1(\epsilon_1)}{2k+1};\Lambda \right) - \frac{1}{p} d\ux.\end{align*}
Thus for $\sigma > \sigma_0$,
\begin{align}
\notag &\left\|\Theta\left(\cdot, \frac{2 \sigma
t^{\mix}_1(\epsilon_1)}{2k+1};\Lambda \right) -
\bU_{\bR^k/\Lambda}\right\|_{\TV(\bR^k/\Lambda)} -\left\|\Theta\left(\cdot,
\frac{2 \sigma_0
t^{\mix}_1(\epsilon_1)}{2k+1};\Lambda \right) -
\bU_{\bR^k/\Lambda}\right\|_{\TV(\bR^k/\Lambda)} \\ &\label{P_sigma} \qquad\qquad\geq
\int_{P(\sigma_0)} \Theta\left(\ux, \frac{2 \sigma
t^{\mix}_1(\epsilon_1)}{2k+1};\Lambda \right)
-
\Theta\left(\ux, \frac{2 \sigma_0 t^{\mix}_1(\epsilon_1)}{2k+1};\Lambda \right)
d\ux
\end{align}
Differentiate under the integral, then apply (\ref{time_derivative}) and finally drop the restriction to $P(\sigma_0)$ to obtain the estimate
\begin{align}
 \notag (\ref{P_sigma})  &= \int_{P(\sigma_0)} \int_{\sigma_0}^\sigma \frac{d}{ds}
\Theta\left(\ux, \frac{2 s t^{\mix}_1(\epsilon_1)}{2k+1};\Lambda
\right)\Bigg|_{s = \sigma'} d\sigma' d\ux
\\\notag &  \geq -\frac{k}{2}\int_{\sigma_0}^\sigma
\frac{1}{\sigma'} \int_{P(\sigma_0)} \Theta\left(\ux,\frac{2 \sigma'
t^{\mix}_1(\epsilon_1)}{2k+1};\Lambda
\right) d\ux  d\sigma'
\\& \label{deriv_bound}  \geq \frac{-k}{2}\log \frac{\sigma}{\sigma_0}.
\end{align}

Note that $k = o\left(t^{\mix}_1(\epsilon_1)\right)$.
Applying (\ref{deriv_bound}) with $\sigma_0 = 1 -
\frac{1}{t^{\mix}_1(\epsilon_1)}$ and
$\sigma =
1$, which corresponds to the random walk at the mixing time and the step
before, we deduce
\[
 \left\|\Theta\left(\cdot, \frac{2
t^{\mix}_1(\epsilon_1)}{2k+1};\Lambda \right) -
\bU_{\bR^k/\Lambda}\right\|_{\TV(\bR^k/\Lambda)} = \epsilon_1 +
o_{\epsilon_1}(1).
\]
Applying (\ref{deriv_bound}) again, but
now with $\sigma_0 = 1$, $\sigma = \exp(\frac{2\epsilon_2}{ k})$, we obtain
in the range $t^{\mix}_1(\epsilon_1) < n
< \exp(\frac{2\epsilon_2}{ k})\cdot t^{\mix}_1(\epsilon_1)$,
\begin{align*}\left\|\mu_A^{(n)} - \bU_{\zed/p\zed}\right\|_{\TV(\zed/p\zed)}
\geq
\epsilon_1 - \epsilon_2 +o_{\epsilon_1, \epsilon_2}(1).\end{align*}
\end{proof}
\section{Random random walk, proof of Theorem \ref{random_theorem}}
 We record several facts regarding the uniform measure $\bU_L$ on the set
$L(p,k)$ of index $p$ lattices in $\zed^k$.  
\begin{lemma}\label{dual_lattice_lemma}
 When $\Lambda$ is chosen uniformly from $L(p,k)$, the dual lattice
$\Lambda^\vee$ has the distribution of
\[
 \{0, 1, \cdots, p-1\} \frac{\uv}{p} + \zed^k
\]
where $\uv$ is a uniform random vector in $(\zed/p\zed)^k \setminus \{0\}$.

When $\Lambda$ is chosen uniformly from $L^0(p,k)$, the dual lattice
$\Lambda^{\vee}$ has the distribution of
\[
\{0, 1, \cdots, p-1\}\frac{\uv}{p} + \zed^k
\]
where $\uv$ is chosen uniformly from
\[
\sD = \{\uv \in (\zed/p\zed \setminus \{0\})^k: \forall 1 \leq i < j \leq k,
v_i \neq \pm v_j \}.
\]
\end{lemma}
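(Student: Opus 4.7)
The plan is to set up an explicit parametrization of $L(p,k)$ by nonzero vectors $\uv \in (\zed/p\zed)^k$, compute the corresponding dual lattice directly, and then translate the ``no short vector'' defining condition of $L^0(p,k)$ into a condition on $\uv$.

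First I would identify $L(p,k)$ with the set of quotient maps $\zed^k \twoheadrightarrow \zed/p\zed$ up to automorphism of the target. Since $p$ is prime, any $\Lambda \in L(p,k)$ arises as $\Lambda_\uv := \ker(\un \mapsto \un \cdot \uv \bmod p)$ for some $\uv \in (\zed/p\zed)^k \setminus \{0\}$, and $\Lambda_\uv = \Lambda_{\uw}$ iff $\uw \in \bF_p^\times \cdot \uv$. Thus the map $\uv \mapsto \Lambda_\uv$ is a uniformly $(p-1)$-to-$1$ surjection $(\zed/p\zed)^k \setminus \{0\} \to L(p,k)$, so uniform $\uv$ pushes forward to uniform $\Lambda$.

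Next I would compute $\Lambda_\uv^\vee$ directly. Clearly $\zed^k \subset \Lambda_\uv^\vee$, and $\tfrac{\uv}{p} \in \Lambda_\uv^\vee$ because $\langle \tfrac{\uv}{p}, \un\rangle = \tfrac{1}{p}(\un \cdot \uv) \in \zed$ for $\un \in \Lambda_\uv$. Hence $\zed^k + \zed \cdot \tfrac{\uv}{p} \subset \Lambda_\uv^\vee$. Comparing indices, $[\Lambda_\uv^\vee : \zed^k] = \vol(\zed^k)/\vol(\Lambda_\uv^\vee) = \vol(\Lambda_\uv) = p$, and the left-hand inclusion already has index $p$ since $\uv \neq 0$ modulo $p$; so the inclusion is an equality and
\[
\Lambda_\uv^\vee = \zed^k + \zed\cdot \tfrac{\uv}{p} = \{0,1,\dots,p-1\}\tfrac{\uv}{p} + \zed^k,
\]
proving the first assertion.

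For the second assertion, I would characterize $L^0(p,k)$ in terms of $\uv$. A nonzero $\un \in \zed^k$ with $\|\un\|_2^2 \leq 2$ lies in $\{\pm e_i : 1 \leq i \leq k\} \cup \{\pm e_i \pm e_j : 1 \leq i < j \leq k\}$. Such a vector belongs to $\Lambda_\uv$ exactly when $v_i \equiv 0 \bmod p$ (in the first case) or $v_i \equiv \pm v_j \bmod p$ (in the second). Thus $\Lambda_\uv \in L^0(p,k)$ iff $\uv \in \sD$, and conditioning the uniform distribution on $(\zed/p\zed)^k\setminus\{0\}$ on the event $\uv \in \sD$ yields the uniform distribution on $\sD$. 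No real obstacle is anticipated; the only point to watch is the index computation in the dual lattice step, which follows from $\vol(\Lambda_\uv)\cdot\vol(\Lambda_\uv^\vee)=1$ and $\vol(\Lambda_\uv)=p$.
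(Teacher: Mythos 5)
Your proof is correct and follows essentially the same route as the paper: identify each $\Lambda\in L(p,k)$ with the kernel of $\un\mapsto \un\cdot\uv \bmod p$, read off $\Lambda^\vee = \zed^k + \zed\frac{\uv}{p}$ from the index/covolume computation, and translate the condition $\|\lambda\|_2^2>2$ into $\uv\in\sD$. The only cosmetic difference is that you justify uniformity by counting the $(p-1)$-to-$1$ fibers of $\uv\mapsto\Lambda_\uv$, whereas the paper invokes the transitive action of $\SL_k(\zed/p\zed)$; both are fine.
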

\begin{proof}
 In the case of $L(p,k)$, the structure follows from $[\Lambda:\zed^k] = p$ and
$\frac{1}{p}\zed^k<\Lambda$, while the uniformity
follows from the fact that $\SL_k(\zed/p\zed)$ acts transitively on the space of dual lattices.  This holds since any non-zero vector may be completed to a basis for $(\zed/p\zed)^k$.

The further conditions imposed in the case of $L^0(p,k)$ are those necessary to
ensure that $\Lambda$ does not contain a vector $\lambda$ with $\|\lambda\|_2^2
\in \{1,2\}.$
\end{proof}

\begin{lemma}\label{expectation_lemma}
 Let $p$ be prime, let $k \geq 2$ and let $\uv \neq \uw \in \zed^k$. We have
\[
 \bU_L(\Lambda: \uv, \uw \in \Lambda) = \left\{ \begin{array}{lll}
1 && \uv, \uw \in (p\zed)^k\\
\frac{p^{k-1}-1}{p^k-1}  &&  |\zed \uv + \zed \uw \bmod p| = p\\
  \frac{p^{k-2}-1}{p^k-1}&& |\zed \uv + \zed \uw \bmod
p| = p^2
  \end{array}\right..
\]
In particular, $\bU_L(L^0(p,k)) \geq 1 - O\left(\frac{k^2}{p} \right).$
\end{lemma}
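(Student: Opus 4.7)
The plan is to parametrize $L(p,k)$ by its dual lattices using Lemma \ref{dual_lattice_lemma}, reduce the event $\{\uv, \uw \in \Lambda\}$ to a linear orthogonality condition modulo $p$, and then count.

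By Lemma \ref{dual_lattice_lemma} the map $\Lambda \mapsto \Lambda^\vee$ identifies $L(p,k)$ with the set of lines $\bF_p \ubar u \subset (\bF_p)^k$, $\ubar u \neq 0$; hence $|L(p,k)| = \frac{p^k-1}{p-1}$ and the uniform measure on $L(p,k)$ pulls back to the uniform measure on $\bP^{k-1}(\bF_p)$. A vector $\uv \in \zed^k$ lies in $\Lambda$ iff $\langle \lambda', \uv\rangle \in \zed$ for every $\lambda' \in \Lambda^\vee$, which under this identification is the linear condition $\langle \ubar u, \uv\rangle \equiv 0 \bmod p$. So $\bU_L(\uv, \uw \in \Lambda)$ equals the fraction of lines in $(\bF_p)^k$ contained in the annihilator $W$ of the span of $\{\uv, \uw\} \bmod p$.

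If $\uv \equiv \uw \equiv 0 \bmod p$, then $W = (\bF_p)^k$ and every $\Lambda$ works, giving probability $1$. If the image of $\{\uv,\uw\}$ in $(\bF_p)^k$ spans a line, $W$ has dimension $k-1$ and contains $\frac{p^{k-1}-1}{p-1}$ lines, giving $\frac{p^{k-1}-1}{p^k-1}$. If the image spans a plane, $W$ has dimension $k-2$, giving $\frac{p^{k-2}-1}{p^k-1}$. This matches the three cases in the statement.

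For the final assertion, note $L(p,k)\setminus L^0(p,k)$ is covered by the events $\{\lambda \in \Lambda\}$ as $\lambda$ ranges over the short vectors $\pm e_i$ and $\pm e_i \pm e_j$, $1 \leq i < j \leq k$, of which there are $2k + 4\binom{k}{2} = 2k^2$. Each such $\lambda$ is nonzero mod $p$ (since $p \geq 3$), so by the single-vector version of the computation above (one linear condition: lines inside a hyperplane), $\bU_L(\lambda \in \Lambda) = \frac{p^{k-1}-1}{p^k-1} = O(1/p)$. A union bound gives $\bU_L(L(p,k) \setminus L^0(p,k)) = O(k^2/p)$, as claimed.

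There is no real obstacle; the only point that requires a small comment is that, in Case 1, every index-$p$ sublattice of $\zed^k$ automatically contains $p\zed^k$ since $p$ annihilates the quotient, so the probability is genuinely $1$.
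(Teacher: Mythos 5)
Your proof is correct and follows exactly the route the paper intends: the paper's own proof is the one-line remark that the claims ``follow immediately from the distribution of the dual group'' (Lemma \ref{dual_lattice_lemma}), and your argument is simply a careful unwinding of that remark, identifying $L(p,k)$ with $\bP^{k-1}(\bF_p)$, translating $\uv \in \Lambda$ into the orthogonality condition $\langle \ubar{u}, \uv\rangle \equiv 0 \bmod p$, and counting lines in the annihilator. The union bound over the $2k^2$ vectors of squared norm $1$ or $2$ for the final assertion is likewise the intended argument.
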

\begin{proof}
These follow immediately from the distribution of the dual group.
\end{proof}

\subsection{Summary of argument}
As the calculations in the remainder of this section are somewhat involved, we
pause to sketch the main ideas.

Theorem \ref{random_theorem} has three claims, the first two of which
consider the worst case mixing time behavior, with the third considering
typical behavior. When considering the walk as a diffusion on
$\bR^k/\Lambda$
where $\Lambda$ is a lattice, the spectrum of the transition kernel is
determined by the dual lattice $\Lambda^{\vee}$.  In general, it is
difficult to work on the spectral side due to the high concentration of
eigenvalues near the spectral gap, but in the worst case regime we are able to
show that for all behavior that persists, the dual lattice is essentially one
dimensional. When this occurs the mixing and relaxation times are
proportional and we obtain a slow transition.

In typical behavior the walk has a sharp transition to uniformity.  The analysis
in this regime consists of separate arguments estimating the
distance to uniformity at times $(1 \pm \epsilon)t^{\mix}_1$.  When considering
the walk at time $(1-\epsilon)t^{\mix}_1$ we study the diffusion
$\Theta\left(\ux, \frac{2t}{2k+1};\Lambda\right)$ on the norm-minimal
fundamental domain $\sF(\Lambda)$ for $\bR^k/\Lambda$.  For a particular lattice
$\Lambda$, $\sF(\Lambda)$ is a highly complex convex body determined by a number
of hyperplanes, but in a statistical sense, for the purpose of the lower bound,
$\sF(\Lambda)$ behaves very much like the volume $p$ ball of $\bR^k$ centered
at the origin.  A Gaussian in $\bR^k$ centered at the origin is
concentrated on a thin spherical shell (see Lemma
\ref{gaussian_length_concentration}), and the mixing time is essentially the
time needed for this spherical shell to expand to the boundary of the volume
$p$ ball.  At time $(1-\epsilon)t^{\mix}_1$ we are then
able to show that the diffusion is typically concentrated on a small measure
part of $\sF(\Lambda)$.

For the upper bound at time $(1+\epsilon)t^{\mix}_1$, we note that $p\zed^k <
\Lambda$, and we show that the distribution of values of
$\Theta\left(\ux, \frac{2t}{2k+1};\Lambda\right)$ is concentrated near 1
when $\ux$ is chosen uniformly from $\bR^k/p\zed^k$ and $\Lambda$ is chosen at
random from $L(p,k)$.  This is the most delicate part of the argument.  For
instance, it is not sufficient to consider the expectation of
$\left(\Theta\left(\ux, \frac{2t}{2k+1};\Lambda\right)-1\right)^2$ as
this gives an upper bound which is too weak, so we split $\Theta$ into an
$L^2$-concentrated piece $\Theta_M$ plus a small $L^1$ error $\Theta_E$.

\subsection{Slow mixing behavior}
We prove 
Theorem \ref{random_theorem} in two parts.  In this section we prove parts (1) and (2) which concern rare slow mixing walks.  In Section \ref{typical_section} we prove part (3) regarding the typical behavior.  The main estimate regarding slow mixing behavior is the following theorem.
\begin{theorem}\label{random_no_cut_off_theorem}
 Let $p$ be a large prime, and let $k = k(p)$ tending to $\infty$ with $p$ in
such a way that $k \leq \frac{\log
p}{\log \log p}$. For any $\delta > 0$, for all $p$ sufficiently large,
uniformly in  $\delta \frac{p^{\frac{1}{k}}}{\sqrt{k}} < \rho < \frac{(p
\log p)^{\frac{1}{k}}}{\delta}$, the following hold
\begin{enumerate}
 \item \begin{equation}
\label{small_vector_prob} \Prob_{\sA(p,k)}\left[t^{\rel} \geq  \frac{
e \rho^2 p^{\frac{2}{k}}}{\pi } \right] = \frac{\exp(o(k))}{\rho^k}.
\end{equation}
\item Let, as in Theorem \ref{random_theorem}, $\tau_0$ be the ratio
between total variation mixing time and relaxation time for Gaussian diffusion
on $\bR/\zed$.  For any $C \geq 1$, and $\frac{ \delta
p^{\frac{4}{k}}}{k}
\leq J \leq \frac{p^{\frac{4}{k}}(\log p)^{\frac{2}{k}}}{\delta}$
\begin{align}
\label{small_mixing_time_prob} \Prob_{\sA(p,k)}\left[t^{\mix}_1
\geq C(\tau_0 + \delta) t^{\rel} \text{ and } \frac{J}{2} \leq t^{\rel}\leq
J\right] &\leq
\exp\left(\frac{k}{2} \log \frac{k}{C} +
O_\delta(k)\right)\frac{p^2}{J^{k}}\\
\notag \Prob_{\sA(p,k)}\left[t^{\mix}_1
\leq (\tau_0 - \delta) t^{\rel} \text{ and }  \frac{J}{2} \leq t^{\rel}\leq
J\right]
&\leq \exp\left(\frac{k}{2} \log k +
O_\delta(k)\right)\frac{p^2}{J^{k}}.
\end{align}
\end{enumerate}
\end{theorem}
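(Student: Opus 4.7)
\noindent\emph{Proof plan.} I would work throughout in the lattice model from Section \ref{gen_sets_lattices_section}: replace the uniform $A \in \sA(p,k)$ by a uniform $\Lambda = \Lambda(A) \in L^0(p,k)$, and parametrize its dual via Lemma \ref{dual_lattice_lemma} as $\Lambda^\vee = \zed^k + \{0,1,\ldots,p-1\}\uv/p$ with $\uv$ uniform on $\sD$. By Lemma \ref{mixing_time_lower_bound_lemma}, up to a factor $1+o(1)$ the event $\{t^{\rel} \geq e\rho^2 p^{2/k}/\pi\}$ coincides with $\{\ell(A) \leq \ell_0\}$ where $\ell_0^2 = (2k+1)/(4\pi e \rho^2 p^{2/k})$; short dual vectors of length at most $\ell_0$ correspond bijectively, via $\uw = p\lambda$, to vectors of $p\Lambda^\vee = \zed\uv + p\zed^k \subset \zed^k$ of length at most $p\ell_0$. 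The identity $R_k^2 \sim k/(2\pi e)$ then gives $\vol B_2(0, p\ell_0) = p^{k-1}\rho^{-k}e^{o(k)}$.

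The upper bound in Part (1) is a first-moment calculation: for $\uw \in \zed^k \setminus p\zed^k$, $\Prob[\uw \in p\Lambda^\vee] = (p-1)/|\sD| = (1+o(1))p^{1-k}$ by Lemma \ref{dual_lattice_lemma}, while the $p\zed^k \setminus \{0\}$ contribution is empty in the stated $\rho$-range because $p\ell_0 < p$. Combining with Lemma \ref{point_count_lemma} to count short $\uw$ yields $\Prob \leq \rho^{-k}e^{o(k)}$. For the matching lower bound I would apply a (truncated) Paley--Zygmund inequality to $X = \#\{\uw \in \zed^k \setminus p\zed^k: \|\uw\|_2 \leq p\ell_0,\ \uw \in p\Lambda^\vee\}$. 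Since $\{\uw_1,\uw_2 \in p\Lambda^\vee\}$ can occur only when $\uw_1, \uw_2$ are $\bF_p^\times$-proportional mod $p$, grouping short reps by the line $L$ they determine in $(\zed/p\zed)^k$ and letting $N_L$ be the number of short reps on $L$ gives $\E[X^2] = \frac{p-1}{|\sD|}\sum_L N_L^2$. The typical line has $N_L = O(1)$, while anomalous contributions from lines carrying super-short generators are bounded using Lemma \ref{lattice_point_bound_lemma}; restricting $X$ to lines whose shortest generator has norm comparable to $\ell_0$ closes the Paley--Zygmund bound.

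For Part (2) I would exploit the heuristic that, conditional on $\ell(\Lambda^\vee) \asymp \sqrt{k/J}$ (i.e.\ $t^{\rel} \asymp J$), the dual lattice is typically essentially one-dimensional, generated modulo $\zed^k$ by a single shortest vector $\lambda^*$; in that regime the $\Theta_0$-projection argument from the proof of Theorem \ref{spectral_mixing_time_theorem} shows $t^{\mix}_1 \sim \tau_0 t^{\rel}$. Both deviations in \eqref{small_mixing_time_prob} therefore require a second short dual vector $\lambda^{**}$ of norm comparable to $\sqrt{k/(CJ)}$ not nearly parallel to $\lambda^*$. I would enumerate candidate configurations via the Farey-fraction parametrization described in the method discussion: modulo $\zed^k$, $\lambda^{**} \equiv bq^{-1}\lambda^*$ for bounded $b,q$, plus a short integer perturbation. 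For each Farey datum the joint event $\{\lambda^*,\lambda^{**} \in \Lambda^\vee\}$ has probability only $(p-1)/|\sD|$ (both conditions pin $\uv$ to a single line), and a union bound over such pairs produces the factor $\exp(\tfrac{k}{2}\log(k/C) + O_\delta(k))$ from the inner ball of radius $\sqrt{k/(CJ)}$ together with the factor $p^2/J^k$ from the two lattice-point counts at radius $\sqrt{k/J}$. The analogous bound for $\{t^{\mix}_1 \leq (\tau_0 - \delta)t^{\rel}\}$ follows by specializing $C = 1$.

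The hardest step will be the quantitative one-dimensional reduction underlying Part (2): given that $\Lambda^\vee$ has no second short vector transverse to $\lambda^*$, one must show that the total variation distance to uniformity at time scales $\asymp t^{\rel}$ is controlled by the one-dimensional diffusion on the $\lambda^*$-line up to $o(1)$ error, rather than merely by a triangle-inequality comparison as in the proof of Theorem \ref{spectral_mixing_time_theorem}. I expect this to reduce to showing that orthogonal Fourier modes (those in $\Lambda^\vee$ at an appreciable angle to $\lambda^*$) carry negligible $L^1$ mass at these times, using Gaussian concentration and the lattice-point count in Section \ref{background_section}; implementing this uniformly in $C$ and $J$ over the stated ranges is where the technical burden lies.
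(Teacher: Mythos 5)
Your proposal correctly identifies the architecture — lattice model, second-moment method for Part (1), and "a deviation in the mixing time forces a second short dual vector" for Part (2) — but it leaves two nontrivial gaps that the paper's proof closes with a single key estimate that you do not identify: Proposition \ref{pair_short_vector_prop}.

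For Part (1), the Paley--Zygmund route is a legitimate alternative to the paper's subtract-off-the-pairs argument in Proposition \ref{short_vector_prop}, and since the target is only $\exp(o(k))/\rho^k$ there is enough slack for it. But the heart of the matter — controlling $\E[X^2]$, i.e.\ the quantity $\sum_L N_L^2$ with the anomalous lines carrying super-short generators — is exactly what Proposition \ref{pair_short_vector_prop} does, via an honest Plancherel-on-$(\zed/p\zed)^k$ computation combined with the observation that all "correlated" pairs $\lambda_1\equiv a\lambda_2$ with both reduced and primitive force $a$ to be a Farey fraction $bq^{-1}$ with bounded $b,q$. Your sketch hands this off to Lemma \ref{lattice_point_bound_lemma}, which bounds a single lattice-ball count but not the sum $\sum_L N_L^2$ weighted over all lines; the uniform bound over the stated $\rho$-range (and later over $C,J$) needs the Plancherel trick or an equivalent.

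For Part (2), the gap is more serious. You state "both deviations therefore require a second short dual vector $\lambda^{**}$ ... not nearly parallel to $\lambda^*$" and then defer the justification, correctly calling it the hardest step. The paper's resolution is crisp and you have it pointed in the wrong logical direction: rather than conditioning on the absence of a transverse short vector and trying to show $t_1^{\mix}\sim\tau_0 t^{\rel}$ (which would require you to \emph{propagate} $L^1$-negligibility of the orthogonal modes through the total variation norm), one argues contrapositively and additively. The proof of Theorem \ref{spectral_mixing_time_theorem} already gives the one-sided inequality $\|\Theta_0-\bU\|_{\TV}\le \|\Theta-\bU\|_{\TV}$ by the projection/convexity argument, so if $\left|t_1^{\mix}/(\tau_0 t^{\rel})-1\right|>\delta$ then at some time $t=(1+O(\delta))t_1^{\mix}$ the difference $\|(\Theta-\Theta_0)(\cdot,\tfrac{2t}{2k+1};\Lambda)\|_{L^1(\bR^k/\Lambda)}\gg_\delta 1$. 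Cauchy--Schwarz plus Plancherel on $\bR^k/\Lambda$ then yields the explicit lower bound
\[
1\;\ll_\delta\;\sum_{\lambda\in\Lambda^\vee\setminus\zed\lambda^*}\exp\!\left(-\tfrac{8\pi^2 t}{2k+1}\|\lambda\|_2^2\right)
\;\ll\;\sum_{\substack{\lambda\in\Lambda^\vee\setminus\{\pm\lambda^*\}\\ p\lambda\ \text{primitive}}}\exp\!\left(-\tfrac{8\pi^2 t}{2k+1}\|\lambda\|_2^2\right),
\]
which is a Gaussian-weighted count of primitive dual vectors transverse to $\lambda^*$. From here one does not need a union bound over Farey configurations at all: Markov's inequality applied to the expectation estimate of Proposition \ref{pair_short_vector_prop} (which already encapsulates the Farey-fraction bookkeeping) gives the stated bound with the factor $\exp(\tfrac{k}{2}\log\tfrac{k}{C}+O_\delta(k))\,p^2/J^k$, uniformly in $C$ and $J$. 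Your union-bound plan would need to reprove Proposition \ref{pair_short_vector_prop} from scratch and would still need the Cauchy--Schwarz reduction to get from a mixing-time deviation to a countable event at all; without either, the argument does not close.
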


\begin{proof}[Deduction of Theorem \ref{random_theorem}, parts (1) and (2)]
Before giving the proof of the Theorem we prove an auxiliary claim.

Let $\delta>0$ be an arbitrarily small fixed quantity.  We claim that
with probability 1, only finitely many of the events
\[
B_p = \left\{ t^{\mix}_1(p) \geq \delta p^{\frac{4}{k}} \text{ and }
\left| \frac{t^{\mix}_1(p)}{\tau_0 t^{\rel}(p)} -1\right| \geq \delta
\right\}
\]
occur.  Note that by Theorem
\ref{spectral_mixing_time_theorem}, $t^{\mix}_1(p) \geq \delta p^{\frac{4}{k}}$
implies $t^{\rel}(p) \gg \delta\frac{p^{\frac{4}{k}}}{k}$.  Thus, combining
(\ref{small_vector_prob}) and (\ref{small_mixing_time_prob}),
\[
 \Prob\left(B_p\right) \leq \frac{\exp\left(k \log k +
O_{\delta}(k)\right)}{p^2} + \frac{1}{p\log p},
\]
where the first term is handled with (\ref{small_mixing_time_prob}) and covers the range $t^{\rel} \ll p^{\frac{4}{k}}(\log
p)^{\frac{2}{k}}$, the worst case occuring when $t^{\rel} \ll
\frac{p^{\frac{4}{k}}}{k}$ is minimized.
Note $k \leq \frac{\log p}{\log \log p}$ from which it follows
\[
\sum_p \Prob\left(B_p\right) \leq \sum_p \left(\frac{\exp\left(-\frac{\log
p}{\log \log p}\left(\log \log \log p + O_{\delta}(1) \right)
\right)}{p} + \frac{1}{p\log p}\right) < \infty,
\]
so that the claim holds by the Borel-Cantelli Lemma.

We now prove the Theorem.

(1) Replace $\rho(p)$ with $\rho(p):= \max\left(\rho(p),
p^{\frac{1}{k}}\right)$ without altering the divergence of $\sum_p \rho(p)^{-k}$.
Estimating with (\ref{small_vector_prob}), by Borel-Cantelli, with probability 1
there is an infinite sequence $\sP_0 \subset \sP$ such that, for $p \to \infty$
through $\sP_0$,
\[
t^{\rel}(p) \gtrsim \frac{e}{\pi}\rho(p)^2 p^{\frac{2}{k}}.
\]
The above remarks guarantee that, for this sequence,
$t^{\mix}_1(p) \sim \tau_0 t^{\rel}(p)$.

(2) Let $\delta > 0$ be fixed.  Estimate with (\ref{small_vector_prob}) to
obtain that with probability 1, for all but finitely many $p$,
\[
 t^{\rel}(p) \leq \left(1 + \frac{\delta}{2}\right)\frac{e}{\pi}\rho(p)^2
p^{\frac{2}{k}}.
\]
Since $\rho(p) \geq p^{\frac{1}{k}}$ eventually, the remarks above imply that
with probability 1
\[
 t^{\mix}_1(p) \leq (1 + \delta)\frac{e \tau_0}{\pi}\rho(p)^2
p^{\frac{2}{k}}
\] for all but finitely many $p$.

\end{proof}

In proving Theorem \ref{random_no_cut_off_theorem} we introduce two commonly used pieces of terminology from the theory of lattices.  Let $p$ be a prime and
let $k \geq 1$. Say that $\lambda \in \zed^k$ is \emph{reduced} (at $p$) if
$\lambda \in \left[ - \frac{p}{2}, \frac{p}{2}\right)^k$.  Any class $\lambda
\in (\zed/p\zed)^k$ has a unique reduced representative $r(\lambda) \in
\zed^k$.  Say that $\lambda = (\lambda_1, ..., \lambda_k) \in \zed^k$ is
\emph{primitive} if $\lambda \neq 0$ and $\GCD(\lambda_i: 1 \leq i \leq k) = 1$.

Our proof of Theorem \ref{random_no_cut_off_theorem} depends upon the
following two estimates, the first of which estimates a mean concerning pairs
of short vectors in the dual space.  
\begin{proposition}\label{pair_short_vector_prop} Let $\delta > 0$ be a fixed
constant.
 Let $p$ and $k(p)$ tend to $\infty$ in such a way that $k \leq
\frac{\log p}{\log\log p}$. Let
$ \frac{\delta p^{\frac{1}{k}}}{\sqrt{k}} \leq \rho \leq \frac{1}{\delta}(p \log
p)^{\frac{1}{k}}$.
For any $\frac{\delta}{\sqrt{k}} \leq C \leq \frac{\sqrt{k}}{\delta}$, for
any $\epsilon > 0$,
\begin{align}\label{two_short_vector_estimate}
\E_{L^0(p,k)}\left[\sum_{\substack{\lambda_1 \neq
\pm \lambda_2 \in
\Lambda^{\vee}\setminus\{0\}\\p\lambda_i \text{
primitive}}}\eta_k\left(
\frac{1}{\rho p^{\frac{1}{k}}},
\lambda_1 \right)\eta_k\left( \frac{1}{C\rho p^{\frac{1}{k}}},
\lambda_2\right)\right]
\leq p^2 + O_\epsilon\left(p^{\frac{3}{2} +
\frac{4}{k} + \epsilon}\right).&
\end{align}
\end{proposition}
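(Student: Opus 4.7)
My plan is to exchange the expectation with the sum over pairs of primitive integer vectors, and then apply Poisson summation twice. By Lemma \ref{dual_lattice_lemma}, choosing $\Lambda$ uniformly from $L^0(p,k)$ is equivalent to choosing $\uv$ uniformly from $\sD$ with $\Lambda^\vee(\uv) = \zed^k + \uv \bF_p / p$. Writing $\uw_i = p \lambda_i$ (so $\uw_i \in \zed^k$ is primitive and hence nonzero mod $p$), the condition $\lambda_i \in \Lambda^\vee$ becomes $\uw_i \equiv a_i \uv \bmod p$ for some $a_i \in \bF_p^*$. For a fixed pair of primitive vectors $\uw_1, \uw_2$, there exists $\uv$ placing both in $p\Lambda^\vee = L(\uv) := \zed \uv + p\zed^k$ only when $\uw_1, \uw_2$ are linearly dependent modulo $p$; in that case the admissible $\uv \in \sD$ number $(p-1)(1 + O(k^2/p))$. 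Writing $\sigma_1 := 1/(\rho p^{1/k})$, $\sigma_2 := 1/(C\rho p^{1/k})$, and $u_1 := \uw_1 \bmod p$, this gives
\[
\E_{L^0(p,k)}[\cdots] = \frac{(p-1)(1 + O(k^2/p))}{|\sD|} \sum_{\uw_1 \text{ prim}} \eta_k(\sigma_1, \tfrac{\uw_1}{p}) \sum_{\substack{\uw_2 \in L(u_1) \setminus p\zed^k \\ \uw_2 \text{ prim}, \, \uw_2 \neq \pm\uw_1}} \eta_k(\sigma_2, \tfrac{\uw_2}{p}).
\]

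Next I would apply Poisson summation to the inner sum on $L(u_1)$, dropping the primitivity and antipodal restrictions (whose contributions are dominated by the main bound after Möbius inversion and direct estimation). Fourier expansion on $L(u_1) = \zed u_1 + p\zed^k$ yields
\[
\sum_{\uw_2 \in L(u_1)} \eta_k(\sigma_2, \tfrac{\uw_2}{p}) = p \sum_{\un \in \Lambda(u_1)} e^{-2\pi^2 \sigma_2^2 \|\un\|^2}, \qquad \Lambda(u_1) := \{\un \in \zed^k : \un \cdot u_1 \equiv 0 \bmod p\}.
\]
Isolating $\un = 0$ and combining with $\sum_{\uw_1 \text{ prim}} \eta_k(\sigma_1, \uw_1/p) = p^k(1 + o(1))$ (Poisson on $\zed^k$ plus Möbius) together with $|\sD| = p^k(1 + O(k^2/p))$ produces the main term $p^2(1 + o(1))$.

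The error from $\un \neq 0$ is handled by reversing the order of summation and applying Poisson a second time. Vectors $\un \in p\zed^k \setminus \{0\}$ satisfy $\|\un\| \geq p$, forcing $e^{-2\pi^2 \sigma_2^2 \|\un\|^2}$ to be super-polynomially small in the target regime. For $\un \not\equiv 0 \bmod p$, the constraint $\uw_1 \cdot \un \equiv 0 \bmod p$ places $\uw_1$ in the index-$p$ sublattice $\Lambda(\un)$, and Poisson summation gives
\[
\sum_{\uw_1 \in \Lambda(\un)} \eta_k(\sigma_1, \tfrac{\uw_1}{p}) = p^{k-1} \sum_{\uw^* \in \zed^k + \bF_p \un/p} e^{-2\pi^2 (p\sigma_1)^2 \|\uw^*\|^2}.
\]
I would then stratify the resulting double sum over $\un$ and the cosets $\uw^* = \uz + a\un/p$ by the Farey level $(c,q)$ of the best rational representative $a \equiv c q^{-1} \bmod p$ with $|c|, q \leq \sqrt{p}/2$, where the map from Farey fractions of that height into $\bF_p^*$ is injective. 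Within each Farey stratum, the number of admissible $(\un, \uw_1)$ pairs is bounded via Minkowski's theorem and Lemma \ref{lattice_point_bound_lemma}, and summing the strata produces $O_\epsilon(p^{3/2 + 4/k + \epsilon})$: the $p^{3/2}$ arises from pairing $\sqrt{p}$-level Farey numerators and denominators against mod-$p$ incidence counts, the $p^{4/k}$ reflects the short-vector count from Minkowski in dimension $k$, and $p^\epsilon$ absorbs divisor-function losses.

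The main technical obstacle will be executing the Farey stratification uniformly across the parameter ranges $\delta/\sqrt{k} \leq C \leq \sqrt{k}/\delta$ and $\delta p^{1/k}/\sqrt k \leq \rho \leq (p\log p)^{1/k}/\delta$, and simultaneously balancing the cancellation from the oscillating exponentials $e(a \un \cdot \uw/p)$ against the amplification that occurs when $\un$ itself is short (so that $\Lambda(\un)^\vee$ possesses many short vectors coming from Farey approximants). Controlling this trade-off without losing extra powers of $p^{1/k}$ or $k$ is where the bulk of the work lies.
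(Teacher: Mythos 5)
Your dual-lattice parametrization and your first Poisson summation are both correct, and your toolkit (Poisson/Plancherel plus a Farey stratification) is the same one the paper uses. However, the step in which you drop the primitivity and antipodal restrictions, asserting that their contribution ``is dominated by the main bound after M\"obius inversion and direct estimation,'' is a fatal gap: those restrictions are the entire content of the proposition. Writing $\sigma_1 = 1/(\rho p^{1/k})$ and $\sigma_2 = 1/(C\rho p^{1/k})$, the single reinstated term $\uw_2 = \uw_1$ contributes to the (unrestricted) expectation
\[
\frac{p-1}{p^k-1}\sum_{\uw \in \zed^k}\eta_k\Bigl(\sigma_1,\tfrac{\uw}{p}\Bigr)\eta_k\Bigl(\sigma_2,\tfrac{\uw}{p}\Bigr)
\asymp p\,\bigl(2\pi(\sigma_1^2+\sigma_2^2)\bigr)^{-\frac{k}{2}}
= \frac{p^{2}\rho^{k}}{\bigl(2\pi(1+C^{-2})\bigr)^{\frac{k}{2}}},
\]
which for $C\asymp 1$ and $\rho$ at the upper end $\rho\asymp (p\log p)^{1/k}$ equals $p^{3+o(1)}$ (note $e^{O(k)}=p^{o(1)}$ for $k\le \log p/\log\log p$) --- vastly larger than $p^{2}+O(p^{3/2+4/k+\epsilon})$. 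The same is true of the near-diagonal terms with $q\uw_2 = b\uw_1$ for small $b,q$, which are exactly the configurations that primitivity forbids: $q\uw_2\equiv b\uw_1\bmod p$ plus a norm bound forces $q\uw_2=b\uw_1$ exactly, and primitivity of both vectors then gives $|b|=|q|$ and $\uw_2=\pm\uw_1$. So the unrestricted quantity you propose to bound genuinely violates the stated inequality, and no amount of subsequent Fourier-side work can recover it.

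The paper makes precisely this point the pivot of its argument: it uses primitivity of both $r(\lambda)$ and $r(a\lambda)$ to show that the multiplier $a$ must avoid the Farey set $\sF(Q)=\{bq^{-1}: \max(|b|,|q|/C)\le \rho p^{1/k}/(2Q)\}$, and only \emph{then} completes the sums over $\lambda$ and $a$ and applies Plancherel; the surviving restriction $a\in\sF(Q)^c$ is what forces $\|\xi/p\|_{(\bR/\zed)^k}$ and $\|a\xi/p\|_{(\bR/\zed)^k}$ to be incompatible for $\xi\neq 0$ and yields the error $O_\epsilon(p^{3/2+4/k+\epsilon})$. Your Farey stratification appears only after the second Poisson summation, applied to a quantity from which the restrictions have already been erased, so the strata with small $(b,q)$ --- in particular $(b,q)=(1,1)$, the diagonal --- are still present and carry the $p^{3+o(1)}$ mass. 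To repair the argument you must carry the primitivity and antipodal conditions through to an exclusion of the small Farey multipliers \emph{before} completing the sum, which is exactly the paper's route.
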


\begin{rem}
 This proposition should be interpretted as expressing the approximate independence of the appearance of a pair of short primitive vectors in the dual space.
\end{rem}

\begin{proof}
It is enough to estimate with respect to
$\bU_{L(p,k)}$ since this introduces a relative error $1 + O\left(\frac{k^2}{p} \right)$, which is smaller than the error claimed. 

Let $\sS \subset  (\zed/p\zed)^k \times
\zed/p\zed$ denote the set of pairs $(\lambda, a)$ such that $\lambda \in
(\zed/p\zed)^k$, $a \in \zed/p\zed\setminus \{0, \pm 1\}$ and both reduced
vectors $r(\lambda)$ and
$r(a\lambda)$ are primitive.  Also denote for $\lambda \in (\zed/p\zed)^k$,
$\sS(\lambda)\subset \zed/p\zed$ the fiber over $\lambda$.

Lemma \ref{dual_lattice_lemma} gives
\begin{align}
& \notag\E_{L(p,k)}\left[\sum_{\substack{\lambda_1 \neq
\pm \lambda_2 \in
\Lambda^{\vee}\setminus\{0\}\\p\lambda_i  \text{
primitive}}}\eta_k\left(\frac{1}{\rho p^{\frac{1}{k}}},
\lambda_1 \right)\eta_k\left(\frac{1}{C\rho p^{\frac{1}{k}}}, \lambda_2
\right)\right]\\
& \leq  \label{first_dual}\frac{p^{2k}(p-1)}{p^k-1} \sum_{\substack{\lambda \in
\left(\zed \cap \left(-\frac{p}{2}, \frac{p}{2}\right]\right)^k\\
\text{primitive}}} \sum_{a \in \sS(\lambda)}
\Phi_1(\lambda)\Phi_C(a\lambda) + o(1),
\end{align}
where
\[
 \Phi_c(\ux) = \sum_{\un \in \zed^k} \eta_k\left(\frac{p^{1-\frac{1}{k}}}{c
\rho }, \ux+p\un\right).
\]
To briefly explain this formula, the factor of $p^{2k}$ results from scaling both the variable and the standard deviation in the Gaussians by $p$.  The condition $\lambda_1 \equiv a \lambda_2 \bmod p\zed^k$ for some $a$ follows from the characterization of $\Lambda^\vee$.
The error term $o(1)$ covers summation over pairs $\lambda_1, \lambda_2$
for which one of $\lambda_1, \lambda_2$ is not reduced but both are primitive.  The summation
in this case is bounded by, for some $c > 0$ and all $B>0$
\begin{align}\notag
 &\ll p^{-k+1}\sum_{\substack{\lambda_1, \lambda_2 \in \left(
\frac{1}{p}\zed\right)^k\\ \max(\|\lambda_1\|_\infty, \|\lambda_2\|_\infty >
\frac{1}{2})}} \eta_k\left(\frac{1}{\rho p^{\frac{1}{k}}},
\lambda_1 \right)\eta_k\left(\frac{1}{C\rho p^{\frac{1}{k}}}, \lambda_2
\right)\\&\label{tail_sum} \ll p^{O(k)}\exp\left(-cp^{\frac{2}{k}} \right) =
O_B(p^{-B}),
\end{align}
since $p^{\frac{2}{k}}$ dominates $k \log p$.

We make several modifications to the sum of (\ref{first_dual}) which make it
easier to estimate.
First we may exclude from $\sS$ any pairs $(\lambda, a)$  for which
\[
\max\left(\rho p^{\frac{1}{k}}\left\|\frac{\lambda}{p}\right\|_{(\bR/\zed)^k},
C\rho p^{\frac{1}{k}}\left\| \frac{a\lambda}{p}\right\|_{(\bR/\zed)^k}\right) \geq
Q:= \sqrt{\log p \log \log p}
\]
as these contribute, for any $B>0$, $O_B(p^{-B})$.  To obtain this, note that the cardinality of the summation set is $O(p^{k+1})$ since we have replaced summation over $\lambda_2$ with summation over $a$.  
Thus it suffices to show that for excluded pairs, $\Phi_1(\lambda)\Phi_C(a\lambda) \ll_{B} p^{-2k-2-B}$; to see this, note that $\Phi$ is controlled by the contribution of the summand nearest 0.

Let $\sS'$ be those
choices of $(\lambda, a)$
which remain.  Denote by $\sF(Q)$ the collection of Farey fractions modulo $p$
(the definition is non-standard since the numerator and denominator are bounded
by
different quantities),
\[
\sF(Q) = \left\{b q^{-1} \bmod p: \max\left(|b|, \frac{|q|}{C}\right) \leq \frac{\rho
p^{\frac{1}{k}}}{2Q}, q \neq
0\right\}.
\]
We claim that for any reduced $\lambda$, $\sS'(\lambda \bmod p) \subset
\zed/p\zed \setminus \sF(Q).$ Indeed,
suppose otherwise and let $a= bq^{-1}\in \sS'(\lambda \bmod p) \cap \sF(Q)$. Let
$\eta \equiv a
\lambda \bmod p$ with $\eta$ reduced.  Then $b \lambda \equiv q \eta \bmod p$,
but the norm condition implies that in fact $b \lambda = q\eta$, which
contradicts the primitivity.

Replace $\sS'(\lambda)$ with $\zed/p\zed \setminus \sF(Q)$ and complete the
sum over $\lambda$ to obtain
\[
(\ref{first_dual}) \leq  O_B(p^{-B}) + \frac{p^{2k}(p-1)}{p^k-1}
\sum_{\lambda \in (\zed/p\zed)^k} \sum_{a \in \sF(Q)^c}
\Phi_1(\lambda)\Phi_C(a\lambda).
\]
Applying Plancherel on $(\zed/p\zed)^k$, we obtain
\[
 (\ref{first_dual})<O_B(p^{-B})+ \frac{p^{k}(p-1)}{p^k-1} \sum_{a \in \sF(Q)^c}
\sum_{\xi \in (\zed/p\zed)^k} \hat{\Phi}_1(a \xi)
\overline{\hat{\Phi}_C(\xi)}
\]
where
\begin{align*}
 \hat{\Phi}_c(\xi) &= \sum_{\un \in \zed^k}
\eta_k\left(\frac{p^{1 -\frac{1}{k}}}{c \rho  } , \un\right)
\exp\left(\frac{2\pi i \xi \cdot \un}{p} \right)= \sum_{\un \in \zed^k}
\exp\left(-\frac{2\pi^2  p^{2-\frac{2}{k}}}{c^2
\rho ^2}\left\|\frac{\xi}{p} + \un \right\|_2^2 \right).
\end{align*}
All but one term from the sum over $\un$ is negligible, and we obtain, for any
$\epsilon > 0$,
\begin{align*}
 (\ref{first_dual})=& O_B(p^{-B})\\&+
\frac{p^{k}(p-1)}{p^k-1} \sum_{a \in \sF(Q)^c} \sum_{\xi \in (\zed/p\zed)^k}
\exp\left(-\frac{2\pi^2
p^{2-\frac{2}{k}}}{
\rho ^2}\left( \frac{1}{C^2}\left\|\frac{\xi}{p}  \right\|_{(\bR/\zed)^k}^2+
\left\|\frac{a\xi}{p} \right\|_{(\bR/\zed)^k}^2 \right)\right).
\end{align*}
Due to the decay in the exponential, we may truncate summation over $a$ and
$\xi$ to
$\left\|\frac{\xi}{p}\right\|_{(\bR/\zed)^k}, \left\|\frac{a\xi}{p}\right\|_{
(\bR/\zed)^k}\ll_\epsilon p^{-1 +
\frac{1}{k}
+\epsilon}$ with negligible error.

From $\xi= 0$ pull out a term $
 \sim p^2.$ To treat the remaining terms,
suppose $k \geq 3$, and let $\xi = q \xi^0$ for $q \in \zed_{>0}$ and $\xi_0$
primitive.  Write $a\xi \equiv \zeta \bmod p$ where
$\|\zeta\|_{\bR^k}\ll_\epsilon
p^{\frac{1}{k} +\epsilon}.$ It follows for $1 <i\leq k$, $ \xi_1^0 \zeta_i
\equiv
\xi_i^0 \zeta_1 \bmod p$, and in fact, $\xi_1^0 \zeta_i = \xi_i^0 \zeta_1$ so
$\zeta
= b \xi_0$ for some $b \in \zed$.  The sum is thus bounded by
\begin{align*}
 &\frac{p^{k}(p-1)}{p^k-1} \sum_{\substack{\xi \in \zed^k\\ 1 \leq  \|\xi\|\ll
p^{\frac{1}{k}+\epsilon}}} \sum_{\max( |b|,
\frac{|q|}{C})>\frac{\rho p^{\frac{1}{k}}}{2Q}}
\exp\left(-\frac{2\pi^2
}{
\rho ^2 p^{\frac{2}{k}}}\left( \left(\frac{q^2}{C^2} + b^2\right)\left\|\xi
\right\|_2^2 \right)\right).
\end{align*}
We may estimate this sum crudely by truncating summation over $b, q$ at $|b|,
\frac{|q|}{C} \leq \rho p^{\frac{1}{k}+\epsilon} $ with error
$O_B(p^{-B})$. The total number of such $b, q$ is $\ll k^{O(1)}\rho^2
p^{\frac{2}{k}+2\epsilon} \ll_{\epsilon'} p^{\frac{4}{k}+\epsilon'}$.  For
all such $b, q$, summation over $\xi$ is bounded by (see Lemma \ref{theta_lemma})
\[
p\sum_{0 \neq \xi \in \zed^k} \exp\left(\frac{-\pi^2 \|\xi\|_2^2}{2Q^2} \right)<p\left(\sum_{\xi \in \zed} \exp\left(\frac{-\pi^2 \xi^2}{2Q^2} \right)\right)^k
\ll p (2Q)^k \ll_\epsilon p^{\frac{3}{2} +\epsilon}.
\]

\end{proof}
Next we determine the distribution of the shortest vector in the dual lattice.
Recall that $R_k =
\left(\frac{\Gamma\left(\frac{k}{2}+1\right)}{\pi^{\frac{k}{2}}}
\right)^{\frac{1}{k}}$ is the radius of a volume 1 ball.
\begin{proposition}\label{short_vector_prop}
Let $\delta > 0$ be
a fixed constant, and let $p$, $k$ and $\rho$ be such that $k \leq \frac{\log
p}{\log \log p},$ and $\frac{\delta p^{\frac{1}{k}}}{\sqrt{k}} \leq \rho \leq
\frac{1}{\delta}(p \log p)^{\frac{1}{k}}.$  Given $\Lambda \in L^0(p,k)$ denote
$\lambda^*$ the shortest non-zero vector of the dual lattice. One has
\[
\Prob_{L^0(p,k)}\left[\|\lambda^*\|_2 \leq \frac{R_k}{\rho p^{\frac{1}{k}}}
\right] = \frac{1}{2\rho^k}\left(1 + O \left(\frac{e^{O(k)}}{\rho^k} +
\frac{k^{2}\rho}{p^{1-\frac{1}{k}}} \right)\right).
\]

\end{proposition}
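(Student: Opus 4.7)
The plan is a first-moment / Bonferroni argument on the random variable
\[
W \;=\; \#\left\{\lambda \in \zed^k : 0 < \|\lambda\|_2 \leq pT,\ \lambda \text{ lexicographically positive},\ \lambda/p \in \Lambda^\vee\right\},
\]
where $T = R_k/(\rho p^{1/k})$. Since $T = o(1)$ in the stated range of $k,\rho$, any non-zero vector of $\Lambda^\vee$ of length at most $T$ lies outside $\zed^k$, hence equals $\lambda/p$ for some reduced $\lambda \in \zed^k \setminus \{0\}$ with $\|\lambda\|_2 \leq pT$. By Lemma \ref{dual_lattice_lemma}, $\{\|\lambda^*\|_2 \leq T\} = \{W \geq 1\}$, and Bonferroni provides
\[
\E W - \tfrac{1}{2}\E[W(W-1)] \;\leq\; \Prob(W \geq 1) \;\leq\; \E W.
\]

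For the first moment, I observe that $\lambda/p \in \Lambda^\vee$ is the event $\{\uv \in \bF_p^\times \lambda \bmod p\}$, whose probability is $(p-1)/|\sD|$ when the full orbit lies in $\sD$ (equivalently, when $\lambda$ itself satisfies the $\sD$ conditions $\lambda_i \neq 0$, $\lambda_i \neq \pm\lambda_j$) and $0$ otherwise. Lemma \ref{point_count_lemma} counts $\zed^k$-points in $B_2(0,pT)$ with relative error $O(k^{3/2}/(pT)) = O(k\rho/p^{1-1/k})$; excising the vectors violating the $\sD$ conditions introduces a further relative $O(k^2\rho/p^{1-1/k})$ correction, and $|\sD| = (p-1)^k(1+O(k^2/p))$. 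Assembling these gives
\[
\E W \;=\; \frac{1}{2\rho^k}\left(1 + O\!\left(\frac{k^2\rho}{p^{1-1/k}}\right)\right),
\]
which already delivers the desired upper bound.

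For the second moment, the joint event $\{\lambda_1/p,\,\lambda_2/p \in \Lambda^\vee\}$ has probability zero unless $\lambda_2 \equiv c\lambda_1 \pmod p$ for some $c \in \bF_p^\times$ (because $\Lambda^\vee/\zed^k$ is cyclic of order $p$); in that case the probability is $\ll (p-1)^{-(k-1)}$. Thus $\E[W(W-1)]$ is bounded by $(p-1)^{-(k-1)}$ times the number of ordered pairs of distinct short reduced vectors lying in a common $\bF_p^\times$-orbit modulo $p$. I will split this into a \emph{line} contribution from pairs $(\lambda, c\lambda)$ with $|c|\geq 2$ and $c\lambda$ short and reduced (forcing $\|\lambda\|_2 \leq pT/|c|$, and contributing $\ll p^{k-1}/(\rho^k 2^{k})$ after integration) and a \emph{Farey} contribution from residue equations $q\lambda_2 \equiv b\lambda_1 \pmod p$ with small coprime $(b,q)$, where both sides lift to the same vector in $\zed^k$ precisely when $\lambda_1 = q\lambda'$ and $\lambda_2 = b\lambda'$ for a primitive $\lambda'$.

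The main obstacle will be controlling the Farey contribution uniformly. For each residue $c = bq^{-1}\bmod p$ the relevant pairs are lattice points of
\[
L_c \;=\; \left\{(\lambda_1,\lambda_2) \in \zed^{2k} : \lambda_2 \equiv c\lambda_1 \pmod p\right\},
\]
a sublattice of covolume $p^k$ intersected with the box $B_2(0,pT)^2$. For generic $c$ the volume heuristic yields $\asymp p^{k-2}/\rho^{2k}$ points, but for small $(b,q)$ the lattice $L_c$ admits a short basis and can carry substantially more. I will apply Lemma \ref{lattice_point_bound_lemma} (Kabatyanski--Levenshtein) to $L_c$, in tandem with Minkowski-type estimates for its shortest vector, to obtain $|L_c \cap B_2(0,pT)^2| \leq e^{O(k)}\, p^{k-2}/\rho^{2k}$ uniformly in $c$; the Farey bookkeeping is analogous to that carried out in the proof of Proposition \ref{pair_short_vector_prop}. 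Summing over $c$ produces
\[
\E[W(W-1)] \;\leq\; \frac{e^{O(k)}}{\rho^{2k}},
\]
and substitution of the first- and second-moment bounds into Bonferroni gives
\[
\Prob(W \geq 1) \;=\; \frac{1}{2\rho^k}\left(1 + O\!\left(\frac{e^{O(k)}}{\rho^k} + \frac{k^2\rho}{p^{1-1/k}}\right)\right),
\]
as required.
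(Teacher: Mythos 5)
The overall plan---compute $\E W$ from Lemma \ref{point_count_lemma}, control $\Prob(W\ge 1)$ via Bonferroni, and bound $\E[W(W-1)]$ using pair estimates---is the same strategy the paper follows, and your first-moment computation matches the one leading to (\ref{expected_short_vector}). Where you diverge is in the second moment: the paper invokes Proposition \ref{pair_short_vector_prop} (itself proved via Plancherel on $(\zed/p\zed)^k$ and a truncation to Farey fractions) as a black box, with $C=1$; you propose to reprove it through Kabatyanskii--Levenshtein on $L_c$ plus the same Farey bookkeeping. Crucially, Proposition \ref{pair_short_vector_prop} restricts to pairs with $p\lambda_1$, $p\lambda_2$ \emph{primitive} and $\lambda_1\neq\pm\lambda_2$, and for small coprime $(b,q)$ the relation $\lambda_1=q\lambda'$, $\lambda_2=b\lambda'$ forces non-primitivity unless $b=q=\pm 1$, so the troublesome line/Farey pairs are simply absent from that sum. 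Your $W$, in contrast, counts all short lexicographically positive reduced vectors, so the line pairs are genuinely in $\E[W(W-1)]$.

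This is where the gap lies. Your uniform claim $|L_c\cap B_2(0,pT)^2|\le e^{O(k)}p^{k-2}/\rho^{2k}$ is false on the Farey set: already for $c=2$, the sublattice $\{(\lambda,2\lambda)\}\subset L_2$ contributes $\#\{\lambda:\|\lambda\|\le pT/2\}\asymp p^{k-1}/(2^k\rho^k)$ points, which dwarfs $p^{k-2}/\rho^{2k}$ throughout the admissible range $\rho\ge \delta p^{1/k}/\sqrt{k}$. You separate these cases as a ``line contribution'' and estimate the lattice count as $\ll p^{k-1}/(\rho^k 2^k)$, but after multiplying by the probability $\asymp p^{1-k}$ this gives a contribution of order $2^{-k}/\rho^k$ to $\E[W(W-1)]$, and $2^{-k}/\rho^k$ is \emph{not} $O(e^{O(k)}/\rho^{2k})$ unless $\rho=O(1)$, which fails across most of the stated range. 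So the final substitution into Bonferroni does not yield the error term as you write it. Also, the Kabatyanskii--Levenshtein bound (Lemma \ref{lattice_point_bound_lemma}) only yields $\exp(O(k))$ when the ball radius is a bounded multiple of the shortest vector of $L_c$; here the ratio $t=\sqrt{2}pT/\|\lambda^*(L_c)\|$ is unbounded for Farey $c$, so the bound is not immediately of the form you want. Separating out a small collection of non-primitive pairs, as the paper does through the primitivity restriction in Proposition \ref{pair_short_vector_prop}, is the cleaner way to keep the pair sum to the target size $e^{O(k)}/\rho^{2k}$. (It is worth noting that the underlying multiple-of-$\lambda^*$ correction is of size $\Theta(2^{-k})$ relative to $\frac{1}{2\rho^k}$, a term the paper's own stated error does not visibly absorb either; this is innocuous for the application, since only an $\exp(o(k))/\rho^k$ asymptotic is needed in Theorem \ref{random_no_cut_off_theorem}, but it is worth being aware that the precise error term is delicate precisely because of this line contribution.)
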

\begin{proof}
By Lemma \ref{point_count_lemma}
\begin{align*}
\E_{L(p,k)}\left[\#\left\{0 \neq \lambda \in \Lambda^\vee \cap B_{2}\left(0,
\frac{R_k}{\rho p^{\frac{1}{k}}} \right) \right\} \right] &=
\frac{p-1}{p^k-1}\#\left\{0 \neq \lambda \in \zed^k \cap B_2\left(0,
\frac{R_k p^{1-\frac{1}{k}}}{\rho}\right) \right\} \\
&= \frac{1}{\rho^k}\left(1 + O\left(\frac{k \rho}{p^{1-\frac{1}{k}}}
\right)\right).
\end{align*}
By counting vectors $\lambda$ with $\lambda_1 = 0$ or $\lambda_1 = \pm
\lambda_2$ one finds
\begin{equation}\label{expected_short_vector}
 \E_{L^0(p,k)}\left[\#\left\{0 \neq \lambda \in \Lambda^\vee \cap B_{2}\left(0,
\frac{R_k}{\rho p^{\frac{1}{k}}} \right) \right\} \right] =
\frac{1}{\rho^k}\left(1 + O\left(\frac{k^{2}\rho}{p^{1-\frac{1}{k}}}
\right) \right).
\end{equation}
Let $0 < \tau < 1$ and observe that for all
$(1-\tau) \frac{\sqrt{k}}{p^{\frac{1}{k}}\rho} < \|\ux\|_2 \leq (1 + \tau)
\frac{\sqrt{k}}{p^{\frac{1}{k}}\rho}$,
\begin{equation}\label{gaussian_size}
  p \rho^k \exp\left(-\frac{k}{2}\left((1 + \tau)^2 + \log 2\pi \right)
\right)\leq \eta_k\left(\frac{1}{p^{\frac{1}{k}} \rho} , \ux\right) \leq
 p \rho^k \exp\left(-\frac{k}{2}\left((1 - \tau)^2 + \log 2\pi \right)\right).
\end{equation}
Choosing $C = 1$ in Proposition \ref{pair_short_vector_prop} and inserting
these bounds, one finds
\[
 \Prob_{L^0(p,k)}\left[\|\lambda^*\|_2 \leq \frac{R_k}{\rho p^{\frac{1}{k}}}
\right] = \frac{1}{2\rho^k}\left(1 +
O\left(\frac{e^{O(k)}}{\rho^k}+\frac{k^{2}\rho}{p^{1-\frac{1}{k}}} \right)
\right),
\]
by subtracting the contribution to (\ref{expected_short_vector}) from lattices
with pairs of primitive short vectors, and accounting for the factor of 2 from
counting $\pm \lambda^*$.

\end{proof}

\begin{proof}[Proof of Theorem \ref{random_no_cut_off_theorem}]
The estimate (\ref{small_vector_prob}) regarding the distribution of $t^{\rel}$
follows from Proposition \ref{short_vector_prop} together with $R_k \sim
\sqrt{\frac{k}{2\pi e}}$ and $t^{\rel}\sim \frac{k}{2\pi^2 \|\lambda^*\|_2^2}$
as $k \to \infty$.

For (\ref{small_mixing_time_prob}),
choose $\rho = 2^{\frac{n}{2}}$  such that $\rho^2 p^{\frac{2}{k}} \asymp J$.
Equivalently, consider $\Lambda$ for which the shortest non-zero vector
$\lambda^*$ of $\Lambda^\vee$ satisfies $\frac{\sqrt{k}}{\rho p^{\frac{1}{k}}}
\asymp \|\lambda^*\|_2$. For such $\lambda^*$,
\begin{equation}\label{gaussian_majorant}
 \eta_k\left(\frac{1}{\rho p^{\frac{1}{k}}}, \lambda^* \right) = p \rho^k
e^{O(k)}.
\end{equation}
This majorant is used in what follows.

Let
\[
\Theta_0\left(\ux, \frac{2t}{2k+1}; \Lambda\right) = \frac{1}{p}\sum_{j \in
\zed}\exp\left(\frac{-4\pi^2 t}{2k+1}\|\lambda^*\|_2^2 j^2 \right) e\left(j
\lambda^* \cdot \ux \right)
\]
denote the projection of $\Theta\left(\ux, \frac{2t}{2k+1}; \Lambda\right)$
in frequency space onto the line determined by $\lambda^*$. If $\left|\frac{t^{\mix}_1}{\tau_0 t^{\rel}} - 1\right| >
\epsilon$ then there is some $t = (1 +O(\epsilon))t_1^{\mix}$ such that
\begin{equation}\label{mixing_perturbation}
\left\|(\Theta - \Theta_0)\left(\cdot, \frac{2 t}{2k+1}; \Lambda\right)
\right\|_{L^1(\bR^k/\Lambda)} \gg_\epsilon 1.
\end{equation}
Apply
Cauchy-Schwarz to obtain
\begin{equation}\label{primitive_lower_bound}
 1 \ll_\epsilon \sum_{\lambda \in \Lambda^{\vee} \setminus \zed \cdot
\lambda^*}\exp\left(\frac{-8 \pi^2 t}{2k+1} \|\lambda\|_2^2\right)
 \ll \sum_{\substack{\lambda \in\Lambda^\vee
\setminus \{\pm \lambda^*\} \\ p\lambda \text{ primitive}}}\exp\left(\frac{-8
\pi^2 t}{2k+1} \|\lambda\|_2^2\right).
\end{equation}
The latter sum may be written as
\[
\sum_{\substack{\lambda \in\Lambda^\vee
\setminus \{\pm \lambda^*\} \\ p\lambda \text{ primitive}}}\left(\frac{8\pi
t}{2k+1} \right)^{\frac{k}{2}}\eta_k\left(\frac{1}{4\pi}\sqrt{\frac{2k+1}{t}},
\lambda \right).
\]
Since $t \gg C t^{\rel} \asymp C\frac{k}{\|\lambda^*\|_2^2} \asymp C\rho^2
p^{\frac{2}{k}}$ (take $C \asymp 1$ in the case of the second estimate of
(\ref{small_mixing_time_prob})) there is $c \asymp C$ such that
\[
 \sum_{\substack{\lambda \in\Lambda^\vee
\setminus \{\pm \lambda^*\} \\ p\lambda \text{ primitive}}}
\eta_k\left(\sqrt{\frac{k}{c}} \frac{1}{\rho p^{\frac{1}{k}}} , \lambda\right)
\gg p \rho^k \left(\frac{C}{k}\right)^{\frac{k}{2}} e^{O(k)}.
\]
Applying Proposition \ref{pair_short_vector_prop},
\[
 \E_{L^0(p,k)}\left[\sum_{\substack{\lambda_1 \neq
\pm \lambda_2 \in
\Lambda^{\vee}\setminus\{0\}\\p\lambda_i \text{
primitive}}}\eta_k\left(
\frac{1}{\rho p^{\frac{1}{k}}},
\lambda_1 \right)\eta_k\left(\sqrt{\frac{k}{c}} \frac{1}{\rho p^{\frac{1}{k}}},
\lambda_2\right)\right]
\leq p^2 + O_\epsilon\left(p^{\frac{3}{2} +
\frac{4}{k} + \epsilon}\right)
\]
and thus, by specializing to $\lambda_1 = \lambda^*$ and applying Markov's inequality,
\begin{align*}
 &\Prob_{L^0(p,k)}\left[\|\lambda^*\|_2 \asymp \frac{\sqrt{k}}{\rho
p^{\frac{1}{k}}} \text{ and } 1 \ll_\epsilon \sum_{\lambda \in \Lambda^{\vee}
\setminus \zed \cdot
\lambda^*}\exp\left(\frac{-8 \pi^2 t}{2k+1} \|\lambda\|_2^2\right)\right] \\&\ll
\rho^{-2k}\exp\left(\frac{k}{2}\log \frac{k}{C} +O(k)\right).
\end{align*}
This verifies (\ref{small_mixing_time_prob}).

\end{proof}

\subsection{Analysis of typical mixing behavior}\label{typical_section}
We turn to analysis of the mixing behavior for $A$ in the bulk of $\sA(p,k)$ proving the following theorem.
\begin{theorem}\label{random_sharp_threshold_theorem}
Let $p$ be prime, let $0 < \epsilon = \epsilon(p)<\frac{1}{2}$ and let
 $1 \leq k \leq \frac{\log p}{\log \log p}$.  Set
$\overline{t^{\mix}_1} = \frac{k}{2\pi
e}p^{\frac{2}{k}}$.  There is a function $\theta = \theta(\epsilon, k)>0$
tending
to 0 as $\epsilon^2 k \to \infty$ and a set $\sA^*(p,k) \subset \sA(p,k)$
satisfying
\[
 \left|\sA^*(p,k)\right| \geq (1-o(1))\left|\sA(p,k)\right|,
\]
such that, for all $A \in \sA^*_{p,k}$,
\begin{align*}
\forall\, n < (1-\epsilon)\overline{t^{\mix}_1}, \qquad  \left\|\mu_A^{*n} -
\frac{1}{p} \right\|_{\TV(\zed/p\zed)} &\geq 1- \theta(\epsilon, k) + o(1), \\
\forall\, n > (1+\epsilon)\overline{t^{\mix}_1}, \qquad  \left\|\mu_A^{*n} -
\frac{1}{p} \right\|_{\TV(\zed/p\zed)} &\leq \theta(\epsilon, k)+ o(1)
\end{align*}
where all quantities $o(1)$ tend to zero as $p \to \infty$ uniformly in $k$.
\end{theorem}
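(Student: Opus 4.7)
The plan is to invoke Lemma \ref{continuous_replacement_lemma} to pass from the discrete walk $\mu_A^{*n}$ on $\zed/p\zed$ to the heat kernel $\Theta(\ux, 2n/(2k+1); \Lambda(A))$ on $\bR^k/\Lambda(A)$, where via Section \ref{gen_sets_lattices_section} the random lattice $\Lambda(A)$ is effectively sampled from $\bU_{L^0(p,k)}$, and then to treat the two sides of the cut-off window separately.

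For the lower bound at $n = (1-\epsilon)\overline{t^{\mix}_1}$, write $t' = 2n/(2k+1)$, so that the Gaussian $\eta_k(\sqrt{t'}, \cdot)$ on $\bR^k$ concentrates at squared radius $kt' \sim (1-\epsilon) R_k^2 p^{2/k}$. Taking $r = \sqrt{1-\epsilon/2}\, R_k p^{1/k}$ and letting $S$ denote the image of $B_2(0, r)$ in $\bR^k/\Lambda$, the volume-non-increasing nature of the projection gives, for \emph{every} $\Lambda$,
\[
 \bU_{\bR^k/\Lambda}(S) \leq \frac{|B_2(0, r)|}{p} = (1-\epsilon/2)^{k/2},
\]
while retaining only the $\lambda = 0$ term of the unfolding furnishes
\[
 \Theta(S) \geq \Prob_{X \sim \eta_k(\sqrt{t'}, \cdot)}(\|X\|_2 \leq r) \geq 1 - e^{-\Omega(\epsilon^2 k)}
\]
by Lemma \ref{gaussian_length_concentration}. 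Subtracting yields $\|\Theta - \bU\|_{\TV} \geq 1 - \theta(\epsilon, k)$ with $\theta(\epsilon, k) \to 0$ as $\epsilon^2 k \to \infty$, and this half places no genericity demand on $A$.

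For the upper bound at $n = (1+\epsilon)\overline{t^{\mix}_1}$, the naive $L^2$ bound
\[
 \|\Theta - \bU\|_{\TV}^2 \leq \tfrac{1}{4}\sum_{\lambda \in \Lambda^\vee \setminus \{0\}} e^{-4\pi^2 t'\|\lambda\|^2}
\]
is insufficient when combined only with the first-moment estimate of Lemma \ref{expectation_lemma}, since for $\epsilon$ below a fixed threshold the expected sum on the right does not tend to zero. To cover arbitrary $\epsilon$ with $\epsilon^2 k \to \infty$, I split the Fourier expansion of $p\Theta - 1$ at a cutoff $R_0$ placed slightly above the typical first minimum $\sim R_k/p^{1/k}$ of $\Lambda^\vee$. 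Denote the low-frequency piece (from $0 < \|\lambda\| \leq R_0$) by $\Psi_M$ and the high-frequency tail by $\Psi_E$. The tail $\Psi_E$ is controlled in $L^1$ on average via Cauchy--Schwarz on the fundamental domain together with the Gaussian decay, the key input being the first-moment estimate of Lemma \ref{expectation_lemma} applied to $\E_\Lambda \sum_{\|\lambda\|>R_0}e^{-4\pi^2 t'\|\lambda\|^2}$. For $\Psi_M$ I bound the variance $\E_\Lambda \int_{\bR^k/\Lambda} \Psi_M(\ux)^2 \, d\ux$ by expanding as a double sum over dual-lattice vectors: the diagonal collapses back to the first-moment estimate while the off-diagonal is precisely the pair-of-primitive-vectors average estimated in Proposition \ref{pair_short_vector_prop}. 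The approximate independence captured there produces a variance small enough that Chebyshev's inequality extracts a density-one subset $\sA^*(p, k) \subset \sA(p, k)$ on which $\int_{\bR^k/\Lambda} \Psi_M^2 \, d\ux = o(p)$, and Cauchy--Schwarz on the volume-$p$ domain then gives $\int_{\bR^k/\Lambda} |\Psi_M| \, d\ux = o(p)$. Dividing by $p$ and combining with the $L^1$ bound on $\Psi_E$ closes the upper bound.

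The delicate point is the tight quantitative balancing in the upper-bound argument: the cutoff $R_0$ must exceed the dual first minimum by enough that Lemma \ref{expectation_lemma} supplies a usable tail estimate for $\Psi_E$, while staying small enough that the $O(p^{3/2 + 4/k + \epsilon})$ error in Proposition \ref{pair_short_vector_prop} is genuinely dominated by the main term $p^2$ even after the $\sqrt{p}$ loss incurred in passing from $L^2$ to $L^1$ on $\bR^k/\Lambda$. The hypothesis $k \leq \log p/\log \log p$ enters both at the Gaussian approximation step (Lemma \ref{continuous_replacement_lemma}) and in making the pair-of-vectors error of strictly lower order than the main term.
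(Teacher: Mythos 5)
Your lower bound is correct and in fact takes a cleaner route than the paper: by testing against the projected ball $S=\pi(B_2(0,r))$ with $r=\sqrt{1-\epsilon/2}\,R_k p^{1/k}$, using $\bU_{\bR^k/\Lambda}(S)\le \vol(B_2(0,r))/p$ and $\Theta(S)\ge \Prob(\|X\|_2\le r)$, you get the bound $1-e^{-\Omega(\epsilon^2k)}-(1-\epsilon/2)^{k/2}$ pointwise for \emph{every} $A$, whereas the paper works with $\E_{L(p,k)}\bigl[\int_{B\cap\sF(\Lambda)}\Theta\bigr]$ (Lemma \ref{random_lower_bound_estimate_lemma}) and must then extract a generic set. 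That half of your argument stands.

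The upper bound, however, has a fatal gap: no frequency-space $L^2$ decomposition of the type you propose can work when $\epsilon$ is small. At $t'=\frac{2n}{2k+1}$ with $n=(1+\epsilon)\overline{t^{\mix}_1}$ one computes
\[
\E_{L(p,k)}\Bigl[\sum_{\lambda\in\Lambda^\vee\setminus\{0\}}e^{-4\pi^2t'\|\lambda\|_2^2}\Bigr]\sim (p-1)(4\pi t')^{-k/2}\sim\Bigl(\tfrac{e}{2(1+\epsilon)}\Bigr)^{k/2},
\]
which is exponentially large in $k$ for $\epsilon<e/2-1$; equivalently, the $L^2$ mixing time of the typical walk is $\sim\frac{e}{2}\overline{t^{\mix}_1}$, a fixed constant factor beyond the total variation mixing time (this is exactly the obstruction flagged in the paper's discussion of method). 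Moreover the radial Gaussian weight $e^{-4\pi^2t'\|\lambda\|^2}$ concentrates its mass, as a measure on $\Lambda^\vee$, at radius $\approx\frac{e}{\sqrt{2(1+\epsilon)}}R_kp^{-1/k}$, strictly larger than the dual first minimum $\sim R_kp^{-1/k}$. Consequently there is no choice of cutoff $R_0$: if $R_0$ is close to the first minimum, the tail $\Psi_E$ retains essentially all of the exponentially large $L^2$ mass and Cauchy--Schwarz gives nothing; if $R_0$ is pushed far enough out that $\E\bigl[\sum_{\|\lambda\|>R_0}e^{-4\pi^2t'\|\lambda\|^2}\bigr]=o(1)$, then $\int_{\bR^k/\Lambda}\Psi_M^2\,d\ux=p\sum_{0<\|\lambda\|\le R_0}e^{-4\pi^2t'\|\lambda\|^2}$ (only the diagonal survives, since the characters $e(\lambda\cdot\ux)$ are orthogonal on $\bR^k/\Lambda$ --- so Proposition \ref{pair_short_vector_prop} does not enter here the way you describe) has typical size $p\cdot e^{ck}$ with $c>0$, not $o(p)$, and this is a genuine feature of the typical lattice rather than a rare event removable by Chebyshev. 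The paper avoids this by decomposing in \emph{physical} space: $\Theta_M(\ux)$ is the sum of the heat kernel over primal lattice points $\lambda\in\Lambda$ lying in the thin shell $S(\ux,\sqrt{t},\tau)$, $\Theta_E$ is small in $L^1$ by Gaussian concentration, and one bounds $\E_{\ux}\bigl[\Var_{L(p,k)}[\Theta_M(\ux)]\bigr]=o(p^{-2})$ using the pair correlations of Lemma \ref{expectation_lemma}; the shell restriction is what kills the proportional pairs $\lambda_1\sim\lambda_2$ that would otherwise dominate. You would need to replace your treatment of $\Psi_M$ (and $\Psi_E$) by an argument of this kind that exploits the smallness in measure of the set of $\ux$ where $p\Theta-1$ is large, rather than passing through $L^2(\ux)$.
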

We can now conclude our proof of Theorem \ref{random_theorem}.
\begin{proof}[Deduction of Theorem \ref{random_theorem}, part (3)]
 For each $j = 1, 2, ...$, let $E(p, j)$
be
the event that
 \begin{align*}
\forall\, n < (1-\epsilon(p))\overline{t^{\mix}_1}, \qquad  \left\|\mu_A^{*n} -
\frac{1}{p} \right\|_{\TV(\zed/p\zed)} &> 1-2^{-j}, \\
\forall\, n > (1+\epsilon(p))\overline{t^{\mix}_1}, \qquad  \left\|\mu_A^{*n} -
\frac{1}{p} \right\|_{\TV(\zed/p\zed)} &< 2^{-j}.
\end{align*}
For a fixed $p$, the events $E(p,j)$ are nested in $j$. For each $j
\in\zed_{>0}$, let $N_j$ be minimal
such that for all $p > N_j$, $\bU_{\sA(p,k)}[E(p,j)] \geq 1 - 2^{-j}$.  This is
finite by Theorem
\ref{random_sharp_threshold_theorem}. Define
$E^*(p) = \bigcap_{j: N_j < p} E(p,j)$ and let $p
\in \sP_0$ if and only if $E^*(p)$ occurs.  Since
$\bU_{\sA(p,k)}\left[E^*(p)\right] \to 1$
as $p \to \infty$ and the events are independent, we have $\sP_0$ has density 1
with probability 1, as desired.
\end{proof}

In the remainder of
this section we shall frequently be concerned with counting
lattice points within Euclidean balls $B_2(\ux, R) \subset \bR^k$.  It is useful
to bear in mind that the radius $R_k$ of a ball of unit volume in $\bR^k$
satisfies
\[
 R_k = \left(\frac{\Gamma\left(\frac{k}{2}+1 \right)}{\pi^{\frac{k}{2}}}
\right)^{\frac{1}{k}} = \sqrt{\frac{k}{2\pi e}}\left(1 + \frac{\log k}{2k} +
O\left( \frac{1}{k}\right)\right).
\]

Let $\epsilon = \epsilon(p)$ as in the theorem and set $\delta = \frac{1}{2}(1 -
\sqrt{1-\epsilon})$.
Recall that, given lattice $\Lambda < \bR^k$, $\sF(\Lambda)$ is the
norm-minimal fundamental domain of $\Lambda$,
\[\sF(\Lambda)= \left\{\ux \in \bR^k: \forall\, \lambda \in \Lambda
\setminus\{0\}, \|\ux\| < \|\lambda - \ux\|\right\}. \]
Let $k = k(p)$ and set $t = t(p,k)=
(1-\epsilon)\overline{t^{\mix}_1} \sim (1-\epsilon)R_k^2 p^{\frac{2}{k}}$.

\begin{lemma}\label{random_lower_bound_estimate_lemma} As $k, p\to \infty$ in
such a way that $k \leq \frac{\log p}{\log \log p}$ we have
\begin{equation}\label{rand_lower_bound_estimate}
 \E_{L(p,k)} \left[\int_{\ux \in B_2\left(0, (1-\delta) R_k
p^{\frac{1}{k}}\right)
\cap \sF(\Lambda)} \Theta\left( \ux, \frac{2t}{2k+1}; \Lambda\right) d\ux
\right]= 1-o(1).
\end{equation}
\end{lemma}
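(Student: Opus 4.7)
The plan is to unfold the theta function via a change of variables, reducing the integral to a one-sided Gaussian tail that can be controlled uniformly in $\Lambda$. The expectation over $L(p,k)$ will turn out to be essentially superfluous, since the bound I aim for holds pointwise in $\Lambda$.

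Set $s = 2t/(2k+1)$ and $R = (1-\delta) R_k p^{1/k}$. I would write $\Theta(\ux, s;\Lambda) = \sum_{\lambda \in \Lambda}\eta_k(\sqrt{s}, \ux+\lambda)$ and, for each $\lambda$, substitute $\uy = \ux + \lambda$. The translated Voronoi cells $\{\sF(\Lambda) + \lambda\}_{\lambda \in \Lambda}$ tile $\bR^k$ up to a null set, so for a.e.\ $\uy \in \bR^k$ there is a unique closest lattice point $\lambda_0(\uy)$, and the combined condition ``$\ux \in \sF(\Lambda) \cap B_2(0, R)$'' becomes ``$d(\uy, \Lambda) \leq R$''. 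This gives
\[
\int_{\sF(\Lambda) \cap B_2(0,R)} \Theta(\ux, s;\Lambda)\,d\ux \;=\; \int_{\bR^k} \eta_k(\sqrt{s}, \uy)\,\mathbf{1}\bigl(d(\uy,\Lambda) \leq R\bigr)\,d\uy.
\]
Because $0 \in \Lambda$, we have $d(\uy, \Lambda) \leq \|\uy\|_2$ pointwise, and so the right side is bounded below by the $\Lambda$-independent quantity $\int_{\|\uy\|_2 \leq R} \eta_k(\sqrt{s}, \uy)\,d\uy$.

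Next I would apply Lemma \ref{gaussian_length_concentration} to bound this Gaussian mass. From $t \sim (1-\epsilon) R_k^2 p^{2/k}$ one computes $\sqrt{ks} = (1+o(1))\sqrt{1-\epsilon}\,R_k p^{1/k}$, and the choice $\delta = \tfrac{1}{2}(1-\sqrt{1-\epsilon})$ was arranged precisely so that $R - \sqrt{ks} = (\delta + o(1)) R_k p^{1/k}$; measured in units of the standard deviation $\sqrt{s} = \sqrt{ks}/\sqrt{k}$, this gap is $\gtrsim \epsilon\sqrt{k}$. Lemma \ref{gaussian_length_concentration} then yields $\int_{\|\uy\|_2 \leq R}\eta_k(\sqrt{s},\uy)\,d\uy \geq 1 - e^{-c\epsilon^2 k} = 1 - o(1)$ under the ambient hypothesis $\epsilon^2 k \to \infty$ of Theorem \ref{random_sharp_threshold_theorem}. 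Combining with the trivial upper bound $\int_{\sF(\Lambda)}\Theta(\ux, s;\Lambda)\,d\ux = 1$ (the theta function integrates to $1$ over any fundamental domain) gives $1-o(1) \leq \int_{\sF(\Lambda) \cap B_2(0,R)} \Theta\,d\ux \leq 1$ pointwise in $\Lambda$, and taking $\E_{L(p,k)}$ preserves both inequalities.

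The only real obstacle is tracking the asymptotic $R_k = \sqrt{k/(2\pi e)}(1 + O(\log k/k))$ quantitatively: the resulting correction to the gap $R - \sqrt{ks}$ is of size $O(\log k/\sqrt{k})$ in units of $\sqrt{s}$, and one must check this is dominated by the principal term $\epsilon\sqrt{k}$. Under $\epsilon^2 k \to \infty$ this is automatic, since that hypothesis forces $\epsilon \gg 1/\sqrt{k} \gg (\log k)/k$.
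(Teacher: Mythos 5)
Your proof is correct, and it takes a genuinely different --- and cleaner --- route than the paper's. The paper's proof discards everything but the $\lambda = 0$ term of the theta sum, writing $\Theta \geq \eta_k$, which reduces the integral over $B_2(0,R) \cap \sF(\Lambda)$ to a Gaussian integral over $B_2(0,R)$ minus the Gaussian mass of $B_2(0,R) \setminus \sF(\Lambda)$. This subtracted term is not small pointwise in $\Lambda$ (short nonzero vectors of $\Lambda$ can push Voronoi faces close to the origin), so the paper has to control it on average via Lemma \ref{expectation_lemma}, using the probability that a fixed $\zed^k$-point lies in a random index-$p$ sublattice, together with a counting argument for lattice points in the half-space $\|\lambda - \ux\| < \|\ux\|$. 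You instead retain the full theta sum and unfold: the translates $\{D + \lambda\}_{\lambda \in \Lambda}$ with $D = B_2(0,R)\cap\sF(\Lambda)$ are pairwise disjoint and their union is $\{\uy : d(\uy,\Lambda) \leq R\}$ (a.e.), so $\int_D \Theta = \int_{\{d(\uy,\Lambda)\leq R\}}\eta_k$, and the elementary inclusion $B_2(0,R) \subset \{d(\uy,\Lambda)\leq R\}$ (since $0 \in \Lambda$) gives a lower bound that is $\Lambda$-independent. Both proofs then close with the same Gaussian concentration estimate from Lemma \ref{gaussian_length_concentration}, noting that the radius gap measured in standard deviations is $\asymp \delta\sqrt{k} \asymp \epsilon\sqrt{k}$, which diverges under the ambient hypothesis $\epsilon^2 k \to \infty$. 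Your argument is strictly stronger than the lemma as stated: it gives the estimate pointwise in $\Lambda \in L(p,k)$ rather than only on average, making the expectation in the statement redundant (it is needed later in the theorem anyway, but your version of the lemma costs no extra effort and is more informative). Your worry about the sub-leading $O(\log k / k)$ correction in $R_k$ is not a real obstacle --- that correction contributes $O(\log k/\sqrt{k}) = o(1)$ to the gap measured in standard deviations, so it is harmless --- but the conclusion you draw is correct.
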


\begin{proof}
 Since $\Theta\left(\ux, \frac{2t}{2k+1}; \Lambda\right) \geq
\eta_k\left(\sqrt{\frac{2t}{2k+1}}, \ux \right)$,
 \begin{align}\notag
  &\E_{L(p,k)} \left[\int_{\ux \in B_2\left(0, (1-\delta) R_k
p^{\frac{1}{k}}\right)
\cap \sF(\Lambda)} \Theta\left( \ux, \frac{2t}{2k+1}; \Lambda\right) d\ux
\right]\\& \label{rand_lower_bound_1} \geq \int_{\|\ux\| \leq
(1-\delta) R_k
p^{\frac{1}{k}}
} \eta_k\left( \sqrt{\frac{2t}{2k+1}}, \ux \right)d\ux
\\& \label{rand_lower_bound_2} - \E\left[\int_{\|\ux\| \leq
(1-\delta) R_k
p^{\frac{1}{k}}
} \eta_k\left( \sqrt{\frac{2t}{2k+1}}, \ux \right) \one\left(\exists\;
\lambda
\in \Lambda\setminus\{0\}: \|\lambda-\ux\| < \|\ux\| \right) d\ux \right].
 \end{align}
Since $\delta \sim \frac{\epsilon}{2}$ as $\epsilon \downarrow 0$,
$(\ref{rand_lower_bound_1}) = 1-o(1)$ follows from concentration of the norm of
a Gaussian vector on scale $\frac{1}{\sqrt{k}}$ times its median length, see
Lemma \ref{gaussian_length_concentration}.

We estimate
\begin{align*}
 (\ref{rand_lower_bound_2}) \leq \int_{\|\ux\| \leq
(1-\delta) R_k
p^{\frac{1}{k}}}\eta_k\left( \sqrt{\frac{2t}{2k+1}}, \ux \right)\E\left[
\sum_{\lambda
\in \Lambda\setminus\{0\}}\one\left(\|\lambda-\ux\| < \|\ux\|
\right)\right]d\ux.
\end{align*}
For $k$ sufficiently large, any $\lambda$ counted in the expectation satisfies
$\|\lambda\| < p$, and thus, by Lemma \ref{expectation_lemma},
\[
 \E\left[
\sum_{\lambda
\in \Lambda\setminus\{0\}}\one\left(\|\lambda-\ux\| < \|\ux\| \right)\right] =
\frac{p^{k-1}-1}{p^k-1} \#\{\lambda \in \zed^k: \|\lambda-\ux\| <
\|\ux\|\}.
\]
For any $\ux \in \bR^k$, any lattice point $\tilde{\ux} \in \zed^k$
which is the vertex of the unit lattice cube containing $\ux$ satisfies
$\|\tilde{\ux}\| = \left(1 +
O\left(\frac{\sqrt{k}}{\|\ux\|}\right)\right)\|\ux\|$.  Since $k^{\frac{3}{2}} =
o(R_k p^{\frac{1}{k}})$, it follows that for all $\|\ux\| \leq (1-\delta) R_k
p^{\frac{1}{k}}$ we have
\[
 \frac{p^{k-1}-1}{p^k-1} \#\{\lambda \in \zed^k: \|\lambda-\ux\| <
\|\ux\|\} \leq \left(1-\delta + o\left(\frac{1}{k}\right)\right)^k = o(1),
\]
and thus
\[
 (\ref{rand_lower_bound_2}) = o\left( \int_{\|\ux\| \leq
(1-\delta) R_k
p^{\frac{1}{k}}}\eta_k\left( \sqrt{\frac{2t}{2k+1}}, \ux \right) d\ux\right) =
o(1).
\]

\end{proof}

\begin{proof}[Proof of Theorem \ref{random_sharp_threshold_theorem}, lower bound]
For $n \geq \frac{t(p,k)}{2}$,
Lemma \ref{continuous_approx_lemma} gives
\[
 \E_{\sA(p,k)} \left[\left\|\mu_A^{*n} -
\bU_{\zed/p\zed}\right\|_{\TV(\zed/p\zed)} \right] = o(1) +
\E_{L^0(p,k)}\left[\left\|\Theta\left(\cdot, \frac{2n}{2k+1};\Lambda \right)
-\frac{1}{p}\right\|_{\TV (\bR^k/\Lambda)} \right]
\]
while, for all $n < t(p,k)$,
\begin{align*}
& (1 + o(1))\E_{L^0(p,k)}\left[ \left\|\Theta\left(\cdot,
\frac{2n}{2k+1};\Lambda \right)
-\frac{1}{p}\right\|_{\TV(\bR^k/\Lambda)}\right] \\&\geq
\E_{L(p,k)}\left[\int_{\ux \in B_2\left(0, (1-\delta) R_k p^{\frac{1}{k}}\right)
\cap \sF(\Lambda)}
\Theta\left( \ux, \frac{2t}{2k+1}; \Lambda\right) -\frac{1}{p}d\ux \right].
\end{align*}
By Lemma \ref{random_lower_bound_estimate_lemma}, the expectation of the integral against $\Theta$ is $1-o(1)$, while the expectation of the integral against $\frac{1}{p}$ is bounded by
\[
  \int_{\ux \in
B_2\left(0,(1-\delta) R_k p^{\frac{1}{k}}\right)} \frac{1}{p} d\ux =
(1-\delta)^k
= o(1).
\]
\end{proof}

\subsubsection{Proof of Theorem \ref{random_sharp_threshold_theorem}, upper
bound}
The main proposition of the upper bound is as follows.
\begin{proposition}\label{theta_dist_prop}  Let $p$ and $k(p)$ tend to $\infty$
in
such a way that $k \leq
\frac{\log p}{\log \log p}$, and let $0 < \epsilon(p)<1$ with $\epsilon(p)^2k(p)
\to \infty$.  Set
$t=t(p)  = (1+\epsilon)\frac{k}{2\pi e}
p^{\frac{2}{k}} $. For any fixed $\delta > 0$
\begin{equation}
 \bU_{L(p,k) \times (\bR/p\zed)^k} \left[(\Lambda, \ux) \in
L(p,k)\times (\bR/p\zed)^k: \left|\Theta\left( \ux, \frac{2t}{2k+1};
\Lambda\right)-\frac{1}{p}\right|<\frac{\delta}{p}\right] = (1 +
o_{\delta}(1)).
\end{equation}

\end{proposition}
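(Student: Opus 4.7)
Expand
\[
p\Theta(\ux, \tau; \Lambda) - 1 \;=\; \sum_{\lambda \in \Lambda^{\vee}\setminus\{0\}} e^{-2\pi^{2}\tau\|\lambda\|_{2}^{2}}\,e(\lambda\cdot\ux), \qquad \tau = \frac{2t}{2k+1},
\]
and use orthogonality of the characters $\{e(\lambda\cdot\ux):\lambda\in\frac{1}{p}\zed^{k}\}$ on $\bR^{k}/p\zed^{k}$ to obtain the identity
\[
\E_{\ux}|p\Theta - 1|^{2} \;=\; Z(\Lambda) \;:=\; \sum_{\lambda \in \Lambda^{\vee}\setminus\{0\}} e^{-4\pi^{2}\tau\|\lambda\|_{2}^{2}}.
\]
The $\Lambda$-expectation $\E_{\Lambda}Z$ is computed from the dual-lattice description of Lemma \ref{dual_lattice_lemma} by Poisson summation on $\frac{1}{p}\zed^{k}$, yielding $\E_{\Lambda}Z \sim (e/(2(1+\epsilon)))^{k/2}$.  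Direct Chebyshev then suffices only when $\epsilon > e/2 - 1$, which does not cover the regime $\epsilon = o(1)$, $\epsilon^{2} k \to \infty$ permitted by the hypothesis.  As flagged in the discussion of method, one needs a refinement by splitting $\Theta = \Theta_{M} + \Theta_{E}$ into an $L^{2}$-concentrated main part and a small $L^{1}$ error.

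A natural choice of splitting is a dual-frequency threshold $r_{\star} = R_{k}p^{-1/k}/\rho$:
\[
\Theta_{E}(\ux) = \frac{1}{p}\sum_{\substack{\lambda\in\Lambda^{\vee}\setminus 0\\ \|\lambda\|_{2}\leq r_{\star}}} e^{-2\pi^{2}\tau\|\lambda\|_{2}^{2}}e(\lambda\cdot\ux), \qquad \Theta_{M} = \Theta - \Theta_{E}.
\]
By Proposition \ref{short_vector_prop}, the probability over $\Lambda$ that $\Lambda^{\vee}$ contains any nonzero vector of length at most $r_{\star}$ is $\sim 1/(2\rho^{k})$; taking $\rho = \rho(\delta)$ large enough renders this $o_{\delta}(1)$, and on the complementary event $\Theta_{E}\equiv 0$, so the contribution of $\Theta_{E}$ to the failure probability is trivially bounded.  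For $\Theta_{M}$, $\E_{\ux}|p\Theta_{M}-1|^{2} = Z_{>r_{\star}}(\Lambda)$, the truncation of $Z$ to $\|\lambda\|_{2}>r_{\star}$.  I would apply the pair correlation estimate of Proposition \ref{pair_short_vector_prop} to compute $\E_{\Lambda}Z_{>r_{\star}}^{2}$ and show $Z_{>r_{\star}}$ is concentrated; a Chebyshev step in $\ux$ for a typical $\Lambda$, followed by Markov in $\Lambda$, then bounds the measure of $\{|p\Theta_{M}-1|>\delta/2\}$.

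\textbf{Main obstacle.}  The essential difficulty is that $\E Z$ is not dominated by rare lattices with anomalously short dual vectors, but by the bulk of dual vectors at length comparable to the typical shortest-vector length $R_{k}p^{-1/k}$ (a radial integration shows the integrand $s^{k}e^{-k(1+\epsilon)s^{2}/e^{2}}$ peaks at $s = e/\sqrt{2(1+\epsilon)}$, only a constant multiple of the shortest-vector scale).  Consequently truncating below $r_{\star}$ at the same scale does not lower the mean, and one cannot argue by moment-of-$Z$ alone; what must actually be proved is that $p\Theta_{M}(\ux) - 1$ is small except on a very small-measure set in $\ux$, relying on fine cancellation among the oscillatory terms $e(\lambda\cdot\ux)$.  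The input from Proposition \ref{pair_short_vector_prop} --- the Siegel--Rogers-type near-independence of pairs of primitive dual vectors --- is used precisely here to propagate character cancellation into second-moment estimates of $\Theta_{M}-1/p$.  A secondary subtlety is that the application requires $\rho = O(1)$, slightly outside the range of Proposition \ref{pair_short_vector_prop} as stated, so its proof may need to be revisited with more careful tracking of the small Farey-denominator contributions identified there; with that adjustment and a union bound over the two failure events, the Proposition follows.
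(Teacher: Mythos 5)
Your diagnosis of the obstacle is exactly right — the naive second moment $\E_{\ux}\left|p\Theta-1\right|^{2}=Z(\Lambda)$ has lattice-expectation $\left(e/(2(1+\epsilon))\right)^{k/2}\to\infty$ for small $\epsilon$, so no Chebyshev argument in the $\ux$ variable can work directly — but your proposed repair does not close the gap, and you essentially concede this. The frequency-space truncation at the shortest-vector scale $r_{\star}$ cannot help, as you observe: the mean of $Z$ is carried by the bulk of dual vectors at that same scale, not by rare anomalously short ones. Proposition \ref{pair_short_vector_prop} is not the missing ingredient either: it concerns pairs of \emph{primitive} dual vectors in the range $\rho\gg p^{1/k}/\sqrt{k}$ and is built for the rare slow-mixing events of parts (1)--(2) of Theorem \ref{random_theorem}; it is out of range for $\rho=O(1)$ and in any case does not produce the cancellation among $e(\lambda\cdot\ux)$ that your plan requires.

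The decomposition that works is in physical space, not frequency space. Set $\tau=\epsilon/2$ and define
\[
\Theta_{M}\left(\ux,\tfrac{2t}{2k+1};\Lambda\right)=\sum_{\lambda\in\Lambda\cap S(\ux,\sqrt{t},\tau)}\eta_{k}\left(\sqrt{\tfrac{2t}{2k+1}},\lambda-\ux\right),
\]
i.e.\ keep only the Gaussian summands indexed by lattice points in the spherical shell where the heat kernel concentrates, and let $\Theta_{E}=\Theta-\Theta_{M}$. Two features make this succeed where the $L^{2}(\ux)$ computation fails. First, since $t=(1+\epsilon)\overline{t^{\mix}_{1}}$, every summand retained in $\Theta_{M}$ is pointwise $\leq\frac{1}{p}\exp\left(-\left(\frac{3\epsilon^{2}}{4}+O(\epsilon^{3})\right)\frac{k}{2}\right)=o(1/p)$ because $\epsilon^{2}k\to\infty$; this kills the diagonal term in the lattice-variance. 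Second, the relevant second moment is $\E_{(\bR/p\zed)^{k}}\left[\Var_{L(p,k)}\left[\Theta_{M}\right]\right]$ — a variance over the \emph{random lattice}, not over $\ux$ — and it is computed from the elementary pair-membership probabilities of Lemma \ref{expectation_lemma} (the discrete Siegel--Rogers average), splitting pairs $\lambda_{1},\lambda_{2}\in\zed^{k}$ into the diagonal, generic pairs (covariance $O(p^{-k})$), and proportional pairs $\lambda_{2}\equiv a\lambda_{1}\bmod p$ (covariance $O(p^{-1})$, but only $\ll 8^{k}p$ such pairs meet the shell constraint). All three contributions come out $o(1/p^{2})$. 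Meanwhile $\E_{L\times(\bR/p\zed)^{k}}[\Theta_{E}]=o(1/p)$ by Gaussian concentration of the norm. One then bounds the three events $|\Theta_{E}|>\frac{\delta}{3p}$, $|\E_{L}[\Theta_{M}]-\frac{1}{p}|>\frac{\delta}{3p}$, $|\Theta_{M}-\E_{L}[\Theta_{M}]|>\frac{\delta}{3p}$ by Markov and Chebyshev on the product measure. Your plan, by contrast, never identifies a quantity whose second moment is genuinely small in the regime $\epsilon=o(1)$, which is the heart of the proposition.
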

\begin{proof}[Deduction of Theorem \ref{random_sharp_threshold_theorem}, upper
bound]
 For any $\Lambda \in L^0(p,k)$ we have $p\zed^k < \Lambda$, and thus
 \begin{align*}
  \left\|\Theta\left( \ux, \frac{2t}{2k+1};
\Lambda\right) -\frac{1}{p}\right\|_{\TV(\bR^k/\Lambda)} &= \int_{\bR^k/\Lambda}
\frac{1}{p}- \min\left(\Theta\left( \ux, \frac{2t}{2k+1};
\Lambda\right) ,\frac{1}{p}\right)d\ux\\
&= p^{-k+1} \int_{\ux \in (\bR/p\zed)^k}\frac{1}{p}- \min\left(\Theta\left(
\ux, \frac{2t}{2k+1};
\Lambda\right) ,\frac{1}{p}\right)d\ux
 \end{align*}
and so
\begin{align*}
 &(1 + o(1))\E_{L^0(p,k)}\left[ \left\|\Theta\left( \ux, \frac{2t}{2k+1};
\Lambda\right) -\frac{1}{p}\right\|_{\TV(\bR^k/\Lambda)} \right]
\\&=\E_{L(p,k)}\left[ \left\|\Theta\left( \ux, \frac{2t}{2k+1};
\Lambda\right) -\frac{1}{p}\right\|_{\TV(\bR^k/\Lambda)} \right] < \delta +
o(1).
\end{align*}
\end{proof}

Let $\tau = \tau(p)= \frac{\epsilon(p)}{2}$.  Given $\ux \in \bR^k$, define
spherical shell
\[
 S(\ux, R, \tau) = \left\{\uy \in \bR^k: \|\uy-\ux\| \in [(1-\tau)R, (1 +
\tau)R]\right\}.
\]
We use several times the estimate
for $\ux \in S(0, \sqrt{t}, \tau)$
\begin{align}\label{gaussian_bound}
\eta_k\left(\sqrt{\frac{2t}{2k+1}}, \ux \right) & \leq
\frac{1}{p}\exp\left(-\left(\frac{3\epsilon^2}{4} + O(\epsilon^3)
\right)\frac{k}{2} \right) = o\left(\frac{1}{p} \right).
\end{align}

The
critical part of $\Lambda$ when considering
$\Theta\left( \ux,
\frac{2t}{2k+1};
\Lambda\right)$ in $L^1$ is $
 \Lambda_c(\ux) = \Lambda \cap S(\ux, \sqrt{t}, \tau).$
We split
\begin{align*}
\Theta\left( \ux,
\frac{2t}{2k+1};
\Lambda\right) &= \Theta_M\left( \ux,
\frac{2t}{2k+1};
\Lambda\right) + \Theta_E\left( \ux,
\frac{2t}{2k+1};
\Lambda\right); \\
\Theta_M\left( \ux,\frac{2t}{2k+1};\Lambda\right) &= \sum_{\lambda \in
\Lambda_c(\ux)} \eta_k\left(\sqrt{\frac{2t}{2k+1}}, \lambda - \ux \right).
\end{align*}

\begin{lemma}\label{mean_lemma}
 For all $\ux \in (\bR/p\zed)^k$,
 \[
  \E_{L(p,k)}\left[\Theta_M\left(\ux, \frac{2t}{2k+1};\Lambda \right) \right]=
\frac{1}{p}(1 + o(1)).
 \]

\end{lemma}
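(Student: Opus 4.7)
The plan is to compute the expectation directly via linearity, separating contributions from the ``forced'' sublattice $p\zed^k \subset \Lambda$ and from $\lambda \in \zed^k \setminus p\zed^k$. By Lemma \ref{dual_lattice_lemma}, a vector $\lambda \in \zed^k$ lies in $\Lambda$ iff $\lambda \cdot \uv \equiv 0 \bmod p$ for the uniform generator $\uv$ of $\Lambda^\vee$, so $\Prob_{L(p,k)}[\lambda \in \Lambda] = 1$ when $\lambda \in p\zed^k$ and equals $(p^{k-1}-1)/(p^k-1) = \frac{1}{p}(1 + O(p^{1-k}))$ otherwise. Writing $\sigma = \sqrt{2t/(2k+1)}$, one obtains the decomposition $\E[\Theta_M(\ux)] = B(\ux) + \frac{p^{k-1}-1}{p^k-1} A(\ux)$, where $B(\ux) = \sum_{\lambda \in p\zed^k \cap S(\ux, \sqrt t, \tau)} \eta_k(\sigma, \lambda - \ux)$ and $A(\ux)$ is the analogous sum over $\lambda \in (\zed^k \setminus p\zed^k) \cap S(\ux, \sqrt t, \tau)$.

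I would then show $A(\ux) + B(\ux) = 1 - o(1)$ uniformly in $\ux$. Since $\sigma^2 \asymp p^{2/k}/e$ is much larger than $1$, Poisson summation gives $\sum_{\lambda \in \zed^k} \eta_k(\sigma, \lambda - \ux) = \sum_{\xi \in \zed^k} e^{-2\pi^2 \sigma^2 \|\xi\|^2} e(-\xi \cdot \ux) = 1 + O(e^{-c p^{2/k}})$. To handle terms outside the shell, I would approximate the sum by the Gaussian integral (as in Lemma \ref{point_count_lemma}, with negligible boundary error since $\sigma \gg 1$) and apply Lemma \ref{gaussian_length_concentration}: the Gaussian concentrates on radius $\sigma\sqrt k \sim \sqrt t$, and the shell of relative width $\tau = \epsilon/2$ captures all but an $e^{-\Omega(\epsilon^2 k)} = o(1)$ fraction, using $\epsilon^2 k \to \infty$.

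The main obstacle, and the reason uniformity in $\ux$ is delicate, is bounding $B(\ux)$. The key geometric observation is that $p\zed^k$ has minimum inter-point distance $p$, whereas the shell's outer radius is $(1+\tau)\sqrt t \sim (1+\epsilon/2) R_k p^{1/k} \ll p/2$ (using $k \leq \log p/\log\log p$ and $R_k \sim \sqrt{k/(2\pi e)}$), so $S(\ux, \sqrt t, \tau)$ contains at most one point of $p\zed^k$. Any such point lies at distance at least $(1-\tau)\sqrt t$ from $\ux$, so the pointwise bound (\ref{gaussian_bound}) yields $\eta_k(\sigma, \lambda - \ux) \leq \frac{1}{p}\exp(-(3\epsilon^2/4 + O(\epsilon^3))k/2) = o(1/p)$, again using $\epsilon^2 k \to \infty$. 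Assembling, $\E[\Theta_M(\ux)] = \frac{1+o(1)}{p}(A+B) + B \bigl(1 - \frac{1+o(1)}{p}\bigr) = \frac{1+o(1)}{p}(1-o(1)) + o(1/p) = \frac{1}{p}(1+o(1))$, uniformly in $\ux$.
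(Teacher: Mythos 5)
Your argument is correct and follows essentially the same strategy as the paper: condition on whether $\lambda \in p\zed^k$ or not using Lemma~\ref{dual_lattice_lemma}, kill the $p\zed^k$ contribution pointwise via~(\ref{gaussian_bound}) (using that at most one such point lies in the shell), and show the $\zed^k$-sum over the shell is $1 - o(1)$ by converting sum to integral and invoking Gaussian concentration. The only organizational difference is that you split ``full sum'' $= 1 + O(e^{-cp^{2/k}})$ via Poisson summation and then bound the tail, whereas the paper directly approximates the shell sum by the shell integral; both reductions hinge on the same slowly-varying property of the Gaussian on the unit lattice scale. One small imprecision: Lemma~\ref{point_count_lemma} counts unweighted lattice points in a ball and does not by itself justify replacing the weighted Gaussian sum by an integral; the paper's mechanism for that step is the directional-derivative bound $|D_\uv \log \eta_k(\sigma, \lambda - \ux)| \ll k/\sqrt{t}$ valid for $\lambda$ in the (enlarged) shell, which shows the Gaussian is nearly constant across a unit cube. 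Substituting that justification in place of your appeal to Lemma~\ref{point_count_lemma} would make the argument airtight; otherwise it is sound.
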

\begin{proof}
If $p$ is sufficiently large then there is at most one point of $p\zed^k$
contained in $\Lambda_c(\ux)$, and so (\ref{gaussian_bound}) gives
\begin{align*}
&\E_{L(p,k)}\left[\Theta_M\left(\ux, \frac{2t}{2k+1};\Lambda \right) \right]\\&=
o\left(\frac{1}{p} \right) + \frac{p^{k-1}-1}{p^k-1} \sum_{\lambda \in \zed^k}
\eta_k\left(\sqrt{\frac{2t}{2k+1}}, \lambda - \ux \right) \one\left(\lambda \in
S\left(\ux, \sqrt{t}, \tau\right) \right)
\end{align*}
Let $\uv \in \bR^k$ be a unit vector, and let $D_{\uv}$ denote the directional
derivative in the $\ux$ variable in direction $\uv$.  For any $\lambda \in
S\left(\ux, \sqrt{t}, 2\tau\right)$ we have
\[
 \left|D_\uv\left(\log \eta_k\left(\sqrt{\frac{2t}{2k+1}}, \lambda - \ux \right)
\right)\right| \ll \frac{k}{\sqrt{t}} \ll \frac{\sqrt{k}}{p^{\frac{1}{k}}}.
\]
In particular, for any $\uy \in \left[-\frac{1}{2}, \frac{1}{2}\right)^k$,
since $\|\uy\|_2 \leq \frac{\sqrt{k}}{2}$, we have
\[
 \eta_k\left(\sqrt{\frac{2t}{2k+1}}, \lambda - \ux \right) = (1 +
o(1))\eta_k\left(\sqrt{\frac{2t}{2k+1}}, \lambda - \ux -\uy\right).
\]
Thus the sum may be approximated with an integral, and the result
follows.
\end{proof}

\begin{lemma}
 We have the following estimates.
 \begin{align*}
  \E_{L(p,k) \times (\bR/p\zed)^k}\left[ \Theta_M\left(
\ux,\frac{2t}{2k+1};\Lambda\right)\right] &= \frac{1}{p}\left(1 +
o(1)\right)\\
  \E_{L(p,k) \times (\bR/p\zed)^k} \left[\Theta_E\left(
\ux,\frac{2t}{2k+1};\Lambda\right) \right] &= o\left(\frac{1}{p}\right)
\end{align*}
and for $k > 2$,
\begin{align*}
&\E_{ (\bR/p\zed)^k} \left[\Var_{L(p,k)} \left[
\Theta_M\left(
\ux,\frac{2t}{2k+1};\Lambda\right)\right]\right]  = o\left(\frac{1}{p^2}
\right).
 \end{align*}

\end{lemma}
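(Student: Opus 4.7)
The first identity is obtained by taking $\E_{(\bR/p\zed)^k}$ of the conclusion of the preceding Lemma, which established $\E_{L(p,k)}[\Theta_M(\ux, \cdot;\Lambda)] = \frac{1}{p}(1+o(1))$ uniformly in $\ux$. For the second identity I would observe that, for every $\Lambda \in L(p,k)$, the theta function $\Theta(\cdot, \frac{2t}{2k+1};\Lambda)$ is $\Lambda$-periodic with integral $1$ on any fundamental domain; since $p\zed^k \subset \Lambda$ has index $p^{k-1}$, one has $\E_{(\bR/p\zed)^k}[\Theta(\cdot;\Lambda)] = \frac{1}{p}$ identically in $\Lambda$. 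Combined with the first identity and the pointwise non-negativity $\Theta_E = \Theta - \Theta_M \geq 0$, this yields the claimed bound on $\E[\Theta_E]$.

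For the variance I would expand the square and write
\[
\Var_{L(p,k)}[\Theta_M(\ux)] = \sum_{\lambda_1, \lambda_2 \in \zed^k} \eta_1 \eta_2 \one_1 \one_2 \cdot \mathrm{Cov}_L\bigl(\one_{\lambda_1 \in \Lambda},\, \one_{\lambda_2 \in \Lambda}\bigr),
\]
with $\eta_i = \eta_k(\sqrt{2t/(2k+1)}, \lambda_i - \ux)$ and $\one_i = \one(\lambda_i \in S(\ux, \sqrt{t}, \tau))$. Applying Lemma \ref{expectation_lemma} the covariance splits into three regimes: on the diagonal and on the off-diagonal \emph{aligned} pairs (those with $\lambda_2 \equiv c\lambda_1 \pmod{p}$ for some $c \in \zed/p\zed$) the covariance equals $\frac{1}{p} - \frac{1}{p^2} + O(p^{-k})$; on the \emph{generic} pairs ($|\zed\lambda_1 + \zed\lambda_2 \bmod p| = p^2$) an exact computation from Lemma \ref{expectation_lemma} gives a covariance of $-(1-1/p)^2/p^k$. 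The generic contribution to $\E_\ux \Var_L$ is therefore of size $O(p^{-k}) \cdot \E_\ux[(\sum_\lambda \eta \one)^2]$; since $\sum_\lambda \eta(\lambda-\ux) \approx 1$ uniformly (by the theta expansion of Lemma \ref{theta_lemma}) and Gaussian concentration places all but a $e^{-\Omega(\epsilon^2 k)}$ fraction of this mass in the shell (Lemma \ref{gaussian_length_concentration}), this contribution is $O(p^{-k}) = o(p^{-2})$ exactly when $k > 2$.

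The main obstacle is to establish that the diagonal plus off-diagonal aligned contribution is also $o(p^{-2})$ after averaging in $\ux$. I would parametrize aligned pairs by $(\lambda_1, c, \mu) \in \zed^k \times \zed/p\zed \times \zed^k$ via $\lambda_2 = c\lambda_1 + p\mu$, and collapse the $\ux$-integral using the Gaussian product identity
\[
\eta_1 \eta_2 = \eta\bigl(\sqrt{2}\sigma,\, \lambda_1-\lambda_2\bigr)\, \eta\bigl(\sigma/\sqrt{2},\, \ux - (\lambda_1+\lambda_2)/2\bigr), \qquad \sigma^2 = 2t/(2k+1).
\]
The case $c = 1$ forces $\mu = 0$ because $p\|\mu\|$ otherwise exceeds the diameter $\leq 2(1+\tau)\sqrt{t}$ of the shell for $k \geq 2$; for $c \neq 1$, the Gaussian prefactor $\eta(\sqrt{2}\sigma, (1-c)\lambda_1 - p\mu)$ confines $\lambda_1$ to a ball of radius $O(\sqrt{t}/|1-c|)$ around $p\mu/(1-c)$, in which the number of integer points is $O(p/|1-c|^k)$ by Lemma \ref{point_count_lemma}. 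The key delicate step, reminiscent of the Siegel--Rogers second-moment identities flagged in the summary of method, is to verify that the aggregate of the diagonal term $\sum_\lambda \eta_1^2 \one$, the $c \in \{0,\pm 1\}$ edge cases, and the $c \neq \pm 1$ generic aligned contributions matches, to leading order, the cross term coming from $(\E_L \Theta_M)^2$, leaving only a residue of order $o(p^{-2})$. The hypothesis $k > 2$ enters here through the density $p^{-(k-1)}$ of the sublattice $\zed\lambda_1 + p\zed^k$, which controls how many aligned pairs both hit the shell.
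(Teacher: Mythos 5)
Your evaluation of the two means is correct and essentially the paper's argument: the first is Lemma \ref{mean_lemma} averaged over $\ux$, and the second follows because $\E_{(\bR/p\zed)^k}\left[\Theta\right]=\frac{1}{p}$ identically (the theta function integrates to $1$ over $\bR^k/\Lambda$ and $p\zed^k<\Lambda$) together with $\Theta_E=\Theta-\Theta_M\geq 0$. Your treatment of the generic (non-aligned) pairs in the variance is also correct and matches the paper: the exact covariance $-(1-1/p)^2p^{-k}$ yields a contribution $O(p^{-k})=o(p^{-2})$ precisely for $k>2$.

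The gap is in the diagonal and aligned pairs, which carry covariance $\asymp \frac{1}{p}$ and are the dominant terms. You defer their estimation to an unexecuted ``key delicate step'' in which their aggregate is supposed to cancel, to leading order, against the cross term $(\E_L[\Theta_M])^2$ --- but that cross term is already absorbed into the covariances in your own decomposition, so there is nothing left to match, and no cancellation of this type occurs. The mechanism that actually controls these terms is the pointwise bound (\ref{gaussian_bound}): because $t=(1+\epsilon)\overline{t^{\mix}_1}$, on the shell $S(\ux,\sqrt{t},\tau)$ one has $\eta_k\left(\sqrt{2t/(2k+1)},\lambda-\ux\right)\leq \frac{1}{p}\exp\left(-\left(\tfrac{3\epsilon^2}{4}+O(\epsilon^3)\right)\tfrac{k}{2}\right)=o\left(\tfrac{1}{p}\right)$. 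This is the entire reason $\Theta_M$ carries the shell restriction: without it the diagonal term alone is of size $\asymp (e/(1+\epsilon))^{k/2}p^{-2}$, which is not $o(p^{-2})$. With it, the paper's argument is short: bound one Gaussian factor by (\ref{gaussian_bound}) and integrate the other over $\bR^k$; for aligned pairs the $O(1/p)$ covariance cancels the $p$ choices of the multiplier $a$ once the $\ux$-integral over $\left[-\tfrac{p}{2},\tfrac{p}{2}\right)^k$ is unfolded, leaving only a sum over the $\ll 8^k p$ admissible differences $\lambda=\lambda_1-\lambda_2$ in $B(0,2(1+\tau)\sqrt{t})$, for a total $\ll 8^k p^{-k}=o(p^{-2})$ when $k>2$. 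Your lattice-point counts per residue $c$ could be organized into such a bound, but only in combination with the pointwise shell estimate, which your outline never invokes for these terms.
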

\begin{proof}
The evaluations of the means follow from Lemma \ref{mean_lemma}.

In evaluating the variance term, we write, for $\lambda_1, \lambda_2 \in
\zed^k$,
$\lambda_1 \sim \lambda_2$ if $\lambda_2 \equiv a \lambda_1 \bmod p$ for some
$0,  1 \not \equiv a \bmod p$. We have the following evaluations (see Lemma \ref{expectation_lemma}):
\begin{align*}
&\bU_L\left(\Lambda: \lambda_1, \lambda_2 \in \Lambda \right)
-\bU_L\left(\Lambda: \lambda_1 \in \Lambda \right)\bU_L\left(\Lambda: \lambda_2
\in \Lambda \right)\\&\qquad= \left\{\begin{array}{lll}0 && \lambda_1 \in
p\zed^k \text{ or } \lambda_2 \in p\zed^k\\O \left(p^{-k} \right) &&
\lambda_1,
\lambda_2 \in \zed^k\setminus p\zed^k,\; \lambda_1 \neq \lambda_2, \lambda_1
\not \sim \lambda_2 \\
O\left( p^{-1}\right)&& \lambda_1,
\lambda_2 \in \zed^k\setminus p\zed^k,\; \lambda_1  \sim \lambda_2 \text{ or }
\lambda_1 = \lambda_2
\end{array}\right..
\end{align*}
The variance thus evaluates to
\begin{align}
 &\notag \E_{(\bR/p\zed)^k}\left[\Var_{L(p,k)}\left[\Theta_M\left(
\ux,\frac{2t}{2k+1};\Lambda\right) \right]\right] \ll \\
\label{variance_first}&\frac{1}{p^k}\sum_{\lambda_1, \lambda_2 \in \zed^k
\setminus (p\zed)^k}
\left(\one(\lambda_1 \sim \lambda_2)\frac{1}{p} + O(p^{-k}) \right)\\ \notag
&\times
\int_{\ux \in [-\frac{p}{2},\frac{p}{2})^k\cap S(\lambda_1, \sqrt{t}, \tau) \cap
S(\lambda_2, \sqrt{t}, \tau)}\eta_k\left(\sqrt{\frac{2t}{2k+1}},
\lambda_1 - \ux \right) \eta_k\left(\sqrt{\frac{2t}{2k+1}}, \lambda_2
- \ux \right)  d\ux\\
&\label{variance_second}+ \frac{1}{p^{k+1}}\sum_{\lambda \in \zed^k \setminus
(p\zed)^k}\int_{\ux \in
\left[-\frac{p}{2},\frac{p}{2}\right)^k \cap
S(\lambda,\sqrt{t},\tau)}\eta_k\left(\sqrt{\frac{2t}{2k+1}},
\lambda - \ux \right)^2d\ux.
\end{align}

The term
(\ref{variance_second}) captures $\lambda_1 =  \lambda_2$.
Replacing one Gaussian by the bound (\ref{gaussian_bound}) and then estimating
as for the mean of $\Theta_M$ gives a bound for this term of
\[(\ref{variance_second})= o\left(\frac{1}{p^2} \right).\]

The error term $O(p^{-k})$ of (\ref{variance_first})  may
be bounded by omitting the restriction on $\|\lambda_2 - \ux\|$ and summing over
$\lambda_2$, the
summation being bounded by a constant. The remaining summation over $\lambda_1$
and integral over $\ux$ are then evaluated as for the mean, and give an error
of $O(p^{-k})$.

It remains to treat those terms from (\ref{variance_first}) with $\lambda_1
\sim \lambda_2$.   Let $R(\tau) = 2 (1 + \tau)\sqrt{t}.$  Any
$\lambda_1 \sim \lambda_2$ contributing
to the variance satisfies $\lambda = \lambda_1 -
\lambda_2 \in B(0, R(\tau))\setminus\{0\}$ and $\lambda_1 \equiv
(a+1)\lambda \bmod p\zed^k$, $\lambda_2 \equiv a \lambda \bmod p\zed^k$ for
some $ a \bmod p$. Arranging the summation over $\lambda$ and $a$, we find that
the contribution of terms with $\lambda_1 \sim \lambda_2$ to
(\ref{variance_first}) is bounded by (by expanding the integral, this is now
independent of $a$, which we pull out)
\begin{align*}
\ll& \frac{1}{p^k}\sum_{\lambda \in \zed^k \cap B(0,
R(\tau))\setminus \{0\}} \int_{\ux \in S(0, \sqrt{t}, \tau)\cap S(\lambda,
\sqrt{t},\tau)}
\eta_k\left(\sqrt{\frac{2t}{2k+1}},
 \ux \right)  \eta_k\left(\sqrt{\frac{2t}{2k+1}}, \lambda
- \ux \right)d\ux.
\end{align*}
The total number of such $\lambda$ is $\ll 2^k (1+\tau)^k (1
+\epsilon)^k p$ by estimating with the volume of the ball, see Lemma
\ref{point_count_lemma}. Putting in the bound (\ref{gaussian_bound}) for one
Gaussian
and integrating the second over all of $\bR^k$, we obtain an estimate from the
terms with $\lambda_1 \sim \lambda_2$ of $\ll \frac{8^k}{p^k}.$

\end{proof}

\begin{proof}[Proof of Proposition \ref{theta_dist_prop}]
Consider separately the cases \[|\Theta_E|, \left|\E_{L(p,k)}[\Theta_M]-\frac{1}{p}\right|, |\Theta_M
- \E_{L(p,k)}[\Theta_M]| > \frac{\delta}{3p}\] and apply Markov's inequality.
\end{proof}

\section{The power-of-2 random walk}
\subsection{A Chebyshev cut-off criterion}
We begin by describing a commonly used second moment method for proving cut-off,
which we apply in analyzing the power-of-2 random walk. The following is a variant of the lower bound method from \cite{DS87}, see also Wilson's lemma in \cite{LPW09}.

Given a probability measure $\mu$ on $\zed/p\zed$ and frequency $\xi \in \zed/p\zed$, define the Fourier coefficient of $\mu$ at $\xi$ to be
\[
 \hat{\mu}(\xi) = \sum_{x \in\zed/p\zed}\mu(x)e\left(\frac{\xi x}{p}\right).
\]
Define, as before, the $L^2$ mixing time by
\[
 t^{\mix}_2 = \inf \left\{n \in \zed_{>0}:\sum_{0 \neq \xi \in \zed/p\zed}
\left|\hat{\mu}(\xi) \right|^{2n} \leq \frac{4}{e^2} \right\}
\]
and the spectral gap
\[
\gap = 1 - \max_{0 \neq \xi \in \zed/p\zed}\left|\hat{\mu}(\xi)
\right|.
\]

\begin{proposition}\label{Chebyshev_criterion} Let $\{A_p \subset
\zed/p\zed\}_{p \in \sP}$ be a sequence of symmetric, lazy, generating sets
for $\zed/p\zed$, with $\mu_{A_p}$ the corresponding uniform probability
measure. Assume that the spectral gap tends to 0 with increasing $p$.

Suppose the following holds for each fixed $\epsilon > 0$.
For each
$p \in \sP$ there exists symmetric subset $0 \not \in B_p \subset
\widehat{\zed/p\zed}$ such that as $p \to \infty$,
\begin{itemize}
 \item For all $\xi \in B_p$,
 \begin{equation}\label{large_frequency_condition}\qquad \hat{\mu}_{A_p}(\xi) =
1-o(1).\end{equation}
 \item For all $n < (1-\epsilon)t^{\mix}_2(p)$
 \begin{equation}\label{mean_grows}\frac{1}{\sqrt{|B_p|}} \sum_{\xi \in B_p}
\hat{\mu}_{A_p}^{n}(\xi) \to \infty\end{equation}
 \item For all $n < (1-\epsilon)t^{\mix}_2(p)$
 \begin{equation}\label{cut_off_condition}\sum_{\xi_1, \xi_2 \in
B_p}\hat{\mu}_{A_p}^{n}(\xi_1 - \xi_2)  \leq (1 + o(1)) \sum_{\xi_1, \xi_2 \in
B_p}
\hat{\mu}_{A_p}^{n}(\xi_1) \hat{\mu}_{A_p}^{n}(\xi_2). \end{equation}
\end{itemize}
Then the sequence $\{( \zed/p\zed, \mu_{A_p}, \bU_{\zed/p\zed})\}$ converges to
uniform in total
variation
distance with a cut-off at $t^{\mix}_1(p)\sim t^{\mix}_2(p)$ if and only if
the condition
\begin{equation}\label{Peres}t^{\mix}_2(p) \gap(p)  \to
\infty \qquad \text{as } p \to \infty\end{equation} is satisfied.
\end{proposition}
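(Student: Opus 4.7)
I would prove the two directions of the equivalence separately. The ``only if'' direction is immediate from the standard fact (Chapter 18.3 of \cite{LPW09}) that any cut-off sequence in total variation satisfies $t^{\mix}_1(p)/t^{\rel}(p)\to\infty$: combining this with the trivial $t^{\mix}_1(p)\leq t^{\mix}_2(p)$ (from Lemma \ref{t_mix_lower}) and $t^{\rel}(p)\asymp 1/\gap(p)$ yields the Peres condition (\ref{Peres}). The substantive work is the ``if'' direction, where I would pin $t^{\mix}_1(\epsilon)\sim t^{\mix}_2(p)$ for every $0<\epsilon<1$ by producing matching upper and lower bounds, which simultaneously exhibits cut-off.

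For the upper bound, I combine $\|\cdot\|_{\TV}\leq \|\cdot\|_{L^2}$ (Lemma \ref{t_mix_lower}) with a spectral contraction: for $n\geq t^{\mix}_2(p)$,
\[
\sum_{\xi\neq 0}|\hat{\mu}_{A_p}(\xi)|^{2n}\leq (1-\gap(p))^{2(n-t^{\mix}_2(p))}\sum_{\xi\neq 0}|\hat{\mu}_{A_p}(\xi)|^{2t^{\mix}_2(p)}\leq \frac{4}{e^2}\exp\left(-2(n-t^{\mix}_2(p))\gap(p)\right),
\]
so that $t^{\mix}_2(\epsilon)\leq t^{\mix}_2(p)+O(\log(1/\epsilon)/\gap(p))$. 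Under (\ref{Peres}), $1/\gap(p)=o(t^{\mix}_2(p))$, hence $t^{\mix}_1(\epsilon)\leq (1+o(1))t^{\mix}_2(p)$.

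For the lower bound I would apply a Chebyshev second-moment argument with the real-valued test function $f_n(x)=\sum_{\xi\in B_p}e(\xi x/p)$; hypothesis (\ref{large_frequency_condition}) forces each $\hat{\mu}_{A_p}(\xi)>0$ on $B_p$, so $M_n:=\E_{\mu_{A_p}^{*n}}f_n=\sum_{\xi\in B_p}\hat{\mu}_{A_p}^n(\xi)>0$. Under $\bU_{\zed/p\zed}$, Parseval gives $\E f_n=0$ and $\Var f_n=|B_p|$; under $\mu_{A_p}^{*n}$ the second moment is $\sum_{\xi_1,\xi_2\in B_p}\hat{\mu}_{A_p}^n(\xi_1-\xi_2)$, bounded by $(1+o(1))M_n^2$ via (\ref{cut_off_condition}), whence $\Var_{\mu_{A_p}^{*n}}f_n=o(M_n^2)$. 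Thresholding at $M_n/2$ and applying Chebyshev on both sides,
\[
\Prob_{\mu_{A_p}^{*n}}\left[f_n>M_n/2\right]\geq 1-o(1),\qquad \Prob_{\bU_{\zed/p\zed}}\left[f_n>M_n/2\right]\leq \frac{4|B_p|}{M_n^2}=o(1),
\]
the latter by (\ref{mean_grows}). So the total variation distance is $\geq 1-o(1)$ for every $n<(1-\epsilon)t^{\mix}_2(p)$, giving $t^{\mix}_1(1-\epsilon)\geq (1-o(1))t^{\mix}_2(p)$ and, combined with the upper bound, both cut-off and $t^{\mix}_1(p)\sim t^{\mix}_2(p)$. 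The proposition itself is essentially mechanical once the three hypotheses are in hand; the main obstacle in subsequent concrete applications (such as the power-of-2 walk) is verifying the decorrelation estimate (\ref{cut_off_condition}) for a natural choice of $B_p$.
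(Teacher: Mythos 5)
Your proof is correct and follows the paper's approach in both directions. The only cosmetic difference is in the test function: you use the unweighted $f_n(x)=\sum_{\xi\in B_p}e(\xi x/p)$, whereas the paper uses the normalized, $\hat\mu$-weighted $f_p(x)=\frac{1}{\sqrt{|B_p|}}\sum_{\xi\in B_p}\hat{\mu}_{A_p}(\xi)e(-\xi x/p)$; hypothesis (\ref{large_frequency_condition}) renders these interchangeable, and the Chebyshev thresholding, the $L^2$/spectral-contraction upper bound, and the appeal to the necessary condition from \cite{LPW09} for the "only if" direction are all the same.
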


\begin{rem}
The condition (\ref{cut_off_condition}) is in fact equivalent to
\begin{equation} \label{weaker_cut_off_condition}\sum_{\xi_1, \xi_2 \in
B_p}\hat{\mu}_{A_p}^{n}(\xi_1 - \xi_2)  =  (1 + o(1)) \sum_{\xi_1, \xi_2 \in
B_p}
\hat{\mu}_{A_p}^{n}(\xi_1) \hat{\mu}_{A_p}^{n}(\xi_2)\end{equation} since
$$\sum_{\xi_1, \xi_2 \in
B_p}\hat{\mu}_{A_p}^{n}(\xi_1 - \xi_2)  \geq \sum_{\xi_1, \xi_2 \in B_p}
\hat{\mu}_{A_p}^{n}(\xi_1) \hat{\mu}_{A_p}^{n}(\xi_2)$$ by the following
application of
Cauchy-Schwarz:
\begin{align*}
\sum_{\xi_1, \xi_2 \in B_p}
\hat{\mu}_{A_p}^{n}(\xi_1) \hat{\mu}_{A_p}^{n}(\xi_2) & = \left| \sum_{\xi \in
B_p}
\hat{\mu}_{A_p}^{n}(\xi)\right|^2 \\ & = \left| \sum_{x \bmod p}
\mu_{A_p}^{*n}(x)
\sum_{\xi \in B_p} e\left(\frac{\xi x}{p}\right) \right|^2 \\
&\leq \left(\sum_{x \bmod p} \mu_{A_p}^{*n}(x)\right)\left( \sum_{x \bmod p}
\mu_{A_p}^{*n}(x) \sum_{\xi_1, \xi_2 \in B_p} e\left(\frac{(\xi_1 -
\xi_2)x}{p}\right)\right)
\\ & = \sum_{\xi_1, \xi_2 \in B_p} \hat{\mu}_{A_p}^{n}(\xi_1 - \xi_2).
\end{align*}

\end{rem}

\begin{proof}[Proof of Proposition \ref{Chebyshev_criterion}]
Since $t^{\mix}_1 \leq t^{\mix}_2$, if the condition
$\gap(p) \cdot t^{\mix}_2(p)\to \infty$ fails then there is
no cut-off in total variation, so we may assume that this condition holds.
Let $\epsilon>0$ be fixed.  For  $n > (1 +
\epsilon) t^{\mix}_2$, by Cauchy-Schwarz,
\begin{align}\label{upper_bound}\left\|\mu_{A_p}^{*n} -
\bU_{\zed/p\zed}\right\|_{\TV(\zed/p\zed)}^2 &\leq \frac{1}{4}\sum_{\xi \not
\equiv 0 \bmod
p} |\hat{\mu}_{A_p}(\xi)|^{(2 +
2\epsilon)t_2^{\mix}}\\& \notag \leq \frac{1}{4}(1-\gap)^{2 t^{\mix}_2 \epsilon}
\sum_{\xi \not \equiv 0 \bmod p}
\left|\hat{\mu}_{A_p}(\xi)\right|^{2 t_2^{\mix}} \to
0\end{align} since
$\gap \cdot t_2^{\mix} \to \infty$.

To prove the lower bound, let $n < (1-\epsilon) t^{\mix}_2$.
  Define function $f_p(x)$ on $\zed/p\zed$ by $f_p(x) =
\frac{1}{\sqrt{|B_p|}} \sum_{\xi \in B_p} \hat{\mu}_{A_p}(\xi)
e\left(\frac{-\xi x}{p}\right)$.
Writing $\E_\mu$, $\Var_\mu$ for expectation and variance with respect to
probability measure
$\mu$, we have
\begin{equation}\label{uniform_expectation}
\E_{\bU_{\zed/p\zed}}\left[f_p\right] =
\frac{1}{p} \sum_{x \bmod p} \frac{1}{\sqrt{|B_p|}}\sum_{\xi \in
B_p}\hat{\mu}_{A_p}(\xi)e\left(\frac{-\xi x}{p}\right) = \frac{1}{\sqrt{|B_p|}}
\sum_{\xi \in B_p}
\hat{\mu}_{A_p}(\xi)\delta_{\xi = 0} = 0 \end{equation} since $0 \not \in B_p$,
and
\begin{align}\label{uniform_variance}
 \Var_{\bU_{\zed/p\zed}}\left[f_p\right] &= \frac{1}{p} \sum_{x \bmod p}
\frac{1}{|B_p|}\sum_{\xi_1, \xi_2 \in
B_p} \hat{\mu}_{A_p}(\xi_1)\hat{\mu}_{A_p}(\xi_2)e\left(\frac{-(\xi_1 -
\xi_2)x}{p}\right) \\\notag &=
\frac{1}{|B_p|}\sum_{\xi\in B_p} \hat{\mu}_{A_p}(\xi)^2 \leq 1.
\end{align}

Meanwhile
\begin{align} \notag \E_{\mu_{A_p}^{*n}}\left[f_p\right] &= \sum_{x \bmod p}
\frac{1}{\sqrt{|B_p|}} \sum_{\xi \in B_p} \hat{\mu}_{A_p}(\xi)e\left(\frac{-\xi
x}{p}\right) \mu_{A_p}^{*n}(x)
\\ \label{mu_expectation}&= \frac{1}{\sqrt{|B_p|}} \sum_{x \bmod p}\sum_{\xi \in
B_p} \hat{\mu}_{A_p}(\xi)e\left(\frac{-\xi x}{p}\right) \frac{1}{p} \sum_{\eta
\bmod p} \hat{\mu}_{A_p}^{n}(\eta)
e\left(\frac{-\eta x}{p}\right) \\\notag & = \frac{1}{\sqrt{|B_p|}} \sum_{\xi
\in B_p} \hat{\mu}_{A_p}(\xi)
\hat{\mu}_{A_p}^n(-\xi) \\& \notag= (1 + o(1)) \frac{1}{\sqrt{|B_p|}}
\sum_{\xi \in B_p}
\hat{\mu}_{A_p}^n(\xi) \end{align}
and
\begin{align} \notag \E_{\mu_{A_p}^{*n}}\left[f_p^2\right] &= \sum_{x \bmod p}
\frac{1}{|B_p|} \sum_{\xi_1, \xi_2 \in B_p}
\hat{\mu}_{A_p}(\xi_1)\hat{\mu}_{A_p}(\xi_2)e\left(\frac{-(\xi_1-\xi_2)x}{p}
\right) \mu_{A_p}^{*n}(x)
\\ \label{mu_expectation_squared}&= \frac{1}{|B_p|} \sum_{\xi_1, \xi_2 \in B_p}
\hat{\mu}_{A_p}(\xi_1)\hat{\mu}_{A_p}(\xi_2)
\hat{\mu}_{A_p}^n(-(\xi_1 - \xi_2)) \\ \notag &= (1 + o(1)) \frac{1}{|B_p|}
\sum_{\xi_1, \xi_2 \in
B_p}
\hat{\mu}_{A_p}^n(\xi_1 - \xi_2). \end{align}
It follows by condition (\ref{cut_off_condition}) that
\begin{equation}\label{mu_variance} \Var_{\mu_{A_p}^{*n}}\left[f_p\right] =
\E_{\mu_{A_p}^{*n}}\left[f_p^2\right] - \E_{\mu_{A_p}^{*n}}\left[f_p\right]^2 =
o\left(\E_{\mu_{A_p}^{*n}}\left[f_p^2\right]\right). \end{equation}  Let $X_p$
be the subset of
$\zed/p\zed$ defined by
$$X_p = \left\{x: f_p(x) \leq \frac{1}{2}
\E_{\mu_{A_p}^{*n}}\left[f_p\right]\right\}.$$ By
(\ref{mu_expectation}) and condition (\ref{mean_grows}),
\begin{equation}\label{growing_mean} \E_{\mu_{A_p}^{*n}}\left[f_p\right]
\to \infty .\end{equation}  Hence Chebyshev's inequality,
(\ref{uniform_expectation}), (\ref{uniform_variance}) and
(\ref{growing_mean}) imply $$\bU_{\zed/p\zed}\left[X_p\right] = 1 - o(1)$$ while
Chebyshev,
(\ref{mu_variance}) and (\ref{growing_mean}) imply
$$\mu_{A_p}^{*n}\left(X_p\right) =
o(1).$$
We conclude
$$\left\|\mu_{A_p}^{*n} - \bU_{\zed/p\zed} \right\|_{\TV(\zed/p\zed)} \geq
\left|\bU_{\zed/p\zed}(X_p)-\mu_{A_p}^{*n}(X_p)\right| = 1 -o(1).$$

\end{proof}

\subsection{Proof of Theorem \ref{power_2_theorem}, lower bound} Recall that we
set $\ell = \ell_2(p) = \left\lceil \log_2 p \right\rceil$ and $$c_0 = \sum_{j =
1}^\infty \left(1- \cos
\frac{2\pi}{2^j}\right).$$ We prove the
lower bound of Theorem \ref{power_2_theorem} conditional on $t_2^{\mix} \lesssim
\frac{\ell \log \ell}{2c_0}$, which is proven in the next section. The proof of
the lower bound is a reduction to the conditions of Proposition
\ref{Chebyshev_criterion}.

Let $J = o(\log \log p)$ be a parameter.  With an eye toward applying
Proposition \ref{Chebyshev_criterion}, set
$$B_p = \left\{\xi \in \widehat{\zed/p\zed}: \exists\, 1
\leq j_1 \neq j_2 \leq \ell, \; \left\|\frac{\xi}{p}- 2^{-j_1} + 2^{-j_2}
\right\|_{\bR/\zed} \leq 2^{-\ell - J}\right\}.$$

\begin{lemma}\label{big_B} $|B_p| \gg \frac{\ell^2}{2^J} - \ell$. \end{lemma}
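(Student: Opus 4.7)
The plan is to reformulate membership in $B_p$ as a closeness condition between two rotations on the unit circle, then apply pigeonhole. Since the fractions $\{\xi/p : 0 \le \xi < p\}$ are spaced by $1/p$ on $\bR/\zed$, for any pair $(j_1, j_2)$ with $j_1 \ne j_2$ there exists $\xi \in \{0,\ldots,p-1\}$ with $\|\xi/p - (2^{-j_1} - 2^{-j_2})\|_{\bR/\zed} \le 2^{-\ell-J}$ iff $\|p(2^{-j_1} - 2^{-j_2})\|_{\bR/\zed} \le p \cdot 2^{-\ell-J}$.  If $A_j$ denotes the nearest integer to $p/2^j$ and $\epsilon_j := p/2^j - A_j \in [-1/2, 1/2]$, this becomes
\[
\|\epsilon_{j_1} - \epsilon_{j_2}\|_{\bR/\zed} \le p \cdot 2^{-\ell-J},
\]
a symmetric closeness condition on the $\ell$ points $\{\epsilon_j\}_{j=1}^\ell$ viewed as elements of $\bR/\zed$.

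I then partition $\bR/\zed$ into $K := \lceil 2^{\ell+J}/p \rceil$ equal arcs of length $1/K \le p \cdot 2^{-\ell-J}$.  Since $p > 2^{\ell-1}$, one has $K \le 2^{J+1}$.  Any two $\epsilon_j$'s lying in a common arc satisfy the closeness condition.  Letting $n_k$ denote the number of $\epsilon_j$ in arc $k$, Cauchy--Schwarz gives
\[
\sum_k n_k(n_k - 1) \;\ge\; \frac{\ell^2}{K} - \ell \;\ge\; \frac{\ell^2}{2^{J+1}} - \ell,
\]
which lower-bounds the number of ordered good pairs $(j_1, j_2)$.

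Each good pair contributes at least one $\xi \in B_p$, so it remains to bound the multiplicity of each $\xi$.  If two ordered pairs share a given $\xi$, their targets $t_{j_1,j_2} = 2^{-j_1} - 2^{-j_2}$ differ by at most $2 \cdot 2^{-\ell-J}$; multiplying by $2^\ell$, the integers $2^{\ell-j_1} - 2^{\ell-j_2} \bmod 2^\ell$ lie within $2^{1-J} \le 2$ of each other on $\bR/2^\ell \zed$.  These integers are pairwise distinct as $(j_1, j_2)$ ranges over ordered pairs with $j_1 \ne j_2$ (their binary expansions are distinct runs of $1$'s, complemented for the reversed ordering), and an interval of length $\le 2$ contains at most three integers.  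Hence each $\xi$ has multiplicity at most $3$, and
\[
|B_p| \;\ge\; \frac{1}{3}\left(\frac{\ell^2}{2^{J+1}} - \ell\right) \;\gg\; \frac{\ell^2}{2^J} - \ell.
\]

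The main technical point is the multiplicity bound, which hinges on the pairwise distinctness of the integers $2^{\ell-j_1} - 2^{\ell-j_2} \bmod 2^\ell$; this is straightforward by inspection of binary expansions.  The pigeonhole step is routine.
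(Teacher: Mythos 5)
Your proof is correct and follows essentially the same strategy as the paper's: partition the $\ell$ quantities attached to the exponents $j$ into $O(2^J)$ cells of the appropriate width (you use arcs for the fractional parts $\epsilon_j = p/2^j - A_j$; the paper uses the binary digits $s_\ell\cdots s_{\ell+J}$ of nearby frequencies $s/p$), lower-bound the number of colliding pairs by Cauchy--Schwarz, and control the multiplicity with which a single $\xi$ arises by an $O(1)$ separation argument on the targets $2^{-j_1}-2^{-j_2}$. The reformulation via $\|\epsilon_{j_1}-\epsilon_{j_2}\|_{\bR/\zed}\le p\,2^{-\ell-J}$ and the explicit multiplicity bound of $3$ are fine; this is a mild reparametrization of the paper's argument rather than a different route.
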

\begin{proof}
Let $$S = \left\{\xi \bmod p: \exists\, 1 \leq j \leq \ell, \;
\left\|\frac{\xi}{p} -\frac{1}{2^j} \right\| \leq 2^{-\ell}
\right\}.$$  We have $\ell \leq |S| \leq 2\ell$.  For each $s \in S$ write
$\frac{s}{p}$ in binary $$\frac{s}{p} = *.s_1 s_2 s_3 ....$$  Partition $S$
into $2^{J+1}$ sets $S_1, S_2, ..., S_{2^{J+1}}$ according to the digits
$s_{\ell}s_{\ell+1}...s_{\ell+J}$.  To each pair $s \neq s' \in S_i$ we obtain
$r = s
- s' \in B_p$.  The multiplicity with which a given such $r$ arises in this way
is  $O(1)$.  Hence
$$|B_p| \gg \sum_{j = 1}^{2^{J+1}} |S_j|(|S_j|-1) = -|S|+\sum_{j = 1}^{2^{J+1}}
|S_j|^2 .$$  By Cauchy-Schwarz,
$$|S|^2 = \left(\sum_j |S_j|\right)^2 \leq 2^{J+1} \sum_j |S_j|^2$$ so
$$|B_p| \gg \frac{|S|^2}{2^{J+1}} - |S| \geq \frac{\ell^2}{2^{J+1}} -\ell.$$
\end{proof}

\begin{lemma}\label{FC_lower_bound}
 For $\xi \in B_p$, $\hat{\mu}_{A_{2,p}}(\xi) \geq 1 - \frac{4c_0}{2\ell+1} -
O\left(\frac{1}{2^J \ell}\right)$.
\end{lemma}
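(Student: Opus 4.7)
The plan is to bound $1 - \hat{\mu}_{A_{2,p}}(\xi) = \frac{2}{2\ell+1}\sum_{i=0}^{\ell-1}(1-\cos(2\pi \xi 2^i /p))$, and since the prefactor is of order $1/\ell$, it suffices to show that the sum on the right is at most $2c_0 + O(2^{-J})$. By the symmetry $\hat{\mu}_{A_{2,p}}(\xi) = \hat{\mu}_{A_{2,p}}(-\xi)$, I may assume the witnessing indices satisfy $j_1 < j_2$; set $s = j_2 - j_1$ and write $\xi/p \equiv 2^{-j_1} - 2^{-j_2} + \alpha \pmod 1$ with $|\alpha| \leq 2^{-\ell-J}$. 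Then $\xi \cdot 2^i /p \equiv 2^{i-j_1} - 2^{i-j_2} + 2^i \alpha \pmod 1$, and since $\left|\frac{d}{dy}(1-\cos(2\pi y))\right| \leq 2\pi$, replacing $\xi 2^i/p$ by the unperturbed value $2^{i-j_1}-2^{i-j_2}$ contributes total error at most $2\pi\sum_{i=0}^{\ell-1} 2^i|\alpha| \leq 2\pi \cdot 2^{-J}$.

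I split the unperturbed sum $S = \sum_{i=0}^{\ell-1}(1 - \cos(2\pi(2^{i-j_1} - 2^{i-j_2})))$ into three ranges. For $i \geq j_2$ both exponents give integers so the summand vanishes; for $j_1 \leq i < j_2$ only $2^{i-j_2}$ is fractional and the summand equals $1-\cos(2\pi/2^{j_2-i})$, totalling $\sum_{k=1}^{s}(1-\cos(2\pi/2^k))$. The main work is in the range $i < j_1$, where I apply the identity
\[
1 - \cos(2\pi(a-b)) = (1-\cos(2\pi a)) + \cos(2\pi a)(1-\cos(2\pi b)) - \sin(2\pi a)\sin(2\pi b)
\]
with $a = 2^{-m}$ and $b = 2^{-m-s}$, where $m = j_1 - i \geq 1$. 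Since $2\pi a \in (0,\pi]$ and $2\pi b \in (0,\pi/2)$, both sines are non-negative (with $\sin\pi = 0$ handling the endpoint $m=1$), so the $\sin\sin$ term has favorable sign and may be discarded; using $\cos(2\pi a)\leq 1$ then bounds each summand by $(1 - \cos(2\pi/2^m)) + (1 - \cos(2\pi/2^{m+s}))$. Summing over $m = 1, \ldots, j_1$ gives $\sum_{k=1}^{j_1}(1-\cos(2\pi/2^k)) + \sum_{k=s+1}^{j_2}(1-\cos(2\pi/2^k))$.

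Combining the three ranges, the pieces $\sum_{k=1}^{s}$ and $\sum_{k=s+1}^{j_2}$ telescope to $\sum_{k=1}^{j_2}$, leaving $S \leq \sum_{k=1}^{j_1}(1-\cos(2\pi/2^k)) + \sum_{k=1}^{j_2}(1-\cos(2\pi/2^k)) \leq 2c_0$, since each sum is a partial sum of the series defining $c_0$. Together with the $O(2^{-J})$ perturbation error this yields $\sum_{i=0}^{\ell-1}(1 - \cos(2\pi \xi 2^i/p)) \leq 2c_0 + O(2^{-J})$, proving the lemma after multiplication by $\frac{2}{2\ell+1}$.

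The subtle step is the sign analysis of the $\sin(2\pi a)\sin(2\pi b)$ term: bounding it by its absolute value would contribute $O(2^{-s})$, which is too large when $s$ is small (e.g.\ $s=1$), whereas a crude $(x+y)^2 \leq 2(x^2+y^2)$ triangle-inequality approach $1-\cos(2\pi(a+b))\leq 2(1-\cos(2\pi a))+2(1-\cos(2\pi b))$ would inflate the leading constant from $2c_0$ to $6c_0$. The correct sign of this term is exactly what gives the sharp constant $2c_0$.
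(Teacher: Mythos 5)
Your proof is correct and follows essentially the same route as the paper: isolate the ranges $i<j_1$, $j_1\le i<j_2$, $i\ge j_2$, compare each summand to tails of the series defining $c_0$, and absorb the $O(2^{-\ell-J})$ perturbation into the $O(1/(2^J\ell))$ error. The only difference is that you explicitly justify the termwise inequality $1-\cos(2\pi(a-b))\le(1-\cos 2\pi a)+(1-\cos 2\pi b)$ via the sign of the $\sin\sin$ term, a step the paper asserts without comment.
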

\begin{proof}
After possibly replacing $\xi$ with $-\xi$ we may take $\xi = 2^{-j_1} -
2^{-j_2}
+ O\left(2^{-\ell-J}\right)$ with $j_1 < j_2$. Then
\begin{align*} &1-\hat{\mu}_{A_{2,p}}(\xi) = \frac{2}{2\ell+1} \sum_{i =
0}^{\ell-1}\left(1-\cos\left(2\pi \left(2^{i -
j_1} - 2^{i-j_2} + O\left(2^{i -\ell -J}\right)\right)\right)\right) \\&=
O\left(\frac{1}{2^J \ell }\right) + \frac{2}{2\ell+1}
\left[\sum_{i = 0}^{j_1-1}\left( 1-\cos\left(2\pi \left(2^{i - j_1} -
2^{i-j_2}\right)\right) \right)+\sum_{i =j_1}^{j_2-1} \left(1 - \cos\left(2\pi
2^{i-j_2}\right)\right)\right] \\ &\leq O\left(\frac{1}{ 2^J \ell}\right) +
\frac{2}{2\ell+1}
\left[\sum_{i = -\infty}^{j_1-1}\left( 1-\cos\left(2\pi 2^{i - j_1}\right)
\right)+\sum_{i =-\infty}^{j_2-1} \left(1 - \cos\left(2\pi
2^{i-j_2}\right)\right)\right] \\&=  \frac{4c_0}{2\ell+1} + O\left(\frac{1}{2^J
\ell}\right).\end{align*}
\end{proof}
\begin{lemma}\label{FC_upper_bound}
 For all but $O\left(J\ell^3\right)$ pairs $\xi_1 \neq \xi_2 \in B_p$
$$\hat{\mu}_{A_{2,p}}(\xi_1 - \xi_2) = 1 - \frac{8 c_0}{2\ell+1} +
O\left(\frac{1}{2^J \ell}\right).$$
For all but $O\left(J^2 \ell^2\right)$ pairs $\xi_1 \neq \xi_2 \in B_p$,
$$\hat{\mu}_{A_{2,p}}\left(\xi_1 - \xi_2\right) \leq 1 - \frac{6c_0}{2\ell+1} +
O\left(\frac{1}{2^J \ell}\right).$$
For all but $O\left(J^3 \ell\right)$ pairs $\xi_1 \neq \xi_2 \in B_p$,
$$\hat{\mu}_{A_{2,p}}(\xi_1 - \xi_2) \leq 1 - \frac{4c_0}{2\ell+1} +
O\left(\frac{1}{2^J \ell}\right).$$
\end{lemma}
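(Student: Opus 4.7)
The plan is to carry out a block-by-block expansion of $\hat{\mu}_{A_{2,p}}(\xi_1-\xi_2)$ paralleling that of Lemma~\ref{FC_lower_bound}, then count exceptional configurations combinatorially. For each $\xi_s\in B_p$ select indices $a_s\neq b_s\in\{1,\dots,\ell\}$ with $\|\xi_s/p - 2^{-a_s} + 2^{-b_s}\|_{\bR/\zed}\leq 2^{-\ell-J}$, and set $\theta_0 := 2^{-a_1} - 2^{-b_1} - 2^{-a_2} + 2^{-b_2}$.  A Taylor bound on $1-\cos(2\pi\cdot)$ applied termwise, combined with the geometric estimate $\sum_{i=0}^{\ell-1} 2^i \cdot 2^{-\ell-J} \ll 2^{-J}$, shows that replacing $(\xi_1-\xi_2)/p$ by $\theta_0$ perturbs $1-\hat{\mu}_{A_{2,p}}(\xi_1-\xi_2)$ by at most $O(1/(2^J\ell))$, absorbable in the claimed error.

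I would then collect the coefficients of $\theta_0$ at each $2^{-n}$ and carry any coefficient $\pm 2$ as $\pm 1$ at the next-smaller index; let $n_1 < \cdots < n_K$ denote the distinct surviving indices, each with net sign $\pm 1$.  On the block $i \in [n_s, n_{s+1})$ only the $n_k > i$ contribute modulo~$1$, with dominant term $\pm 2^{i-n_{s+1}}$ and subleading correction bounded by $2\cdot 2^{i-n_{s+2}}$.  Monotonicity of $1-\cos(2\pi x)$ on $[0,\tfrac12]$ (the key ingredient in Lemma~\ref{FC_lower_bound}), together with the elementary estimate $|(1-\cos 2\pi(\tau+\delta))-(1-\cos 2\pi\tau)|\ll|\tau\delta|+|\delta|^2$, handles the subleading perturbation: each block whose subsequent gap $n_{s+2}-n_{s+1}$ exceeds $J$ contributes $c_0 + O(2^{-J})$. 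Summing then yields
\[
\hat{\mu}_{A_{2,p}}(\xi_1-\xi_2) \;=\; 1 - \frac{2Kc_0}{2\ell+1} + O\!\left(\frac{1}{2^J\ell}\right)
\]
whenever every relevant consecutive gap among the $n_k$ exceeds $J$.

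The three statements then follow by counting exceptional configurations $(a_1,b_1,a_2,b_2)$.  Call a configuration \emph{$r$-degenerate} if at least $r$ relations of the form ``two of the four indices are equal'' or ``two of the four indices lie within distance $J$'' hold; since each such relation confines one index to an $O(J)$-element set relative to another, a union bound over patterns gives at most $O(J^r\ell^{4-r})$ such configurations.  The first statement corresponds to the $0$-degenerate case (where $K=4$ and all gaps exceed $J$), leaving $O(J\ell^3)$ exceptions.  For the second, tracking the carrying rules, the asymptotic with effective $K\geq 3$ and surviving gaps $\geq J$ fails only at $\geq 2$-degenerate configurations, bounding exceptions by $O(J^2\ell^2)$.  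The third is analogous with $K\geq 2$, giving $O(J^3\ell)$ exceptions.

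The main obstacle is the error analysis in blocks where the dominant term $|\tau|=2^{i-n_{s+1}}$ approaches $\tfrac12$, or in blocks with small subsequent gap where $|\delta|$ becomes comparable to $|\tau|$; the hypothesis $n_{s+2}-n_{s+1}\geq J$ is precisely what keeps $\delta/\tau=O(2^{-J})$ uniformly across each block and supports the uniform error bound above.
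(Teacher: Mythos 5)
Your overall strategy --- expand $1-\hat{\mu}_{A_{2,p}}(\xi_1-\xi_2)$ blockwise over the dyadic positions of the four indices, show that each well-separated surviving position contributes $c_0+O(2^{-J})$ to the savings, and then count degenerate quadruples --- is the same as the paper's, and your treatment of the first statement (the non-degenerate case, $K=4$) is sound and matches the paper's computation. The explicit carrying normalization is a sensible way to organize what the paper leaves implicit, and it correctly rescues same-sign coincidences such as $a_1=b_2$, where the coefficient $\pm 2$ carries to an adjacent position and the number of surviving indices stays at $3$.

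The gap is in the counting for the second and third statements. The claim that ``effective $K\geq 3$ with surviving gaps $\geq J$ fails only at $\geq 2$-degenerate configurations'' is false: the single relation $a_1=a_2$ (equal positions carrying \emph{opposite} signs in $\theta_0=2^{-a_1}-2^{-b_1}-2^{-a_2}+2^{-b_2}$) forces exact cancellation, drops $K$ to $2$, and yields $\hat{\mu}_{A_{2,p}}(\xi_1-\xi_2)=1-\tfrac{4c_0}{2\ell+1}+O\bigl(\tfrac{1}{2^J\ell}\bigr)$. Such pairs arise as $\xi_1=s-s'$, $\xi_2=s-s''$ with $s,s',s''$ in a common cell $S_i$ from the proof of Lemma~\ref{big_B}, so there are $\gg \ell^3/4^J$ of them --- far more than $O(J^2\ell^2)$ --- and they are only $1$-degenerate under your definition (if you instead count equality as two relations, then the bound $O(J^2\ell^2)$ on $2$-degenerate configurations fails, since there are $\ell^3$ quadruples with $a_1=a_2$). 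The analogous configuration $a_1=a_2$, $b_2=b_1\pm 1$ gives $K=1$ and threatens the third count in the same way. You would need either to enlarge the exceptional sets (to $O(\ell^3)$ and $O(J\ell^2)$, which is all that the application to condition (\ref{weaker_cut_off_condition}) actually requires, since those pairs have $\xi_1-\xi_2$ essentially back in $B_p\cup(-B_p)$) or to treat the exact opposite-sign cancellation case separately. For what it is worth, the paper's own proof is terse at precisely this point: its auxiliary inequality $\|2^{-i}\pm 2^{j-i}\|_{\bR/\zed}\geq\|2^{-i}\|_{\bR/\zed}$ covers close-but-distinct indices, not coincident ones of opposite sign.
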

\begin{proof}
 Write $\frac{\xi_1}{p} = 2^{-j_1} - 2^{-j_2} + O\left(2^{-\ell-J}\right)$,
$\frac{\xi_2}{p} =
2^{-j_3} - 2^{-j_4} + O\left(2^{-\ell - J}\right)$.  By excluding at most
$O\left(J\ell^3\right)$ quadruples
$(j_1, j_2, j_3, j_4)$ we may assume $j_i > J$ for all $i$ and $|j_i - j_k| \geq
J$ for all $i\neq k$ in $\{1, 2, 3, 4\}$. Then
\begin{align*} 1-\hat{\mu}_{A_{2,p}}(\xi) &= \frac{2}{2\ell+1} \sum_{i =
0}^{\ell-1}\left(1-\cos\left(2\pi \left(2^{i -
j_1} - 2^{i-j_2} - 2^{i - j_3} + 2^{i - j_4} + O\left(2^{i -\ell
-J}\right)\right)\right)\right) \\&=
O\left(\frac{1}{ 2^J \ell}\right) +  \frac{2}{2\ell + 1}
\left[\sum_{k = 1}^4\sum_{i = j_k - J}^{j_k-1}\left( 1-\cos\left(2\pi 2^{i -
j_k}\right)\right)\right]  \\ &= O\left(\frac{1}{ 2^J \ell}\right) +
\frac{8c_0}{2\ell+1}.
\end{align*}
For the second statement, by excluding $O\left(J^2\ell^2\right)$ tuples $(j_1,
j_2, j_3,j_4)
$ we may
assume that three of $j_1, j_2, j_3, j_4$ are larger than $J$ and mutually
separated by at least $J$.  One argues as before, using the additional
calculation that for $1 < j < J$,
$$\sum_{i = 1}^{J} \left(1 - \cos\left(2\pi\left(2^{-i} \pm
2^{j-i}\right)\right)\right) \geq \sum_{i = 1}^J \left(1
- \cos\left(2\pi 2^{-i}\right)\right) = c_0 + O\left(2^{-J}\right),$$ which
holds since for all
$i, j >0$, $$\|2^{-i} \pm 2^{j-i}\|_{\bR/\zed} \geq \|2^{-i}\|_{\bR/\zed}.$$ The
third
statement is similar.
\end{proof}

\begin{proof}[Proof of Theorem \ref{power_2_theorem}, lower bound]
Let $\epsilon > 0$ be given, and suppose that $n < (1-\epsilon)\frac{\ell \log
\ell}{2c_0}$.

Set $J = 2\log \log \ell$ and define $B_p$ as above. It suffices to show that
$B_p$ satisfies
conditions (\ref{mean_grows}) and (\ref{weaker_cut_off_condition}) of
Proposition \ref{Chebyshev_criterion}.

By Lemma \ref{FC_lower_bound}
\begin{align*}\frac{1}{\sqrt{|B_p|}} \sum_{\xi \in B_p}
(\hat{\mu}_{A_{2,p}}(\xi))^{n} &\geq
\sqrt{|B_p|} \exp\left[(1-\epsilon)\frac{\ell\log \ell}{2c_0}
\left(\frac{-4c_0}{2\ell+1} +
O\left(\frac{1}{2^J \ell}\right)\right)\right] \\&\geq \sqrt{|B_p|}
\ell^{\epsilon-1}\left(1 +
O\left(\frac{\log \ell}{2^J} \right)\right).\end{align*}
In particular
 Lemma \ref{big_B} implies
$$\frac{1}{\sqrt{|B_p|}} \sum_{\xi \in B_p} (\hat{\mu}_{A_{2,p}}(\xi))^{n}
\gg \frac{\ell^{\epsilon }}{2^{\frac{J}{2}}} = \ell^{\epsilon - o(1)}$$ and
condition
(\ref{mean_grows}) is satisfied.

To check (\ref{weaker_cut_off_condition}), split $\xi_1, \xi_2 \in B_p$
according as $\xi_1 = \xi_2$, or $\xi_1, \xi_2$ fall into one of the several
cases enumerated in Lemma  \ref{FC_upper_bound}.  This gives
\begin{align*}\sum_{\xi_1, \xi_2 \in B_p} \left(\hat{\mu}_{A_{2,p}}(\xi_1 -
\xi_2)\right)^{n} &\leq |B_p|+ |B_p|^2
\exp\left[(1-\epsilon)\frac{\ell \log \ell}{2c_0} \left(-\frac{4 c_0}{\ell} +
O\left(\frac{1}{2^J \ell}\right)\right)\right]\\& \qquad
+ O\left(J\ell^3\right)\exp\left[(1-\epsilon)\frac{\ell \log
\ell}{2c_0}\left(\frac{-3c_0}{\ell} + O\left(
\frac{1}{2^J \ell}\right)\right)\right] \\&\qquad + O\left(J^2 \ell^2\right)
\exp\left[(1-\epsilon)\left(\frac{\ell\log \ell}{2c_0} \left(\frac{-2c_0}{\ell}
+
O\left(\frac{1}{2^J \ell}\right)\right)\right)\right]\\&\qquad  + O\left(J^3
\ell\right).
\end{align*}
By Lemma \ref{big_B}, $|B_p| = \ell^{2 - o(1)}$, and thus all but the second
term is an error term. Condition (\ref{weaker_cut_off_condition}) holds, since
\[
 |B_p|^2
\exp\left[(1-\epsilon)\frac{\ell \log \ell}{2c_0} \left(-\frac{4
c_0}{\ell}\right)\right] \leq (1 + o(1))\left(\sum_{\xi \in B_p}
(\hat{\mu}_{A_{2,p}}(\xi))^{n} \right)^2.
\]

\end{proof}

\subsection{Proof of Theorem \ref{power_2_theorem}, upper bound}
We prove the following somewhat more precise estimate.

\begin{proposition}\label{upper_bound_prop}
For all $0 < \beta
< \log \ell$, for all  $n \geq \frac{\ell}{2c_0}(\log \ell + \beta)$
we have
$$\|\mu_{A_{2,p}}^{*n} - \bU_{\zed/p\zed}\|_{\TV(\zed/p\zed)}^2 \ll
e^{-\beta}
+ \frac{e^{-\frac{\beta}{c_0}}\log \ell }{\ell^{\frac{1}{c_0}}}.$$
\end{proposition}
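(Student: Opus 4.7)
My plan is to estimate the Fourier sum $\sum_{\xi\neq 0}|\hat\mu_{A_{2,p}}(\xi)|^{2n}$ by exploiting the dyadic structure of the generating set.

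The starting point is the standard $L^2$/Plancherel upper bound
\[
4\|\mu_{A_{2,p}}^{*n}-\bU_{\zed/p\zed}\|_{\TV(\zed/p\zed)}^{\,2} \leq \sum_{\xi=1}^{p-1}|\hat\mu_{A_{2,p}}(\xi)|^{2n}.
\]
Setting $B(\xi) := \sum_{j=0}^{\ell-1}(1-\cos(2\pi 2^j\xi/p))$, one has $\hat\mu_{A_{2,p}}(\xi) = 1 - \frac{2}{2\ell+1}B(\xi)$, and the elementary bound $(1-x)^{2n} \leq e^{-2nx}$ gives
\[
|\hat\mu_{A_{2,p}}(\xi)|^{2n} \leq \exp\!\Bigl(-\frac{4n}{2\ell+1}\,B(\xi)\Bigr)
\]
whenever $\hat\mu_{A_{2,p}}(\xi)\in[0,1]$. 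The complementary regime $\hat\mu_{A_{2,p}}(\xi) < 0$ forces $B(\xi) \geq \ell$, and the resulting contribution $\leq p\exp(-4n\ell/(2\ell+1))$ is negligible in the range of $n$ we consider.

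To estimate $\sum_{\xi \neq 0} e^{-4nB(\xi)/(2\ell+1)}$, I would parametrize $\xi \in \{1,\ldots,p-1\}$ by the binary expansion $\xi/p = 0.b_1b_2\ldots$, so that $\{2^j\xi/p\} = 0.b_{j+1}b_{j+2}\ldots$. The term $1-\cos(2\pi 2^j\xi/p) = 2\sin^2(\pi\cdot 0.b_{j+1}b_{j+2}\ldots)$ is small precisely when the binary tail at position $j+1$ opens with a long constant run of $0$s or $1$s. Quantitatively, within a maximal constant run of length $r$ starting at position $j+1$, the cumulative contribution of the $r$ positions $j,j+1,\ldots,j+r-1$ to $B(\xi)$ equals $c_0^{(r)} := \sum_{k=1}^{r}(1-\cos(2\pi/2^k))$, which increases monotonically to $c_0$ as $r\to\infty$. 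One therefore obtains the approximate decomposition $B(\xi) \approx \sum_i c_0^{(r_i)}$ over the maximal constant runs in $b_1b_2\ldots b_\ell$.

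The remainder of the argument is combinatorial counting. Classifying frequencies by their run signature, the leading contribution $\asymp e^{-\beta}$ to the Fourier sum is produced by frequencies $\xi \approx \pm p\cdot 2^{-j}$ whose binary expansion consists of a single long run; there are $\asymp \ell$ such frequencies, and each contributes $\exp\!\bigl(-\tfrac{4nc_0}{2\ell+1}\bigr)\asymp \ell^{-1}e^{-\beta}$ for $n = \tfrac{\ell}{2c_0}(\log\ell+\beta)$, summing to $\asymp e^{-\beta}$. The secondary term $\frac{e^{-\beta/c_0}\log\ell}{\ell^{1/c_0}}$ reflects the contribution of frequencies whose leading run is too short to saturate the series defining $c_0$, so that the exponent in $\exp(-4nB(\xi)/(2\ell+1))$ is reduced by the factor $c_0^{(r)}/c_0 < 1$; the logarithmic factor comes from summation over the location of this short run.

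The main obstacle will be the precise bookkeeping in the sub-leading regime: balancing the count of $\xi$ with a given truncated-run signature against the reduced exponent to isolate exactly the power $\ell^{-1/c_0}$ and the logarithmic factor. Additional care is needed to handle the $O(1)$-to-$1$ correspondence between $\xi \in \{1,\ldots,p-1\}$ and length-$\ell$ binary strings (since $p \leq 2^\ell$ is not a power of two), and to ensure that the truncation of the binary expansion of $\xi/p$ at position $\ell$ does not invalidate the run-decomposition estimate for contributions from positions near the end of the expansion.
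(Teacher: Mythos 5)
Your proposal takes essentially the same route as the paper: bound $\|\mu^{*n}-\bU\|_{\TV}^2$ by the Plancherel sum $\frac14\sum_{\xi\neq 0}|\hat\mu(\xi)|^{2n}$, upper-bound $|\hat\mu(\xi)|^{2n}\leq\exp(-4nB(\xi)/(2\ell+1))$, and control the result by a combinatorial classification of the binary expansion of $\xi/p$. Your intuition for where each of the two terms in the bound comes from (main term from single-transition frequencies, secondary term from boundary-truncated frequencies with a $\log\ell$-factor from the number of admissible truncation positions) is also correct in spirit. However, the quantitative engine you propose — the approximate identity $B(\xi)\approx\sum_i c_0^{(r_i)}$ over the run lengths $r_i$ of $b_1\cdots b_\ell$ — is an \emph{over}-estimate of $B(\xi)$, not a lower bound, and the argument needs a lower bound. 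Two concrete failures: (i) for $\xi/p$ near $1/3$ (every run has length $1$) one computes $B(\xi)=\tfrac32\ell$, whereas $\sum_i c_0^{(1)}=2\ell$, because when consecutive transitions sit close together the values $\{2^l\xi/p\}$ are not well-approximated by the dyadic points underlying $c_0^{(r)}$ — nearby transitions interfere rather than add; (ii) the final run in $[1,\ell]$ is truncated, and its true contribution depends on the first transition past position $\ell$, ranging from $o(1)$ up to $c_0^{(r)}$, so $c_0^{(r_{\text{last}})}$ again over-counts. Used as written, your formula would under-count the Fourier sum and "prove" a bound that is too strong.

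The paper's workaround is exactly where the bookkeeping that you flag as "the main obstacle" lives, and it requires a genuine idea, not just care: introduce a separation threshold $J\asymp\log\ell$, group the transition positions into $J$-separated \emph{clumps}, and prove per-clump lower bounds on the savings that are uniform over the detailed bit-pattern. Isolated transitions yield $\sav\geq c_0-O(2^{-J})$; clumps of several nearby transitions yield $\sav\geq c_0+\delta(|C|-1)$ via the shift/drop lemmas; transitions within $J$ of the boundary (initial/final clumps) yield only $\sav\geq 1$ per index, which is precisely what forces the $\ell^{-1/c_0}$ rather than a better exponent, with $J\asymp\log\ell$ supplying the logarithmic factor. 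Without some device that isolates well-separated transitions and separately controls the interactions within a cluster and at the boundary, the run-signature classification does not give the valid lower bound on $B(\xi)$ that the Plancherel argument requires.
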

\begin{remark}
 The second term results from a discrepancy between the eigenvalue generating
the spectral gap and the bulk of the large spectrum which determines the
mixing time. With more effort, the factor of $\log \ell$ could be removed.
\end{remark}

The proof uses the following frequently used application of the Cauchy-Schwarz inequality, see \cite{D88} for an introduction to these types of estimates, also \cite{D10}.

\begin{lemma}\label{cauchy-schwarz} Let $\mu$ be a probability measure on
finite abelian group $G$.  We have the upper bound
$$ \left\|\mu - \bU_G\right\|_{\TV(G)} \leq \frac{1}{2}\left(\sum_{0 \neq \chi
\in \widehat{G}}
\left|\hat{\mu}(\chi)\right|^2\right)^{\frac{1}{2}}.$$
In particular,
\begin{equation}\label{cs_bound}\| \mu_{A_{2,p}}^{*n} -
\bU_{\zed/p\zed}\|_{\TV(\zed/p\zed)} \leq
\frac{1}{2}\left( \sum_{0 \not \equiv \xi \bmod p}
\left|\hat{\mu}_{A_{2,p}}(\xi)\right|^{2n}
\right)^{\frac{1}{2}}.\end{equation}

\end{lemma}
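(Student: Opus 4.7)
The plan is to prove this in the standard Diaconis--Shahshahani way: bound $L^1$ by $L^2$ via Cauchy--Schwarz, apply Plancherel, and observe that the trivial character contributes nothing because $\mu$ and $\bU_G$ are both probability measures.

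Concretely, I would first rewrite the total variation norm in density form. Since $G$ is finite and $\bU_G$ is uniform, $\mu$ is absolutely continuous with respect to $\bU_G$, and
\[
\|\mu-\bU_G\|_{\TV(G)} = \frac{1}{2}\int_G \left|\frac{d\mu}{d\bU_G}-1\right|\,d\bU_G = \frac{1}{2}\left\|\frac{d\mu}{d\bU_G}-1\right\|_{L^1(d\bU_G)},
\]
as recorded in Section \ref{background_section}. By Cauchy--Schwarz on the probability space $(G,\bU_G)$,
\[
\left\|\tfrac{d\mu}{d\bU_G}-1\right\|_{L^1(d\bU_G)} \leq \left\|\tfrac{d\mu}{d\bU_G}-1\right\|_{L^2(d\bU_G)}.
\]

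Next I would apply Plancherel's theorem on $G$ with characters $\widehat{G}$ orthonormal in $L^2(d\bU_G)$. Writing $f = \frac{d\mu}{d\bU_G}-1$, one has $\hat f(\chi) = \hat\mu(\chi)$ for every non-trivial character $\chi$, while $\hat f(1)=\int_G(\frac{d\mu}{d\bU_G}-1)\,d\bU_G = 0$ because both $\mu$ and $\bU_G$ are probability measures. Thus
\[
\left\|\tfrac{d\mu}{d\bU_G}-1\right\|_{L^2(d\bU_G)}^2 = \sum_{0\neq \chi\in\widehat{G}}|\hat\mu(\chi)|^2,
\]
which yields the claimed inequality after dividing by $2$.

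The specialization to $G=\zed/p\zed$ and $\mu = \mu_{A_{2,p}}^{*n}$ is then immediate from $\widehat{\mu^{*n}}(\xi)=\hat\mu(\xi)^n$, which follows since the Fourier transform on a finite abelian group turns convolution into multiplication. There is no real obstacle here; the lemma is essentially a repackaging of Parseval, and the only subtlety worth stating clearly is the cancellation of the $\chi=0$ Fourier coefficient, which is what allows the sum on the right to omit the trivial character.
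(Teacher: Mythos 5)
Your proof is correct and follows essentially the same route as the paper: express total variation as half an $L^1$ norm, apply Cauchy--Schwarz to pass to $L^2$, and invoke Plancherel, with the trivial character dropping out because $\mu$ and $\bU_G$ have equal total mass. The only cosmetic difference is that you work with the density $\frac{d\mu}{d\bU_G}-1$ in $L^2(d\bU_G)$ while the paper works with the pointwise differences $\mu(x)-\bU_G(x)$ and carries the factor $|G|$ explicitly; these are the same computation.
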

\begin{proof}
We have
$$\left\|\mu - \bU_G\right\|_{\TV(G)} = \frac{1}{2} \sum_{x \in G} \left|\mu(x)
- \bU_G(x)\right|.$$ Hence, by
Cauchy-Schwarz,
\begin{align*} \left\|\mu- \bU(G)\right\|_{\TV(G)} &\leq \frac{1}{2} \left(|G|
\sum_{x \in G}
\left|\mu(x) - \bU_G(x)\right|^2\right)^{\frac{1}{2}} = \frac{1}{2}
\left(\sum_{0 \neq \chi \in \widehat{G}} \left|\hat{\mu}(\chi)\right|^2
\right)^{\frac{1}{2}}. \end{align*}
\end{proof}

The above lemma reduces to estimation of the size of the Fourier
coefficients $\hat{\mu}_{A_{2,p}}(\xi)$. In estimating these coefficients it
will
be convenient to use the following modified binary expansion of $\frac{\xi}{p}$.
\begin{lemma}
 Let $p \geq 3$ be prime.  For each $0 \not \equiv \xi \bmod p$ there is an
increasing sequence $\sI = \{i_j\}_{j=1}^{\infty}\subset \zed_{>0}$,  and
$\epsilon = \pm 1$ such that
\[
\frac{\xi}{p} \equiv \epsilon \sum_{j=1}^\infty (-1)^j 2^{-i_j} \bmod 1.
\]
This representation is unique.
\end{lemma}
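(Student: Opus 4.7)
\bigskip
\noindent\textbf{Proof proposal.} The plan is a greedy algorithm that mimics the ordinary binary expansion, but with signs forced to alternate. Write $y = \xi/p \bmod 1 \in (0,1)$. Since $p$ is an odd prime and $\xi \not\equiv 0 \bmod p$, $y$ cannot be a dyadic rational $m/2^k$ (else $2^k\xi \equiv 0 \bmod p$ forces $\xi \equiv 0$), and in particular $y \neq 1/2$. First I would dispose of the sign $\epsilon$: if $y \in (0,1/2)$ set $\epsilon=-1$, and if $y \in (1/2,1)$ set $\epsilon=+1$ and replace $y$ by $1-y \in (0,1/2)$. Under this reduction the problem becomes to represent any non-dyadic $y \in (0,1/2)$ as $y = 2^{-i_1} - 2^{-i_2} + 2^{-i_3} - \cdots$ with $i_1 < i_2 < \cdots$ a strictly increasing sequence of positive integers.

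For existence, define $y_1 = y$ and, recursively, let $i_j$ be the unique positive integer with $y_j \in (2^{-i_j-1}, 2^{-i_j})$, then set $y_{j+1} = 2^{-i_j} - y_j$. The key bookkeeping step is to check three things: (a) $y_{j+1} \in (0, 2^{-i_j-1})$, which forces $i_{j+1} \geq i_j+1$; (b) $y_{j+1}$ is again non-dyadic (since $y_j$ is and $2^{-i_j}$ is dyadic), so the recursion never gets stuck at an endpoint of an interval $(2^{-i-1},2^{-i})$; and (c) telescoping gives $y = \sum_{j=1}^{N}(-1)^{j-1}2^{-i_j} + (-1)^{N}y_{N+1}$ with $|y_{N+1}|<2^{-i_N-1}\to 0$, so the infinite alternating sum converges to $y$. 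Multiplying through by $\epsilon = -1$ puts this in the stated form $\epsilon\sum_j (-1)^j 2^{-i_j}$.

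For uniqueness, suppose $y = 2^{-i_1} - 2^{-i_2} + 2^{-i_3} - \cdots$ with $i_1 < i_2 < \cdots$. Grouping the terms as $2^{-i_1} - (2^{-i_2} - 2^{-i_3}) - (2^{-i_4} - 2^{-i_5}) - \cdots$ shows $y < 2^{-i_1}$, while grouping as $(2^{-i_1} - 2^{-i_2}) + (2^{-i_3} - 2^{-i_4}) + \cdots$ together with $i_2 \geq i_1+1$ shows $y > 2^{-i_1-1}$. Hence $y$ lies strictly in the dyadic interval $(2^{-i_1-1},2^{-i_1})$, and since these intervals are disjoint as $i_1$ varies, $i_1$ is uniquely determined. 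Then $2^{-i_1}-y = 2^{-i_2} - 2^{-i_3} + \cdots$ is another representation of the same form and induction determines $i_2, i_3, \ldots$. The sign $\epsilon$ is pinned down by the observation that the sum $2^{-i_1}-2^{-i_2}+\cdots$ always lies in $(0,1/2)$, so $\epsilon$ is forced by which half-interval of $(0,1)\setminus\{1/2\}$ contains $y$.

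There is no real obstacle here beyond keeping the inequalities strict at every step; the only subtlety is the use of $p$ odd prime to guarantee $y$ is never dyadic, which both allows the recursion to continue indefinitely and prevents $y=1/2$ from needing a separate convention for $\epsilon$.
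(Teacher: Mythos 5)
Your proof is correct, but it takes a different route from the paper's. The paper constructs the expansion by writing $\tfrac{\xi}{p} = -\tfrac{\xi}{p} - \bigl(-\tfrac{2\xi}{p}\bigr)$ and performing a bitwise subtraction of the binary expansion of $-\xi/p$ from its left shift: the indices $i_j$ are exactly the positions where consecutive binary digits of $-\xi/p$ differ, and the alternation of signs is automatic because the digit changes alternate between $0\to 1$ and $1\to 0$. Uniqueness is then dispatched in one line by noting that two distinct representations differ by $\gg 2^{-J}$ at the first discrepancy. You instead run a greedy nested-interval recursion $y_{j+1}=2^{-i_j}-y_j$ with $y_j\in(2^{-i_j-1},2^{-i_j})$, and prove uniqueness by localizing any admissible alternating sum to the dyadic interval $(2^{-i_1-1},2^{-i_1})$ and inducting. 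Both arguments are elementary and hinge on the same non-dyadicity of $\xi/p$ (which you correctly justify from $p$ being an odd prime); your version spells out the uniqueness more carefully than the paper does, including why $\epsilon$ is forced, while the paper's version has the side benefit of tying the sequence $\{i_j\}$ directly to the ordinary binary digits of $-\xi/p$, a link that is convenient for the digit-based bookkeeping used elsewhere in the analysis of the power-of-2 walk. Your bookkeeping steps (a)--(c) all check out: $y_{j+1}\in(0,2^{-i_j-1})$ does force $i_{j+1}\ge i_j+1$, dyadic points never arise, and the telescoped remainder tends to $0$, so the argument is complete.
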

\begin{proof}
 Write $-\frac{\xi}{p}$ in binary as $*.s_1 s_2 s_3...$ with each $s_i \in
\{0,1\}$, then write $\frac{\xi}{p} = -\frac{\xi}{p}-
\left(-\frac{2\xi}{p}\right)$
where $\left(-\frac{2\xi}{p}\right)$ is obtained by a left shift, and then the
subtraction is performed bitwise.

The uniqueness follows because any two distinct such representations
$(\epsilon,\{i_j\}),  (\epsilon', \{i'_j\})$ differ by $\gg 2^{-J}$, where $J$
is
$\min(i_1, i'_1)$ if $\epsilon \neq \epsilon'$, and otherwise is the least
integer  which appears in the symmetric difference
$\{i_j\}\Delta\{i'_j\}$.
\end{proof}

\subsubsection{Index sequences}
We introduce several notions which will
be useful in the remainder of the argument.

 Given a real parameter $J>0$, define a \emph{$J$-sequence} of non-negative
integers to be an ordered set $A \subset \zed_{\geq 0}$, with members enumerated
$A = a_1 < a_2 < ...$ such that any pair of consecutive elements differ by at
most $J$. $|A|$ denotes the cardinality. Set $i(A) = a_1$, $t(A) = \sup(A)$. A
$J$-sequence with $a_1 = 0$ is called \emph{normalized}. Given $J$-sequence $A =
a_1 < a_2 < ...$, its \emph{off-set sequence} is the normalized $J$-sequence $A'
= 0 < a_2-a_1 < a_3-a_1 <
...$. For instance, 
\[
 1,3,7,8,10,14
\]
is a 4-sequence with offset sequence
\[
 0,2,6,7,9,13.
\]
A $J$-sequence is called \emph{non-trivial} if it contains a pair of
elements that differ by more than 1.
We denote $\sJ$ the set of $J$-sequences, $\sJ_0$ the set of normalized
$J$-sequences and $\sJ_0' = \sJ_0 \setminus \{\{0\}, \{0,1\}\}$ the set of
non-trivial normalized $J$-sequences.

A $J$-sequence $A$ contained in sequence $B \subset \zed_{\geq 0}$ is
called a $J$-subsequence. We say that $J$-subsequence $A \subset
B$ is \emph{maximal} if it is not properly contained in another $J$-subsequence
$A' \subset B$. 
Given parameter $J$, one easily checks that any $B \subset \zed_{\geq 0}$ has a
unique partition into maximal $J$-subsequences. 
For instance, in the first sequence above,
\[
 1,3; \quad 7,8,10; \quad 14
\]
is a partition into maximal 2-subsequences.

We write $\sC(B)$ for the set
of maximal $J$-subsequences of $B$. The $J$-sequences in $\sC(B)$ are
\emph{$J$-separated} in the sense that if $A_1 \neq A_2 \in \sC(B)$ and $x_1
\in A_1, x_2 \in A_2$ then $|x_1-x_2| > J$.  The sequences in $\sC(B)$ are
naturally ordered by, for $A_1, A_2 \in \sC(B)$, $A_1 < A_2$ if and only if
for any $x_1 \in A_1, x_2 \in A_2$, $x_1 < x_2$.

In the remainder of the argument we think of the non-zero bits in the expansion of $\frac{\xi}{p}$ above as partitioned into maximal $J$-sequences.  These $J$-separated parts do not interact significantly in calculating the Fourier transform. The argument that follows quantifies the interaction. 

Let $J \geq \log_2 \ell$  be a parameter.
Given $\xi \bmod p$, represent $\xi$ as  $(\sI(\xi), \epsilon(\xi))$ as above.
Truncate $\sI(\xi)$ to $\sI'(\xi)
= \sI(\xi) \cap (0,\ell]$ (note that $\epsilon$ and $\sI'$ determine $\xi$) and
set \begin{equation}\label{def_sigma}\sigma(\xi) = |\sI'(\xi)|, \qquad \sC(\xi)
= \sC(\sI'(\xi)).\end{equation} We call
$\sC(\xi)$ the set of \emph{clumps} of
$\xi$, each clump being a $J$-sequence.   If there exists $C \in \sC(\xi)$ with
$i(C)\leq J$ we say that $C$ is \emph{initial}. A
clump $C$ with $t(C)  > \ell-J$ is
\emph{final}. We write $C_{\ini}(\xi)$, $C_{\fin}(\xi)$ for the initial and
final clump, with the convention that $C_{\ini}= \emptyset$ if there is no
initial clump, and similarly $C_{\fin}$.  A clump is \emph{typical} if it
is neither initial nor final.
$\sC_0(\xi)\subset \sC(\xi)$ is the subset of typical clumps.

Given frequency $\xi$, define the \emph{savings} of $\xi$ to be
\begin{equation}\label{def_sav_xi}\sav(\xi) = \frac{2 \ell+1}{2}\left(1 -
\hat{\mu}_{A_{2,p}}(\xi)\right)=
\sum_{l=0}^{\ell-1}\left(1 - \cos\left(2\pi\left(\sum_{k=1}^\infty (-1)^k
2^{l-i_k} \right) \right) \right).\end{equation} For a typical clump $C \in
\sC_0(\xi)$ also
define
\begin{equation}\label{sav_def}\sav(C) = \sum_{i(C)-J \leq l <
t(C) } \left[1 - \cos
\left(2\pi\sum_{i_k\in C} (-1)^k 2^{l-i_k}\right)\right]. \end{equation}

\begin{lemma}\label{fc_approx}
We have
$$\sav(\xi) \geq \sum_{C \in \sC_0(\xi)} \sav(C)+ |C_{\ini}(\xi)| +
|C_{\fin}(\xi)| + O\left(2^{-J}|\sC|\right).$$
\end{lemma}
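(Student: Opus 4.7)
The plan is to decompose $\sav(\xi) = \sum_{l=0}^{\ell-1}\psi_l$ with $\psi_l = 1 - \cos(2\pi\theta(l))$ and $\theta(l) \equiv \sum_k (-1)^k 2^{l-i_k} \pmod 1$, and to attribute each scale $l$ to a home clump. Specifically I assign $l$ to a typical clump $C \in \sC_0(\xi)$ whenever $l \in R_C := [i(C)-J, t(C))$, to $C_{\ini}$ whenever $l \in [0, t(C_{\ini}))$, and to $C_{\fin}$ whenever $l \in [i(C_{\fin})-J, \ell)$. The $J$-separation of distinct clumps forces these intervals to be pairwise disjoint, and what remains in $[0,\ell)$ consists of short ``gap'' regions between consecutive responsibility intervals. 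Since every $\psi_l \geq 0$ it suffices to bound each piece from below and sum.

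For $l \in R_C$ with $C$ typical I split $\theta(l) = \theta_C(l) + \theta_{<}(l) + \theta_{>}(l)$ according to contributions from $C$, from clumps preceding $C$, and from clumps following $C$. Every index in a preceding clump is at most $i(C)-J-1 \leq l-1$, so its contribution $(-1)^k 2^{l-i_k}$ is an integer and $\theta_{<}(l) \in \zed$. Every index in a following clump is at least $t(C)+J+1$; these are distinct integers, so a geometric estimate gives
\[
|\theta_{>}(l)| \;\leq\; \sum_{j\geq 0} 2^{l-(t(C)+J+1)-j} \;=\; 2^{l-t(C)-J}.
\]
Applying the Lipschitz bound $|(1-\cos 2\pi(\alpha+\beta)) - (1-\cos 2\pi\alpha)| \leq 2\pi|\beta|$ and summing $\sum_{l \in R_C} 2^{l-t(C)-J} \leq 2^{1-J}$ yields
\[
\sum_{l \in R_C} \psi_l \;\geq\; \sav(C) - O(2^{-J}),
\]
which across all typical clumps contributes $\sum_{C \in \sC_0}\sav(C) - O(2^{-J}|\sC|)$.

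For the initial clump $C_{\ini} = \{i_1 < \cdots < i_s\}$ the same interference estimate gives $\theta(l) \equiv \theta_{C_{\ini}}(l) + O(2^{-J}) \pmod 1$ on $[0, t(C_{\ini}))$ with total error $O(2^{-J})$ summed over the range. The key observation is that at the particular scale $l = i_k - 1$ only indices $i_k, \ldots, i_s$ contribute, and
\[
\theta_{C_{\ini}}(i_k-1) \;=\; 2^{i_k-1} D_k, \qquad D_k := \sum_{m \geq k}(-1)^m 2^{-i_m}.
\]
Because the magnitudes $2^{-i_m}$ strictly decrease in $m$, the alternating series estimate gives $|D_k| \in [2^{-i_k} - 2^{-i_{k+1}},\, 2^{-i_k}] \subset [2^{-i_k-1}, 2^{-i_k}]$, whence $|\theta_{C_{\ini}}(i_k-1)| \in [1/4, 1/2]$ and therefore $\psi_{i_k-1} \geq 1 - \cos(\pi/2) = 1$. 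Summing over the $s = |C_{\ini}|$ distinct scales $i_k - 1 \in [0, t(C_{\ini}))$ and absorbing the interference error gives a contribution of at least $|C_{\ini}| - O(2^{-J})$, and an identical argument applied to the scales $j_k - 1$ within $C_{\fin}$ yields at least $|C_{\fin}| - O(2^{-J})$. Finally, on any gap scale $l$ both $\theta_<$ and $\theta_>$ have magnitude $O(2^{-J})$ modulo 1, so $\psi_l = O(2^{-2J})$, and summed over at most $\ell$ gap scales the total contribution is $O(\ell \cdot 2^{-2J}) = O(2^{-J})$ since $J \geq \log_2 \ell$. Combining the four estimates produces the lemma. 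The principal technical point is the alternating-series lower bound on $|D_k|$, which prevents $\theta_{C_{\ini}}(i_k-1)$ from vanishing; without it the individual scale contributions could degenerate and one would have to average over many neighbouring scales within each index ``slot'' instead.
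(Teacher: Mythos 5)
Your proof is correct and follows essentially the same route as the paper: partition the scales $l \in [0,\ell)$ into clump responsibility intervals via $J$-separation, use a geometric interference bound and a Lipschitz estimate on $1-\cos$ for the typical clumps, and specialize to the scales $l=i_k-1$ with an alternating-series lower bound on the dominant term for the initial and final clumps. The gap-scale paragraph is unnecessary (since $\psi_l\geq 0$, those scales can simply be dropped), but it does no harm; otherwise the argument matches the paper's, written out in fuller detail.
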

\begin{proof}
Since the clumps $C \in \sC$ are $J$-separated, we have
\begin{align*}\sav(\xi)  &\geq \sum_{C \in \sC_0(\xi)} \sum_{i(C)-J \leq
l <
t(C)} \left[1 - \cos
\left(2\pi\sum_{i_k\in C} (-1)^k 2^{l-i_k}\right)+ O\left(2^{-J
-t(C)+l}\right)\right] \\ &+ \sum_{i \in
C_{\ini}(\xi)}\left(1 - \cos\left(\frac{\pi}{2}
\right)\right)+ \sum_{i \in
C_{\fin}(\xi)}\left(1-\cos\left(\frac{\pi}{2}\right)\right),\end{align*} where
in the last
two sums we specialize to $j = i-1$, and note that for any fixed $l$
$\frac{1}{2}\geq \left|\sum_{i_j \geq
l}(-1)^{i_j}2^{l-i_j-1}\right| \geq \frac{1}{4}.$
\end{proof}

In a similar spirit we have the following crude estimate for
savings.

\begin{lemma}\label{first_comp}
Let $0 \not \equiv \xi \in \widehat{\zed/p\zed}$ and let $C \in \sC_0(\xi)$.  We
have $\sav(C) \geq |C|$.
\end{lemma}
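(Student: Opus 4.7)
The plan is to produce, for each element $i_{k_j} \in C$, one value of $l$ in the summation range $[i(C)-J, t(C))$ where the summand
\[
1 - \cos\!\left(2\pi \sum_{i_k \in C}(-1)^k 2^{l-i_k}\right)
\]
is at least $1$. Since these $l$-values will be distinct and the remaining summands are non-negative, this yields $\sav(C) \geq |C|$. The natural choice is $l = i_{k_j} - 1$ for $j = 1, \ldots, m := |C|$; this certainly lies in the summation range because $i_{k_j} - 1 \geq i(C) - 1 \geq i(C) - J$ and $i_{k_j} - 1 < i_{k_j} \leq t(C)$.

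Write $C = \{i_{k_1} < i_{k_2} < \cdots < i_{k_m}\}$. The key combinatorial observation is that, because $C$ is one of the blocks in the partition of $\sI'(\xi)$ into maximal $J$-subsequences, the labels $k_1 < k_2 < \cdots < k_m$ are \emph{consecutive} integers, so that within $C$ the signs $(-1)^{k_r}$ strictly alternate. With $l = i_{k_j} - 1$ fixed, I would split the inner sum according to whether $i_k < i_{k_j}$, $i_k = i_{k_j}$, or $i_k > i_{k_j}$. The first group gives an integer (since $l - i_k \geq 0$) and drops out mod $1$; the middle term contributes exactly $(-1)^{k_j}/2$; the tail
\[
R \;=\; \sum_{r=j+1}^{m}(-1)^{k_r} 2^{\,i_{k_j}-1-i_{k_r}}
\]
is an alternating series with strictly decreasing terms, hence $|R| \leq 2^{\,i_{k_j}-1-i_{k_{j+1}}} \leq 2^{-2} = \tfrac{1}{4}$, using $i_{k_{j+1}} \geq i_{k_j}+1$.

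Consequently the inner sum, taken modulo $1$, lies in $[\tfrac{1}{4}, \tfrac{3}{4}]$, so $1 - \cos(2\pi\cdot)$ is at least $1 - \cos(\pi/2) = 1$. Summing the $m$ contributions at the distinct indices $l = i_{k_j}-1$, and discarding the remaining non-negative summands, gives $\sav(C) \geq m = |C|$. The only real subtlety I expect is to record cleanly that the maximality of $C$ as a $J$-subsequence forces the labels $k_1, \ldots, k_m$ to be consecutive in $\sI'(\xi)$ --- this is what delivers the strict alternation of signs that makes the tail $R$ geometrically bounded; the rest is the arithmetic above.
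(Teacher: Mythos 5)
Your proof is correct and is essentially the paper's argument: in both you specialize to $l = i_m - 1$ for each index $i_m \in C$, use the sign alternation and the $1$-spacing of consecutive indices to place the inner sum (mod $1$) in $[\tfrac14,\tfrac34]$ up to a global sign, and conclude each such $l$ contributes at least $1 - \cos(\pi/2) = 1$. The only difference is cosmetic: you make explicit the observation that maximality of the clump forces the labels $k$ to be consecutive, hence the signs $(-1)^k$ alternate within $C$, whereas the paper absorbs this into a silent relabeling $C = i_1 < \cdots < i_j$ with signs $(-1)^m$.
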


\begin{proof}
Write $C = i_1 < \cdots <i_j$.
We have
$$\sav(C) = \sum_{l =  i_1-J}^{i_j -1} \left[1-\cos
\left(2\pi\sum_{m = 1}^j(-1)^m
2^{l -i_m}\right)\right] \geq \sum_{m = 1}^j \left(1-\cos\left(
\frac{\pi}{2}\right)\right)$$
by specializing to $l = i_m - 1$, $m = 1, ..., j$.
\end{proof}
\begin{lemma}\label{many_digits}Let $0 \not \equiv \xi \in
\widehat{\zed/p\zed}$.  For fixed $\delta_3 > 0$, for $\sigma(\xi)$ as in (\ref{def_sigma}),
$$\left|\hat\mu(\xi)\right| \leq \max \left(1 - \frac{2\sigma(\xi)}{2\ell+1},
1-\delta_3 \right).$$
\end{lemma}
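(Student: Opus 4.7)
The plan is to use the identity $\hat\mu_{A_{2,p}}(\xi) = 1 - \frac{2\sav(\xi)}{2\ell+1}$ from (\ref{def_sav_xi}), which is real since $\mu_{A_{2,p}}$ is symmetric, and reduce the claim to a two-sided control of $\sav(\xi)$: a lower bound by $\sigma(\xi)$ handles the case $\hat\mu(\xi) \geq 0$, and an upper bound bounded away from $2\ell+1$ handles $\hat\mu(\xi) < 0$.

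For the lower bound, I would prove $\sav(\xi) \geq \sigma(\xi)$ directly, essentially a global version of the specialization used inside Lemma \ref{first_comp}. For each $i_k \in \sI'(\xi)$, set $l_k = i_k - 1 \in [0, \ell-1]$; distinct $k$ give distinct $l_k$. Starting from $\xi/p \equiv \epsilon \sum_{j \geq 1}(-1)^j 2^{-i_j} \pmod 1$ and multiplying by $2^{l_k}$, the terms with $j < k$ become integers and vanish modulo $1$, while
\[
\frac{\xi \cdot 2^{l_k}}{p} \equiv \epsilon(-1)^k \cdot \tfrac{1}{2} + \epsilon S_k \pmod 1, \qquad S_k = \sum_{j > k}(-1)^j 2^{l_k - i_j}.
\]
Since $i_{k+1} < i_{k+2} < \cdots$, the tail $S_k$ is an alternating series with strictly decreasing magnitudes and leading magnitude $2^{l_k - i_{k+1}} \leq 2^{-2}$, so $|S_k| \leq 1/4$. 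Hence $\xi 2^{l_k}/p \bmod 1 \in [1/4, 3/4]$, which gives $\cos(2\pi \xi 2^{l_k}/p) \leq 0$ and therefore $1 - \cos(2\pi \xi 2^{l_k}/p) \geq 1$. Summing over $k$ yields $\sav(\xi) \geq \sigma(\xi)$, with no error term to absorb.

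For the upper bound on $\sav(\xi)$, I would use the trigonometric identity
\[
(1 - \cos y) + (1 - \cos 2y) = 2s(5 - 4s), \qquad s = \sin^2(y/2) \in [0,1],
\]
which is maximized at $s = 5/8$ with value $25/8$. Pairing consecutive indices $l, l+1$ in the definition $\sav(\xi) = \sum_{l=0}^{\ell-1}(1 - \cos(2\pi \xi 2^l/p))$ then gives $\sav(\xi) \leq \tfrac{25\ell}{16} + O(1)$, and hence $\hat\mu(\xi) \geq 1 - \tfrac{25\ell/8 + O(1)}{2\ell+1} = -\tfrac{9}{16} + o(1)$.

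Combining the two steps: when $\hat\mu(\xi) \geq 0$, the lower bound gives $|\hat\mu(\xi)| = 1 - \tfrac{2\sav(\xi)}{2\ell+1} \leq 1 - \tfrac{2\sigma(\xi)}{2\ell+1}$; when $\hat\mu(\xi) < 0$, the upper bound gives $|\hat\mu(\xi)| \leq \tfrac{9}{16} + o(1)$, which is at most $1 - \delta_3$ for any fixed $\delta_3 < 7/16$ once $p$ is sufficiently large. The main delicate point is maintaining the strict inequality $|S_k| \leq 1/4$ uniformly in $\xi$ and $k$; this rests on the strict increase $i_{k+1} > i_k$, which forces the leading tail term to have magnitude at most $2^{-2}$, and thereby allows the $\pm 1/2$ contribution from $j = k$ to dominate cleanly, producing a saving of at least one per digit with no error incurred.
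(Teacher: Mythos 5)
Your proof is correct and follows essentially the same route as the paper's: the bound $\sav(\xi)\geq\sigma(\xi)$ via the specialization $l=i_k-1$ is exactly the mechanism behind Lemmas \ref{fc_approx} and \ref{first_comp} (carried out globally, which neatly sidesteps the $O(2^{-J}|\sC|)$ error the clump decomposition incurs), and the pairing of $\cos y$ with $\cos 2y$ is precisely the paper's observation that $\tfrac{1}{2}(\cos\theta+\cos 2\theta)\geq -1+c$ for a fixed $c>0$. No issues.
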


\begin{proof} The bound $\hat{\mu}(\xi) \leq 1 - \frac{2\sigma(\xi)}{2\ell+1}$
follows from Lemmas \ref{fc_approx} and \ref{first_comp}.
The bound
$\hat{\mu}(\xi)
\geq 1 - \delta_3$ follows from $\frac{1}{2}(\cos \theta + \cos 2\theta) \geq -1
+ c$ for a fixed $c >0$.
\end{proof}

For typical clumps we require slightly stronger estimates.

\begin{lemma}\label{digit_shift}
Let $0 \not \equiv \xi \in \widehat{\zed/p\zed}$.  Let $C\in \sC_0(\xi)$, and
suppose that $|C| = j > 1$.
Enumerate $C = i_1 < i_2 < ...< i_j$.
There exists fixed $\delta_1 >0$ such that if $i_2 > i_1 + 1$ then $\sav(C) >
\sav(C') + \delta_1$ where $C'$ is the $J$-sequence formed by $i_2- 1, i_2 ,
...,
i_j$, i.e. by shifting $i_1$ to the place adjacent to $i_2$.
\end{lemma}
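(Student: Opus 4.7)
The plan is to isolate a single position at which $\sav(C)$ strictly dominates $\sav(C')$ and to control the remaining perturbations. Set $a = i_2 - i_1 \in [2,J]$, $g_C(l) = 1 - \cos(2\pi S_C(l))$, and analogously $g_{C'}$. For $l \geq i_2 - 1$ one has $l - i_1 \geq a - 1 \geq 1$, so $-2^{l - i_1}$ is an integer, and similarly $-2^{l - (i_2 - 1)}$ is an integer; hence the two cosine arguments agree modulo $1$ and the tail sums cancel. This reduces matters to bounding
\[
\sav(C) - \sav(C') = \sum_{l = i_1 - J}^{i_2 - 2} g_C(l) - \sum_{l = i_2 - 1 - J}^{i_2 - 2} g_{C'}(l).
\]
Writing $T(l) = \sum_{k \geq 3}(-1)^k 2^{l - i_k}$ and using that the adjacent pair $i_2 - 1, i_2$ in $C'$ collapses to $-2^{l - i_2}$, we have $S_C(l) = -2^{l - i_1} + 2^{l - i_2} + T(l)$ and $S_{C'}(l) = -2^{l - i_2} + T(l)$, so the product-to-sum formula gives
\[
g_C(l) - g_{C'}(l) = 2\sin\!\bigl(\pi(-2^{l - i_1} + 2T(l))\bigr)\,\sin\!\bigl(\pi(-2^{l - i_1} + 2^{l - i_2 + 1})\bigr).
\]

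Evaluating at $l^\star := i_1 - 1$ simplifies the arguments to $-\pi/2 + 2\pi T(l^\star)$ and $-\pi/2 + \pi \cdot 2^{-a}$, producing $g_C(l^\star) - g_{C'}(l^\star) = 2\cos(\pi \cdot 2^{-a})\cos(2\pi T(l^\star))$. The alternating series bound $|T(l^\star)| \leq 2^{-a - 1}$ combined with $a \geq 2$ ensures both cosines exceed $\cos(\pi/4) = \sqrt{2}/2$, so the contribution from this single position is at least $1$. Moreover, the ``extra'' range $l \in [i_1 - J, i_2 - 2 - J]$ contributes only nonnegative terms (as $g_C \geq 0$), and for the remaining common positions $l = i_1 - m$ with $m \geq 2$, the bound $|T(l)| \leq 2^{-a - m}$ forces both sine factors above to share the same (negative) sign, so each such contribution is also nonnegative.

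The only positions where $g_C(l) - g_{C'}(l)$ can be negative are the high-range positions $l \in [i_1, i_2 - 2]$: here the integrality of $-2^{l - i_1}$ reduces the identity to $\pm 2\sin(2\pi T(l))\sin(\pi \cdot 2^{l - i_2 + 1})$, and bounding each sine by its argument together with $|T(l)| \leq 2 \cdot 2^{l - i_3}$ yields $\sum_{l = i_1}^{i_2 - 2}|g_C(l) - g_{C'}(l)| \ll 2^{i_2 - i_3}$. When $i_3 - i_2$ is sufficiently large this loss is negligible against the $+1$ gain at $l^\star$ (and the additional nonnegative contributions in the $m \geq 2$ common range). The hard residual case is $i_3 = i_2 + 1$ (tightest packing of the third element), where one must exploit the signed cancellation of $T(l)$ across consecutive high-range positions, together with an explicit lower bound on the $m \geq 2$ positive contributions, to close the gap. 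Verifying that $\sav(C) - \sav(C')$ remains bounded below by an absolute constant $\delta_1 > 0$ uniformly in $a$, $j$, and the positions $i_3, \ldots, i_j$ is the main technical obstacle in the proof.
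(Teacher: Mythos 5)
Your reduction is correct and agrees with the paper's first step: the terms with $l \geq i_2-1$ cancel modulo $1$, leaving $\sav(C)-\sav(C') = \sum_{l=i_1-J}^{i_2-2}g_C(l) - \sum_{l=i_2-1-J}^{i_2-2}g_{C'}(l)$, and your product-to-sum identity and the computation of the $+1$ gain at $l^{\star}=i_1-1$ are right. But the proof does not close, and you say so yourself. The problem is created by your choice to compare $g_C$ and $g_{C'}$ at the \emph{same} value of $l$: on the high range $i_1 < l \leq i_2-2$ your identity gives $g_C(l)-g_{C'}(l) = 2\sin(2\pi T(l))\sin(\pi 2^{l-i_2+1})$, which is genuinely negative and not small when $i_3$ is adjacent to $i_2$. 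Concretely, for $C=\{i_1,i_1+2,i_1+3\}$ and $C'=\{i_1+1,i_1+2,i_1+3\}$ the single term at $l=i_2-2$ equals $2\sin(-\pi/4)\sin(\pi/2)=-\sqrt{2}$, already exceeding the $+1$ gain at $l^{\star}$; your crude bound $\sum_{l=i_1}^{i_2-2}|g_C-g_{C'}|\ll 2^{i_2-i_3}$ carries an implied constant of order $4\pi^2/3$ and so only wins when $i_3-i_2\gtrsim 4$. Declaring the remaining cases ``the main technical obstacle'' leaves the lemma unproved precisely in the tight configurations that occur.

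The missing idea is a \emph{shifted} pairing, which is what the paper uses: match the term of $\sav(C)$ at $l=i_1-m$ with the term of $\sav(C')$ at $l=i_2-1-m$, for $m=1,\dots,J$, and discard the leftover terms $g_C(l)$, $i_1\leq l\leq i_2-2$, which are nonnegative. Writing $a=i_2-i_1$ and $x=\sum_{m\geq 3}(-1)^{m-1}2^{i_2-1-i_m}\in[0,\tfrac14]$, the two arguments become $2^{-m}\bigl(-1+2^{-a}(1-2x)\bigr)$ and $2^{-m}\bigl(-\tfrac12-x\bigr)$ respectively, i.e.\ the shift aligns the dyadic scales (which your same-$l$ comparison mismatches by a factor $2^{a-1}$). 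Since $\tfrac12+x \leq 1-2^{-a}(1-2x)$ for all $a\geq 2$ and $x\leq\tfrac14$ (the gap is minimized at $a=2$, where it is $\tfrac14-\tfrac{x}{2}\geq\tfrac18$), every paired difference is nonnegative and the $m=1$ term is bounded below by an absolute constant; the negative high-range terms that defeat your accounting never appear. If you want to salvage your decomposition instead, you would have to prove quantitative lower bounds on the $m\geq 2$ contributions (you only use that they are $\geq 0$) strong enough to absorb a loss of size $\sqrt{2}$, which is exactly the uniform-in-$(a,j,i_3,\dots)$ verification you have deferred.
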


\begin{proof}
We have
\begin{align*}&\sav(C) - \sav(C')= \\& \sum_{i_1 -J \leq l < i_2 - 1} \left[1 -
\cos\left(2\pi \sum_{m = 1}^j (-1)^m2^{l-i_m}\right)\right] - \\
&\qquad\qquad\qquad \sum_{i_2 - J - 1 \leq l < i_2 - 1} \left[ 1 - \cos \left(
2\pi
\left(-2^{l+1 - i_2} + \sum_{m = 2}^j(-1)^m 2^{l - i_m}\right)\right)\right].
\end{align*}
Set
\[
x =  \sum_{m=3}^j (-1)^{m-1} 2^{-i_m + i_2 -1}, \qquad 0 \leq x \leq
\frac{1}{4}.
\]
Take only the first $J$ terms of the first sum, to obtain for some $\delta_1 >
0$
\begin{align*}
\sav(C)- \sav(C')\geq \sum_{l = 1}^J \left[\cos\left(\frac{2\pi}{2^l}
\left(\frac{-3}{4} -
\frac{x}{2}\right)\right)-\cos\left(\frac{2\pi}{2^l}
\left(\frac{-1}{2}
- x\right)\right) \right] >
\delta_1 \end{align*} by noting that the worst case is $i_1 = i_2 -2$.
\end{proof}

\begin{lemma} \label{digit_drop}
Let $0 \not \equiv \xi \in \widehat{\zed/p\zed}$. Let $C\in \sC_0(\xi)$, and
suppose that $|C| = j > 1$, $C = i_1 < \cdots
< i_j$ with $i_2 = i_1 + 1$.  Then $\sav(C) \geq \sav(C')$ where $C'$ is the
$J$-sequence formed by $i_2, ..., i_j$, i.e. by dropping $i_1$.  Furthermore, if
$j
\geq 3$ and $i_3 = i_1 + 2$ then there exists fixed $\delta_2 > 0$ such that
$\sav(C) > \sav(C') + \delta_2$.
\end{lemma}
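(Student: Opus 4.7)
The plan is to expand both sides of the claimed inequality using the definition \eqref{sav_def} and display $\sav(C)-\sav(C')$ as an explicit sum in which every term is manifestly non-negative. Writing $S_C(l)=\sum_{m=1}^{j}(-1)^m 2^{l-i_m}$ and, after reindexing $C'$ as $i'_m=i_{m+1}$, $S_{C'}(l)=\sum_{m=1}^{j-1}(-1)^m 2^{l-i'_m}=-\sum_{m=2}^{j}(-1)^m 2^{l-i_m}$, the assumption $i_2=i_1+1$ gives the two key identities
\[
S_C(l)+S_{C'}(l)=-2^{l-i_1}=-2\cdot 2^{l-i_2},\qquad S_{C'}(l)-S_C(l)=2\,T(l),
\]
where $T(l):=2^{l-i_3}-2^{l-i_4}+2^{l-i_5}-\cdots$ (the empty sum, $T\equiv 0$, when $j=2$).

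The summation range for $\sav(C)$ is $l\in[i_1-J,i_j-1]$ while that for $\sav(C')$ is $l\in[i_2-J,i_j-1]$, which differ only by the single index $l=i_1-J$. That extra term contributes $1-\cos(2\pi S_C(i_1-J))\geq 0$. On the common range I would apply the sum-to-product identity with $A=2\pi S_{C'}(l)$, $B=2\pi S_C(l)$ to get
\[
\cos(2\pi S_{C'}(l))-\cos(2\pi S_C(l))=2\sin(2\pi\cdot 2^{l-i_2})\sin(2\pi T(l)).
\]
For $l\geq i_2$ the first sine vanishes because $2^{l-i_2}\in\zed_{\geq 0}$, and at $l=i_2-1$ it equals $\sin\pi=0$, so only $l\in[i_2-J,i_2-2]$ can contribute. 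On this range $2\pi\cdot 2^{l-i_2}\in(0,\pi/2]$, so the first factor is positive; and since $l\leq i_2-2$ and $i_3\geq i_2+1$, one has $|T(l)|\leq 2^{l-i_3}\leq 1/8$ with the alternating series guaranteeing $T(l)>0$, so the second factor is also strictly positive. Thus $\sav(C)-\sav(C')$ is a sum of non-negative terms, which proves the first assertion.

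For the quantitative improvement, under the hypothesis $i_3=i_1+2=i_2+1$ I would isolate the single summand at $l=i_2-2$. There $2^{l-i_2}=\frac{1}{4}$ so $\sin(2\pi\cdot 2^{l-i_2})=\sin(\pi/2)=1$, and $T(l)=2^{-3}-2^{l-i_4}+\cdots$ with the tail controlled by $2^{l-i_4}\leq 2^{-4}$ (using $i_4\geq i_3+1=i_2+2$), giving $T(l)\in[1/16,1/8]$ and therefore $\sin(2\pi T(l))\geq\sin(\pi/8)$. Since every other term has already been shown non-negative, this single summand produces
\[
\sav(C)-\sav(C')\geq 2\sin(\pi/8),
\]
so one may take $\delta_2=2\sin(\pi/8)>0$.

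There is no serious obstacle: once the pair of identities for $S_C\pm S_{C'}$ is in hand, everything reduces to verifying that the two sine factors in the sum-to-product expression have constant sign on the relevant range, which is a routine inspection. The only delicate point is confirming that the alternating-series expression for $T(l)$ is genuinely positive and stays well below $1/2$ (guaranteeing the second sine is positive), and this is a clean consequence of $l\leq i_2-2$ together with strict monotonicity of the $i_n$.
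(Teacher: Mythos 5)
Your proof is correct and follows essentially the same route as the paper: both arguments reduce $\sav(C)-\sav(C')$ to a term-by-term comparison over $l$, drop the single extra boundary term, observe that every remaining summand is nonnegative, and extract the quantitative gain from the same single index $l=i_2-2=i_1-1$, arriving at the identical constant $2\sin(\pi/8)=2\cos(3\pi/8)$. The only difference is cosmetic: you package the term-by-term difference via the sum-to-product identity, which makes the sign analysis of each summand a bit more transparent, whereas the paper substitutes $l\mapsto i_1-1-l$ and estimates the resulting cosines directly in terms of the parameter $x$.
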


\begin{proof}
We have
\begin{align} \notag &\sav(C) - \sav(C') =\\ \notag &\sum_{l = i_1 - J}^{i_1-1}
\left[1 - \cos\left(2\pi \sum_{m = 1}^j (-1)^m 2^{l-i_m}\right)\right] - \sum_{l
= i_1 - J +1}^{i_1-1} \left[1 - \cos\left(2 \pi \sum_{m = 2}^j (-1)^m
2^{l-i_m}\right)\right].
\end{align}
Replace $l$ with $i_1 -1 - l$ and set $x = \sum_{m = 2}^j (-1)^m
2^{i_m - i_1-1}$, $\frac{1}{8} \leq x \leq \frac{1}{4}$ to obtain
\begin{align}
\label{drop_bound} \sav(C)- \sav(C')&  \geq \sum_{l = 0}^{J -2} \left[
\cos
\left(\frac{2\pi x}{2^l}\right) - \cos \left(\frac{2 \pi
(x-\frac{1}{2})}{2^l}\right)\right]. \end{align}  In the case $j \geq
3$ and $i_3 = i_1 +2$ we have $x \leq \frac{3}{16}$  which proves
$$ (\ref{drop_bound}) \geq \cos\left(\frac{3 \pi}{8}\right) - \cos\left(\frac{5
\pi}{8}\right).$$
\end{proof}

 The previous two lemmas imply the following one.
\begin{lemma}\label{component_bound}
Let $0 \not \equiv \xi \in \widehat{\zed/p\zed}$. Let $C \in \sC_0(\xi)$ be a
typical clump of $\sC(\xi)$ with digits $i_1 < i_2< ...< i_j$. If $j = 1$ or
$j=2$ and $i_2 = i_1 + 1$ we have
\[
 \sav(C) \geq c_0 + O(2^{-J}).
\]
Furthermore, there is a $\delta > 0$ such that, if $j \geq 3$ or $j = 2$ and
$i_2 > i_1 +1$ then
\[
 \sav(C) \geq c_0 + \delta j.
\]

\end{lemma}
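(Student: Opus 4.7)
The plan is to verify the first statement by direct computation from the geometric-series definition of $c_0$, and the second by a greedy reduction of $C$ to a singleton, accumulating gain at each step through alternating applications of Lemmas \ref{digit_shift} and \ref{digit_drop}. For $|C| = 1$, only one summand survives in (\ref{sav_def}) and the substitution $m = i_1 - l$ gives
\[
 \sav(\{i_1\}) = \sum_{m=1}^{J}\left[1-\cos\left(\frac{2\pi}{2^m}\right)\right] = c_0 - \sum_{m > J}\left[1-\cos\left(\frac{2\pi}{2^m}\right)\right] = c_0 + O(4^{-J}).
\]
For $|C| = 2$ with $i_2 = i_1 + 1$, the two summands of opposite sign collapse to $\pm 2^{l-i_2}$ and the range of $l$ is one longer, yielding $\sav(C) = \sum_{m=1}^{J+1}[1-\cos(2\pi/2^m)] = c_0 - O(4^{-J})$. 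Both cases satisfy the first bound.

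For the second statement I describe a reduction procedure on $C$ of size $j \geq 2$. At each stage, let $i_1 < i_2$ denote the two smallest elements of the current $C$. If $i_2 > i_1 + 1$, apply Lemma \ref{digit_shift} to replace $i_1$ by $i_2 - 1$ (a \emph{shift}); this decreases $\sav$ by at least $\delta_1$, leaves $|C|$ unchanged, and forces the new first gap to equal $1$. If $i_2 = i_1 + 1$, apply Lemma \ref{digit_drop} to delete $i_1$ (a \emph{drop}); this decreases $\sav$ by at least $\delta_2$ when $|C| \geq 3$ and $i_3 = i_1 + 2$ (a \emph{gainful} drop) and by $0$ otherwise. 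Iterate until $|C| = 1$. The procedure is well-defined because each modification preserves the $J$-sequence structure, and each application of the corresponding lemma is legitimate in the reduced configuration.

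The essential bookkeeping is that after a shift the new first gap equals $1$ (so the very next operation must be a drop), while after a non-gainful drop the new first gap is at least $2$ (so the very next operation must be a shift). Among the $j - 1$ total drops, the terminal one (reducing size $2$ to $1$) is unavoidably non-gainful, but each of the other $j - 2$ non-terminal drops is either itself gainful or is paired with a subsequent shift worth $\delta_1$. The total savings consumed during the reduction is therefore at least $\min(\delta_1, \delta_2)(j - 2)$, and combining with the singleton base case yields $\sav(C) \geq c_0 + \min(\delta_1, \delta_2)(j - 2) + O(2^{-J})$. For $j \geq 3$ this is at least $c_0 + \delta j$ with $\delta = \tfrac{1}{4}\min(\delta_1, \delta_2)$, once $J$ is large enough to absorb the error (as holds here since $J \geq \log_2\ell$); the $j = 2$, $i_2 > i_1 + 1$ case follows from a single application of Lemma \ref{digit_shift} and the $|C|=2$ consecutive base. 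The main obstacle is obtaining a gain linear in $j$: iterating Lemma \ref{digit_drop} alone contributes nothing on sparse clumps, and the argument hinges on pairing every non-gainful drop with the profitable shift that the gap it creates forces at the next step.
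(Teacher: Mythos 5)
Your proof is correct and follows the same reduction-to-singleton scheme, via Lemmas \ref{digit_shift} and \ref{digit_drop}, that the paper uses; the two base-case computations and the choice of operations (shift when the first gap exceeds $1$, drop when it equals $1$) match the paper's one-line sketch exactly. What you add is the bookkeeping the paper leaves implicit — that a non-gainful, non-terminal drop leaves a first gap of at least $2$ and hence forces a $\delta_1$-profitable shift at the very next step, giving the required gain linear in $j$ — and this is indeed the point that makes the sketch rigorous.
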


\begin{proof} By a sequence of steps in which we either (i) move the first index
of $C$ adjacent to the second, or (ii) delete the first, we reduce to case of
$C_0$ containing a single element, which satisfies $\sav(C_0) = c_0 -
O(2^{-J})$.
\end{proof}

We collect together several easy combinatorial estimates.   Given frequency
$\xi$ we are most interested in typical clumps $C \in \sC_0(\xi)$
which consist of a single index, or a pair of adjacent indices.  Let the number
of
these be $x_1(\xi)$ and $x_2(\xi)$.  Let $x_3(\xi) = |\sC_0(\xi)|-x_1(\xi)-
x_2(\xi)$ be the number of non-trivial clumps in $\sC_0(\xi)$, and let $m =
\sigma(\xi)- |C_{\ini}| - |C_{\fin}| - x_1(\xi) - 2 x_2(\xi)$ be the number of
indices contained in the clumps counted in $x_3(\xi)$.

Given $m\geq 0$ and $x_3 \geq 0$, let \[\sT(m, x_3) = \left\{\uA \in
(\sJ_0')^{x_3}: \sum_{i=1}^{x_3}|A_j| = m\right\} \]
be the collection of $x_3$-tuples of non-trivial normalized $J$-sequences of
total cardinality $m$.
Given initial and final clumps $C_{\ini}$ and $C_{\fin}$, $T \in \sT(m, x_3)$
and
integers $x_1, x_2 \geq 0$, let $\sN(C_{\ini}, C_{\fin}, x_1, x_2, T)$ denote
the number of $\xi$
with initial clump $C_{\ini}$, final clump $C_{\fin}$, $x_1$
typical clumps with a single index, $x_2$ typical
clumps which consist of a pair of consecutive indices and $x_3$ non-trivial
typical clumps,
whose
offsets taken in order are given by
$T$.  For any $j \geq 0$, let $I(j)$ (resp. $F(j)$) be the number of
$J$-sequences on $j$ indices which may appear as the initial (resp. final) clump
of $\sI(\xi)$, $\xi \in \widehat{\zed/p\zed} \setminus \{0\}$.

\begin{lemma}\label{counting_lemma}
 Let $x_1, x_2, x_3, m, T$ be as above and let $C_{\ini}$, $C_{\fin}$ be any
initial and final clumps (possibly empty).  We have the bounds
 \[
  |\sT(m, x_3)| \leq (J+1)^{m-1}
 \]
 and, for any $T \in \sT(m, x_3)$,
 \begin{align*}
  \sN(C_{\ini}, C_{\fin}, x_1, x_2, T) &\leq 2\frac{\ell^{
x_1+x_2+x_3}}{x_1!x_2!x_3!}.
 \end{align*}
Also, for any $j \geq 0$,
\[
I(j), F(j) \leq J^j.
\]
\end{lemma}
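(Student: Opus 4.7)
My overall plan is to prove each of the three bounds separately, in each case by exhibiting a direct injective encoding of the objects being counted. For the bound $|\sT(m, x_3)| \leq (J+1)^{m-1}$, the idea is to encode a tuple $T = (A_1, \ldots, A_{x_3})$ as a single word of length $m - 1$ over the alphabet $\{1, \ldots, J\} \cup \{*\}$. Inside each normalized $J$-sequence $A_i$ of cardinality $m_i$ there are $m_i - 1$ successive gaps in $\{1, \ldots, J\}$; I will list these in order and insert the separator symbol $*$ between consecutive sequences. The resulting word has length $(m - x_3) + (x_3 - 1) = m - 1$ and evidently determines $T$, so bounding by all $(J+1)^{m-1}$ possible words yields the claim. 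The non-triviality hypothesis is unused here.

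For the second bound on $\sN(C_{\ini}, C_{\fin}, x_1, x_2, T)$, my observation is that once $T$, $C_{\ini}$ and $C_{\fin}$ are fixed, each typical clump is determined by a single starting index in $(0, \ell]$: the unique index for a singleton, the smaller index for a pair clump, and the first index for a non-trivial clump (whose offsets are read off from the corresponding entry of $T$, matched by position in sorted order). I plan to count starting positions as unordered subsets of $(0, \ell]$, discarding the $J$-separation and disjointness constraints as an upper bound, which gives $\binom{\ell}{x_1}\binom{\ell}{x_2}\binom{\ell}{x_3} \leq \ell^{x_1+x_2+x_3}/(x_1!\,x_2!\,x_3!)$, with a final factor of $2$ to account for the choice of sign $\epsilon = \pm 1$.

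Finally, for the estimates $I(j), F(j) \leq J^j$, the same style of encoding suffices: an initial clump of cardinality $j$ is determined by its first index (which must lie in $\{1, \ldots, J\}$ by definition of initial) together with its $j - 1$ successive gaps (each in $\{1, \ldots, J\}$), giving at most $J \cdot J^{j-1} = J^j$ possibilities, and an analogous argument in reverse handles final clumps. The only points requiring any care are the injectivity of the word encoding in the first bound and the correct matching of non-trivial clumps with the ordered tuple $T$ in the second; both are elementary bookkeeping rather than a genuine obstacle, so I do not expect any step of the proof to present substantive difficulty.
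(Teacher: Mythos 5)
Your proof is correct and takes essentially the same approach as the paper: the same gap/separator encoding for $|\sT(m,x_3)|$ (the paper uses the distance value $J+1$ itself as the separator where you use a distinct symbol $*$, but the injection is the same), the same starting-index count for $\sN$ with the factor of $2$ from the sign $\epsilon$, and the same first-index-plus-sequential-gaps count for $I(j)$ and $F(j)$.
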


\begin{proof}
 To bound $|\sT|$, neglecting $x_3$ and the non-triviality condition, choose for
each index $1 \leq j < m$ a
distance $1\leq d(j) \leq J+1$ between $j$ and $j+1$ in the arrangement, with a
distance of $J+1$ indicating that a new clump begins with $j+1$.

Similarly, the bound for $I(j)$ follows on choosing a first index in one of at
most $J$ ways, and then choosing sequentially distances between the consecutive
indices.  For $F(j)$, choose counting from the back instead.

The bound for $\sN(C_{\ini}, C_{\fin}, x_1, x_2, T)$ follows on choosing a first
index for each
clump, the factor of 2 coming from choosing the sign.
\end{proof}

Our results on savings may be summarized as follows.
\begin{lemma}\label{coordinate_savings}
 Let $0 \not \equiv \xi \in \widehat{\zed/p\zed}$ have parameters $x_1, x_2,
x_3, m, C_{\ini}, C_{\fin}$ as above.   There is a fixed $0 < \delta <
\frac{1}{2}$ such
that
 \[
  \sav(\xi) \geq c_0(x_1 + x_2 + x_3) + \delta m +|C_{\ini}| + |C_{\fin}|-
O\left(\frac{x_1+x_2+x_3}{2^J}\right).
 \]
\end{lemma}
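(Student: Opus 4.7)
The plan is to combine Lemma \ref{fc_approx} with the classification of typical clumps in Lemma \ref{component_bound}, treating this as essentially a bookkeeping exercise since the per-clump savings have already been established.

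First I would apply Lemma \ref{fc_approx} to reduce to a sum over typical clumps:
\[
\sav(\xi) \geq \sum_{C \in \sC_0(\xi)} \sav(C) + |C_{\ini}(\xi)| + |C_{\fin}(\xi)| + O\!\left(\frac{|\sC(\xi)|}{2^J}\right).
\]
The initial and final contributions already match the corresponding terms on the right hand side of the claimed inequality, so it only remains to estimate $\sum_{C \in \sC_0(\xi)} \sav(C)$.

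Next, partition the typical clumps in $\sC_0(\xi)$ according to which case of Lemma \ref{component_bound} applies: singletons (there are $x_1$ of these), adjacent pairs with $|C|=2, i_2 = i_1+1$ (there are $x_2$ of these), and the remaining non-trivial clumps (of which there are $x_3$, containing $m$ indices in total). The first case of Lemma \ref{component_bound} gives $\sav(C) \geq c_0 + O(2^{-J})$ for each singleton or adjacent pair, contributing $c_0(x_1+x_2) + O((x_1+x_2)/2^J)$. The second case gives $\sav(C) \geq c_0 + \delta |C|$ for each non-trivial clump (with $|C|\geq 2$ for such a clump); summing over the $x_3$ non-trivial clumps yields at least $c_0 x_3 + \delta m$. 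Adding these three contributions,
\[
\sum_{C \in \sC_0(\xi)} \sav(C) \geq c_0(x_1+x_2+x_3) + \delta m + O\!\left(\frac{x_1+x_2}{2^J}\right).
\]

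Finally I would combine the two displays, noting that $|\sC(\xi)| \leq x_1 + x_2 + x_3 + 2$ (the $+2$ accounting for the possible initial and final clumps), so that $O(|\sC(\xi)|/2^J)$ is absorbed into $O((x_1+x_2+x_3)/2^J)$ up to an additive $O(2^{-J})$ which is negligible. This gives the claimed inequality with the same $\delta > 0$ produced by Lemma \ref{component_bound}. There is no substantive obstacle here; the only thing to verify carefully is that every clump in $\sC_0(\xi)$ has been counted in exactly one of the three classes and that Lemma \ref{component_bound} applies in the stated form in each case — the non-trivial case covers precisely those with $|C|\geq 3$ or $|C|=2$ with non-adjacent indices, which is consistent with the definition of $x_3$.
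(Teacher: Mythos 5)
Your proof is correct and matches the approach implicit in the paper: the paper presents Lemma~\ref{coordinate_savings} as a summary with no written proof, and the intended derivation is exactly the combination of Lemma~\ref{fc_approx} (to reduce to typical clumps plus $|C_{\ini}|+|C_{\fin}|$) with the three-way clump classification of Lemma~\ref{component_bound}. Your bookkeeping is sound — the $x_1$ singletons and $x_2$ adjacent pairs each give $c_0 - O(2^{-J})$, and the $x_3$ non-trivial clumps give $c_0 + \delta|C|$ summing to $c_0 x_3 + \delta m$ — and your closing remark about the $O(2^{-J}|\sC|)$ term is correct; in fact, reading the proof of Lemma~\ref{fc_approx} one sees the error arises only from typical clumps, so it is genuinely $O(|\sC_0| \cdot 2^{-J}) = O((x_1+x_2+x_3)/2^J)$, making even the negligible additive $O(2^{-J})$ you flag unnecessary.
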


\begin{proof}[Proof of Proposition \ref{upper_bound_prop}]
Let $\log J = o(\log \ell)$ and fix some $\theta$, $1 -
\frac{1}{c_0} < \theta < 1$.
By Lemma \ref{cauchy-schwarz}
\begin{align*} \left\|\mu_{A_{2,p}}^{*n} -
\bU_{\zed/p\zed}\right\|_{\TV(\zed/p\zed)}^2 &\leq \frac{1}{4} \sum_{\xi \not
\equiv
0 \bmod p} \left|\hat{\mu}_{A_{2,p}}(\xi)\right|^{2n} \\ &=
\frac{1}{4}\left\{\sum_{1 \leq \sigma(\xi) <
\ell^\theta} + \sum_{ \ell^\theta \leq \sigma(\xi) < \delta_3 \ell} +
\sum_{\delta_3 \ell \leq
\sigma(\xi)} \right\}
 \left|\hat{\mu}_{A_{2,p}}(\xi)\right|^{2n}\\&=
\frac{1}{4}\left(\sS_1 + \sS_2 + \sS_3\right).\end{align*}
By Lemma \ref{many_digits}, for some $c >0$,
\begin{equation}\label{S_3_bound}\sS_3 \leq 2^\ell
\left(1-\frac{2\ell\delta_3}{2\ell+1}\right)^{\frac{\ell}{c_0}(\log \ell+\beta)}
= O(e^{-c
\ell \log \ell}).\end{equation}
By Lemma \ref{many_digits}, again for some $c > 0$,
\begin{align}\notag  \sS_2 &\leq 2\sum_{\ell^\theta \leq j < \delta_3 \ell}
\binom{\ell}{j} \left(1 - \frac{2j}{2\ell+1}\right)^{\frac{\ell}{c_0}(\log \ell
+\beta)}\\\notag &\ll
\sum_{\ell^\theta \leq j < \delta_3 \ell} \exp\left(j\log \ell - j\log j + j -
\frac{2\ell j}{(2\ell+1)c_0} \log \ell - \frac{2j\ell\beta}{c_0(2
\ell+1)}\right) \\
\label{S_2_bound} &\ll
e^{-c\ell^\theta \beta}\sum_{\ell^\theta \leq j < \delta_3 \ell} \exp\left(
\left(-\theta + 1 -
\frac{1}{c_0}\right)j \log \ell + j\right) =
O\left(e^{-c \ell^\theta (\beta + \log \ell)}\right).\end{align}

 Conditioning on
$x_1(\xi), x_2(\xi), x_3(\xi), m$ as in Lemma
\ref{counting_lemma} and $i = |C_{\ini}|$, $f = |C_{\fin}|$ we find
\begin{align*}\sS_1 &\leq 2\sum_{1 \leq j < \ell^\theta} \sum_{x_1 + 2
x_2 + m +i + f= j}\sum_{|C_{\ini}| = i, |C_{\fin}|=j} \sum_{x_3 \leq \lfloor
\frac{m}{2}\rfloor}\\&\qquad \times
\sum_{T\in \sT(m, x_3)} \sN(C_{\ini}, C_{\fin}, x_1, x_2, T) \left(1 -
\frac{2 \sav}{2\ell+1}\right)^{\frac{\ell}{c_0}(\log \ell +
\beta)}
\end{align*}
where $\sav = c_0(x_1 + x_2 + x_3) + \delta m + i + f -
O\left(\frac{x_1+x_2+x_3}{2^J}\right)$. Inserting the estimates for $|\sT|$ and
$\sN$
from Lemma \ref{counting_lemma}, we obtain
\begin{align*}&\sS_1 \ll \sum_{1 \leq j < \ell^\theta} \sum_{x_1 + 2 x_2
+ m + i + f = j} (J+1)^{m+i+f} \frac{\ell^{x_1 + x_2}}{x_1!x_2!}\\&
\sum_{\substack{x_3 \leq \lfloor \frac{m}{2}\rfloor\\ m >0 \Rightarrow x_3 > 0}}
\frac{\ell^{x_3}}{x_3!} \exp\left(-\left(x_1 + x_2 + x_3 + \frac{\delta m
+ i + f}{c_0}
-O\left(\frac{x_1+x_2+x_3}{2^J}\right)\right)(\log \ell + \beta)\right).
\end{align*}Assume that $\frac{\log
\ell}{2^J} = o(1)$.  Then, when $m \geq 1$ we find that the sum over $x_3$
is \[O\left(\exp(-\beta) \right).\] The terms for which $x_1 = x_2 =
0$ thus contribute
$O\left(\frac{J}{e^{\beta}\ell^{\frac{\delta}{c_0}+o(1)}}  +
\frac{J}{e^{\frac{\beta}{c_0}} \ell^{\frac{1}{c_0}}}\right)$.
When $x_1 + x_2 \neq 0$ summation over $i,f, m$ reduces to $1 + o(1)$.
Thus
\begin{align*}
\sS_1
\leq & O\left(\frac{J}{e^{\frac{\beta}{c_0}}
\ell^{\frac{1}{c_0}}} \right)+O\left(\exp\left(-\beta
+
O\left(\frac{\log \ell + \beta}{2^J} \right) \right) \right).\end{align*}
Choose $2^J = \ell$ to complete the proof.
\end{proof}
\appendix
\section{Local limit theorem on $\bR^k$}\label{local_limit_appendix}
For $k > 1$ recall that we define the measure on $\zed^k$, \[\nu_k =
\frac{1}{2k+1}\left(\delta_0 + \sum_{j =1}^k (\delta_{e_j} + \delta_{-e_{j}})
\right)\] and that we write \[\eta_k\left(\sigma, \ux\right) = \frac{1}{(2\pi
\sigma^2)^{\frac{k}{2}}}\exp\left(- \frac{\|\ux\|_2^2}{2 \sigma^2} \right)\] for
the density of the centered standard normal distribution on $\bR^k$.  In this
appendix we prove Lemma \ref{normal_approximation_lemma}, which we recall for
convenience.
\begin{lemma*} Let $n, k(n) \geq 1$ with $k^2 = o\left(n\right)$
for
large $n$. As $n \to \infty$ we have
 \[
\left\| \nu_k^{*n} \ast \one_{\left[-\frac{1}{2}, \frac{1}{2}\right)^k} -
\eta_k\left(\sqrt{\frac{2n}{2k+1}}, \cdot\right)\right\|_{\TV(\bR^k)} = o(1).
 \]
\end{lemma*}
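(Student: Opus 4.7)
The approach is Fourier-analytic. The characteristic function $\hat\nu_k(\xi) = \frac{1}{2k+1}(1 + 2\sum_{j=1}^k\cos(2\pi\xi_j))$ is real-valued with vanishing odd moments. A Taylor expansion near the origin gives
\[
\hat\nu_k^n(\xi) = \exp\bigl(-2\pi^2\sigma^2\|\xi\|_2^2 + E(\xi)\bigr),
\qquad |E(\xi)| = O\bigl(\sigma^2(\|\xi\|_4^4 + \|\xi\|_2^4/k)\bigr),
\]
with $\sigma^2 = \frac{2n}{2k+1}$. Crucially, because the third cumulants of $\nu_k$ vanish by symmetry, $E$ starts at fourth order in $\xi$ rather than third.

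I would split the Fourier domain into three regions. Outside $[-\tfrac{1}{2}, \tfrac{1}{2}]^k$, $\hat\eta_k(\xi) = e^{-2\pi^2\sigma^2\|\xi\|_2^2}$ is exponentially small since $\sigma^2 \to \infty$ (using $k^2 = o(n)$), while the Fourier transform $\hat\nu_k^n(\xi)\prod_j \frac{\sin(\pi\xi_j)}{\pi\xi_j}$ of $f = \nu_k^{*n}\ast\one_{[-1/2,1/2)^k}$ is controlled by the vanishing of the sinc product at nonzero integer points. Within $[-\tfrac{1}{2}, \tfrac{1}{2}]^k$ but outside a ball of radius $\epsilon \asymp (k/n)^{1/4}$, both $|\hat\nu_k^n|$ and $|\hat\eta_k|$ decay like $\exp(-cn\epsilon^2/k) = o(1)$, using the elementary bound $1 - \hat\nu_k(\xi) \gg \|\xi\|_2^2/k$ on the cube. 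Inside the central ball the Taylor expansion applies, and Fourier-inverting $\hat\nu_k^n - \hat\eta_k$ yields a pointwise local limit theorem: for $\un \in \zed^k$ in the typical range $\|\un\|_2 \lesssim \sigma\sqrt{k}$, $\nu_k^{*n}(\un)/\eta_k(\sigma, \un) \to 1$.

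To convert to total variation I would use the decomposition
\[
\tfrac{1}{2}\|f - g\|_{L^1(\bR^k)} \leq \tfrac{1}{2}\sum_{\un \in \zed^k}|\nu_k^{*n}(\un) - \eta_k(\sigma, \un)| + \tfrac{1}{2}\int_{\bR^k}|\eta_k(\sigma, \un(x)) - g(x)|\,dx,
\]
where $\un(x) = \lfloor x + 1/2\rfloor$ acts componentwise. The first sum is $o(1)$ by the local limit theorem summed against $\sum_\un \eta_k(\sigma, \un) = 1 + o(1)$ (Riemann-sum approximation). For the discretization integral I would exploit the exact ratio $g(\un + y)/g(\un) = \exp(-(y\cdot\un)/\sigma^2 - \|y\|_2^2/(2\sigma^2))$; for $\un$ in the bulk the exponent is $o(1)$ uniformly in $y \in [-\tfrac{1}{2}, \tfrac{1}{2})^k$, so $|g(\un+y) - g(\un)| \lesssim g(\un)\bigl(|y\cdot\un|/\sigma^2 + \|y\|_2^2/(2\sigma^2)\bigr)$, and Cauchy-Schwarz gives $\int_{\text{cube}} |y\cdot\un|\,dy \leq \|\un\|_2/\sqrt{12}$. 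Summing yields $O(\sqrt{k}/\sigma + k/\sigma^2) = O(\sqrt{k^2/n}) = o(1)$, while the tails $\|\un\|_2 > C\sigma\sqrt{k}$ are controlled by Lemma~\ref{gaussian_length_concentration} applied to both $g$ and (via the $L^\infty$ version of the local limit theorem) to $\nu_k^{*n}$.

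The main obstacle is maintaining the correct error rate under only the hypothesis $k^2 = o(n)$, rather than the stronger $k^3 = o(n)$ that a naive Taylor-remainder bound on $g$ over unit cubes using the operator norm of the Hessian would require. The critical inputs are the cancellation of odd moments of $y$ over the cube (so the linear Taylor term enters only through its $L^1$ size $O(\|\un\|_2/\sigma^2)$ rather than a pointwise $L^\infty$ bound), together with the vanishing of odd cumulants of $\nu_k$ which ensures that the leading Edgeworth correction is already of fourth order in $\xi$.
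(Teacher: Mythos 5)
Your overall architecture matches the paper's: prove a local limit theorem, then control the discretization error from replacing the lattice sum with a Gaussian integral over unit cubes. For the LLT your approach (Taylor expansion of the characteristic function $\hat\nu_k$ near $0$, truncation of the Fourier integral using elementary bounds on $1-\hat\nu_k$) is genuinely different from the paper's, which instead represents $\nu_k^{*n}(\alpha)$ as a Cauchy integral and chooses radii $R_j$ via the saddle-point (stationary phase) equations; both routes are viable, and the paper's gives somewhat more explicit control in the tails. However, the step where you convert to total variation has a genuine gap.

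You assert that for $\un$ in the bulk $\|\un\|_2\lesssim\sigma\sqrt{k}$, the exponent $-(y\cdot\un)/\sigma^2-\|y\|_2^2/(2\sigma^2)$ is $o(1)$ \emph{uniformly} in $y\in\left[-\tfrac12,\tfrac12\right)^k$, and you use this to linearize $|e^x-1|\lesssim|x|$ before averaging in $y$. This is false under only $k^2=o(n)$. Indeed, taking $\un$ with all coordinates roughly equal to $\sigma$ (so $\|\un\|_2\asymp\sigma\sqrt{k}$ is in the bulk but $\|\un\|_1\asymp\sigma k$) and $y=-\tfrac12\operatorname{sgn}(\un)$, the exponent has magnitude $\asymp\|\un\|_1/(2\sigma^2)\asymp k/(2\sigma)\asymp k^{3/2}/\sqrt{n}$, which blows up whenever $k^3$ is not $o(n)$ — e.g.\ for $k=n^{2/5}$ it is $\asymp n^{1/10}$. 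Once the exponent is unbounded, $|e^x-1|$ is no longer $\lesssim|x|$, and the odd-moment cancellation over the cube that you invoke (which is indeed what saves the $L^1$ size of the linear term) cannot rescue the argument, because it only controls the Taylor expansion after the pointwise linearization has been justified. The paper repairs exactly this with a concentration-of-measure argument: it uses Azuma's inequality to show that $\{y:|\ux\cdot y|>A\sqrt{n}(1+\log n/\sqrt{k})\}$ has measure $\ll e^{-A^2/C}$, chooses $A\to\infty$ with $A=o(\sqrt{n}/k)$ (possible precisely because $k^2=o(n)$) so the exponent is $o(1)$ on the good set, and controls the bad set by integrating the exponential against the Gaussian tail of the measure via integration by parts. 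That splitting into a good set where linearization is valid and a small bad set where the decay of the measure wins is the missing ingredient in your proposal; without it the argument only works under the stronger hypothesis $k^3=o(n)$.
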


We actually prove a stronger estimate, which is a local limit
theorem
on $\bR^k$ for which we don't know an easy reference.

\begin{lemma}\label{gauss_approximation}
 Let $n, k(n) \geq 1$ with $k^2 = o\left(n\right)$ for
large
$n$.
Uniformly for $\ualpha \in \zed^k$ such that $\|\ualpha\|_2^2 \leq
\frac{2kn}{2k+1} +
\frac{n\log
n}{\sqrt{k}}$, and $\|\ualpha\|_4^4 \ll
\frac{n^{2}}{k}\left(1 +
\frac{\log n}{\sqrt{k}}\right)$, as
$n \to \infty$,
\[\nu_k^{*n}(\alpha) = \left\{1 +
o(1)\right\}
\eta_k\left(\sqrt{\frac{2n}{2k+1}}, \alpha \right).\]
\end{lemma}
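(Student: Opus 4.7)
The approach is Fourier inversion on the torus combined with a saddle-point contour deformation, as the permitted range of $\alpha$ reaches $\Theta(\log n/\sqrt{k})$ standard deviations from the mode, which is a moderate-deviations regime where naive uniform-in-frequency comparison loses the Gaussian tail factor $\exp(-\|\alpha\|_2^2/(2\sigma^2))$. Set $\sigma = \sqrt{2n/(2k+1)}$ and
\[
\phi(\xi) = \hat\nu_k(\xi) = \frac{1 + 2\sum_{j=1}^k \cos(2\pi\xi_j)}{2k+1},
\]
so that $\nu_k^{*n}(\alpha) = \int_{[-1/2,1/2)^k}\phi(\xi)^n e(-\alpha\cdot\xi)\,d\xi$ while $\eta_k(\sigma,\alpha) = \int_{\bR^k} e^{-2\pi^2\sigma^2\|\xi\|_2^2}e(-\alpha\cdot\xi)\,d\xi$. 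Since $\phi$ is entire in $\xi\in\bC^k$ and $\zed^k$-periodic, I deform the contour to $\xi - i\tau$ without boundary contribution, taking the Gaussian saddle $\tau = \alpha/(2\pi\sigma^2)$. The $\ell^2$ hypothesis on $\alpha$ yields $\|\tau\|_2 \ll k/\sqrt{n}$, and the $\ell^4$ hypothesis sharpens this to $\|\tau\|_\infty \ll k^{3/4}/\sqrt{n}$; both are $o(1)$ under $k^2 = o(n)$.

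After the shift, split the cube into $B_R = \{\|\xi\|_\infty\leq R\}$ and its complement, with $R = R(n,k)$ chosen so that $\sqrt{k/n}\log n \ll R \ll (k/n)^{1/4}$, which is possible because $k^2 = o(n)$. On $B_R$ expand
\[
\log\phi(\xi - i\tau) = \log\phi(-i\tau) + \xi\cdot\nabla\log\phi(-i\tau) - \tfrac{1}{2}\xi^{\top}H(-i\tau)\xi + O\!\left(\frac{\|\xi\|_2^2\bigl(\|\xi\|_\infty^2 + \|\tau\|_\infty^2\bigr)}{2k+1}\right).
\]
Using $\cosh(2\pi\tau_j) = 1 + 2\pi^2\tau_j^2 + O(\tau_j^4)$ and $\sinh(2\pi\tau_j) = 2\pi\tau_j + O(\tau_j^3)$, the saddle choice ensures that $e^{-2\pi\alpha\cdot\tau}\cdot e^{n\log\phi(-i\tau)} = (1+o(1))\exp(-\|\alpha\|_2^2/(2\sigma^2))$, that $n\xi\cdot\nabla\log\phi(-i\tau) = 2\pi i\alpha\cdot\xi + o(1)$ cancels the $-2\pi i\alpha\cdot\xi$ from $e(-\alpha\cdot\xi)$, and that the Hessian contributes $-2\pi^2\sigma^2\|\xi\|_2^2(1+o(1))$. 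The integrand on $B_R$ is therefore $(1+o(1))\exp(-\|\alpha\|_2^2/(2\sigma^2))\exp(-2\pi^2\sigma^2\|\xi\|_2^2)$; extending to $\bR^k$ (the tail is negligible since $\sigma R\to\infty$) delivers the Gaussian normalisation $(2\pi\sigma^2)^{-k/2}$ and hence $(1+o(1))\eta_k(\sigma,\alpha)$.

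For the complement of $B_R$ inside the cube, the inequality $1-\cos(2\pi t)\geq 8t^2$ on $[-1/2,1/2]$ yields $\phi(\xi)\leq 1 - 16\|\xi\|_2^2/(2k+1)$ where $\phi(\xi)\geq 0$, and the crude $|\phi(\xi)|\leq 1 - 2/(2k+1)$ handles cube corners where $\phi$ may be negative; carrying the shift through costs only an additional factor of $e^{-2\pi\alpha\cdot\tau}$, and the combined contribution is $o(\eta_k(\sigma,\alpha))$ uniformly in the admissible $\alpha$. The main obstacle is uniform control of the Taylor remainder in $\alpha$: after exponentiation the cubic and quartic errors must remain $1+o(1)$ when integrated against the Gaussian weight, and this is where the $\ell^2$ and $\ell^4$ hypotheses enter precisely --- the $\ell^2$ bound ensures that evaluating $H$ at $-i\tau$ rather than $0$ perturbs it by only $o(1)$, while the $\ell^4$ bound controls the quartic cross terms arising from the shifted argument. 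Lemma \ref{normal_approximation_lemma} is then deduced by integrating the pointwise estimate over admissible $\alpha\in\zed^k$ and invoking the Gaussian concentration of Lemma \ref{gaussian_length_concentration} to bound the total variation contribution from the inadmissible range, which is $o(1)$ in both measures.
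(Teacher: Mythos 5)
Your overall strategy is the same as the paper's: the Fourier inversion $\nu_k^{*n}(\ualpha)=\int_{[-1/2,1/2)^k}\phi(\xi)^n e(-\ualpha\cdot\xi)\,d\xi$ followed by the contour shift $\xi\mapsto\xi-i\tau$ is exactly the paper's Cauchy-integral representation with radii $R_j=e^{2\pi\tau_j}$; the only cosmetic difference is that you take the leading-order Gaussian saddle $\tau=\ualpha/(2\pi\sigma^2)$ while the paper solves the exact stationary-phase equation $D_jF(0)=0$ (which kills the linear term identically instead of leaving a residual to be estimated). The far-field bound via $|\phi(\xi-i\tau)|\le\phi(-i\tau)(1-c\|\xi\|_2^2/k)$ away from the corners, with a separate $1-c/k$ bound at the corners, is also the paper's Lemma \ref{general_bound}.

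There is, however, a genuine gap in the bulk estimate. The assertion that the integrand on $B_R=\{\|\xi\|_\infty\le R\}$ is pointwise $(1+o(1))$ times the Gaussian is false for your choice $\sqrt{k/n}\log n\ll R\ll(k/n)^{1/4}$: the quartic term of $n\log\phi$ is of genuine size $\asymp\frac{n}{k}\|\xi\|_4^4$ (your bound $\frac{n}{k}\|\xi\|_2^2\|\xi\|_\infty^2$ majorizes it), and at points of $B_R$ with many coordinates of size $\asymp R$ this is $\asymp nR^4\asymp k$, not $o(1)$; the residual linear term $n\xi\cdot\nabla\log\phi(-i\tau)-2\pi i\ualpha\cdot\xi\asymp\frac{k^2}{n^2}\sum_j|\alpha_j|^3|\xi_j|$ is likewise as large as $k^{7/4}/n^{3/4}$ on the edge of $B_R$. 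Nor can you shrink $R$ to force $nR^4=o(1)$: that requires $R=o(n^{-1/4})$, which is incompatible with the tail requirement $nR^2/k\gg\log n$ once $k\gg\sqrt{n}/\log n$, a range permitted by $k^2=o(n)$. So an $\ell^\infty$-only cutoff cannot work in this problem. The repair is precisely the paper's decomposition of the bulk into $B\sqcup E_1\sqcup E_2$ by $\ell^2$ and $\ell^4$ balls of $\xi$: on $B$ (where $\|\xi\|_2\ll k/\sqrt n$ and $\|\xi\|_4\ll k^{3/4}/\sqrt n$) the remainder is pointwise $o(1)$, while on $E_1$ and $E_2$ one trades the growth of the remainder in $\|\xi\|_4$ (resp.\ $\|\xi\|_2$) against the Gaussian concentration of those norms (Lemma \ref{gaussian_length_concentration}) via integration by parts. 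Your closing remark that the errors "must remain $1+o(1)$ when integrated against the Gaussian weight" gestures at this, but locates the difficulty in the $\ualpha$-dependence rather than in the $\xi$-dependence, which is where the actual work lies.
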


The deduction of Lemma \ref{normal_approximation_lemma} is as follows.
\begin{proof}[Proof of Lemma \ref{normal_approximation_lemma}]
 We have, for any $A, \delta > 0$, and for some $C>0$,
 \begin{align*}\int_{\|\ux\|_2^2 > \frac{2kn}{2k+1} + \delta\frac{n \log
n}{\sqrt{k}}} \eta_k\left(\sqrt{\frac{2n}{2k+1}}, \ux \right)d\ux
&=O_{\delta, A}\left(n^{-A}
\right)\\
\int_{\|\ux\|_4^4 > C\frac{n^2}{k}\left(1 + \frac{\log n}{\sqrt{k}} \right)}
\eta_k\left(\sqrt{\frac{2n}{2k+1}}, \ux \right)d\ux
&=O_A\left(n^{-A}
\right)
\end{align*}
see Lemma \ref{gaussian_length_concentration}, so it suffices to estimate the
difference
\[
  \nu_k^{*n} \ast \one_{\left[-\frac{1}{2}, \frac{1}{2}\right)^k} (\ux) -
\eta_k\left(\sqrt{\frac{2n}{2k+1}}, \ux\right)
\]
for $\|\ux\|_2^2 \leq \frac{2kn}{2k+1} + O\left(\frac{n \log
n}{\sqrt{k}}\right)$  and $\|\ux\|_4^4 \ll
\frac{n^2}{k}\left(1 + \frac{\log n}{\sqrt{k}} \right)$.

For
$\ux \in \zed^k$ satisfying this  upper bound and for $\uy \in
[-\frac{1}{2},
\frac{1}{2})^k$, 
\begin{align*}
\eta_k \left(\sqrt{\frac{2n}{2k+1}}, \ux + \uy \right) &= \eta_k
\left(\sqrt{\frac{2n}{2k+1}}, \ux \right)\exp\left(-\frac{2k+1}{4n}\left(2 \ux
\cdot \uy + \|\uy\|_2^2\right) \right)\\
&= (1 + o(1))\eta_k
\left(\sqrt{\frac{2n}{2k+1}}, \ux \right) \exp\left(-\frac{(2k+1)\ux\cdot
\uy}{2n}\right).
\end{align*}
Therefore
\begin{align*}
 &\int_{\left[-\frac{1}{2},
\frac{1}{2}\right)^k}\left|\eta_k\left(\sqrt{\frac{2n}{2k+1}}, \ux + \uy \right)
- \eta_k\left(\sqrt{\frac{2n}{2k+1}}, \ux \right)\right|d\uy\\
&= \eta_k\left(\sqrt{\frac{2n}{2k+1}}, \ux \right) \left(o(1) +
(1+o(1))\int_{\left[-\frac{1}{2}, \frac{1}{2}\right)^k}\left|
\exp\left(-\frac{(2k+1)\ux \cdot \uy}{2n} \right)-1 \right|d\uy \right).
\end{align*}
We claim that for all $\|\ux\|_2^2 \ll \frac{2kn}{2k+1} + \frac{n \log
n}{\sqrt{k}}$,
\begin{equation}\label{hypercube_conc}\int_{\left[-\frac{1}{2},
\frac{1}{2}\right)^k}\left|
\exp\left(-\frac{(2k+1)\ux \cdot \uy}{2n} \right)-1 \right|d\uy  =
o(1).\end{equation}
To see that this suffices for the proof, let \[\sB = \left\{\ux \in \zed^k: \|\ux\|_2^2 \leq \frac{2kn}{2k+1} + \frac{n \log n}{\sqrt{k}},\; \|x\|_4^4 \leq C \frac{n^2}{k}\left(1 + \frac{\log n}{\sqrt{k}} \right)\right\}\]
and estimate
\begin{align*}
 &\sum_{\ux \in \sB} \int_{\left[-\frac{1}{2}, \frac{1}{2}\right)^k} \left|\eta_k\left(\sqrt{\frac{2n}{2k+1}}, \ux + \uy\right) - \eta_k\left(\sqrt{\frac{2n}{2k+1}}, \ux\right) \right|d\uy\\& \leq o(1) \sum_{\ux \in \sB} \eta_k\left(\sqrt{\frac{2n}{2k+1}}, \ux \right)
 \\ & = o(1)\sum_{\ux \in \sB} (1 + o(1))\eta_k^{*n}(\ux) = o(1) 
\end{align*}
where in the last line we apply Lemma \ref{gauss_approximation}.  Since \[\sum_{\ux \in \sB} \int_{\left[-\frac{1}{2}, \frac{1}{2}\right)^k}\eta_k\left(\sqrt{\frac{2n}{2k+1}}, \ux + \uy\right) d\uy = 1+o(1)\]
it follows that $\sum_{\ux \in \sB} \eta_k^{*n}(\ux) = 1+o(1)$ so that 
 \begin{align*}
&\left\| \nu_k^{*n} \ast \one_{\left[-\frac{1}{2}, \frac{1}{2}\right)^k} -
\eta_k\left(\sqrt{\frac{2n}{2k+1}}, \cdot\right)\right\|_{\TV(\bR^k)} \\&= \sum_{\ux \in \zed^k}\int_{\left[-\frac{1}{2},
\frac{1}{2}\right)^k}\left|\eta_k\left(\sqrt{\frac{2n}{2k+1}}, \ux + \uy \right)
- \eta_k\left(\sqrt{\frac{2n}{2k+1}}, \ux \right)\right|d\uy = o(1)
 \end{align*}
by bounding both terms in the sum over $\ux \in \sB^c$ separately.

To prove (\ref{hypercube_conc}), choose a parameter $A = A(n,k) \to \infty$
with $n$ such that $A = o\left(\frac{\sqrt{n}}{k} \right)$ and partition
$\left[-\frac{1}{2}, \frac{1}{2}\right)^k = \sS_{\good} \sqcup \sS_{\bad}$
with
\[
\sS_{\good} = \left\{\uy \in \left[-\frac{1}{2}, \frac{1}{2}\right)^k: |\ux
\cdot \uy| \leq A\sqrt{n}\left(1 + \frac{\log n}{\sqrt{k}}\right)
\right\}.
\]
By Azuma's inequality, for some fixed $C>0$,
\[
 \meas\left(\uy \in \left[-\frac{1}{2}, \frac{1}{2}\right)^k: \left|\ux \cdot
\uy\right| >  t \sqrt{n}\left(1 + \frac{\log n}{\sqrt{k}}\right)\right) \leq 2
\exp\left( - \frac{t^2}{C}\right),
\]
and thus $\meas(\sS_{\good}) = 1 -o(1)$.  Since $\exp\left(-\frac{(2k+1)\ux
\cdot \uy}{2n} \right) = 1+o(1)$ for all $\uy \in \sS_{\good}$ we have
\[
\int_{\uy \in \sS_{\good}}  \left|
\exp\left(-\frac{(2k+1)\ux \cdot \uy}{2n} \right)-1 \right| d\uy = o(1).
\]
Meanwhile,
\begin{align*}
&\int_{\uy \in \sS_{\bad}}  \left|
\exp\left(-\frac{(2k+1)\ux \cdot \uy}{2n} \right)-1 \right| d\uy  \leq
\int_{\uy \in \sS_{\bad}}
\exp\left(\left|\frac{(2k+1)\ux \cdot \uy}{2n} \right|\right) d\uy\\
&= -\int_{A}^{\infty}\exp\left(\frac{t(2k+1)\left(1 + \frac{\log n}{\sqrt{k}}
\right)}{\sqrt{n}} \right)d \meas\left(\uy: |\uy \cdot \ux| > t \sqrt{n}\left(1
+ \frac{\log n}{\sqrt{k}}\right) \right) \\
&\ll \exp\left(\frac{-A^2}{C} + o(1) \right) = o(1).
\end{align*}

\end{proof}

The proof of Lemma \ref{gauss_approximation} is a standard application of the
saddle point method.  As there are several intermediate lemmas, it may help the reader to skip ahead to first read the eventual proof.
Associate to $\nu_k$ the generating function $$f(z_1, ...,
z_k) = \frac{1}{2k+1}\left(1 + z_1 + z_1^{-1} + ... + z_k + z_k^{-1}\right),$$
so that
$\nu_k(\alpha) = C_\alpha[f]$, where for Laurent series in
multiple variables \[g(z_1, ..., z_k) = \sum_{n_1, ..., n_k = -\infty}^\infty
a_{n_1, ..., n_k} z_1^{n_1}...z_k^{n_k}\] we write $C_{\ualpha}[g] =
a_{\ualpha}$.   The generating function associated to $\nu_k^{*n}$
is thus $f^n$.

By symmetry we may assume $\ualpha \geq 0$ coordinatewise.  By Cauchy's theorem,
for any
$R_1, ..., R_k > 0$
\begin{align} \notag
\nu_k^{*n}(\alpha) &= \left(\frac{1}{2\pi i}\right)^k \int_{|z_1| = R_1}\cdots
\int_{|z_k| = R_k} \frac{f(z_1, ..., z_k)^n}{z_1^{\alpha_1} ... z_k^{\alpha_k}}
\frac{dz_1}{z_1}\cdots \frac{dz_k}{z_k}\\
&\label{integral_rep} = \frac{1}{R_1^{\alpha_1}...R_k^{\alpha_k}}
\int_{\left(\bR/\zed\right)^k} f_0(\theta_1, ..., \theta_k)^n
e\left(-\ualpha \cdot \vo\right) d\vo,
\end{align}
where
\[
 f_0(\theta_1, ..., \theta_k) = f(R_1 e(\theta_1), ..., R_k e(\theta_k)).
\]
and $\ualpha \cdot \vo$ is the usual dot product on $\bR^k$.
The asymptotic in Lemma \ref{gauss_approximation} is derived by choosing $R_1,
..., R_k$ such that the phase in $f_0(\vo)^n$ is approximately equal to
$e(\ualpha \cdot \vo)$ for $\vo$ near 0.  The main contribution of the
integral then comes from small $\vo$.

Let $\sD_{\sm}$ be the domain \[\sD_{\sm} = \left\{ \vo \in
\left(\bR/\zed\right)^k : \|\vo\|_\infty \leq \frac{1}{12}\right\}.\] For $\vo
\in \sD_{\sm}$  define
\begin{align*} F(\vo) &=  n \log
\left[\frac{1}{2k+1}\left(1 + R_1 e(\theta_1) + \frac{e(-\theta_1)}{R_1
} + ... + R_k e(\theta_k) + \frac{e(-\theta_k)}{R_k}\right)\right] \\&
\qquad\qquad\qquad\qquad\qquad\qquad\qquad\qquad - (\alpha_1 \log R_1 + ... +
\alpha_k \log R_k) - 2\pi i \ualpha \cdot \vo.
\end{align*}
Evidently $F(\vo)$ gives a continuous
definition of
$\log \frac{f_0(\vo)^n e(-\ualpha \cdot
\vo)}{R_1^{\alpha_1}...R_k^{\alpha_k}}$
on $\sD_{\sm}$.

\begin{lemma}\label{derivatives}
 The first few partial derivatives of $F(\vo)$ are given as follows
\begin{align*}
 D_j F(\vo) &= 2\pi i \left[ \frac{n}{2k+1}\frac{  R_j e(\theta_j) -
\frac{ e(-\theta_j)}{R_j}}{f_0(\vo)} - \alpha_j\right]
\\ D_{j_1} D_{j_2} F(\vo) &= \frac{4\pi^2 n}{(2k+1)^2} \frac{ \left(R_{j_1}
e(\theta_{j_1})- \frac{e(-\theta_{j_1})}{R_{j_1}}\right)\left(R_{j_2}
e(\theta_{j_2}) -
\frac{e(-\theta_{j_2})}{R_{j_2}}\right)}{f_0(\vo)^2}, \;\; j_1 \neq j_2\\
\\ D_j^2 F(\vo) &=- \frac{4\pi^2
n}{2k+1} \frac{R_j e(\theta_j) + \frac{e(-\theta_j)}{R_j}}{f_0(\vo)}
+\frac{4\pi^2 n}{(2k+1)^2} \frac{ \left(R_j
e(\theta_j)- \frac{e(-\theta_j)}{R_j}\right)^2}{f_0(\vo)^2}
\\ D_{j_1} D_{j_2} D_{j_3} F(\vo) &=\\ \frac{-16\pi^3 i n}{(2k+1)^3} &\frac{
\left(R_{j_1}
e(\theta_{j_1})- \frac{e(-\theta_{j_1})}{R_{j_1}}\right)\left(R_{j_2}
e(\theta_{j_2}) -
\frac{e(-\theta_{j_2})}{R_{j_2}}\right)\left(R_{j_3} e(\theta_{j_3}) -
\frac{e(-\theta_{j_3})}{R_{j_3}}\right)}{f_0(\vo)^3}, \\
&\qquad\qquad\qquad\qquad\qquad\qquad\qquad\qquad
\qquad\qquad j_1, j_2, j_3 \text{ distinct}
\\ D_{j_1}^2 D_{j_2} F(\vo) &=
\frac{8 \pi^3 i n}{(2k+1)^2}\frac{\left(R_{j_1} e(\theta_{j_1}) +
\frac{e(-\theta_{j_1})}{R_{j_1}}\right)\left(R_{j_2} e(\theta_{j_2}) -
\frac{e(-\theta_{j_2})}{R_{j_2}}\right)}{f_0(\vo)^2}
\\&  -\frac{16\pi^3 i n}{(2k+1)^3} \frac{ \left(R_{j_1}
e(\theta_{j_1})- \frac{e(-\theta_{j_1})}{R_{j_1}}\right)^2\left(R_{j_2}
e(\theta_{j_2}) -
\frac{e(-\theta_{j_2})}{R_{j_2}}\right)}{f_0(\vo)^3} , \;\; j_1 \neq j_2
\\ D_j^3 F(\vo) &=
-\frac{8 \pi^3 i
n}{2k+1} \frac{R_j e(\theta_j) -
\frac{e(-\theta_j)}{R_j}}{f_0(\vo)}
 \\&+ \frac{24 \pi^3 i n}{(2k+1)^2}\frac{\left(R_j e(\theta_j) +
\frac{e(-\theta_j)}{R_j}\right)\left(R_j e(\theta_j) -
\frac{e(-\theta_j)}{R_j}\right)}{f_0(\vo)^2}\\& \qquad \qquad -\frac{16\pi^3 i
n}{(2k+1)^3} \frac{ \left(R_j
e(\theta_j)- \frac{e(-\theta_j)}{R_j}\right)^3}{f_0(\vo)^3}.
\end{align*}

\end{lemma}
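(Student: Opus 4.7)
The plan is to reduce everything to a straightforward chain-rule calculation using the fact that $F$ decomposes cleanly and $f_0$ is separable in the variables $\theta_j$. First I would write $F(\vo) = n G(\vo) - L(\vo)$ where $G(\vo) = \log f_0(\vo)$ and $L(\vo) = \sum_j \alpha_j \log R_j + 2\pi i \, \ualpha \cdot \vo$ is affine linear in $\vo$. Because $L$ is linear, only the first derivative of $L$ is nonzero, contributing $2\pi i \alpha_j$ to $D_j F$ and nothing to the higher derivatives. All derivatives of order $\geq 2$ thus agree with $n$ times the corresponding derivatives of $G = \log f_0$.

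Next I would exploit the separability of $f_0$. Setting $g_j(\theta_j) = R_j e(\theta_j) + R_j^{-1} e(-\theta_j)$, one has $(2k+1) f_0(\vo) = 1 + \sum_j g_j(\theta_j)$, and a direct differentiation gives
\begin{align*}
g_j'(\theta_j) &= 2\pi i \bigl(R_j e(\theta_j) - R_j^{-1} e(-\theta_j)\bigr),\\
g_j''(\theta_j) &= -4\pi^2 \bigl(R_j e(\theta_j) + R_j^{-1} e(-\theta_j)\bigr),\\
g_j'''(\theta_j) &= -8\pi^3 i \bigl(R_j e(\theta_j) - R_j^{-1} e(-\theta_j)\bigr).
\end{align*}
The crucial structural fact is that every mixed partial derivative of $f_0$ with at least two distinct indices vanishes: $D_{j_1} D_{j_2} f_0 = 0$ and $D_{j_1} D_{j_2} D_{j_3} f_0 = 0$ and $D_{j_1}^2 D_{j_2} f_0 = 0$ whenever the indices are not all equal. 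This is what collapses the combinatorics of the higher Fa\`a di Bruno expansion.

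From here the plan is to apply the standard logarithmic derivative identities
\begin{align*}
D_j \log f_0 &= \frac{D_j f_0}{f_0},\\
D_{j_1} D_{j_2} \log f_0 &= \frac{D_{j_1} D_{j_2} f_0}{f_0} - \frac{(D_{j_1} f_0)(D_{j_2} f_0)}{f_0^2},\\
D_{j_1} D_{j_2} D_{j_3} \log f_0 &= \frac{D_{j_1} D_{j_2} D_{j_3} f_0}{f_0} - \sum_{\text{cyc}} \frac{(D_{j_a} D_{j_b} f_0)(D_{j_c} f_0)}{f_0^2} + 2\, \frac{(D_{j_1} f_0)(D_{j_2} f_0)(D_{j_3} f_0)}{f_0^3}.
\end{align*}
Plugging in the values of $D_j^m f_0 = g_j^{(m)}(\theta_j)/(2k+1)$ and killing every term with distinct indices in a derivative block yields exactly the claimed formulas once one multiplies through by $n$. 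The three-distinct-indices case keeps only the triple-product term; the $D_{j_1}^2 D_{j_2}$ case with $j_1 \neq j_2$ keeps the $(D_{j_1}^2 f_0)(D_{j_2} f_0)/f_0^2$ term and the triple-product term; and the $D_j^3$ case keeps all three types of terms (the diagonal $D_j^3 f_0/f_0$, the cross $(D_j^2 f_0)(D_j f_0)/f_0^2$ with the combinatorial factor $3$, and the triple product with factor $2$).

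No real obstacle is foreseen: this is a bookkeeping exercise in multivariable calculus, and the main care needed is to track the powers of $2\pi i$ and the combinatorial factors in the third-order identity (in particular the coefficient $3$ in front of the $D_j^2 f_0 \cdot D_j f_0$ term, which matches the factor $24/8 = 3$ between the $\frac{24\pi^3 i n}{(2k+1)^2}$ and $\frac{8\pi^3 i n}{2k+1}$ coefficients in the stated formula after accounting for one extra $\frac{2\pi i}{2k+1}$).
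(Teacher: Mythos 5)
Your computation is correct: the paper states Lemma \ref{derivatives} without proof, and the intended argument is exactly the direct chain-rule/logarithmic-derivative calculation you carry out, using that $L$ is affine and that all mixed partials of the separable function $f_0$ with distinct indices vanish. I verified each of the six formulas against your identities (including the combinatorial factors $3$ and $2$ in the third-order case and the signs coming from $(2\pi i)^2=-4\pi^2$, $(2\pi i)^3=-8\pi^3 i$), and everything matches the stated lemma.
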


Choose $R_j$ by solving the stationary phase equation, for each $j$,  $D_j F(0)
= 0$, thus
\begin{equation}\label{stationary_phase} \frac{n}{2k+1}\frac{R_j -
\frac{1}{R_j}}{f_0(0)} - \alpha_j = 0.\end{equation}

\begin{lemma}\label{f_0_bound}
Let $ n, k(n) \in \zed_{>0}$ with $k^2 = o(n)$ as $n \to \infty$. Let $\ualpha
\in
\zed^k$ and assume
$\|\ualpha\|_{2}^2 \leq
n\left(1 + \frac{\log n}{\sqrt{k}} \right)$ and $\|\ualpha\|_4^4 \ll
\frac{n^2}{k}\left(1 + \frac{\log n}{\sqrt{k}}\right)$.
The
stationary phase equations (\ref{stationary_phase})  have a solution and the
solution satisfies
\begin{align}\label{f_0_eval} f_0(0) &= 1 + \frac{2k+1}{4n^2}\|\ualpha\|_2^2
+O\left(
\frac{k^3}{n^4}
\|\ualpha\|_4^4 \right)
\\\label{R_j_eval}
  R_j + \frac{1}{R_j}
& = 2 + \left(f_0(0) \alpha_j \frac{2k+1}{2n} \right)^2 + O\left(\frac{k^4
\alpha_j^4}{n^4} \right)\\
\notag &= 2 + O\left(\frac{\alpha_j^2 k^2}{n^2} \right)\\
\label{log_R}\log R_j &=  \frac{2k+1}{2n}f_0(0) \alpha_j +
O\left(\frac{k^3\alpha_j^3}{n^3}
\right).
\end{align}

\end{lemma}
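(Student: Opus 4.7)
The plan is to decouple the $k$ coupled stationary phase equations (\ref{stationary_phase}) by observing that once $s := f_0(0)$ is specified, each equation becomes a single-variable equation in $R_j$: explicitly, $R_j - R_j^{-1} = \tfrac{(2k+1) s \alpha_j}{n}$. Since $(R_j+R_j^{-1})^2 - (R_j-R_j^{-1})^2 = 4$, the positive root selection forces
\[
R_j + R_j^{-1} \;=\; \sqrt{4 + \left(\tfrac{(2k+1) s \alpha_j}{n}\right)^2}.
\]
Substituting this back into $s = \tfrac{1}{2k+1}\bigl(1+\sum_j (R_j+R_j^{-1})\bigr)$ reduces the whole system to the scalar fixed-point equation $s = \Phi(s)$, where
\[
\Phi(s) \;=\; \frac{1}{2k+1} \;+\; \frac{1}{2k+1}\sum_{j=1}^k \sqrt{4 + \left(\tfrac{(2k+1) s \alpha_j}{n}\right)^2}.
\]

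Next I would verify that $\Phi$ contracts on $[1,2]$. Direct differentiation gives $\Phi'(s) \le \tfrac{(2k+1) s}{2 n^2}\|\ualpha\|_2^2$, which is $o(1)$ under the hypotheses $\|\ualpha\|_2^2 \le n(1+\log n/\sqrt{k})$ and $k^2 = o(n)$ (the latter forces $\sqrt{k}\log n = o(n)$). Since $\Phi(1) \ge 1$ and $\Phi(s) = 1 + O(k/n \cdot (1+\log n/\sqrt{k}))$ for $s \in [1,2]$, the map sends $[1,2]$ into itself, so Banach's theorem yields a unique fixed point $s$ and in particular $s = 1 + o(1)$.

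To extract the asymptotic expansion, I would Taylor expand $\sqrt{4+u^2} = 2 + u^2/4 - u^4/64 + O(u^6)$ in the fixed-point equation to obtain
\[
s \;=\; 1 \;+\; \frac{(2k+1) s^2}{4 n^2}\|\ualpha\|_2^2 \;-\; \frac{(2k+1)^3 s^4}{64 n^4}\|\ualpha\|_4^4 \;+\; O\!\left(\frac{(2k+1)^5 s^6}{n^6}\|\ualpha\|_6^6\right).
\]
Bootstrapping using $s = 1 + o(1)$ then delivers (\ref{f_0_eval}): the $(s^2-1)$ correction to the leading term is of order $\tfrac{k^2 \|\ualpha\|_2^4}{n^4}$, which by Cauchy--Schwarz $\|\ualpha\|_2^4 \le k\|\ualpha\|_4^4$ is absorbed into $O(\tfrac{k^3}{n^4}\|\ualpha\|_4^4)$. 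The formula (\ref{R_j_eval}) is then immediate from $R_j + R_j^{-1} = \sqrt{4 + ((2k+1)s\alpha_j/n)^2}$ and a second-order expansion of the square root, while (\ref{log_R}) follows by setting $t_j = \log R_j$ so that $\sinh t_j = \tfrac{(2k+1) s \alpha_j}{2 n}$, inverting via $\sinh^{-1}(x) = x - x^3/6 + O(x^5)$.

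The main bookkeeping obstacle is verifying that the higher-order remainders in all three expansions fit inside the stated error terms. For the $O(u^6)$ term in the $\Phi$ expansion one uses the interpolation $\|\ualpha\|_6^6 \le \|\ualpha\|_\infty^2 \|\ualpha\|_4^4 \le \|\ualpha\|_2^2 \|\ualpha\|_4^4$ and then invokes $\tfrac{k^2 \|\ualpha\|_2^2}{n^2} = o(1)$, which holds by the hypotheses. The error control in (\ref{log_R}) is easier, since $\bigl(\tfrac{(2k+1) s \alpha_j}{2n}\bigr)^3$ yields directly the declared $O(k^3 \alpha_j^3/n^3)$. Everything else is routine.
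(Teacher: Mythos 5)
Your proposal is correct, and its bookkeeping matches the paper's. The one substantive departure is how existence of the solution is established: you observe explicitly that the system collapses to a single scalar fixed-point equation $s=\Phi(s)$ and invoke Banach's contraction mapping theorem (computing $\Phi'(s)\le\tfrac{(2k+1)s}{2n^2}\|\ualpha\|_2^2=o(1)$), whereas the paper runs a monotone iteration starting from the initial guess $f_0(0)=2$ and argues that the resulting sequence is decreasing and bounded below, hence convergent. Both reductions are really the same observation ($R_j+R_j^{-1}$ is determined by $f_0(0)$, which then satisfies a self-consistency equation), but the contraction framing is cleaner: it gives uniqueness for free and avoids the slight hand-waving in the paper's ``this produces a decreasing sequence \dots\ as is evident since the first step is decreasing.'' Your derivation of the asymptotics is essentially identical to the paper's --- Taylor expansion of $\sqrt{4+u^2}$ and of $\log R_j$ (you phrase the latter via $\sinh^{-1}$, the paper via expansion of $R_j$ itself), followed by the bootstrap $s=1+o(1)$ --- and your use of $\|\ualpha\|_2^4\le k\|\ualpha\|_4^4$ and the interpolation $\|\ualpha\|_6^6\le\|\ualpha\|_2^2\|\ualpha\|_4^4$ to absorb the higher-order remainders into $O(\tfrac{k^3}{n^4}\|\ualpha\|_4^4)$ is exactly the required error control, relying as it should on $k^2\|\ualpha\|_2^2/n^2=o(1)$, which follows from $k^2=o(n)$ together with the assumed bound on $\|\ualpha\|_2^2$.
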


\begin{proof}
 We have the system of equations
 \begin{equation}\label{f_0_formula}f_0(0) = 1 + \frac{1}{2k+1}\sum_{j=1}^k
\left(R_j + \frac{1}{R_j} -2
\right).\end{equation}
and
\begin{equation}\label{R_formula}
 R_j + \frac{1}{R_j} = \sqrt{4 + \left(f_0(0) \alpha_j \frac{2k+1}{n}
\right)^2}.
\end{equation}
Beginning from an initial guess $f_0(0) = 2$, solve for each $R_j$ in
$R_j \geq 1$ according
to (\ref{R_formula}), sequentially update $f_0(0)$ and then the $R_j$.  This
produces a decreasing sequence of guesses for $f_0(0)$ and for each $R_j$, as
is evident since the first step is decreasing, e.g. since
\[
 R_{j}+\frac{1}{R_{j}}- 2 \leq \frac{f_{0, \old}(0)^2 \alpha_j^2(2k+1)^2 }{4n^2}
\]
and therefore,
\[
 f_{0, \new}(0) \leq 1 + \frac{2k+1}{4n^2} f_{0, \old}(0)^2 \|\ualpha\|_2^2 \leq
1 +
O\left( \frac{1}{n}\right).
\]
As $f_0(0)$ is bounded below, the sequence necessarily converges.

To verify the asymptotics, note that $f_0(0) = O(1)$ leads to
\begin{align*}
 R_j + \frac{1}{R_j} &= 2 + \frac{\left(f_0(0) \alpha_j \frac{2k+1}{n}
\right)^2}{2 + \sqrt{4 + \left(f_0(0) \alpha_j \frac{2k+1}{n}\right)^2}}\\
& = 2 + \left(f_0(0) \alpha_j \frac{2k+1}{2n} \right)^2 + O\left(\frac{k^4
\alpha_j^4}{n^4} \right),
\end{align*}
which satisfies the claimed asymptotic.

Inserted into the formula for $f_0(0)$, this yields
\begin{align*}
 f_0(0) = 1 +\frac{2k+1}{4n^2}\|\ualpha\|_2^2 f_0(0)^2+  O\left(\frac{k^3}{n^4}
\|\ualpha\|_4^4 \right).
\end{align*}
The error introduced by the factor of
$f_0(0)^2$ may be absorbed into the last error term, since $\|\ualpha\|_2^4 \leq
k
\|\ualpha\|_4^4$.

Combining the stationary phase equation with (\ref{R_j_eval}) we find
\begin{align*}
 R_j &= 1 + \frac{2k+1}{2n} f_0(0) \alpha_j +
\frac{1}{2}\left(\frac{2k+1}{2n} f_0(0) \alpha_j
 \right)^2 + O\left(\frac{k^4}{n^4}\alpha_j^4  \right),
\end{align*}
so
\begin{align*}
 \log R_j = \frac{2k+1}{2n}f_0(0) \alpha_j + O\left(\frac{k^3\alpha_j^3}{n^3}
\right).
\end{align*}

\end{proof}

\begin{lemma}\label{F_approximation}
Let $ n, k(n) \in \zed_{>0}$ with $k(n)^2 = o(n)$ as $n \to \infty$. Let
$\ualpha \in
\zed^k$ and assume
$\|\ualpha\|_{2}^2 \leq
n\left(1 + \frac{\log n}{\sqrt{k}} \right)$ and $ \|\ualpha\|_4^4 \ll
\frac{n^2}{k}\left(1 + \frac{\log n}{\sqrt{k}} \right)$.
Let
$R_j$ be determined by the saddle point equations (\ref{stationary_phase}).  For
$\vo \in \sD_{\sm}$ we have
\begin{align*}
F(0) & =
-\frac{2k+1}{4n}\|\ualpha\|_2^2 + O\left(\frac{k^3}{n^3}\|\ualpha\|_4^4\right)\\
D_j F(0) & =0 \\
D_{j_1} D_{j_2} F(0) & =\frac{4\pi^2 \alpha_{j_1}\alpha_{j_2}}{n}, \;
j_1 \neq j_2\\
D_j^2 F(0) & = \frac{-8\pi^2 n}{2k+1} +
\frac{2\pi^2 \|\ualpha\|_2^2}{n} + O\left(\frac{k\alpha_j^2}{n}\right) +
O\left(\frac{k^2}{n^3}\|\ualpha\|_4^4 \right)\\
D_{j_1}D_{j_2}D_{j_3} F(\vo) &= O\left(\frac{n}{k^3} \left(|\theta_{j_1}| +
\frac{k|\alpha_{j_1}|}{n}\right)\left(|\theta_{j_2}| +
\frac{k|\alpha_{j_2}|}{n}\right)\left(|\theta_{j_3}| + \frac{k
|\alpha_{j_3}|}{n}\right)\right),\\& \qquad\qquad\qquad j_1,j_2, j_3 \text{
distinct} \\
D_{j_1}^2 D_{j_2} F(\vo) & =  O\left( \frac{n}{k^2}\left(|\theta_{j_2}| +
\frac{k|\alpha_{j_2}|}{n}\right)\right), \;\; j_1 \neq j_2
\\D_j^3 F(\vo) &= O\left(\frac{n}{k}\left(|\theta_j| + \frac{k
|\alpha_j|}{n}\right)\right).
\end{align*}

\end{lemma}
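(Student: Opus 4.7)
The plan is to substitute $\vo = 0$ into the derivative formulas of Lemma \ref{derivatives} and then insert the asymptotic expansions of $f_0(0)$ and $R_j$ from Lemma \ref{f_0_bound}. The value $F(0) = n\log f_0(0) - \sum_j \alpha_j \log R_j$ is obtained by combining $\log f_0(0) = \frac{2k+1}{4n^2}\|\ualpha\|_2^2 + O(f_0(0)-1)^2$ with (\ref{log_R}); the $\log^2$ correction contributes $O(n\cdot(f_0(0)-1)^2) = O(k^2\|\ualpha\|_2^4/n^3)$ which is absorbed into $O(k^3\|\ualpha\|_4^4/n^3)$ via $\|\ualpha\|_2^4 \leq k\|\ualpha\|_4^4$, and the cubic term in (\ref{log_R}) sums (after multiplication by $\alpha_j$) to an error of the same size. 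The identity $D_j F(0)=0$ is just the stationary phase equation (\ref{stationary_phase}), which also yields $R_j - 1/R_j = \frac{(2k+1)\alpha_j f_0(0)}{n}$; substituting this into the formula for $D_{j_1}D_{j_2}F(0)$ with $j_1 \neq j_2$ gives the clean value $\frac{4\pi^2 \alpha_{j_1}\alpha_{j_2}}{n}$, with the $f_0(0)^2$'s cancelling.

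For $D_j^2 F(0)$ I split into two pieces. The second piece $\frac{4\pi^2 n}{(2k+1)^2}\frac{(R_j - 1/R_j)^2}{f_0(0)^2}$ is exactly $\frac{4\pi^2 \alpha_j^2}{n}$ by the same stationary phase substitution, and can be absorbed into $O(k\alpha_j^2/n)$. The first piece $-\frac{4\pi^2 n}{2k+1}\frac{R_j + 1/R_j}{f_0(0)}$ is handled by inserting (\ref{R_j_eval}) in the numerator and expanding $\frac{1}{f_0(0)} = 1 - \frac{2k+1}{4n^2}\|\ualpha\|_2^2 + O(k^2\|\ualpha\|_2^4/n^4)$ from (\ref{f_0_eval}); the linear-in-$\|\ualpha\|_2^2$ term yields $+\frac{2\pi^2\|\ualpha\|_2^2}{n}$, the $(f_0(0)\alpha_j(2k+1)/(2n))^2$ piece from (\ref{R_j_eval}) contributes $O(k\alpha_j^2/n)$, and the quadratic term in the $1/f_0(0)$ expansion contributes $O(k^2\|\ualpha\|_4^4/n^3)$ after absorbing $\|\ualpha\|_2^4\leq k\|\ualpha\|_4^4$.

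The third-derivative bounds require control throughout $\sD_{\sm}$, not just at $\vo = 0$. The key observation is that for $\vo \in \sD_{\sm}$, writing
\[
R_j e(\theta_j) - \frac{e(-\theta_j)}{R_j} = \left(R_j - \frac{1}{R_j}\right)\cos(2\pi\theta_j) + i\left(R_j + \frac{1}{R_j}\right)\sin(2\pi\theta_j),
\]
one gets $|R_j e(\theta_j) - e(-\theta_j)/R_j| \ll \frac{k|\alpha_j|}{n} + |\theta_j|$ using (\ref{R_j_eval}) and (\ref{stationary_phase}), while $|R_j e(\theta_j) + e(-\theta_j)/R_j| \ll 1$. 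The denominator $f_0(\vo)$ is bounded below on $\sD_{\sm}$ since
\[
\RE f_0(\vo) = \frac{1}{2k+1}\left(1 + \sum_j (R_j + 1/R_j)\cos(2\pi\theta_j)\right) \geq \frac{\cos(\pi/6)}{2k+1}\sum_j (R_j + 1/R_j) \gtrsim 1,
\]
using $|\theta_j| \leq 1/12$. Plugging these bounds into the three third-derivative formulas of Lemma \ref{derivatives} produces the three stated estimates directly; in the mixed case $D_{j_1}^2 D_{j_2} F$, the second term (with $(R_{j_1}e(\theta_{j_1})-e(-\theta_{j_1})/R_{j_1})^2$) is smaller than the first by a factor of $\frac{1}{k}(|\theta_{j_1}| + k|\alpha_{j_1}|/n)^2 = o(1)$ (using the hypotheses $k^2 = o(n)$ and the $\|\ualpha\|_2$ bound), and is absorbed.

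The whole argument is essentially substitution plus bookkeeping; the only mild subtlety is the lower bound $|f_0(\vo)| \gtrsim 1$ on $\sD_{\sm}$, and keeping track of which cross terms are genuinely smaller so that the stated error terms are sharp.
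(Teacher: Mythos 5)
Your proposal is correct and follows essentially the same route as the paper: evaluate $F(0)$ from $n\log f_0(0)-\sum_j\alpha_j\log R_j$ using Lemma \ref{f_0_bound}, use the stationary phase equation $R_j-1/R_j = \frac{(2k+1)f_0(0)\alpha_j}{n}$ to reduce the second derivatives, and bound the third derivatives via $|R_je(\theta_j)-e(-\theta_j)/R_j|\ll|\theta_j|+\frac{k|\alpha_j|}{n}$, $|R_je(\theta_j)+e(-\theta_j)/R_j|\ll 1$, and $|f_0(\vo)|\asymp 1$ on $\sD_{\sm}$. Your explicit real/imaginary decomposition of $R_je(\theta_j)\pm e(-\theta_j)/R_j$ and the explicit lower bound $\RE f_0(\vo)\geq\frac{\cos(\pi/6)}{2k+1}\sum_j(R_j+1/R_j)$ spell out the details the paper compresses into ``Taylor expanding $e(\theta)$ to degree 1 in the numerator''; this is a sound elaboration of the same argument rather than a new one.
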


\begin{proof}
We have
\begin{align*}
 F(0) &= n \log f(0) - \sum_j \alpha_j \log R_j\\
      &= \frac{2k+1}{4n} \|\ualpha\|_2^2  - \frac{2k+1}{2n}f_0(0)
\|\ualpha\|_2^2+
O\left(\frac{k^3}{n^3}
\|\ualpha\|_4^4\right)\\
      &= -\frac{2k+1}{4n}\|\ualpha\|_2^2 + O\left(\frac{k^3}{n^3}
\|\ualpha\|_4^4\right).
\end{align*}
At the saddle point, the first derivatives vanish.  The mixed derivatives are
evaluated by plugging in
\[
 R_j - \frac{1}{R_j} = \frac{2k+1}{n}f_0(0) \alpha_j.
\]
We have
\begin{align*}
 D_j^2 F(0) &= -\frac{4\pi^2 n}{2k+1} \frac{R_j + \frac{1}{R_j}}{f_0(0)} +
\frac{4\pi^2 \alpha_j^2}{n}\\
            &= -\frac{8\pi^2 n}{2k+1} \frac{1}{f_0(0)} + O\left(\frac{k
\alpha_j^2}{n}\right)\\
            &= -\frac{8\pi^2 n}{2k+1} + \frac{2\pi^2 \|\ualpha\|_2^2}{n}
+O\left(\frac{k\alpha_j^2}{n}\right) + O\left(\frac{k^2}{n^3}\|\ualpha\|_4^4
\right).
\end{align*}

The triple derivatives are estimated by Taylor expanding $e(\theta)$ to degree
1 in the numerator, using $R_j - \frac{1}{R_j} \ll \frac{k\alpha_j}{n}$ and
$R_j + \frac{1}{R_j}, f_0(\vo) \asymp 1$.

\end{proof}

\begin{lemma}\label{taylor_expansion_bound}
Let $n, k(n) \in \zed_{>0}$ with $k^2 = o(n)$ as $n \to \infty$.  Let $\ualpha
\in \zed^k$ and assume
$\|\ualpha\|_{2}^2 \leq
n\left(1 + \frac{\log n}{\sqrt{k}} \right)$ and $ \|\ualpha\|_4^4 \ll
\frac{n^2}{k}\left(1 + \frac{\log n}{\sqrt{k}} \right)$. Let $\vo \in
\sD_{\sm}$. We have
\begin{align*}
 F(\vo) - F(0) =& \frac{-4\pi^2 n}{2k+1}\|\vo\|_2^2 + O\left(\left(1 +
\frac{\log n}{\sqrt{k}}\right) \|\vo\|_2^2 + \sqrt{k}\left(1 +
\frac{\log n}{\sqrt{k}}\right)\|\vo\|_4^2\right)\\
&+O\left(\frac{n}{k^3}\|\vo\|_2^6 + \frac{n}{k^2} \|\vo\|_2^4 +
\frac{\sqrt{n}}{k} \left(1 + \frac{\log n}{\sqrt{k}} \right)\|\vo\|_2^3\right)\\
&+O\left( \frac{n}{k} \|\vo\|_4^4 +
\frac{\sqrt{n}}{k^{\frac{1}{4}}}\left(1 + \frac{\log
n}{\sqrt{k}}
\right)
\|\vo\|_4^3\right).
\end{align*}
In particular, for any fixed constants $c_2, c_4>0$, for  $\|\vo\|_2 \leq c_2
\frac{k}{n^{\frac{1}{2}}}$ and $\|\vo\|_4
\leq c_4 \frac{k^{\frac{3}{4}}}{n^{\frac{1}{2}}}$ we have
\begin{equation}\label{integrand_bulk}
 F(\vo) - F(0) + \frac{4\pi^2 n}{2k+1}\|\vo\|_2^2 = o(1),
\end{equation}
while for $\|\vo\|_2 \leq c_2 \frac{k}{n^{\frac{1}{2}}}$ and $\|\vo\|_4 = o(1)$,
\begin{equation}\label{6_norm_large}
 F(\vo) - F(0) + \frac{4\pi^2 n}{2k+1}\|\vo\|_2^2 \ll o(1)
+\frac{n}{k}\|\vo\|_4^4 +  \left(1 +
\frac{\log n}{\sqrt{k}} \right)\left(\sqrt{k}\|\vo\|_4^2 +
\frac{\sqrt{n}}{k^{\frac{1}{4}}}\|\vo\|_4^3 \right),
\end{equation}
and in general for $\|\vo\|_\infty < \delta < \frac{1}{12}$,
\begin{equation}\label{2_norm_large}
 F(\vo) - F(0) + \frac{4\pi^2 n}{2k+1}\|\vo\|_2^2 \ll\delta^2
\frac{n}{k}\|\vo\|_2^2 + \left(1 + \frac{\log
n}{\sqrt{k}}\right)\left(\sqrt{\frac{n}{k}}\|\vo\|_2 +
\frac{\sqrt{n}}{k^{\frac{1}{4}}}\|\vo\|_2^{\frac{3}{2}}\right).
\end{equation}

\end{lemma}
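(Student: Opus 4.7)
The plan is to apply Taylor's theorem with integral (or Lagrange) remainder to $F$ at the origin, out to third order, and then estimate each piece using the derivative computations already carried out in Lemma \ref{F_approximation}. Concretely, on the convex domain $\sD_{\sm}$ one writes
\[
F(\vo)-F(0) = \tfrac{1}{2}\sum_{j_1,j_2} D_{j_1}D_{j_2}F(0)\,\theta_{j_1}\theta_{j_2} + \tfrac{1}{6}\sum_{j_1,j_2,j_3} D_{j_1}D_{j_2}D_{j_3}F(\xi\vo)\,\theta_{j_1}\theta_{j_2}\theta_{j_3}
\]
for some $\xi = \xi(\vo)\in (0,1)$, using $D_j F(0)=0$ from the saddle point equations.

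First I would isolate the leading quadratic term. Substituting the values from Lemma \ref{F_approximation} and collecting, the diagonal contribution $\tfrac{1}{2}\sum_j D_j^2 F(0)\theta_j^2$ produces $-\tfrac{4\pi^2 n}{2k+1}\|\vo\|_2^2$ plus the extra pieces $\tfrac{\pi^2\|\ualpha\|_2^2}{n}\|\vo\|_2^2 + O(\sum_j \tfrac{k\alpha_j^2}{n}\theta_j^2) + O(\tfrac{k^2}{n^3}\|\ualpha\|_4^4\|\vo\|_2^2)$, while the off-diagonal contribution combines with part of the diagonal to produce $\tfrac{2\pi^2}{n}(\ualpha\cdot\vo)^2$. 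Using the hypothesis $\|\ualpha\|_2^2 \ll n(1+\tfrac{\log n}{\sqrt k})$ and Cauchy--Schwarz gives $\tfrac{\|\ualpha\|_2^2}{n}\|\vo\|_2^2 \ll (1+\tfrac{\log n}{\sqrt k})\|\vo\|_2^2$ and similarly for $(\ualpha\cdot\vo)^2$. For the $\sum_j \tfrac{k\alpha_j^2}{n}\theta_j^2$ term I would apply Cauchy--Schwarz in the form $\sum_j \alpha_j^2 \theta_j^2 \leq \|\ualpha\|_4^2\|\vo\|_4^2$; combined with $\|\ualpha\|_4^2 \ll \tfrac{n}{\sqrt k}(1+\tfrac{\log n}{\sqrt k})^{1/2}$ this contributes $\sqrt k (1+\tfrac{\log n}{\sqrt k})\|\vo\|_4^2$, matching the stated error.

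For the cubic remainder I would use the triple derivative bounds from Lemma \ref{F_approximation}, which on $\sD_{\sm}$ dominate each derivative by products of factors of the shape $|\theta_j| + \tfrac{k|\alpha_j|}{n}$ with prefactors $n/k^3$, $n/k^2$, $n/k$ for triple-distinct, doubled, and triple-repeated indices respectively. Expand each product and sum against $|\theta_{j_1}\theta_{j_2}\theta_{j_3}|$. The purely $|\theta|$ terms yield contributions bounded by $\tfrac{n}{k^3}\|\vo\|_2^6$, $\tfrac{n}{k^2}\|\vo\|_2^4$, and $\tfrac{n}{k}\|\vo\|_4^4$ after using Cauchy--Schwarz to reassemble the index sums. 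The mixed terms involving one or more factors of $\tfrac{k|\alpha|}{n}$ are estimated by Hölder combined with the hypotheses on $\|\ualpha\|_2, \|\ualpha\|_4$, producing the remaining $\sqrt n/k$ and $\sqrt n/k^{1/4}$ terms with factors $(1+\tfrac{\log n}{\sqrt k})$; these convert $\|\ualpha\|_p$ bounds into the norms $\|\vo\|_2^3$ and $\|\vo\|_4^3$ of the stated remainder.

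Finally, the three specialized bounds \eqref{integrand_bulk}, \eqref{6_norm_large}, \eqref{2_norm_large} follow by substitution: in the bulk regime $\|\vo\|_2 \ll k/\sqrt n$, $\|\vo\|_4 \ll k^{3/4}/\sqrt n$, every error term is $o(1)$; relaxing only the $\ell^4$ control keeps the $\ell^4$-dependent errors explicit; and in the general $\|\vo\|_\infty<\delta$ regime one uses $\|\vo\|_4 \leq \delta^{1/2}\|\vo\|_2^{1/2}$ and $\|\vo\|_2^2 \leq \delta\|\vo\|_1$ etc.\ to reorganize the bound in terms of $\|\vo\|_2$ alone. The main obstacle will be the bookkeeping in the cubic step: keeping track of which Hölder splitting produces the $\|\vo\|_4$ versus $\|\vo\|_2$ norms, and verifying that every expansion of $(|\theta_j|+k|\alpha_j|/n)^3$ recombines into one of the five error shapes appearing on the right hand side, rather than producing spurious higher-norm contributions.
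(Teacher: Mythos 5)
Your proposal is correct and follows essentially the same route as the paper: Taylor expansion to second order with a third-order Lagrange remainder, splitting $\sD^2(0)$ into the scalar part $\frac{-8\pi^2 n}{2k+1}I_k$ plus a perturbation controlled via Cauchy--Schwarz and the hypotheses on $\|\ualpha\|_2$ and $\|\ualpha\|_4$, then bounding the cubic remainder by expanding the products of $\bigl(|\theta_j| + \tfrac{k|\alpha_j|}{n}\bigr)$ factors from Lemma \ref{F_approximation} and specializing via $\|\vo\|_4 \leq \delta^{1/2}\|\vo\|_2^{1/2}$ for \eqref{2_norm_large}. The bookkeeping you flag as the main obstacle is exactly what the paper carries out, and your term-by-term accounting matches its error shapes.
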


\begin{proof}
By Taylor's theorem, for $\vo \in \sD_{\sm}$, for some $0 \leq t_{\vo} \leq 1$,
\[
 F(\vo) - F(0) = \frac{1}{2} \sD^2(0)(\vo, \vo) + \frac{1}{6}
\sD^3(t_{\vo}\vo)(\vo, \vo, \vo)
\]
where $\sD^2$ and $\sD^3$ represent the second and third derivatives of $F$.
Write
\[
 \sD^2(0) = \frac{-8\pi^2 n}{2k+1}I_k + \tilde{\sD}^2(0)
\]
We have
\begin{align*}
 \tilde{\sD}^2(0)(\vo, \vo)&\ll \frac{\|\ualpha\|_2^2 \|\vo\|_2^2}{n} +
\frac{k\|\ualpha\|_4^2 \|\vo\|_4^2}{n} + \frac{k^2}{n^3} \|\ualpha\|_4^4
\|\vo\|_2^2\\
&\ll \left(1 + \frac{\log n}{\sqrt{k}}\right) \|\vo\|_2^2 + \sqrt{k}\left(1 +
\frac{\log n}{\sqrt{k}}\right) \|\vo\|_4^2
\end{align*}
Also,
\begin{align*}
 \left|\sD^3(t_{\vo}\vo)(\vo, \vo, \vo)\right| \ll& \frac{n}{k^3}\left(
\|\vo\|_2^6 + \frac{k^3}{n^3}\|\vo\|_2^3 \|\ualpha\|_2^3\right)\\&+
\frac{n}{k^2}\left(\|\vo\|_2^4 + \frac{k}{n}\|\vo\|_2^3 \|\ualpha\|_2 \right)\\
&+ \frac{n}{k}\left(\|\vo\|_4^4 + \frac{k}{n}\|\vo\|_4^3\|\ualpha\|_4 \right)
\\ \ll& \frac{n}{k^3}\|\vo\|_2^6 + \frac{n}{k^2} \|\vo\|_2^4 +
\frac{\sqrt{n}}{k} \left(1 + \frac{\log n}{\sqrt{k}} \right)\|\vo\|_2^3\\& +
\frac{n}{k} \|\vo\|_4^4 + \frac{\sqrt{n}}{k^{\frac{1}{4}}}\left(1 + \frac{\log
n}{\sqrt{k}}
\right)
\|\vo\|_4^3 .
\end{align*}

For (\ref{2_norm_large})
use $\|\vo\|_4 \leq \delta^{\frac{1}{2}} \|\vo\|_2^{\frac{1}{2}}$,
 and $\|\vo\|_2
\leq \delta \sqrt{k}$.
\end{proof}

\begin{lemma}\label{general_bound}
 Keep the same assumptions on $k, n$ and $\ualpha$ as in Lemma
\ref{taylor_expansion_bound}.  We have
 \[
 \left|\IM f_0(\vo)\right| \ll \frac{\|\ualpha\|_2\|\vo\|_2}{n}.
 \]
Moreover, there is a constant $c > 0$ such that, if $\RE(f_0(\vo))>0$ then
\[
 \left|\frac{f_0(\vo)}{f_0(0)}\right| \leq 1 - \frac{c}{k}
\|\vo\|_{(\bR/\zed)^k}^2,
\]
and if $\RE(f_0(\vo)) < 0$ then
\[
 \left|\frac{f_0(\vo)}{f_0(0)}\right| \leq 1 - \frac{c}{k} -
\frac{c}{k}\left\|\vo - \left(\frac{1}{2}\right)_k\right\|_{(\bR/\zed)^k}^2,
\]
where $\left(\frac{1}{2}\right)_k$ denotes the vector of $\bR^k$, all of whose
coordinates are $\frac{1}{2}$.

\end{lemma}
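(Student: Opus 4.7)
The plan is to establish the three bounds in order, with the two magnitude bounds sharing a common skeleton.

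For the imaginary part, Lemma~\ref{derivatives} gives
\[
 \IM f_0(\vo) \;=\; \frac{1}{2k+1}\sum_{j=1}^k \bigl(R_j - R_j^{-1}\bigr)\sin(2\pi \theta_j),
\]
and the saddle point identity~\eqref{stationary_phase} rewrites $R_j - R_j^{-1} = (2k+1)f_0(0)\alpha_j/n$. Bounding $|\sin 2\pi \theta_j| \le 2\pi\|\theta_j\|_{\bR/\zed}$ and applying Cauchy--Schwarz yields the first claim.

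For the magnitude bound when $\RE f_0(\vo) > 0$, the structural input is $R_j + R_j^{-1} \ge 2$, which gives
\[
 f_0(0) - \RE f_0(\vo) \;=\; \frac{1}{2k+1}\sum_{j=1}^k (R_j + R_j^{-1})(1-\cos 2\pi\theta_j) \;\gg\; \frac{\|\vo\|_{(\bR/\zed)^k}^2}{k}.
\]
I would then combine this with the imaginary bound through the identity $|f_0|^2 = (\RE f_0)^2 + (\IM f_0)^2$. Using $f_0(0) = 1 + o(1)$ from Lemma~\ref{f_0_bound}, $\|\vo\|_{(\bR/\zed)^k}^2 \le k/4$, and the hypothesis $k^2 = o(n)$ to absorb the imaginary contribution, a Taylor expansion and a square root produce the second claim with a slightly smaller constant. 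In the regime $\|\vo\|_{(\bR/\zed)^k}^2 \asymp k$, where a Taylor expansion is no longer informative, one instead observes that $\RE f_0$ has dropped below $f_0(0)$ by a positive constant while $\IM f_0 = o(1)$, so a triangle inequality beats the target $1 - (c/k)\|\vo\|_{(\bR/\zed)^k}^2 \ge 1 - c/4$.

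For the third claim I would make the shift $\theta_j = \tfrac12 + \phi_j$, so that $e(\theta_j) = -e(\phi_j)$; writing $\vo' = \vo - (1/2)_k$ and $g(\vo') := -f_0(\vo)$,
\[
 g(\vo') \;=\; \frac{1}{2k+1}\Bigl(-1 + \sum_{j=1}^k R_j e(\phi_j) + R_j^{-1} e(-\phi_j)\Bigr),
\]
with $g(\uO) = f_0(0) - \tfrac{2}{2k+1}$, and the analogous computation gives $g(\uO) - \RE g(\vo') \gg \|\vo'\|_{(\bR/\zed)^k}^2/k$. The hypothesis $\RE f_0(\vo) < 0$ is precisely $\RE g(\vo') > 0$, so running the previous argument with $g$ in place of $f_0$ produces
\[
 \frac{|f_0(\vo)|}{f_0(0)} \;=\; \frac{|g(\vo')|}{f_0(0)} \;\le\; 1 - \frac{2}{(2k+1)f_0(0)} - \frac{c'}{k f_0(0)}\|\vo'\|_{(\bR/\zed)^k}^2 + \text{error},
\]
and $f_0(0) = 1 + o(1)$ then delivers the third claim with a slightly smaller constant. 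The main obstacle throughout is bookkeeping constants through the squaring/square-root step so that the imaginary contribution of size $\|\vo\|_2^2/n$ stays strictly subdominant to the real-part saving of size $\|\vo\|_{(\bR/\zed)^k}^2/k$; this uses $n \gg k^2$ decisively. The tightest pinch-point is the third claim, where near $(1/2)_k$ one has $\|\vo\|_2 \asymp \sqrt{k}/2$ so that $\IM f_0 = O(\sqrt{k/n})$, and one again needs $n \gg k^2$ to conclude $\sqrt{k/n} = o(1/k)$, matching the scale of the real-part saving $2/(2k+1)$.
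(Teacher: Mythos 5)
Your proposal is correct and follows essentially the same route as the paper: express $\IM f_0$ via the saddle-point identity $R_j - R_j^{-1} = (2k+1)f_0(0)\alpha_j/n$ and apply Cauchy--Schwarz; use $R_j + R_j^{-1} \ge 2$ together with the exact formula $f_0(0) - \RE f_0(\vo) = \frac{1}{2k+1}\sum_j (R_j + R_j^{-1})(1 - \cos 2\pi\theta_j)$ to get the real-part saving; and then combine with the imaginary bound via $|f_0|^2 = (\RE f_0)^2 + (\IM f_0)^2$, using $k^2 = o(n)$ to keep the imaginary contribution subdominant. Your shift $\theta_j = \tfrac12 + \phi_j$, $g = -f_0$ in the $\RE f_0 < 0$ case is just a tidy repackaging of the computation the paper carries out directly, and your bookkeeping near $(1/2)_k$ is a slightly more explicit version of what the paper leaves as ``similar.''
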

\begin{proof}
 We have
 \begin{align*}
  \left|\IM f_0(\vo)\right| &= \left|\frac{1}{2k+1} \sum_{j=1}^k \left(R_j -
\frac{1}{R_j}\right) \sin(2\pi \theta_j)\right|\\
&= \frac{f_0(0)}{n} \left|\sum_{j=1}^k \alpha_j \sin(2\pi \theta_j)\right|
\\& \ll \frac{\|\ualpha\|_2 \|\vo\|_2 }{n}.
 \end{align*}
If $\RE(f_0(\vo)) > 0$ then
\begin{align*}
 \left|\frac{\RE f_0(\vo)}{f_0(0)}\right| &= 1-
\frac{1}{f_0(0)(2k+1)}\sum_{j=1}^k \left(R_j +\frac{1}{R_j}\right)(1 -
\cos(2\pi \theta_j))\\
& \leq 1 - \frac{c}{k} \|\vo\|_2^2.
\end{align*}
If, instead, $\RE(f_0(\vo)) < 0$ then
\begin{align*}
 \left|\frac{\RE f_0(\vo)}{f_0(0)}\right| &= 1 - \frac{1}{(2k+1)f_0(0)} -
\frac{1}{(2k+1)f_0(0)} \sum_{j=1}^k \left(R_j + \frac{1}{R_j}\right)(1 +
\cos(2\pi \theta_j))\\
& \geq 1 - \frac{c}{k} - \frac{c}{k} \left\|\vo -
\left(\frac{1}{2}\right)_k\right\|_{(\bR/\zed)^k}^2.
\end{align*}

The bound for $\left|\frac{f_0(\vo)}{f_0(0)} \right|$ in the case
$\RE(f_0(\vo))>0$ follows from, for some $c'>0$,
\[
 \left|\frac{f_0(\vo)}{f_0(0)} \right|^2 \leq 1 -  \frac{c'}{k}\|\vo\|_2^2 +
O\left(\left( 1 + \frac{\log n}{\sqrt{k}}\right)\frac{\|\vo\|_2^2}{n}\right),
\]
and the claim in the case $\RE(f(\vo)) <0$ is similar.
\end{proof}

We give our final estimate.

\begin{proof}[Proof of Lemma \ref{gauss_approximation}]  Let $0 < \delta <
\frac{1}{12}$ be a constant to be chosen.
\begin{align*}\nu_k^{*n}(\ualpha) &= \int_{(\bR/\zed)^k}
\frac{f_0(\vo)^n}{R_1^{\alpha_1}...R_k^{\alpha_k}}d\vo
\\ &=
\frac{f_0(0)^n}{R_1^{\alpha_1}...R_k^{\alpha_k}}\left[\int_{\|\vo\|_\infty \leq
\delta}e^{F(\vo)- F(0)}d\vo + \int_{\|\vo\|_\infty > \delta}
\left(\frac{|f_0(\vo)|} {f_0(0)}\right)^n d\vo
\right]. \end{align*}

By Lemma \ref{F_approximation},
\begin{equation}\label{leading_term}\frac{f_0(0)^n}{R_1^{\alpha_1}...R_k^{
\alpha_k } } = e^{F(0)} =
e^{-\frac{2k+1}{4n}\|\ualpha\|_2^2}\left[1 +  O\left(\frac{k^3}{n^3}
\|\ualpha\|_4^4\right)\right] \sim
e^{-\frac{2k+1}{4n}\|\ualpha\|_2^2}\end{equation}
since $\|\ualpha\|_4^4 \ll \frac{n^2}{k}\left(1 + \frac{\log
n}{\sqrt{k}}\right)$.

To treat the integral over $\|\vo\|_\infty < \delta$, write
\[
 \int_{\|\vo\|_\infty \leq \delta}e^{F(\vo)- F(0)}d\vo =
\left(\frac{2k+1}{4\pi
n}\right)^{\frac{k}{2}} \int_{\|\vo\|_\infty \leq \delta}
\eta_k\left( \sqrt{\frac{2k+1}{8\pi^2 n}}, \vo\right)\exp\left(G(\vo)
\right)d\vo.
\]
Partition $B_\infty(0,\delta)$ by choosing for some parameters $c_2, c_4$,
\begin{align*}
 B_\infty(0,\delta) &= B \sqcup E_1 \sqcup E_2\\
  B & = B_\infty(0,\delta)\cap B_2\left(0,
c_2\frac{k}{\sqrt{n}}\right)
\cap B_{4}\left(
0, c_4 \frac{k^{\frac{2}{3}}}{n^{\frac{1}{2}}} \right)\\
  E_1 &= B_\infty(0,\delta) \cap B_2\left(0,
c_2\frac{k}{\sqrt{n}}\right)
\setminus B_{4}\left(
0, c_4 \frac{k^{\frac{2}{3}}}{n^{\frac{1}{2}}} \right) \\
  E_2 &= B_\infty(0,\delta) \setminus B_2\left(0,
c_2\frac{k}{\sqrt{n}}\right).
\end{align*}
The parameters $c_2, c_4$ are considered fixed, but may be arbitrarily large.

On $B$, (\ref{integrand_bulk}) gives $G(\vo) = o(1)$ and we find
\begin{align*}
 \int_{\vo \in B} \eta_k\left( \sqrt{\frac{2k+1}{8\pi^2 n}},
\vo\right)\exp\left(G(\vo)
\right)d\vo &= (1+o(1))\int_{\vo \in B} \eta_k\left( \sqrt{\frac{2k+1}{8\pi^2
n}},
\vo\right) d\vo \\&= 1 + \varepsilon(c_2, c_4),
\end{align*}
where $\varepsilon(c_2, c_4) \to 0$ as $\min(c_2, c_4)\to \infty$,
as follows by Lemma \ref{gaussian_length_concentration} ($c_2$ and $c_4$ only
need be taken growing if $k$ does not grow).

In treating $E_1$ and $E_2$, let $C_2, C_4$ be the constants of Lemma
\ref{gaussian_length_concentration}.
To treat $E_1$, note that with respect to the Gaussian measure $\gamma =
\eta_k\left(\sqrt{\frac{2k+1}{8\pi^2 n}}, \vo \right)$, the event $\vo \in
B_{\infty}(0,\delta)\cap
B_2\left(0,
c_2\frac{k}{\sqrt{n}}\right)$ has probability $\asymp 1$, and thus, even
after
conditioning on this event, the probability of $\|\vo\|_4 >
C_4 k^{\frac{1}{4}}\sqrt{\frac{2k+1}{8\pi^2 n}} + t $ is, for some $C>0$,
$O\left(\exp\left(-\frac{n}{k}\frac{t^2}{C} \right) \right)$.
The bound  $\|\vo\|_4 \leq
\delta^{\frac{1}{2}}\|\vo\|_2^{\frac{1}{2}}$  implies that on $E_1$, $\|\vo\|_4
= o(1)$.  Set $t = \|\vo\|_4 -C_4 k^{\frac{1}{4}}\sqrt{\frac{2k+1}{8\pi^2 n}}$
and assume $c_4$ is larger than a sufficiently large multiple of $C_4$, so that
$t \gg \frac{k^{\frac{3}{4}}}{\sqrt{n}}$. By
(\ref{6_norm_large}) we find that for $\vo \in E_1$,
\begin{align*}
  G(\vo) &\leq g(t)\\
 g( t) &\ll o(1) + \frac{n}{k}t^4 +\left(1 +\frac{\log
n}{\sqrt{k}}\right)\left(\sqrt{\frac{n}{k}}t^2 +
\frac{\sqrt{n}}{k^{\frac{1}{4}}} t^3 \right).
\end{align*}
Then
\begin{align*}
&\int_{E_1} \eta_k\left(\sqrt{\frac{2k+1}{8\pi^2 n}}, \vo \right)
\exp(G(\vo))d\vo
\\&\leq -\int_{\frac{k^{\frac{3}{4}}}{\sqrt{n}} \ll t =
o(1)}\exp\left(g(t)\right)d\meas\left(\|\vo\|_4 \geq
C_4 k^{\frac{1}{4}}\sqrt{\frac{2k+1}{8\pi^2 n}} + t \right).
\end{align*}
Integrating by parts, we find that this integral is $o(1)$ as $c_4 \to \infty$.

To treat $E_2$, set $s = \|\vo\|_2$ and appeal to (\ref{2_norm_large}) to find
\begin{align*}
 G(\vo) &\leq h(s)\\
h(s) &\ll o(1) +\delta^2 \frac{n}{k}s^2+  \left(1 + \frac{\log
n}{\sqrt{k}}\right) \left(\sqrt{\frac{n}{k}}s +
\frac{\sqrt{n}}{k^{\frac{1}{4}}} s^{\frac{3}{2}}
\right).
\end{align*}
Also, for some $C>0$,
\[
 \meas\left(\|\vo\|_2 > C_2 k^{\frac{1}{2}}\sqrt{\frac{2k+1}{8\pi^2 n}} +
s\right) \ll \exp\left(-\frac{n}{k}\frac{s^2}{C}\right).
\]
We conclude
\begin{align*}
 &\int_{\vo \in E_2}
\eta_k\left(\sqrt{\frac{2k+1}{8\pi^2 n}}, \vo \right) \exp(G(\vo))d\vo \\& \leq
-
\int_{\frac{k}{\sqrt{n}} \ll s \leq \delta\sqrt{k}} \exp(h(s))
d\meas\left(\|\vo\|_2 > \sqrt{\frac{2k+1}{8\pi^2 n}} +
s\right).
\end{align*}
If $\delta$ is sufficiently small, this integral is in fact $o(1)$ as $c_2 \to
\infty$, as may be
checked again by integration by parts.

It remains to bound the integral over $\|\vo\|_\infty > \delta$.  Consider
first the case $\RE(f(\vo))>0$.  Let $S \subset [k]$ be the collection of
$\theta_j$ with $|\theta_j|> \delta$.  Write $\vo_S$ for the variables
in $S$ and $\vo_{S^c}$ for the variables in $S^c$.  Appealing to Lemma
\ref{general_bound}, we see that if $|S|\gg \log n$ then the integral is
negligible.  Using $1-x < e^{-x}$ in the remaining range we obtain a bound, for
some fixed $c > 0$,
\begin{align*}
& \ll \sum_{1 \leq j \ll \log n} \sum_{S \subset [k], |S| = j} \exp\left(-
\frac{c j n}{k} \right) \int_{\|\vo_{S^c}\|_\infty < \delta}
\exp\left(-\frac{cn}{k}\|\vo_{S^c}\|_2^2 \right)\\& \ll \left(\frac{2k+1}{4\pi
nc} \right)^{\frac{k}{2}} \sum_{1 \leq j \ll \log n}
\binom{k}{j}\left(\frac{4\pi nc}{2k+1} \right)^{\frac{j}{2}}\exp\left(-\frac{c
j n}{k}\right)\\&
= o\left(\left(\frac{2k+1}{4\pi
n} \right)^{\frac{k}{2}} \right).
\end{align*}

The terms for which $\RE(f(\vo))< 0$ are handled similarly.
\end{proof}

\bibliographystyle{plain}
\bibliography{cycle}

\end{document}